\newcommand{\ubar}[1]{\underaccent{\bar}{#1}}
\theoremstyle{plain}
\newtheorem{theorem}{Theorem}
\newtheorem{proposition}{Proposition}[section]
\newtheorem{lemma}[proposition]{Lemma}
\newtheorem{corollary}[proposition]{Corollary}
\newtheorem{remark}[proposition]{Remark}
\theoremstyle{problem}
\newtheorem {problem}{Problem}
\numberwithin{equation}{section}
\def\Xint#1{\mathchoice
	{\XXint\displaystyle\textstyle{#1}}
	{\XXint\textstyle\scriptstyle{#1}}
	{\XXint\scriptstyle\scriptscriptstyle{#1}}
	{\XXint\scriptscriptstyle\scriptscriptstyle{#1}}
	\!\int}
\def\XXint#1#2#3{{\setbox0=\hbox{$#1{#2#3}{\int}$ }
		\vcenter{\hbox{$#2#3$ }}\kern-.6\wd0}}
\def\dashint{\Xint-}
\def\a{\alpha}
\def\b{\beta}
\def\c{\cdot}
\def\d{\delta}
\def\g{\gamma}
\def\G{\Gamma}
\def\k{\kappa}
\def\ld{\lambda}
\def\Ld{\Lambda}
\def\n{\nabla}
\def\o{\omega}
\def\O{\Omega}
\def\bp{\mathbf{p}}
\def\pt{\partial}
\def\q{\quad}
\def\r{\rho}
\def\th{\theta}
\def\ul{\ubar}
\def\ur{\Upsilon}
\def\v{\epsilon}
\def\vp{\varphi}
\def\B{\mathcal{B}}
\def\D{\mathcal{D}}
\def\MG{\mathcal{G}}
\def\h{\mathfrak{h}}
\def\J{\mathcal{J}}
\def\K{\mathcal{K}}
\def\N{\mathcal{N}}
\def\MO{\mathcal{O}}
\def\bp{\mathbf{p}}
\def\R{\mathbb{R}}
\def\S{\mathcal{S}}
\def\t{\mathfrak{t}}
\def\mn{\mathbf{n}}
\def\u{\mathbf{u}}
\newcommand{\bx}{{\nobreak\hfil\penalty50\hskip2em\hbox{}
\nobreak\hfil $\square$\qquad\parfillskip=0pt\finalhyphendemerits=0\par\medskip}}
\begin{document}

\title[three-dimensional axially symmetric jet flows with nonzero vorticity]{\bf The jet problem for three-dimensional axially symmetric full compressible subsonic flows with nonzero vorticity}

\author{Yan Li}
\address{School of Mathematical Sciences, Shanghai Jiao Tong University, 800 Dongchuan Road, Shanghai, 200240, China}
\email{liyanly@sjtu.edu.cn}

\begin{abstract}
	In this paper, we show that for given Bernoulli function and entropy function at the upstream, if the incoming mass flux is within a suitable range, then there exists a unique outer pressure such that  smooth subsonic three-dimensional axially symmetric jet flows for steady full Euler system with nonzero vorticity exist and have certain far fields behavior. 
	A key observation is that we can transform the jet problem for three-dimensional axially symmetric steady full Euler system with nonzero vorticity into a variational problem in terms of the stream function. Moreover, using uniform estimates of the stream function and the iteration method, we exclude the singularity of the jet flows near the symmetry axis.
\end{abstract}

\keywords{axially symmetric flows, free boundary, steady Euler system, subsonic jet, vorticity.}
\subjclass[2010]{
	35Q31, 35R35, 35J20, 35J70, 35M32, 76N10}


\maketitle
\section{Introduction and main results}
\subsection{Background and motivation}The free boundary problems, such as jets and cavities, are  important topics in mathematics and hydrodynamics due to their theoretical and practical significance. The earlier mathematical theory of free boundary problems relied on complex analysis and hodograph transformations, which were applicable to two-dimensional flows 
but had no effective analogues in three-dimensional case (\cite{BZ1957_book,Gilbarg_jet_book}). A significant contribution to three-dimensional flows with free boundary was the existence of axially symmetric incompressible cavity flows given by Garabedian, Lewy and Schiffer (\cite{GLS1952}), where the idea was based on the principle of minimum virtual mass. Later, Alt, Caffarelli and Friedman used a variational method to deal with the Bernoulli type free boundary problems (\cite{AC81,ACF84}), and developed a general framework to investigate jets and cavities in both two-dimensional and three-dimensional cases (\cite{ACF82_asy_jet,ACF83_axially_jet,ACF85}).
In particular, the existence and uniqueness of three-dimensional axially symmetric irrotational subsonic jet flows were established in \cite{ACF85}. In addition, some interesting three-dimensional axially symmetric jet problems were studied in \cite{CDX2020_Rethy,CDZ2021,DW2021_two_axi}. More relevant results about free boundary problems see \cite{F82_book_variational,ACF_two_fluids_I,ACF_two_fluids_II,WX2013_degenerate_free,WX2019_free_boundary,CDXgravity} and references therein.

A recent advance of the free boundary problems is the well-posedness of two-dimensional compressible subsonic jet flows with general vorticity (\cite{LSTX2023}). This result is based on an observation that the stream function formulation for two-dimensional compressible steady Euler system enjoys a variational structure, even when the flows have nontrivial vorticity. Then the jet problem can be reformulated as a domain variation problem, which fits to the framework developed in the irrotational case (cf. \cite{ACF85}).
An important question is whether one can establish the well-posedness of three-dimensional compressible jet flows with general vorticity. As a first step, we study three-dimensional axially symmetric compressible subsonic jet flows with nonzero vorticity.

Before stating the main results of the paper, we briefly introduce some researches on the compressible flows in fixed infinitely long nozzles, which are closely related to the jet problem.
The existence and uniqueness of irrotational subsonic flows in infinitely long nozzles were established in \cite{XX2007,XX2010_axially_nozzle,DXY}, which give a positive answer to the problem posed by Bers in \cite{Bers1958_book}. 
By using the stream function formulation, the well-posedness of two-dimensional subsonic flows with nonzero vorticity in infinitely long nozzles was obtained in \cite{XX2010}.
This result was extended to three-dimensional axially symmetric cases  (\cite{DD2011_axially_small_vorticity,DL2013_axi_nonisentropic,DWX2018_swirl}). More generalizations about rotational subsonic flows in infinitely long nozzles cf.  \cite{CX2012_periodic_nozzle,CX2014_bounded_nozzle,CDX2012_full_Euler,CHWX2019_large_vorticity,DD2016_axially_large_vorticity,DL2015_axi_nonisentropic_large,DX2014_stagnation}.  

\subsection{The problem and main results}Three-dimensional steady ideal flows are governed by the following full Euler system
\begin{equation}\label{Euler origional}
\begin{cases}\begin{split}
& \n\cdot(\r \mathbf{u})=0,\\
& \n \cdot(\r \mathbf{u}\otimes\mathbf{u})+\n p=0,\\
& \n\cdot(\r\mathbf{u}E+p\mathbf{u})=0,
\end{split}\end{cases}
\end{equation}
where $\mathbf u=(u_1,u_2,u_3)$ denotes the flow velocity, $\r$ is the density, $p$ is the pressure, and $E$ is the total energy of the flow. For  the polytropic gas, one has
$$E=\frac{|\mathbf{u}|^2}{2}+\frac{p}{(\g-1)\r},$$
where the constant $\gamma>1$ is called the adiabatic exponent. The local sound speed and the Mach number of the flow are defined as
\begin{equation}\label{sonic mach}
c=\sqrt{\frac{\g p}{\r}} \quad \text{and} \quad M=\frac{|\mathbf{u}|}{c},
\end{equation}
respectively. The flow is called subsonic if $M<1$, sonic if $M=1$, and supersonic if $M>1$, respectively.

We specialize to axially symmetric compressible flows and take the symmetry axis to be the $x_1$-axis. Denote $x=x_1$ and $y=\sqrt{x_2^2+x_3^2}$. For simplicity we assume the swirl velocity of the flow is zero. Let $u$ and $v$ be the axial velocity and the radial velocity of the flow, respectively. Then
\begin{equation*}
	\begin{cases}\begin{split}
			&u_1(x_1,x_2,x_3)=u(x,y),\\
			&u_2(x_1,x_2,x_3)=v(x,y)\frac{x_2}{y},\\
			&u_3(x_1,x_2,x_3)=v(x,y)\frac{x_3}{y}.
	\end{split}\end{cases}
\end{equation*}
Hence in cylindrical coordinates, the full Euler system \eqref{Euler origional} can be rewritten as
\begin{equation}\label{Euler system}
	\begin{cases}\begin{split}
			&\n\c(y\r \u)=0,\\
			&\n\c(y\r \u\otimes \u)+y\n p=0,\\
			&\n\c(y(\r E+p)\u)=0,
	\end{split}\end{cases}
\end{equation}
where $\u=(u,v)$ and $\n=(\pt_x,\pt_y)$.
If the flow is away from vacuum, it follows from \eqref{Euler system} that
\begin{equation}\label{BS equations}
	\u\c\n B=0\q \text{and}\q
	\u\c\n S=0,
\end{equation}
where $B$ is the Bernoulli function defined by
\begin{equation}\label{B def}
	B=\frac{|\u|^2}{2}+\frac{\g p}{(\g-1)\r},
\end{equation}
and $S$ is the entropy function defined by
\begin{equation}\label{S def}
	S=\frac{\g p}{(\g-1)\r^\g}.
\end{equation}

Now given an axially symmetric nozzle in $\R^3$ with the symmetry axis and the upper solid boundary of the nozzle expressed as
\begin{equation}\label{nozzle}
	\N_0:=\{(x, 0): x\in \mathbb{R}\}
	\quad \text{and}\quad	
	\N:=\{(x,y): x=N(y),\, y\in[1,\bar H)\},
\end{equation}
respectively, where $\bar{H}>1$, $N\in C^{1,\bar\a}([1,\bar{H}])$ for some  $\bar\a\in(0,1)$ and it satisfies
\begin{equation}\label{nozzle condition}
	N(1)=0 \q \text{and} \q \lim_{y\to\bar{H}-}N(y)=-\infty.
\end{equation}

The main goal of this paper is to study the following jet problem.

\begin{problem}\label{probelm 1}
	Given the mass flux $Q$, the Bernoulli function $\bar{B}(y)$, and the entropy function $\bar{S}(y)$ of the flow at the upstream, find $(\r,\u,p)$, the free boundary $\G$, and the outer pressure $\ul{p}$ which we assume to be constant, such that the following statements hold.
	\begin{enumerate}
		\item[\rm (1)] The free boundary $\G$ joins the nozzle boundary $\N$ as a continuous surface and tends asymptotically horizontal at downstream as $x\to+\infty$.
		
		\item[\rm (2)] The solution $(\r,\u,p)$ solves the Euler system \eqref{Euler system} in the flow region $\mathcal{O}$ bounded by $\N_0$, $\N$, and $\G$. It takes the incoming data at the upstream, i.e.,
		\begin{equation}\label{upstream condition}
		\frac{|\u|^2}2+\frac{\gamma p}{(\gamma-1)\rho}\to\bar B(y) \q\text{and}\q \frac{\g p}{(\g-1)\r^\g}\to \bar{S}(y), \q \text{as } x\to-\infty,
		\end{equation}
		and
		\begin{equation}\label{Q}
			\int_0^1 y(\r u)(0,y)dy=Q.
		\end{equation}
		Furthermore, it satisfies the boundary conditions
		\begin{equation*}
			p=\ul{p} \q\text{on } \G, \q\text{and}\q \u\c\mn=0 \q\text{on } \N\cup \G,
		\end{equation*}
		where $\mn$ is the unit normal along $\N\cup\G$.
	\end{enumerate}
\end{problem}

The main results in this paper can be stated as follows.

\begin{theorem}\label{result}
Let the nozzle boundary $\N$ defined in \eqref{nozzle} satisfy \eqref{nozzle condition}. Given the Bernoulli function and the entropy function $(\bar{B},\bar{S})\in (C^{1,1}([0,\bar{H}]))^2$ and mass flux $Q>0$ at the upstream.
Suppose that
\begin{equation}\label{BS min}
	B_*:=\inf_{y\in[0,\bar{H}]}\bar{B}(y)>0,\q S_*:=\inf_{y\in[0,\bar{H}]}\bar{S}(y)>0,
\end{equation}
and
\begin{equation}\label{BS condition1}
	\bar{B}'(0)=0,\q \bar{B}'(\bar{H})\geq0,
	\q  \bar{S}'(0)=\bar{S}'(\bar{H})=0.
\end{equation}
There exists a constant $\k=\k(B_*,S_*,\gamma,\N)\in(0,1)$, such that if
\begin{equation}\label{BS condition2}
\|y^{-2}\bar{B}''(y)\|_{L^\infty((0,\bar{H}])}+\|y^{-2}\bar{S}''(y)\|_{L^\infty((0,\bar{H}])}\leq\k,
\end{equation}
then there exist $Q_*=\k^{\frac1{4\gamma}}$ and $Q_m=Q_m(B_*,S_*,\gamma,\N)>Q_*$, 
such that for any $Q\in(Q_*,Q_m)$, there are functions $\rho,\mathbf{u},p\in C^{1,\alpha}(\mathcal{O})\cap C^0(\overline{\mathcal{O}})$ (for any $\alpha\in(0,1)$) where $\mathcal O$ is the flow region, the free boundary $\G$, and the outer pressure $\ubar p$, such that $(\r,\u,p,\G,\ubar p)$ solves  Problem \ref{probelm 1}.  Furthermore, the following properties hold.
\begin{itemize}
	
\item[(i)] (Smooth fit) The free boundary $\G$ joins the nozzle boundary $\N$ as a $C^1$ surface.
	
\item[(ii)] The free boundary $\Gamma$ is given by a graph $x=\Upsilon(y)$, $y\in (\ubar H, 1]$, where $\Upsilon$ is a $C^{2,\alpha}$ function, $\ubar H\in(0,1)$, and $\lim_{y\rightarrow \ubar H+} \Upsilon(y)=\infty$. For $x$ sufficiently large, the free boundary  can also be written as $y=f(x)$ for some $C^{2,\alpha}$ function $f$ which satisfies
	$$\lim_{x\rightarrow \infty}f(x)=\ubar H
	\quad\text{and}\quad  \lim_{x\rightarrow \infty}f'(x)=0.$$
	
\item[(iii)] The flow is globally uniformly subsonic and has negative radial velocity in the flow region $\MO$, i.e.,
   \begin{equation*}\label{velocity}
		\sup_{\overline{\MO}}(|\u|^2-c^2)<0 \q\text{and}\q v<0 \ \text{in }\MO.
	\end{equation*}

\item[(iv)] (Upstream and downstream asymptotics)
The flow satisfies the asymptotic behavior
  \begin{equation}\label{asymptotic upstream}
  \|(\r,\u,p)(x,\c)-(\bar{\r}(\c),\bar{u}(\c),0,\bar{p})\|_{C^{1,\alpha}((0,\bar{H}))}\to0, \q\text{as } x\to-\infty
  \end{equation}
  and
  \begin{equation}\label{asymptotic downstream}
  \|(\r,\u,p)(x,\c)-(\ul{\r}(\c),\ul{u}(\c),0,\ul{p})\|_{C^{1,\alpha}((0,\ul{H}))}\to0, \q\text{as } x\to+\infty,
  \end{equation}
  where
  $\bar p$ is a positive constant, $(\bar\rho,\bar u)\in (C^{1,1}([0,\bar H]))^2$ and $(\ul{\r},\ul{u})\in (C^{1,\alpha}((0,\ul{H}]))^2$ are positive functions. Moreover, the upstream asymptotics $\bar\r$, $\bar u$, $\bar p$, and the downstream asymptotics $\ul{\r}$, $\ul u$, $\ul H$ are uniquely determined by $\bar B$, $\bar S$, $Q$, $\g$, $\bar H$, and $\ubar p$.

\item[(v)] (Uniqueness of the outer pressure) The outer pressure $\ubar p$ such that the solution $(\rho,\mathbf u,p,\Gamma)$ satisfies the properties (i)-(iv) is uniquely determined by $\bar B$, $\bar S$, $Q$, $\gamma$, and $\bar H$.
\end{itemize}
\end{theorem}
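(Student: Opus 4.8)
The plan is to reformulate Problem~\ref{probelm 1} in terms of the Stokes stream function $\psi$ defined by $\psi_y = y\rho u$, $\psi_x=-y\rho v$, so that the two transport equations $\u\cdot\nabla B=0$ and $\u\cdot\nabla S=0$ force $B$ and $S$ to be functions of $\psi$ alone along streamlines; the boundary data $(\bar B,\bar S)$ together with the mass flux $Q$ then pin down $B=\mathcal{B}(\psi)$ and $S=\mathcal{S}(\psi)$ globally. The key structural observation (the three-dimensional axisymmetric analogue of \cite{LSTX2023}) is that the momentum equations then reduce to a single second-order quasilinear equation for $\psi$ that is the Euler--Lagrange equation of a functional of the form $\int \left[ \frac{1}{2y}G(|\nabla\psi|^2,\psi,y) + y\,\Pi(\psi) \right]$, so that the free boundary condition $p=\ubar p$ on $\Gamma$ becomes a natural (variational) boundary condition, exactly fitting the Alt--Caffarelli--Friedman framework. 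First I would carry out this change of variables carefully, verify the variational structure, and record the precise ellipticity (subsonic truncation): one introduces a truncated density $\rho = \varrho(|\nabla\psi|^2/y^2,\psi,y)$ that coincides with the physical density in the subsonic region and is modified for large speeds so that the truncated functional is uniformly convex in $\nabla\psi$.

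Next I would set up the truncated domain-variation problem on a large but finite nozzle $\mathcal{O}_R$ (cutting off at $x=\pm R$ with appropriate boundary data coming from the one-dimensional upstream/downstream flows determined by $\bar B,\bar S,Q,\ubar p$), minimize the truncated functional over an admissible class of stream functions, and extract a minimizer $\psi_R$ together with its free boundary $\{\psi_R = Q\}$. Standard ACF regularity theory (nondegeneracy, Lipschitz continuity, flatness implies $C^{1,\alpha}$) gives that the free boundary is $C^{1,\alpha}$ and meets $\mathcal{N}$ in a smooth fit; the continuous-fit and smooth-fit conditions are what select the outer pressure $\ubar p$ — i.e.\ for each admissible $Q$ there is exactly one $\ubar p=\ubar p(Q)$ for which the free boundary detaches tangentially from the nozzle lip at $y=1$, and monotonicity of the detachment condition in $\ubar p$ gives uniqueness, proving (v). Passing $R\to\infty$ with uniform estimates yields the global jet solution, and a Bernoulli-law / comparison argument on the truncated equation shows that the genuine (untruncated) subsonic regime is entered provided $Q\in(Q_*,Q_m)$, with $Q_m$ the critical mass flux beyond which no global subsonic solution exists; this also yields (iii), the global subsonicity and the sign $v<0$ (the latter from $\psi_x = -y\rho v$ and monotonicity of $\psi$ in $x$).

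The remaining, and in my view hardest, point is (ii) together with the exclusion of singular behaviour of the flow near the symmetry axis $\mathcal{N}_0=\{y=0\}$: the stream-function equation is genuinely degenerate at $y=0$ (the coefficient $1/y$ blows up), and a priori the free boundary $x=\Upsilon(y)$ could reach the axis, or $\nabla\psi/y$ could fail to have a limit there. Here the hypotheses \eqref{BS condition1}--\eqref{BS condition2} ($\bar B'(0)=\bar S'(0)=0$ and the bounds on $y^{-2}\bar B''$, $y^{-2}\bar S''$) and the smallness $Q_*=\kappa^{1/(4\gamma)}$ are used: they guarantee that $\mathcal{B}(\psi)$ and $\mathcal{S}(\psi)$ are sufficiently flat near $\psi=0$ so that the even reflection of $\psi$ across $y=0$ is a weak solution of a uniformly elliptic equation in the full ball, whence interior Schauder estimates give $\psi \in C^{1,\alpha}$ up to the axis with $\nabla\psi(x,0)=0$ and $\psi(x,0)=0$; consequently the flow region stays a positive distance from the axis near the free boundary, $\ubar H\in(0,1)$, and the free boundary is a genuine graph $x=\Upsilon(y)$ on $(\ubar H,1]$ with the stated $C^{2,\alpha}$ regularity (bootstrapping from $C^{1,\alpha}$ via the oblique-derivative/transmission structure of the free boundary condition). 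The downstream description $y=f(x)$ and the limits $f(x)\to\ubar H$, $f'(x)\to 0$ then follow from the asymptotic convergence \eqref{asymptotic downstream}, which I would obtain by a translation-compactness argument: the limit of $\psi(x+x_n,\cdot)$ as $x_n\to\infty$ solves the one-dimensional downstream problem, whose solution with flux $Q$ and pressure $\ubar p$ is unique, giving (iv) and in particular the uniqueness of all the asymptotic profiles.
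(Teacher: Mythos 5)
Your overall architecture (stream-function reformulation with $B,S$ as functions of $\psi$, variational/ACF framework after a subsonic truncation, truncated domains, continuous and smooth fit selecting the free-boundary parameter, removal of truncations, translation-compactness for the far fields, $v<0$ from $\partial_x\psi>0$) matches the paper. But the step you yourself identify as the hardest — regularity up to the symmetry axis and the exclusion of singular behaviour there — is where your argument has a genuine gap. You claim that \eqref{BS condition1}--\eqref{BS condition2} make $\mathcal B(\psi),\mathcal S(\psi)$ flat near $\psi=0$ so that the \emph{even reflection} of $\psi$ across $y=0$ solves a \emph{uniformly elliptic} equation in a full ball, and interior Schauder then gives $C^{1,\alpha}$ up to the axis. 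This cannot work as stated: the singularity at $y=0$ does not come from the vorticity terms at all but from the Stokes stream-function structure itself. In divergence form the equation is $\nabla\cdot\big(g(|\nabla\psi/y|^2,\psi)\,\nabla\psi/y\big)=\cdots$, i.e.\ the principal coefficient carries the weight $1/y$ (equivalently, in nondivergence form there is the drift $-\mathfrak a_{i2}\,\partial_i\psi/y$), and this term is present even for irrotational flow. After even reflection the operator is still degenerate/singular on $\{y=0\}$, so uniform ellipticity and interior Schauder are not available; indeed the very statement one needs — that $\nabla\psi/y$ has a Hölder-continuous limit at the axis, with $\psi=\mathfrak c(x)y^2+O(y^{2+\alpha})$ — is exactly what must be proved and is what the paper establishes by a separate iteration scheme (the Appendix: a Harnack-based dyadic improvement trapping $\psi$ between multiples of $y^2$, followed by scaled interior estimates on balls $B_{y_0/2}(x_0,y_0)$). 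Moreover, even to start that iteration one needs the uniform decay $0<\psi\le C y^2$ near the axis, which in the paper is not automatic but is produced by the specific choice of boundary data $\psi^\sharp_{\mu,R}$ (the barrier $\min(\Lambda y^2e^{1-y},Q)$) in the truncated problems together with a comparison argument; your truncation with generic one-dimensional boundary data does not secure this, nor the resulting positive lower bound $\ubar H\ge H_*$ for the free boundary.

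Two further points where your route departs from the paper and is under-justified. First, global subsonicity: you invoke a ``Bernoulli-law / comparison argument'' with $Q_m$ a critical flux; the paper instead proves the quantitative bound $|\nabla\psi/y|\le CQ$ (by scaled elliptic estimates, using $\psi\le Cy^2$ near the axis and the axis regularity just discussed) and then chooses $Q_m$ \emph{small} so the subsonic truncation is never active — no criticality claim is made or needed, and your version would require an argument you have not supplied. Second, uniqueness of $\ubar p$: you propose monotonicity of a ``detachment condition'' in $\ubar p$, which is not established; the paper proves uniqueness of the free-boundary momentum $\Lambda$ (hence of $\ubar p$) for \emph{all} solutions of the jet problem by comparing downstream heights ($\Lambda_1<\Lambda_2\Rightarrow \ubar H_1>\ubar H_2$ via the downstream states), sliding one flow domain over the other, and applying the strong maximum principle and Hopf lemma at the touching point. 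If you repair the axis regularity step along the lines of a weighted/iterative argument (or an equivalent $5$-dimensional reinterpretation of the operator) and replace the two sketched steps above with quantitative arguments, the proposal becomes viable.
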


\begin{remark}\label{rmk_1}
	Suppose the flow satisfies the far field conditions with high order compatibility at the upstream, i.e.,
	\begin{align*}
		(\rho(x,y),u(x,y),v(x,y),p(x,y))\to(\bar\rho(y),\bar u(y),\bar v(y),\bar p(y))
	\end{align*}
	and
	\begin{equation*}
	\n(\rho(x,y),u(x,y),v(x,y),p(x,y))\to\n(\bar\rho(y),\bar u(y),\bar v(y),\bar p(y))
	\end{equation*}
	for some smooth functions $(\bar\rho,\bar u,\bar v,\bar p)$ as $x\to-\infty$. It follows from the Euler system \eqref{Euler system} that
	\begin{equation}\label{eq_0}
		(y\bar\rho\bar v)'=0,\q (y\bar\r \bar u\bar v)'=0,\q (y\bar \r \bar v^2)'+y\bar p'=0,\q  (y\bar\r \bar v\bar B)'=0.
	\end{equation}
	The first equation in \eqref{eq_0} shows that $y\bar\rho \bar v\equiv C$ for some constant $C$. Since $y\bar\r \bar v=0$ on the $x$-axis implies $C=0$ and since the solution is supposed to satisfy $\bar\rho>0$, one has $\bar v=0$. Then it follows from the third equation in \eqref{eq_0} that $y\bar p'=0$.
	Hence the pressure $\bar p$ is a constant.  Thus in view of \eqref{S def}, for non-isentropic Euler flows the density $\bar\rho$ is a function. Same arguments hold for the downstream flow states.
\end{remark}

\begin{remark} 	
For the convenience of stating the main results, we give the Bernoulli function, the entropy function, and the mass flux of the flow at the upstream. If we give suitable density, axial velocity, and pressure of the flow at the upstream, we can also prove the existence of the subsonic jet flows.
\end{remark}

\begin{remark} 	
In this paper, we study jet flows without swirl to avoid the technicality. It makes sense since the swirl velocity of the flow  will be always zero throughout the flow region if we assume the swirl is zero at the upstream and the streamlines have a simple topological structure (which can be guaranteed by $v<0$ in the flow region), cf. \cite{DD2012_uniqueness_axially}. Three-dimensional axially symmetric jet flows with swirl will be discussed in our future work.
\end{remark}

In order to prove Theorem \ref{result}, we follow the procedure in \cite{LSTX2023}, where two-dimensional subsonic jet flows for steady Euler system with general vorticity were  investigated. We first  reformulate three-dimensional axially symmetric full Euler system in terms of the Bernoulli function, the entropy function and the vorticity (cf. Proposition \ref{equivalent pro}). Then further reduce the full Euler system into a second order quasilinear equation of the stream function,  which is elliptic if and only if the flow is subsonic (cf. Lemma \ref{quasilinearequ lem}). A key observation is that as in the two-dimensional isentropic case, this quasilinear elliptic equation of the stream function enjoys a variational structure in the subsonic state (cf. Lemma \ref{EL}).
Thus Problem \ref{probelm 1} can be translated into a variational problem, and we can apply the techniques developed in \cite{ACF85,LSTX2023} to study the existence and regularity of the solution.

The main difficulties of three-dimensional axially symmetric jet problem with nonzero vorticity come from the appearance of $1/y$ in the stream function formulation, for which we introduce the condition \eqref{BS condition2}. One of the difficulties is that the solution may be singular near the symmetry axis. As we study the variational problems in a series of truncated domains at first, we give appropriate conditions on the boundaries of the truncated domains, so that we can obtain a uniform decay estimate of the solutions to the domain truncated problems near the symmetry axis (cf. Lemma \ref{psi bound lem}). This is sufficient to exclude the singularity of the solution near the symmetry axis (cf. Proposition \ref{subsonic pro}). Actually we  show that the stream function is $C^{2,\alpha}$ up to the symmetry axis by iteration(cf. Appendix \ref{appendix}).  Besides, the specific boundary conditions of the domain truncated problems are also used to show the positive lower bound of the free boundary (cf. Proposition \ref{y graph}). 
Another difficulty is that the presence of $1/y$ in the equation makes the shifting argument in $y$-direction not applicable, which is crucial for the proof of the asymptotic behavior of the jet flow in two-dimensional case, cf.  \cite[Proposition 7.4]{LSTX2023}. Here we use the energy estimates as in \cite{DD2011_axially_small_vorticity} to obtain the asymptotic behavior of the solution.

The rest of the paper is organized as follows. In Section \ref{sec stream formulation}, we reformulate the Euler system and the jet problem in terms of the stream function. In Section \ref{sec variational formulation},  we give the variational formulation for the jet problem after the subsonic and domain truncations. The existence and regularity of solutions for the truncated free boundary problems are established in Section \ref{sec existence and regularity}. Fine properties of solutions for the truncated problems such as the monotonicity property and some uniform estimates are obtained in Section \ref{sec fine properties}. The continuous fit and smooth fit of the free boundary are established in Section \ref{sec fit}. In Section \ref{sec remove truncation}, we remove the domain and subsonic truncations, and then get the far fields behavior of solutions. This completes the proof for the existence of solutions  to the jet problem. In Section \ref{sec uniqueness}, we show the uniqueness of the outer pressure for the subsonic jet problem. 
The regularity of the stream function up to the symmetry axis is discussed in the appendix. 

In the rest of the paper, $(\pt_1,\pt_2)$ represents $(\pt_x,\pt_y)$, and for convention the repeated indices mean the summation.

\section{Stream function formulation and subsonic truncation}\label{sec stream formulation}

In this section, we use the stream function to reduce the full Euler system \eqref{Euler system} into a {second order} quasilinear equation, which is elliptic in the subsonic region and becomes singular at the sonic state. To deal with the possible degeneracy near the sonic state, we introduce a subsonic truncation so that the modified equation is always uniformly elliptic.

\subsection{The equation for the stream function}
We start by determining the incoming flow density, axial velocity and pressure  $(\bar{\r},\bar{u},\bar{p})$ with the given Bernoulli function $\bar{B}$, entropy function $\bar{S}$, and mass flux $Q$ at the upstream. Suppose the flow satisfies the upstream asymptotic behavior \eqref{asymptotic upstream} with some functions $(\bar\rho,\bar u)\in (C^{1,1}([0,\bar H]))^2$ and a constant $\bar p>0$. In view of \eqref{B def} and \eqref{S def} one has
\begin{equation}\label{BS_relation1}
B=\frac{|\mathbf u|^2}{2}+\left(\frac{\g p}{\g-1}\right)^{\frac{\g-1}{\g}}S^{\frac1\g},
\end{equation}
so that
\begin{equation}\label{BSbar relation}
	\bar{B}(y)=\frac{\bar{u}^2(y)}{2}+\left(\frac{\g \bar p}{\g-1}\right)^{\frac{\g-1}{\g}}\bar{S}^{\frac1\g}(y).
\end{equation}
Denote $\bar{D}:=(\bar B\bar S^{-\frac1\g})$. Then the upstream speed $\bar u$ satisfies
\begin{equation}\label{ubar express}
	\bar u(y)=\sqrt{2\bar S^{\frac1\g}(y)\Big(\bar D(y)-\Big(\frac{\g \bar p}{\g-1}\Big)^{\frac{\g-1}{\g}}\Big)}.
\end{equation}
For any positive constant $d$, let
\begin{equation}\label{pcm def}
	p_c(d):=\frac{\g-1}{\g}\left(\frac{2d}{\g+1}\right)^{\frac{\g}{\g-1}} \q\text{and}\q p^*(d):=\frac{\g-1}{\g}d^{\frac{\g}{\g-1}}.
\end{equation}
By straightforward calculations, the flow is subsonic at the upstream if and only if $\bar p\in (p_c(D^*), p^*(D_*))$, where
\begin{equation}\label{Dbar bounds}
	D_*:=\inf_{y\in[0,\bar{H}]}\bar{D}(y) \q\text{and}\q
	 D^*:=\sup_{y\in[0,\bar{H}]}\bar{D}(y).
\end{equation}
Note that
\begin{equation}\label{Dbar estimate}
	0<B_*(S^*)^{-\frac1\g}\leq D_*\leq B_*S_*^{-\frac1\g}\leq D^*\leq B^*S_*^{-\frac1\g},
\end{equation}
where $B_*$ and $S_*$ are defined in \eqref{BS min}, and
\begin{align}\label{BS max}
	&B^*:=\sup_{y\in[0,\bar{H}]}\bar{B}(y)
	\quad\text{and}\quad 
	S^*:=\sup_{y\in[0,\bar{H}]}\bar{S}(y).
\end{align}
Moreover, the conditions \eqref{BS condition1} and \eqref{BS condition2} imply
\begin{equation}\label{BS_*^*}
	B^*\leq B_*+\k\bar H^4 \quad\text{and} \quad
	S^*\leq S_*+\k\bar H^4 .
\end{equation}
Thus if $\k$ is sufficiently small depending on $B_*$, $S_*$, $\bar H$, and $\g$, then $D^*\leq (\g+1)D_*/2$, consequently $p_c(D^*)\leq p^*(D_*)$.

The incoming data $(\bar{\r},\bar{u},\bar{p})$ can be determined as follows.

\begin{proposition}\label{incoming data}
 Let $(\bar{B}, \bar{S})\in (C^{1,1}([0,\bar{H}]))^2$ be the Bernoulli function and the entropy function at the upstream which satisfy \eqref{BS min}-\eqref{BS condition2}. Let $Q>0$ be the mass flux. Suppose that the flow satisfies the upstream asymptotic behavior \eqref{asymptotic upstream} for some positive functions $(\bar\rho,\bar u)\in (C^{1,1}([0,\bar H]))^2$ and some constant $\bar p>0$. There exists a constant $\bar{\k}=\bar{\k}(B_*,S_*,\bar H,\g)>0$ sufficiently small, such that if $\k$ defined in \eqref{BS condition2} satisfies $\k\in(0,\bar{\k})$ and $Q\in (Q_*, Q^*)$ where $Q_*=\k^{\frac1{4\g}}$ and $Q^*=Q^*(B_*,S_*,\bar H,\g)>Q_*$, then the upstream state $(\bar{\r},\bar{u},\bar{p})$ in \eqref{asymptotic upstream} is uniquely determined by $\bar{B},\ \bar{S}$, $Q$, and $\g$, and $\bar{p}\to p_c(D^*)$ as $Q\to Q^*$, where $D^*$ is defined in \eqref{Dbar bounds}. Furthermore,
 \begin{equation}\label{rhobar_estimate}
 C^{-1}\leq \bar\r\leq C,\q \bar\rho'(0)=\bar\rho'(\bar H)=0,
 \quad \text{and}\quad
 \|\bar{\r}'\|_{L^{\infty}([0,\bar{H}])}\leq C\k,
 \end{equation}
 and
	\begin{equation}\label{ubar estimates}
		C^{-1}\k^{\frac1{4\g}}\leq \bar{u}\leq C,\q \bar{u}'(0)=0,\q \bar{u}'(\bar{H})\geq0, \q\text{and}\q \|\bar{u}'\|_{L^\infty([0,\bar{H}])}\leq C\k^{1-\frac1{4\g}},
	\end{equation}
	for some $C=C(B_*,S_*,\bar H,\g)>0$.
\end{proposition}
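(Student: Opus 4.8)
The plan is to reduce the determination of $(\bar\r,\bar u,\bar p)$ to a single scalar equation for the constant $\bar p$, and then to solve that equation by a monotonicity (``mass–flux'') argument, as in one–dimensional gas dynamics. Since the prescribed upstream profile already has the form $(\bar\r(y),\bar u(y),0,\bar p)$ with $\bar p$ constant (cf. Remark \ref{rmk_1}), the entropy relation \eqref{S def} forces $\bar\r(y)=\bigl(\tfrac{\g\bar p}{(\g-1)\bar S(y)}\bigr)^{1/\g}$, while \eqref{BSbar relation} gives $\bar u$ by \eqref{ubar express}; both are then functions of the single unknown $\bar p$. Conservation of mass, matching the flux \eqref{Q} across cross sections, imposes
\[
\Phi(\bar p):=\int_0^{\bar H}y\,\bar\r(y;\bar p)\,\bar u(y;\bar p)\,dy=Q .
\]
It therefore suffices to show that, for $\k$ small, $\Phi$ restricted to the subsonic pressure interval $\bigl(p_c(D^*),p^*(D_*)\bigr)$ — which is nonempty precisely because $D^*\le(\g+1)D_*/2$, already observed after \eqref{BS_*^*} — is a decreasing homeomorphism onto an open interval containing $(Q_*,Q^*)$.

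\emph{Solving for $\bar p$.} Setting $q=q(\bar p):=\bigl(\tfrac{\g\bar p}{\g-1}\bigr)^{(\g-1)/\g}$ (an increasing homeomorphism carrying $(p_c(D^*),p^*(D_*))$ onto $(q_c,D_*)$ with $q_c:=\tfrac{2D^*}{\g+1}$), a short computation gives $\bar\r\bar u=\sqrt2\,q^{\frac1{\g-1}}\bar S^{-\frac1{2\g}}\bigl(\bar D(y)-q\bigr)^{1/2}$ and hence
\[
\pt_q(\bar\r\bar u)=\frac{q^{\frac1{\g-1}-1}\bar S^{-\frac1{2\g}}}{\sqrt2\,(\g-1)}\cdot\frac{2\bar D(y)-(\g+1)q}{\bigl(\bar D(y)-q\bigr)^{1/2}} .
\]
For $q>q_c$ one has $2\bar D(y)-(\g+1)q<2D^*-(\g+1)q_c=0$, so $\pt_q(\bar\r\bar u)<0$ pointwise; since $\bar D(y)-q_c\ge D_*-q_c>0$ and $\bar D(y)\ge D_*\ge q$, all quantities stay finite at the endpoints, so $\Phi$ is continuous on $[p_c(D^*),p^*(D_*)]$ and strictly decreasing. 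I would set $Q^*:=\Phi\bigl(p_c(D^*)\bigr)$; using $\bar D(y)-q_c\ge D_*-q_c\ge c_0(B_*,S_*,\bar H,\g)>0$ (from \eqref{Dbar estimate}, \eqref{BS_*^*} and $\g>1$) one gets $Q^*\ge c_0'>0$, while $\bar D(y)-D_*\le D^*-D_*\le C\k$ gives $\Phi\bigl(p^*(D_*)\bigr)\le C\k^{1/2}$. Since $Q_*=\k^{1/(4\g)}$ and $1/(4\g)<1/2$, for $\k$ small one has $\Phi(p^*(D_*))<Q_*<Q^*$, so $(Q_*,Q^*)$ lies in the open image of $\Phi$, whence each $Q\in(Q_*,Q^*)$ yields a unique $\bar p=\Phi^{-1}(Q)$ in the subsonic interval — in particular the upstream flow is strictly subsonic — depending only on $\bar B,\bar S,Q,\g$, with $\bar p\to p_c(D^*)$ as $Q\to Q^*$ by continuity of $\Phi^{-1}$; then $\bar\r,\bar u$ are recovered from the formulas.

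\emph{The bounds.} From $p_c(D^*)\le\bar p\le p^*(D_*)$ and the two-sided control of $D_*,D^*$ in \eqref{Dbar estimate}–\eqref{BS_*^*} we get $\bar p\in[C^{-1},C]$, so $C^{-1}\le\bar\r\le C$; differentiating, $\bar\r'=-\tfrac1\g\bigl(\tfrac{\g\bar p}{\g-1}\bigr)^{1/\g}\bar S^{-(\g+1)/\g}\bar S'$, which vanishes at $0$ and $\bar H$ by \eqref{BS condition1}, and $\|\bar\r'\|_\infty\le C\|\bar S'\|_\infty$. By \eqref{BS condition2}, $|\bar S''(y)|\le\k y^2$, so integrating from $0$ (where $\bar S'=0$) gives $\|\bar S'\|_\infty\le C\k$, and likewise $\|\bar B'\|_\infty\le C\k$, whence $\|\bar D'\|_\infty\le C\k$ and $\|\bar\r'\|_\infty\le C\k$. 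For $\bar u$, the flux identity gives $Q=\Phi(q)\le C\bigl((D_*-q)+(D^*-D_*)\bigr)^{1/2}\le C(D_*-q+\k)^{1/2}$, so $Q\ge\k^{1/(4\g)}$ forces $D_*-q\ge C^{-1}\k^{1/(2\g)}-\k\ge c\,\k^{1/(2\g)}$ for $\k$ small; then $\bar u^2=2\bar S^{1/\g}(\bar D-q)\ge2S_*^{1/\g}(D_*-q)$ gives $\bar u\ge C^{-1}\k^{1/(4\g)}$, while $\bar u^2\le2(S^*)^{1/\g}(D^*-q_c)\le C$ gives $\bar u\le C$. Finally $\bar u'=\bar u^{-1}\bigl[\tfrac1\g\bar S^{\frac1\g-1}\bar S'(\bar D-q)+\bar S^{1/\g}\bar D'\bigr]$, and since $\bar B'(0)=\bar S'(0)=0$ (so $\bar D'(0)=0$) we get $\bar u'(0)=0$, while $\bar S'(\bar H)=0$ and $\bar D'(\bar H)=\bar B'(\bar H)\bar S^{-1/\g}(\bar H)\ge0$ give $\bar u'(\bar H)\ge0$; and $\|\bar u'\|_\infty\le C\bigl(\|\bar S'\|_\infty+\|\bar D'\|_\infty\bigr)\|\bar u^{-1}\|_\infty\le C\k\cdot\k^{-1/(4\g)}=C\k^{1-1/(4\g)}$.

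\emph{Main obstacle.} The crux is the middle step: establishing the strict monotonicity of the mass–flux map $\Phi$ (the standard subsonic-branch phenomenon, here carried by the sign $2\bar D(y)-(\g+1)q<0$) and, at the same time, extracting from $Q\ge\k^{1/(4\g)}$ the matching lower bound $D_*-q\gtrsim\k^{1/(2\g)}$ that renders the threshold $Q_*=\k^{1/(4\g)}$ and the velocity bound $\bar u\ge C^{-1}\k^{1/(4\g)}$ mutually consistent. The recurring subtlety is that $\bar B,\bar S$ — hence $\bar D$ — are not exactly constant, so one must repeatedly invoke $D^*-D_*=O(\k)$ to guarantee the error terms are genuinely of lower order; this is exactly where the smallness of $\k$ enters, and without it even the nonemptiness of the subsonic pressure interval could fail.
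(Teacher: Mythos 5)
Your proposal is correct and follows essentially the same route as the paper: reduce everything to the scalar mass--flux equation $Q(\bar p)=Q$ on the subsonic pressure interval $(p_c(D^*),p^*(D_*))$, use its strict monotonicity to define $Q^*=Q(p_c(D^*))$ and to place $(Q_*,Q^*)$ inside the range, then read off the bounds \eqref{rhobar_estimate}--\eqref{ubar estimates} from the explicit formulas \eqref{rhobar express}, \eqref{ubar express} together with $\|\bar B'\|_\infty+\|\bar S'\|_\infty\le C\k$ and the lower bound $D_*-\bigl(\tfrac{\g\bar p}{\g-1}\bigr)^{\frac{\g-1}{\g}}\gtrsim\k^{\frac1{2\g}}$ extracted from $Q\ge\k^{\frac1{4\g}}$. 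The only (harmless) deviations are that you make the monotonicity of the flux map explicit via the variable $q$ and the sign of $2\bar D-(\g+1)q$, and you use the slightly sharper bound $D^*-D_*\le C\k$ (versus the paper's $C\k^{1/\g}$), which changes nothing in the conclusion.
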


\begin{proof}
	Since the flow satisfies the asymptotic behavior \eqref{asymptotic upstream}, it follows from \eqref{ubar express} that
	\begin{equation}\label{Q bar}
		Q(\bar p)=\int_0^{\bar{H}}y\bar{\r}(y;\bar p)\sqrt{2\bar{S}^{\frac1\g}(y)\Big(\bar D(y)-\Big(\frac{\g\bar p}{\g-1}\Big)^{\frac{\g-1}{\g}}\Big)}dy,
	\end{equation}
	where
	\begin{equation}\label{rhobar express}
		\bar{\r}(y;\bar p)=\left(\frac{\g \bar{p}}{(\g-1)\bar{S}(y)}\right)^{\frac1\g}.
	\end{equation}
	Since
	$$\frac{d}{d\bar p}Q(\bar p)
	<0 \q \text{for } \bar{p}\in(p_c(D^*),p^*(D_*))$$
	with $D^*$ and $D_*$ as in \eqref{Dbar bounds}, the mapping $\bar p\mapsto Q(\bar p)$ is strictly monotone decreasing in $(p_c(D^*),p^*(D_*))$. This means that as long as $Q\in (Q(p^*(D_*)),Q(p_c(D^*)))$, there exists a unique $\bar{p}\in(p_c(D^*),p^*(D_*))$ satisfying \eqref{Q bar}; accordingly, $(\bar\r,\bar u)$ is uniquely determined by \eqref{rhobar express} and \eqref{ubar express}. Furthermore, let $Q^*:=Q(p_c(D^*))$, then $\bar p\to p_c(D^*)$ if $Q\to Q^*$.
	
	It follows from the expression for $\bar\r$ in \eqref{rhobar express} and \eqref{BS_*^*} that
	$$\left(\frac{\g\bar p}{(\g-1)(S_*+\k\bar H^4)}\right)^{\frac1\g}\leq \bar\r\leq \left(\frac{\g\bar p}{(\g-1)S_*}\right)^{\frac1\g}
	\quad \text{and}\quad
	\bar \r'(y)=-\frac{\bar \rho(y)}{\g\bar S(y)}\bar S'(y).$$
	Since $\bar{p}\in(p_c(D^*),p^*(D_*))$ where $D_*\leq B_*S_*^{-\frac1\g}\leq D^*$ (cf. \eqref{Dbar estimate}), using \eqref{BS min}-\eqref{BS condition2} one gets \eqref{rhobar_estimate}, provided $0<\k\leq1$.
	
	To derive the lower bound for $\bar u$ in \eqref{ubar estimates}, we find a suitable $Q_*\in (Q(p^*(D_*)),Q^*)$ firstly.
	In view of \eqref{Q bar} and \eqref{pcm def}, one has
	\begin{align*}
		Q(p^*(D_*))&=\int_0^{\bar{H}}y\left(\frac{\g  p^*(D_*)}{(\g-1)\bar{S}(y)}\right)^{\frac1\g}\sqrt{2\bar{S}^{\frac1\g}(y)\Big(\bar D(y)-\Big(\frac{\g p^*(D_*)}{\g-1}\Big)^{\frac{\g-1}{\g}}\Big)}dy\\
		&=\int_0^{\bar{H}}yD_*^{\frac{1}{\g-1}}\bar{S}^{-\frac1\g}(y)\sqrt{2\bar{S}^{\frac1\g}(y)(\bar D(y)-D_*)}dy.
	\end{align*}
	Using \eqref{Dbar estimate} and \eqref{BS_*^*} gives
	\begin{equation}\label{SD estimate}
		0\leq (S^*)^{\frac1\g}(D^*-D_*)\leq(S_*+\k\bar H^4)^{\frac1\g}(B_*+\k\bar H^4)S_*^{-\frac1\g}-B_*\leq C_0\k^{\frac1\g}
	\end{equation}
	for some $C_0=C_0(B_*,S_*,\bar H,\g)>0$, provided $0<\k\leq1$. Thus
	\begin{align*}
		Q(p^*(D_*))\leq\bar{H}^2B_*^{\frac1{\g-1}}S_*^{-\frac1{\g-1}}\sqrt{2C_0\k^{\frac1\g}} \leq \k^{\frac1{4\g}},
	\end{align*}
	provided $0<\k\leq \min\left\{1,\big(2\bar H^4(B_*/S_*)^{\frac{2}{\g-1}}C_0\big)^{-2\g}\right\}$. Similarly, one also deduces that
	\begin{align*}
		Q^*&=\int_0^{\bar{H}}y\left(\frac{\g p_c(D^*)}{(\g-1)\bar{S}(y)}\right)^{\frac1\g}\sqrt{2\bar{S}^{\frac1\g}(y)\Big(\bar D(y)-\Big(\frac{\g p_c(D^*)}{\g-1}\Big)^{\frac{\g-1}{\g}}\Big)}dy\\
		&\geq\int_{\frac{\bar H}{2}}^{\bar{H}}y\left(\frac{2D^*}{\g+1}\right)^{\frac1{\g-1}}\bar{S}^{-\frac1\g}(y)\sqrt{2\bar S^{\frac1\g}(y)\Big(\frac{\g-1}{\g+1}\bar D(y)+\frac{2}{\g+1}\left(\bar D(y)-D^*\right)\Big)}dy\\
		&\geq \frac{\bar H^2}{4}\left(\frac2{\g+1}\right)^{\frac1{\g-1}}B_*^{\frac1{\g-1}}(S^*)^{-\frac1{\g-1}}\sqrt{\frac{2(\g-1)}{\g+1}B_*-\frac{4}{\g+1} (S^*)^{\frac1\g}(D^*-D_*)}\\
		&\geq \frac{\bar H^2}{4}\left(\frac2{\g+1}\right)^{\frac1{\g-1}}B_*^{\frac1{\g-1}}(S_*+\bar H)^{-\frac1{\g-1}}\sqrt{\frac{\g-1}{\g+1}B_*},
	\end{align*}
	provided $0<\k\leq\min\left\{1,\left((\g-1)B_*/(4C_0)\right)^\g\right\} $. Hence there exists a constant  $\tilde\k=\tilde\k(B_*,S_*,\bar H,\g)>0$ such that $Q^*>\k^{\frac1{4\g}}$ for any $\k\in[0,\tilde\k)$. Let $Q_*:=\k^{\frac1{4\g}}$.
	
	Now we prove the estimate \eqref{ubar estimates} for $Q\in(Q_*,Q^*)$. Using \eqref{Q bar}-\eqref{SD estimate} yields
	\begin{align*}
		Q&=\int_0^{\bar{H}}y\left(\frac{\g \bar{p}}{(\g-1)\bar{S}(y)}\right)^{\frac1\g}\sqrt{2\bar{S}^{\frac1\g}(y)\Big(\bar D(y)-D_*+D_*-\Big(\frac{\g \bar{p}}{\g-1}\Big)^{\frac{\g-1}{\g}}\Big)}dy\\
		&\leq \int_0^{\bar{H}}y\left(\frac{\g p^*(D_*)}{(\g-1)\bar{S}(y)}\right)^{\frac1\g}\sqrt{2C_0\k^{\frac1\g}+2(S^*)^{\frac1\g}\Big(D_*-\Big(\frac{\g \bar{p}}{\g-1}\Big)^{\frac{\g-1}{\g}}\Big)}dy\\
		&\leq \bar{H}^2B_*^{\frac1{\g-1}}S_*^{-\frac1{\g-1}}\sqrt{2C_0\k^{\frac1\g}+2(S^*)^{\frac1\g}\Big(D_*-\Big(\frac{\g \bar{p}}{\g-1}\Big)^{\frac{\g-1}{\g}}\Big)}.
	\end{align*}
	Thus
	$$C_0\k^{\frac1\g}+(S^*)^{\frac1\g}\Big(D_*-\Big(\frac{\g \bar{p}}{\g-1}\Big)^{\frac{\g-1}{\g}}\Big)
	\geq \frac{Q^2}{2\bar H^4}B_*^{-\frac2{\g-1}}S_*^{\frac2{\g-1}}
	\geq  \frac{\k^{\frac1{2\g}}}{2\bar H^4}B_*^{-\frac2{\g-1}}S_*^{\frac2{\g-1}}.$$
	Note that $S^{*}\leq S_*+\k\bar H^4$ and $D_*\leq B_*S_*^{-\frac1\g}$. Therefore there exists a constant $\bar{\k}=\bar\k(B_*,S_*,\bar H,\g)\in(0,\tilde\k)$ such that if $0<\k<\bar{\k}$, then
	$$S_*^{\frac1\g}\left(D_*-\left(\frac{\g \bar{p}}{\g-1}\right)^{\frac{\g-1}{\g}}\right)\geq C_1\k^{\frac1{2\g}}$$
	for some $C_1=C_1(B_*,S_*,\bar H,\g)>0$. This implies $\bar{u}$ given by \eqref{ubar express} satisfies
	\begin{equation}\label{ubar ineq1}
		\begin{split}
			\sqrt{2C_1}\k^{\frac1{4\g}} \leq\sqrt{2S_*^{\frac1\g}\Big(D_*-\Big(\frac{\g \bar{p}}{\g-1}\Big)^{\frac{\g-1}{\g}}\Big)}\leq \bar{u} \leq \sqrt{2B^*}.
	\end{split}\end{equation}
	Furthermore, in view of \eqref{BSbar relation}, a straightforward computation gives
	$$\bar{u}'(y)=\left(\bar{B}'(y)-\frac1\g\left(\frac{\g\bar p}{(\g-1)\bar S(y)}\right)^{\frac{\g-1}{\g}}\bar{S}'(y)\right)\bar{u}^{-1}(y).$$
	Then with the help of \eqref{BS condition1}, \eqref{BS condition2} and \eqref{ubar ineq1}, one obtains $\bar{u}'(0)=0$,  $\bar{u}'(\bar{H})\geq0$, and the $L^\infty$-bound for $\bar u'$ in \eqref{ubar estimates}.
	This finishes the proof.
\end{proof}

From now on, we always assume that at the upstream the Bernoulli function $\bar B$ and the entropy function $\bar S$, which belong to  $C^{1,1}([0,\bar{H}])$, satisfy \eqref{BS min}-\eqref{BS condition2} with $\k$ in \eqref{BS condition2} sufficiently small depending on $B_*,\, S_*,\, \gamma$, and $\bar H$. Moreover, we let the mass flux $Q\in(Q_*,Q^*)$, where $Q_*$ and $Q^*$ are defined in Proposition \ref{incoming data}.

It follows from the continuity equation \eqref{Euler system}$_1$ that there is a stream function $\psi$ satisfying
\begin{equation}\label{psi gradient}
	\n\psi(x,y)=(-y\r v, y\r u).
\end{equation}
Note that \eqref{BS equations} implies the Bernoulli function $B$ and the entropy function $S$ are constants along each streamline. With Proposition \ref{incoming data} at hand, we show that $B$ and $S$ can be expressed as functions of the stream function $\psi$ under suitable assumptions.
\begin{proposition}\label{BS pro}
	Let $(\bar\rho,\bar u,\bar p)$ be the incoming  density, axial velocity and pressure of the flow determined in Proposition \ref{incoming data}. Suppose that $(\rho,
	\u,p)$ is a solution to the full Euler system \eqref{Euler system} and each streamline is globally well-defined in the flow region. Then there exist two $C^{1,1}$ functions $\B:[0,Q]\to\R$ and $\S:[0,Q]\to\R$ such that
	\begin{equation}\label{BBSS}
	(B,S)(x,y)=(\B,\S)(\psi(x,y))
	\end{equation}
   and
	\begin{equation}\label{BSpsi relation}
		\B(\psi)=\frac{|\nabla \psi|^2}{2y^2\rho^2}+\r^{\g-1}\S(\psi).
	\end{equation}
Moreover,
\begin{equation}\label{k0}
	\k_0:=\|\B'\|_{C^{0,1}((0,Q])}+\|\S'\|_{C^{0,1}((0,Q])}\leq C\k^{1-\frac1{2\gamma}},
\end{equation}
where $\k$ is the same constant as in \eqref{BS condition2} and  $C=C(B_*,S_*,\bar H,\gamma)>0$.
\end{proposition}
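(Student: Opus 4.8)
The plan is to construct the profile functions $\B$ and $\S$ directly from the upstream data by composing $\bar B$ and $\bar S$ with the inverse of the upstream ``mass flux'' coordinate. First I would introduce the function
\begin{equation*}
	\Phi(y):=\int_0^y t\bar\r(t)\bar u(t)\,dt, \q y\in[0,\bar H],
\end{equation*}
which represents the value of the stream function $\psi$ along the streamline that, at the upstream, passes through the point at radius $y$; note $\Phi(0)=0$ and $\Phi(\bar H)=Q$ by the normalization \eqref{Q} (more precisely, $\Phi(1)=Q$ once one checks the streamline through $y=1$ carries flux $Q$, the full range $[0,\bar H]$ being used to define the profiles on all of $[0,Q]$). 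Since $\bar\r\geq C^{-1}>0$ by \eqref{rhobar_estimate} and $\bar u\geq C^{-1}\k^{1/(4\g)}>0$ by \eqref{ubar estimates}, the map $\Phi$ is a strictly increasing $C^{2,1}$ diffeomorphism from $[0,\bar H]$ onto $[0,Q]$ with $\Phi'(y)=y\bar\r\bar u$. I then \emph{define}
\begin{equation*}
	\B:=\bar B\circ\Phi^{-1}, \q \S:=\bar S\circ\Phi^{-1} \q\text{on }[0,Q].
\end{equation*}

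Next I would verify \eqref{BBSS}. By \eqref{BS equations}, $B$ and $S$ are constant along each streamline, i.e. along each level set of $\psi$; and the hypothesis that every streamline is globally well-defined in $\MO$ (guaranteed in the application by $v<0$, cf. the third Remark) ensures each streamline originates at the upstream, where its $\psi$-value equals $\Phi(y)$ for the unique $y$ at which it enters and where $B=\bar B(y)$, $S=\bar S(y)$. Hence on the streamline with label $\psi=\Phi(y)$ one has $B=\bar B(y)=\bar B(\Phi^{-1}(\psi))=\B(\psi)$, and likewise for $S$; this is exactly \eqref{BBSS}. Relation \eqref{BSpsi relation} is then just the identity \eqref{BS_relation1}, i.e. $B=\tfrac{|\u|^2}2+\r^{\g-1}S$ (using $S=\tfrac{\g p}{(\g-1)\r^\g}$ so that $\tfrac{\g p}{(\g-1)\r}=\r^{\g-1}S$), combined with \eqref{psi gradient}, which gives $|\u|^2=u^2+v^2=|\n\psi|^2/(y^2\r^2)$; substituting \eqref{BBSS} yields \eqref{BSpsi relation}.

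For the regularity and the quantitative bound \eqref{k0}, the key point is the chain rule: $\B'(\psi)=\bar B'(y)/\Phi'(y)=\bar B'(y)/(y\bar\r(y)\bar u(y))$ where $y=\Phi^{-1}(\psi)$, and similarly for $\S'$. The numerators are controlled using \eqref{BS condition1}--\eqref{BS condition2}: since $\bar B'(0)=0$ and $\|y^{-2}\bar B''\|_{L^\infty}\leq\k$, a Taylor expansion gives $|\bar B'(y)|\leq \k y^3/3$, and the same for $\bar S'$. The denominator satisfies $y\bar\r\bar u\geq C^{-1}\k^{1/(4\g)}\,y$ by \eqref{rhobar_estimate}--\eqref{ubar estimates}. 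Hence $|\B'(\psi)|\lesssim \k y^2/\k^{1/(4\g)}\lesssim \k^{1-1/(4\g)}\bar H^2$, and in fact for the Lipschitz seminorm one differentiates once more and uses $|\bar B''(y)|\leq \k y^2$ together with the already-derived bounds on $\bar\r',\bar u'$ from \eqref{rhobar_estimate}--\eqref{ubar estimates}; after changing variables back to $\psi$ (which costs another factor $1/\Phi'\sim \k^{-1/(4\g)}$) one arrives at $\|\B'\|_{C^{0,1}}+\|\S'\|_{C^{0,1}}\lesssim \k^{1-1/(2\g)}$, giving \eqref{k0}. The regularity $\B,\S\in C^{1,1}$ follows since $\Phi^{-1}\in C^{2,1}$ and $\bar B,\bar S\in C^{1,1}$, noting the potential trouble spot $y=0$ is harmless because the vanishing of $\bar B'(0)=\bar S'(0)=0$ cancels the vanishing of $\Phi'(0)=0$ at the right order (both numerator and denominator are $O(y)$ near $0$, with quotient bounded and Lipschitz).

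The main obstacle I anticipate is the behavior near the symmetry axis $y=0$, i.e. near $\psi=0$: both $\Phi'(y)=y\bar\r\bar u$ and $\bar B'(y)$, $\bar S'(y)$ vanish there, so establishing that the quotients $\B'$, $\S'$ extend to genuinely $C^{1,1}$ (indeed $C^{0,1}$) functions up to $\psi=0$ — rather than merely bounded ones — requires carefully exploiting the precise vanishing orders dictated by \eqref{BS condition1} and \eqref{BS condition2} (the hypotheses $\bar B'(0)=\bar S'(0)=0$ together with the weighted bound $\|y^{-2}\bar B''\|_{L^\infty}$ are exactly what make $\bar B'(y)=O(y^3)$, overcoming the single power of $y$ lost in the denominator). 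Keeping track of the $\k$-powers through the change of variables to obtain the sharp exponent $1-\tfrac1{2\g}$ in \eqref{k0}, rather than a weaker one, is the other delicate bookkeeping point.
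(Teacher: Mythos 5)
Your proposal is correct and follows essentially the same route as the paper: the paper's function $\h(z)$, defined by $z=\int_0^{\h(z)}y(\bar\r\bar u)(y)\,dy$, is exactly your $\Phi^{-1}$, and the verification of \eqref{BBSS}, \eqref{BSpsi relation} and the chain-rule estimates for $\B',\S',\B'',\S''$ (using $\bar B'(0)=\bar S'(0)=0$, the weighted bound \eqref{BS condition2}, and \eqref{rhobar_estimate}--\eqref{ubar estimates}) coincide with the paper's computation \eqref{BS_deri}. Only minor imprecisions: the normalization is $\Phi(\bar H)=Q$ by \eqref{Q bar} in Proposition \ref{incoming data} (not $\Phi(1)=Q$), and $\Phi^{-1}$ is not $C^{2,1}$ up to $z=0$ since $\Phi'(0)=0$, but your closing remark on the vanishing orders already supplies the needed correction.
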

\begin{proof}
	Since the Bernoulli function $B$ and the entropy function $S$ are conserved along each streamline, which is globally well-defined in the flow region, then $B$ and $S$ are uniquely determined by their values at the upstream. Let $\h(z):[0,Q]\to[0,\bar H]$ be the position of the streamline at the upstream where the stream function takes the value $\psi=z$, i.e.
	\begin{equation}\label{h def}
		z=\int_0^{\h(z)}y(\bar{\r}\bar{u})(y)dy.
	\end{equation}
The function $\h$ is well-defined as $\bar \rho\bar u>0$.
Then one has
\begin{equation}\label{BS_psi_bar}
(B,S)(x,y)=(\bar B,\bar S)(\h(\psi(x,y)))=(\B,\S)(\psi(x,y)),
\end{equation}
where $\B$ and $\S$ are defined as
\begin{equation}\label{B Bbar}
	\B(z):=\bar B(\h(z))=\frac{\bar{u}^2(\h(z))}{2}+\frac{\g \bar{p}}{(\g-1)\bar{\r}(\h(z))}
\end{equation}
and
\begin{equation}\label{S Sbar}
	\S(z):=\bar S(\h(z))=\frac{\g\bar{p}}{(\g-1)\bar{\r}^\g(\h(z))}
\end{equation}
for $z\in[0,Q]$, respectively. Therefore \eqref{BBSS} follows. In addition, since by \eqref{psi gradient} the flow velocity $\u=(u,v)$ satisfies $|\u|^2=|\n\psi/(y\r)|^2$, using the definitions of the Bernoulli function in \eqref{B def} and the entropy function in \eqref{S def} as well as \eqref{BBSS} yields \eqref{BSpsi relation}.

To show \eqref{k0}, differentiating \eqref{h def} with respect to $z$ gives
$$\h'(z)=\frac{1}{\h(z)(\bar \r\bar u)(\h(z))},$$
so that differentiating \eqref{B Bbar} and \eqref{S Sbar} one has
\begin{equation}\label{BS_deri}\begin{split}
	&\B'(z)=\frac{\bar{B}'(\h(z))}{\h(z)(\bar{\r}\bar{u})(\h(z))},
	\q \S'(z)=\frac{\bar{S}'(\h(z))}{\h(z)(\bar{\r}\bar{u})(\h(z))},\\
	&\B''(z)=\frac1{\h^2(z)(\bar{\r}^2\bar{u}^2)(\h(z))}\left(\bar{B}''(\h(z))-\bar{B}'(\h(z))\left(\frac{\bar{\r}'(\h(z))}{\bar{\r}(\h(z))}+\frac{\bar{u}'(\h(z))}{\bar{u}(\h(z))}
	+\frac1{\h(z)}\right)\right),\\
	&\S''(z)=\frac1{\h^2(z)(\bar{\r}^2\bar{u}^2)(\h(z))}\left(\bar{S}''(\h(z))-\bar{S}'(\h(z))\left(\frac{\bar{\r}'(\h(z))}{\bar{\r}(\h(z))}+\frac{\bar{u}'(\h(z))}{\bar{u}(\h(z))}
	+\frac1{\h(z)}\right)\right).
\end{split}\end{equation}
Combining the above equalities with \eqref{BS condition2}, \eqref{rhobar_estimate}, and \eqref{ubar estimates} together yields \eqref{k0}.
\end{proof}

For later purpose, we extend the Bernoulli function $\B$ and the entropy function $\S$ from $[0,Q]$ to $\R$ as follows: First, in view of \eqref{BS condition1}, $\bar B$ and $\bar S$ can be extended to $C^{1,1}$ functions defined on $\mathbb R$, which is still denoted by $\bar B$ and $\bar S$ respectively, such that
\begin{equation*}
	\bar{B}'(y)=\bar S'(y)=0 \text{ on } (-\infty,0],
	\q \bar{B}'(y)\geq0, \ \bar{S}'(y)=0  \text{ on } [\bar H,\infty).
\end{equation*}
Besides, the extensions can be made such that
\begin{equation}\label{label_11}
	B_\ast=\inf_{\R}\bar B\leq B^*\leq\sup_{\R} \bar B<\infty \q\text{and}\q  S_\ast=\inf_{\R}\bar S\leq \sup_{\R} \bar S=S^*
\end{equation}
and
\begin{equation}\label{label_13}
	\|\bar B'\|_{C^{0,1}(\R)}\leq \|\bar B'\|_{C^{0,1}([0,\bar H])}
	\q\text{and}\q
	\|\bar S'\|_{C^{0,1}(\R)}=\|\bar S'\|_{C^{0,1}([0,\bar H])},
\end{equation}
where $B_*$ and $S_*$ are defined in \eqref{BS min}, $B^*$ and $S^*$ are defined in \eqref{BS max}, and $\sup_{\R} \bar B$ depends on $B_*$ and $\|\bar B\|_{C^{1,1}([0,\bar H])}$.
Furthermore, the upstream density $\bar \rho$ and axial velocity $\bar u$ can be extended to $C^{1,1}$ functions in $\R$ such that \eqref{rhobar express} and \eqref{ubar express} always hold. Consequently, by
\eqref{B Bbar} and \eqref{S Sbar} one gets extensions of $\mathcal B$ and $\S$ to $C^{1,1}$ functions in $\R$ (still denoted by $\mathcal{B}$ and $\S$), which satisfy
\begin{equation}\label{eq:sign_B}
	\mathcal{B}'(z)=\S'(z)= 0 \text{ on } (-\infty, 0], \q
	\mathcal{B}'(z)\geq 0, \ \S'(z)=0 \text{ on } [Q,\infty).
\end{equation}
In view of \eqref{label_11}, \eqref{label_13}, and \eqref{BS_*^*}, one has
\begin{align}\label{eq:BS_bounds}
	0<B_*\leq\B(z)\leq B_*+C\k \q\text{and}\q
	0<S_*\leq\S(z)\leq S_*+C\k,\q\text{for } z\in\R.
\end{align}
Moreover, it follows from \eqref{BS_deri} that
\begin{equation}\label{BS_deri_R}
\|\B'\|_{C^{0,1}(\R)}\leq C\|\B'\|_{C^{0,1}((0,Q])}\q\text{and}\q
\|\S'\|_{C^{0,1}(\R)}=\|\S'\|_{C^{0,1}((0,Q])},
\end{equation}
where the constant $C$ depends on $B_*$, $S_*$, $\bar H$, and $\g$.

Now we deduce the relation between $(\B,\S)$ and the vorticity  $\o:=\pt_{x}v-\pt_{y}u$.
Differentiating \eqref{BS_relation1} and noting that $$\rho=\left(\frac{\g p}{(\g-1)S}\right)^{\frac1\g},$$ one gets
\begin{equation}\label{B partial}
\pt_{x}B=u\pt_{x}u+v\pt_{x}v+\frac{\r^{\g-1}}\g\pt_{x}S+\frac{\pt_{x}p}{\r}\q\text{and}\q \pt_{y}B=u\pt_{y}u+v\pt_{y}v+\frac{\r^{\g-1}}\g\pt_{y}S+\frac{\pt_{y}p}{\r}.
\end{equation}
These together with
\begin{equation}\label{equiv_eq1}
u\pt_{x}u+v\pt_{y}u+\frac{\pt_{x}p}{\r}=0 \q\text{and}\q u\pt_{x}v+v\pt_{y}v+\frac{\pt_{y}p}{\r}=0,
\end{equation}
which are obtained from the first three equations in \eqref{Euler system}, give
\begin{equation}\label{BSdxdy}
\pt_{x}B-\frac {\r^{\g-1}}\g \pt_{x}S=v\omega \q\text{and}\q \pt_{y}B-\frac {\r^{\g-1}}\g \pt_{y}S=-u\omega.
\end{equation}
Since it follows from \eqref{psi gradient} and \eqref{BBSS} that
\begin{equation}\label{BS gradient}
(\pt_xB,\pt_yB)=y\r \B'(\psi)(-v,u)\q\text{and}\q (\pt_xS,\pt_yS)=y\r \S'(\psi)(-v,u),
\end{equation}
substituting \eqref{BS gradient} into \eqref{BSdxdy} yields
\begin{equation}\label{vorticity express}
\o=-y\r\B'(\psi)+\frac{y\r^\g}{\g}\S'(\psi)
\end{equation}
provided $|\u|\neq0$.
Thus the smooth solution of the full Euler system \eqref{Euler system} satisfies a system consisting of the continuity equation  \eqref{Euler system}$_1$, \eqref{BS equations} and \eqref{vorticity express}. 
Actually, the following proposition shows that the two systems are equivalent under appropriate conditions.

\begin{proposition}\label{equivalent pro}
Let $\tilde\MO\subset \R^2$ be the domain bounded by two streamlines $\N_0$ (the $x$-axis) and
$$
\tilde{\N}:=\{(x,y): x=\tilde N(y),\, \ul h<y<\bar H\},$$
where $0<\ul{h}<\bar{H}<\infty$ and $\tilde N:(\ul h,\bar H)\to\mathbb R$ is a $C^1$ function with
$$\lim_{y\to\bar H-}\tilde N(y)=-\infty \q\text{and}\q \lim_{y\to\ul{h}+}\tilde N(y)=+\infty.$$
Let $\rho:\overline{\tilde \MO}\to(0,\infty)$, $\mathbf u=(u,v):\overline{\tilde \MO}\to\mathbb R^2$ and $p:\overline{\tilde \MO}\to(0,\infty)$
be $C^{1,1}$ in $\tilde \MO$ and continuous up to $\partial {\tilde \MO}$ except finitely many points.
Suppose $\u$ satisfies the slip boundary condition $\u\c\mn=0$ on $\pt \tilde{\MO}$, $(\rho,\u,p)$ satisfies the upstream asymptotic behavior  \eqref{asymptotic upstream}, and
\begin{equation}\label{v negative}
v<0 \q\text{in }\tilde{\MO}.
\end{equation}
Then $(\rho,\u,p)$ solves the full Euler system \eqref{Euler system} in $\tilde \MO$ if and only if $(\rho,\u,p)$ solves the system consisting of  \eqref{Euler system}$_1$, \eqref{BS equations} and \eqref{vorticity express}.
\end{proposition}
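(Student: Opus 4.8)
The plan is to prove the two implications separately; almost all of the algebra needed is already carried out in the discussion preceding the proposition, so the proof mainly consists of organizing a chain of equivalences and checking that it can be run in both directions, the only genuinely analytic point being the streamline structure of the flow. I would first record three facts that hold for \emph{any} admissible $(\r,\u,p)$, independently of which system is assumed. (a) Since $y>0$ and $\r>0$ in $\tilde\MO$, expanding $\n\c(y\r\u\otimes\u)$ and using the continuity equation \eqref{Euler system}$_1$ to cancel the term $u_i\,\n\c(y\r\u)$ shows that, given \eqref{Euler system}$_1$, the momentum equations \eqref{Euler system}$_2$ are equivalent to \eqref{equiv_eq1}. (b) Since $\r E+p=\r B$ by \eqref{B def}, expanding $\n\c(y\r B\u)$ and again using \eqref{Euler system}$_1$ shows that, given \eqref{Euler system}$_1$, the energy equation \eqref{Euler system}$_3$ is equivalent to $\u\c\n B=0$. (c) Differentiating \eqref{BS_relation1} gives the identity \eqref{B partial}; combined with the elementary identities $u\pt_xu+v\pt_yu=\pt_x(|\u|^2/2)-v\o$ and $u\pt_xv+v\pt_yv=\pt_y(|\u|^2/2)+u\o$, this shows that \eqref{equiv_eq1} is equivalent to \eqref{BSdxdy}.

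Next I would use the continuity equation \eqref{Euler system}$_1$ to introduce the stream function $\psi$ via \eqref{psi gradient}, so that $\n\psi=y\r(-v,u)$, which is nonvanishing in $\tilde\MO$ because $v<0$ forces $\u\ne0$. The slip condition $\u\c\mn=0$ on $\pt\tilde\MO$ makes $\N_0$ and $\tilde{\N}$ streamlines; together with $v<0$ in $\tilde\MO$ (which prevents streamlines from terminating or spiralling inside $\tilde\MO$) and the upstream asymptotics \eqref{asymptotic upstream}, this guarantees that $\psi$ foliates $\tilde\MO$ by streamlines each of which is globally defined and reaches the upstream cross-section. Hence, whenever $\u\c\n B=\u\c\n S=0$ holds, the argument in the proof of Proposition \ref{BS pro} --- which relies only on the conservation of $B$ and $S$ along globally defined streamlines together with the fixed upstream profiles --- applies and gives $B=\B(\psi)$ and $S=\S(\psi)$ with the prescribed $\B,\S$, whence \eqref{BS gradient} holds. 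Substituting \eqref{BS gradient} into \eqref{BSdxdy} and dividing the first component by $v\ne0$, one checks that, under $B=\B(\psi)$ and $S=\S(\psi)$, the system \eqref{BSdxdy} is equivalent to the single relation \eqref{vorticity express} (the second component being automatically consistent wherever $u=0$).

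Now the two directions assemble immediately. If $(\r,\u,p)$ solves \eqref{Euler system}, then \eqref{Euler system}$_1$ holds; by (b), $\u\c\n B=0$; by (a) and (c), \eqref{BSdxdy} holds; dotting \eqref{BSdxdy} with $\u$ and using $\u\c\n B=0$ together with $\r>0$ gives $\u\c\n S=0$, so \eqref{BS equations} holds; then the streamline step yields \eqref{BS gradient} and hence \eqref{vorticity express}. Conversely, if $(\r,\u,p)$ solves \eqref{Euler system}$_1$, \eqref{BS equations}, and \eqref{vorticity express}, then \eqref{Euler system}$_1$ holds; by the streamline step \eqref{BS equations} gives \eqref{BS gradient}, which together with \eqref{vorticity express} gives \eqref{BSdxdy}; by (c) this is \eqref{equiv_eq1}, and by (a) this is \eqref{Euler system}$_2$; finally \eqref{BS equations}$_1$ reads $\u\c\n B=0$, so by (b) the energy equation \eqref{Euler system}$_3$ holds. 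This proves both implications.

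The only non-routine ingredient is the streamline step: one must rigorously establish that, under the hypotheses $v<0$ in $\tilde\MO$, $\u\c\mn=0$ on $\pt\tilde\MO$, and \eqref{asymptotic upstream}, every streamline through an interior point of $\tilde\MO$ is globally defined and connects to the upstream, so that Proposition \ref{BS pro} is applicable and $B,S$ become the prescribed functions of $\psi$. This requires some care near $\pt\tilde\MO$, where $(\r,\u,p)$ is only continuous away from finitely many points, and near the degenerate end $y\to\ul h+$ of $\tilde{\N}$; the sign condition $v<0$ and the fact that $\N_0$ and $\tilde{\N}$ are themselves streamlines are precisely what rule out these pathologies.
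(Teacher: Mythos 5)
Your argument is correct, and the forward direction coincides with the paper's own derivation (the computations leading from \eqref{Euler system} through \eqref{B partial} and \eqref{BSdxdy} to \eqref{vorticity express}, after invoking Proposition \ref{BS pro}). The converse, however, follows a genuinely different route. The paper does not reverse the algebraic chain: it takes the transport form of \eqref{vorticity express}, rewrites it (using \eqref{BS gradient} for $S$ and the continuity equation) as the statement that the curl of the two expressions in \eqref{equiv_eq1} vanishes, introduces a potential $\Phi$ with $\nabla\Phi$ equal to those expressions, shows $\u\c\n\Phi=0$ via \eqref{BS equations} and \eqref{B partial}, and then uses the upstream asymptotics \eqref{asymptotic upstream} plus the fact that every streamline reaches the upstream to conclude $\Phi\equiv\text{const}$, hence \eqref{equiv_eq1}. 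You instead observe that once the streamline step gives $B=\B(\psi)$, $S=\S(\psi)$ and hence \eqref{BS gradient}, multiplying \eqref{vorticity express} by $v$ and $-u$ recovers both components of \eqref{BSdxdy}, and the pointwise identity \eqref{B partial} then yields \eqref{equiv_eq1} directly, with the energy equation coming from $\u\c\n B=0$ and the continuity equation. Your route is shorter and purely local once \eqref{BS gradient} is in hand (it avoids the potential function and the second use of the upstream limit), while the paper's curl--potential argument only needs the gradient identity for $S$ and is structured so that \eqref{equiv_eq1} is recovered from a single scalar constraint propagated from upstream; both rest on exactly the same prerequisite, namely that Proposition \ref{BS pro}'s argument applies under \eqref{BS equations} alone.

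The one place where you are thinner than the paper is the streamline step itself, which you correctly isolate but only assert. The paper proves that an interior streamline cannot touch $\pt\tilde\MO$ by a flux argument: if a streamline through an interior point reached, say, the axis, then the constancy of the mass flux between that streamline and $\N_0$, computed from the continuity equation and the slip condition, would force $0=\int_0^{y_0}s(\r u)(-x_0,s)\,ds$ for $x_0$ large, contradicting \eqref{asymptotic upstream}. Some such quantitative argument is needed (ODE uniqueness through boundary points is not available, since $(\r,\u,p)$ is only continuous up to $\pt\tilde\MO$ away from finitely many points), so you should supply it rather than leave it at the level of ``$v<0$ and the boundary streamlines rule out pathologies.''
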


\begin{proof}
	In view of \eqref{v negative}, through each point in $\tilde\MO$ there is one and only one streamline which  satisfies
	\begin{equation*}
		\left\{\begin{aligned}
			&\frac{dx}{ds}=u(x(s),y(s))\\
			&\frac{dy}{ds}=v(x(s),y(s))
		\end{aligned}\right.
	\end{equation*}
	and can be defined globally in $\tilde\MO$. Furthermore, any streamline through some point in $\tilde{\MO}$ cannot touch $\pt\tilde\MO$. To show this, without loss of generality, we assume that there exists a streamline through $(-x_0,y_0)$ (with $x_0>0$ sufficiently large) in $\tilde{\MO}$ passing through $(\tilde x, 0)$. By the continuity equation \eqref{Euler system}$_1$ and the slip boundary condition along each streamline, one has
	$$0=\int_{0}^{y_0}s(\r u)(-x_0,s)ds.$$
	This contradicts the asymptotic behavior \eqref{asymptotic upstream}.
	As each streamline is globally well-defined in $\tilde\MO$, from Proposition \ref{BS pro} and previous analysis, it suffices to derive the full Euler system \eqref{Euler system} from the system consisting of \eqref{Euler system}$_1$, \eqref{BS equations} and \eqref{vorticity express}.

Combining the equalities \eqref{vorticity express} and \eqref{psi gradient} gives
$$y\r\u\c\n\left(\frac{\o}{y\r}-\frac{\r^{\g-1}}\g\S'(\psi)\right)=0.$$
With the help of the definition of the entropy function in \eqref{S def} and the expression of its gradient in \eqref{BS gradient}, one can check that the above equality is equivalent to
\begin{equation}\label{equ1}
y\r\u\c\n\left(\frac{\o}{y\r}\right)+\n\c\left(\frac{\pt_{y}p}{\r},-\frac{\pt_{x}p}{\r}\right)=0.
\end{equation}
Since the continuity equation \eqref{Euler system}$_1$ implies
$$y\r\u\c\n\left(\frac{\o}{y\r}\right)=\u\c\n\o-\o\frac{\u\c\n(y\r)}{y\r}=\u\c\n\o+\o\n\c\u$$
where $\o=\pt_xv-\pt_yu$,
it follows from \eqref{equ1} that
$$\pt_{x}\left(u\pt_{x}v+v\pt_{y}v+\frac{\pt_{y}p}{\r}\right)-\pt_{y}\left(u\pt_{x}u+v\pt_{y}u+\frac{\pt_{x}p}{\r}\right)=0.$$
Therefore there exists a function $\Phi$ such that
$$\pt_{x}\Phi=u\pt_{x}u+v\pt_{y}u+\frac{\pt_{x}p}{\r} \q\text{and}\q \pt_{y}\Phi=u\pt_{x}v+v\pt_{y}v+\frac{\pt_{y}p}{\r}.$$
Moreover, using \eqref{BS equations} and \eqref{B partial} one has
$$\u\c\left(u\pt_{x}u+v\pt_{y}u+\frac{\pt_{x}p}{\r}, u\pt_{x}v+v\pt_{y}v+\frac{\pt_{y}p}{\r}\right)=0,$$
that is,
$$\u\c\n\Phi=0.$$
This means $\Phi$ is a constant along each streamline, in particular, along $\N_0$ and $\tilde{\N}$. Besides, the asymptotic behavior  \eqref{asymptotic upstream} yields $\pt_{y}\Phi\to0$ as $x\to-\infty$, so that $\Phi\to C$ as $x\to-\infty$.
Thus the function $\Phi\equiv C$ in the whole domain $\tilde{\MO}$. This implies that $\pt_{x}\Phi=\pt_{y}\Phi\equiv0$ in $\tilde{\MO}$, i.e., \eqref{equiv_eq1} holds globally in $\tilde{\MO}$.

Combining the continuity equation \eqref{Euler system}$_1$,  \eqref{BS equations} and \eqref{equiv_eq1} together gives the full Euler system \eqref{Euler system}. This finishes the proof of the proposition.
\end{proof}

Let us digress for the study on the subsonic states. It follows from the definitions of the Bernoulli function \eqref{B def} and the entropy function \eqref{S def} as well as \eqref{BBSS} that, for given value  $z$, the flow density $\rho$ and the flow speed $q$ satisfy
\begin{equation*}\label{BSq relation}
	\B(z)=\frac{q^2}{2}+\r^{\g-1}\S(z),
\end{equation*}
and
\begin{equation}\label{eq:q_rho}
q=\mathfrak q(\rho,z)=\sqrt{2(\B(z)-\r^{\g-1}\S(z))}.
\end{equation}
Moreover, using \eqref{S def} and the definition of the sound speed \eqref{sonic mach} one has
$$c=\sqrt{(\g-1)\rho^{\g-1}\mathcal S(z)}.$$
Let
\begin{equation}\label{rho cm}
	\varrho_c(z):=\left(\frac{2\B(z)}{(\g+1)\S(z)}\right)^{\frac1{\g-1}} \q\text{and}\q \varrho_m(z):=\left(\frac{\B(z)}{\S(z)}\right)^{\frac1{\g-1}}
\end{equation}
be the critical density and maximum density respectively. The flow speed $q=\mathfrak q(\rho,z)$ is well-defined if and only if $\rho\leq \varrho_m(z)$. The flow is subsonic, i.e., $q\in[0,c)$, if and only if
$q\in[0,\mathfrak{q}(\varrho_c(z),z))$, or equivalently   $\r\in(\varrho_c(z),\varrho_m(z)]$. Denote the square of the momentum and the square of the critical momentum by
\begin{equation}\label{t}
	\t(\rho, z):=\rho^2\mathfrak{q}^2(\rho, z)=2\rho^2(\B(z)-\rho^{\g-1}\S(z))
\end{equation}
and
\begin{equation}\label{tc}
	\t_c(z):=\t(\varrho_c(z),z)=(\g-1)\left(\frac{2\B(z)}{\g+1}\right)^{\frac{\g+1}{\g-1}}\S^{-\frac2{\g-1}}(z),
\end{equation}
respectively. In view of \eqref{eq:BS_bounds}, there exist positive constants $C$, $t_*$, and $t^*$, which depend on $B_*,\, S_*,\, \bar H$, and $\g$, such that
\begin{align}\label{tc_bound}
	0<C\leq\varrho_c(z)<\varrho_m(z)\leq C^{-1}
	\q\text{and}\q 0<t_*\leq\mathfrak t_c(z)\leq {t}^*, \q \text{for }z\in \R.
\end{align}

\begin{lemma}\label{g}
Suppose the density function $\r$ and the stream function $\psi$ satisfy \eqref{BSpsi relation}. Then the following statements hold.
\begin{itemize}
	\item [(i)] The density function $\r$ can be expressed as a function of $|\n\psi/y|^2$ and $\psi$ in the subsonic region, i.e.,
	\begin{equation}\label{rho g}
		\r=\frac 1{g(|\frac{\n\psi}{y}|^2,\psi)}, \q \text{if } \r\in (\varrho_c(\psi),\varrho_m(\psi)],
	\end{equation}
	where $\varrho_c$ and $\varrho_m$ are functions defined in \eqref{rho cm}, and
	$$g:\{(t,z):0\leq t<\mathfrak t_c(z),\, z\in \R\}\to \R$$
	is a function smooth in $t$ and $C^{1,1}$ in $z$ with $\mathfrak t_c$ defined in \eqref{tc}. Furthermore,
	\begin{equation}\label{g bound}
	\left(\sup_{z\in\R}\varrho_m(z)\right)^{-1}=:g_*\leq g(t,z)\leq g^*:=\left(\inf_{z\in\R}\varrho_c(z)\right)^{-1}.
	\end{equation}
	\item [(ii)] The function $g$ satisfies the identity
	\begin{equation}\label{dzg dtg}
	\pt_z g(t,z)=-2\pt_t g(t,z)\frac{\B'(z)-g^{1-\g}(t,z)\S'(z)}{g^2(t,z)}, \quad\text{for } t\in[0,\mathfrak t_c(z)), \, z\in\R.
	\end{equation}
\end{itemize}
\end{lemma}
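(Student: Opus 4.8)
The plan is to solve the Bernoulli relation \eqref{BSpsi relation} for $\rho$ in terms of the quantity $t:=|\n\psi/y|^2$ and $z:=\psi$. Writing $|\n\psi/(y\r)|^2=q^2$, equation \eqref{BSpsi relation} reads $\B(z)=\frac{t}{2\r^2}+\r^{\g-1}\S(z)$, i.e.
\begin{equation}\label{eq:Gdef}
	t=2\r^2\big(\B(z)-\r^{\g-1}\S(z)\big)=\t(\r,z),
\end{equation}
with $\t$ as in \eqref{t}. So the first step is to show that for fixed $z$, the map $\r\mapsto\t(\r,z)$ is a smooth diffeomorphism from the subsonic density interval $(\varrho_c(z),\varrho_m(z)]$ onto $[0,\t_c(z))$ — note the orientation reverses. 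Differentiating, $\pt_\r\t(\r,z)=4\r\B(z)-2(\g+1)\r^\g\S(z)=2\r\big(2\B(z)-(\g+1)\r^{\g-1}\S(z)\big)$, which vanishes exactly at $\r=\varrho_c(z)$ (by \eqref{rho cm}) and is strictly negative for $\r\in(\varrho_c(z),\varrho_m(z)]$. Hence $\t(\cdot,z)$ is strictly decreasing there, with $\t(\varrho_c(z),z)=\t_c(z)$ and $\t(\varrho_m(z),z)=0$, so it has a smooth inverse; define $g(t,z)$ to be $1/\r$ where $\r$ is that inverse, so that \eqref{rho g} holds. The bound \eqref{g bound} is then immediate: since $\r$ ranges over $(\varrho_c(z),\varrho_m(z)]$ and this holds for every $z\in\R$, we get $\inf_z\varrho_c(z)<\r\le\sup_z\varrho_m(z)$, whence $g=1/\r\in[g_*,g^*]$ with $g_*,g^*$ as defined.

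Smoothness in $t$ follows from the implicit function theorem applied to $F(\r,t,z):=\t(\r,z)-t=0$, since $\pt_\r F=\pt_\r\t\ne0$ on the subsonic range; the $C^{1,1}$ dependence in $z$ comes from the fact that $\B,\S\in C^{1,1}(\R)$ (Proposition \ref{BS pro} and \eqref{BS_deri_R}) enter $\t(\r,z)$ only through these two functions, and the implicit function theorem preserves this regularity in the parameter. I would state explicitly that $\pt_t g=-\r^{-2}\pt_t\r$ and compute $\pt_t\r=1/\pt_\r\t<0$ from \eqref{eq:Gdef}, so in particular $\pt_t g>0$, which will be convenient later.

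For part (ii), the identity \eqref{dzg dtg} is obtained by implicit differentiation of \eqref{eq:Gdef} in the two variables. Differentiating $t=\t(\r,z)$ with respect to $z$ at fixed $t$ gives $0=\pt_\r\t\cdot\pt_z\r+\pt_z\t$, and with respect to $t$ at fixed $z$ gives $1=\pt_\r\t\cdot\pt_t\r$; dividing, $\pt_z\r=-\pt_z\t\cdot\pt_t\r$. Now $\pt_z\t(\r,z)=2\r^2\big(\B'(z)-\r^{\g-1}\S'(z)\big)$ from \eqref{t}, so $\pt_z\r=-2\r^2\big(\B'(z)-\r^{\g-1}\S'(z)\big)\pt_t\r$. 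Converting back via $\pt_z g=-\r^{-2}\pt_z\r$ and $\pt_t g=-\r^{-2}\pt_t\r$, and substituting $\r=1/g$ (so $\r^{\g-1}=g^{1-\g}$), yields exactly \eqref{dzg dtg}. This part is a routine computation once the inverse function $g$ is in hand.

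The only genuine obstacle is the first step — verifying that $\t(\cdot,z)$ is a bijection onto the correct interval with nonvanishing derivative on the relevant (half-open) range, and being careful about the endpoints: at $\r=\varrho_c(z)$ the derivative $\pt_\r\t$ vanishes, so $g$ is defined and smooth only on the open interval $[0,\t_c(z))$ in $t$, matching the stated domain $\{(t,z):0\le t<\t_c(z)\}$. Everything else is bookkeeping with the implicit function theorem and the a priori bounds \eqref{eq:BS_bounds}, \eqref{tc_bound} already established in the excerpt.
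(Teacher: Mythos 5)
Your proposal is correct and follows essentially the same route as the paper: invert the momentum map $\r\mapsto\t(\r,z)$ on the subsonic branch $(\varrho_c(z),\varrho_m(z)]$, where $\pt_\r\t<0$, via the inverse/implicit function theorem to define $g=1/\r(t,z)$ and read off the bounds, then obtain \eqref{dzg dtg} by implicit differentiation of $t=\t(\r,z)$ in $t$ and $z$ and converting through $g=1/\r$. The paper writes the implicit differentiation in the slightly different but equivalent form of its equations \eqref{bdt}--\eqref{bdz}, so there is no substantive difference.
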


\begin{proof}
(i) From the expression \eqref{t}, the straightforward computations give
\begin{equation}\label{dF}
\pt_\rho\t(\rho, z)=2\rho\left(2\B(z)-(\g+1)\rho^{\g-1}\S(z)\right).
\end{equation}
Now one can see that with $\varrho_c$ and $\varrho_m$ defined in \eqref{rho cm}, the following statements hold:
\begin{enumerate}
  \item[\rm (a)] $\rho\mapsto\t(\rho,z)$ achieves its maximum $\t_c(z)$ at $\rho=\varrho_c(z)$;
  \item[\rm (b)] $\pt_\rho\t(\rho,z)<0$ when $\varrho_c(z)<\rho\leq\varrho_m(z)$.
\end{enumerate}
Thus by the inverse function theorem, if $\rho\in(\varrho_c(z),\varrho_m(z)]$, $\rho$ can be expressed as a function of $t:=\t(\rho,z)\in[0,\t_c(z))$ and $z$, that is, $\rho=\rho(t,z)$. Let
\begin{equation}\label{rho g tz}
g(t,z)=\frac1{\rho(t,z)}, \q\text{for } t\in[0,\mathfrak t_c(z)).
\end{equation}
The function $g$ is smooth in $t$ by the inverse function theorem and $C^{1,1}$ in $z$ by the $C^{1,1}$ regularity of $\B$ and $\S$.
This together with \eqref{BSpsi relation} completes the proof of (i).

(ii) Differentiating \eqref{t}
with respect to $t:=\mathfrak t(\rho,z)$ gives
\begin{equation}\label{bdt}
	\frac12=\frac{\pt_t\r}{\r}(t-(\g-1)\r^{\g+1}\S(z)),
\end{equation}
and differentiating \eqref{t} with respect to $z$ gives
\begin{equation}\label{bdz}
	\frac{\pt_z\r}{\r}(t-(\g-1)\r^{\g+1}\S(z))=-\r^2(\B'(z)-\r^{\g-1}\S'(z)).
\end{equation}
Combining \eqref{bdt} and \eqref{bdz} together one has
$$\pt_z\r=-2\r^2\pt_t\r(\B'(z)-\r^{\g-1}\S'(z)).$$
In view of \eqref{rho g tz} one has \eqref{dzg dtg}.
\end{proof}

\begin{lemma}\label{quasilinearequ lem}
Let $(\r,\u,p)$ be a solution to the system consisting of \eqref{Euler system}$_1$, \eqref{BS equations}, and \eqref{vorticity express}. Assume $(\r,\u,p)$ satisfies the assumptions in Proposition \ref{equivalent pro}. Then in the subsonic region $|\n\psi/y|^2<\t_c(\psi)$, the stream function $\psi$ solves
\begin{equation}\label{elliptic equ}
\n\c\bigg(g\bigg(\left|\frac{\n\psi}{y}\right|^2,\psi\bigg)\frac{\n\psi}{y}\bigg)
=\frac{y\B'(\psi)}{g(|\frac{\n\psi}{y}|^2,\psi)}-\frac{y\S'(\psi)}{\g g^\g(|\frac{\n\psi}{y}|^2,\psi)},
\end{equation}
where $g$ is defined in \eqref{rho g}, $\B$ is the Bernoulli function and $\S$ is the entropy function as in Proposition \ref{BS pro}. Moreover, the equation \eqref{elliptic equ} is elliptic if and only if $|\n\psi/y|^2<\t_c(\psi)$.
\end{lemma}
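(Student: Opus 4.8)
The plan is to derive \eqref{elliptic equ} by writing the vorticity $\o$ in terms of the stream function and the density, eliminating the density through Lemma \ref{g}, and then substituting the vorticity formula \eqref{vorticity express}; the ellipticity dichotomy then follows from a direct computation of the principal symbol of the resulting operator together with the identity \eqref{bdt}. Concretely, I would first rewrite \eqref{psi gradient} as $u=\pt_y\psi/(y\r)$ and $v=-\pt_x\psi/(y\r)$, so that
\[
\o=\pt_x v-\pt_y u=-\n\c\Big(\frac{\n\psi}{y\r}\Big).
\]
Since the hypotheses of Proposition \ref{equivalent pro} are assumed, every streamline is globally defined in $\tilde\MO$ (and $(\r,\u,p)$ solves the full Euler system), so Proposition \ref{BS pro} yields $(B,S)=(\B,\S)(\psi)$ and the algebraic relation \eqref{BSpsi relation} linking $\r$, $\n\psi/y$, and $\psi$. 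On the subsonic set $|\n\psi/y|^2<\t_c(\psi)$, Lemma \ref{g}(i) inverts \eqref{BSpsi relation} into $\r=1/g(|\n\psi/y|^2,\psi)$, whence $\n\psi/(y\r)=g(|\n\psi/y|^2,\psi)\,\n\psi/y$, $\,y\r=y/g$, and $y\r^\g=y/g^\g$. Plugging these relations and $\o=-y\r\B'(\psi)+y\r^\g\S'(\psi)/\g$ into the displayed identity and rearranging gives precisely \eqref{elliptic equ}.

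For the ellipticity assertion I would compute the coefficients of the second-order terms of the operator in \eqref{elliptic equ}. Viewing the flux $\mathbf F=y^{-1}g(|\n\psi/y|^2,\psi)\,\n\psi$ as a function of $\n\psi$, its differential is the symmetric matrix $a_{ij}=\frac{g}{y}\d_{ij}+\frac{2\pt_t g}{y^3}\pt_i\psi\,\pt_j\psi$, whose eigenvalues are $g/y$ (on the orthogonal complement of $\n\psi$) and $\big(g+2t\,\pt_t g\big)/y$ along $\n\psi$, with $t=|\n\psi/y|^2$. Since $g>0$ and $y>0$, the equation \eqref{elliptic equ} is elliptic if and only if $g+2t\,\pt_t g>0$. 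Writing $g=1/\r$, so that $\pt_t g=-\r^{-2}\pt_t\r$, and using \eqref{bdt}, one computes
\[
g+2t\,\pt_t g=\frac1\r\Big(1-\frac{2t\,\pt_t\r}{\r}\Big)=\frac1\r\cdot\frac{(\g-1)\r^{\g+1}\S(\psi)}{(\g-1)\r^{\g+1}\S(\psi)-t}=\frac{c^2}{\r\,(c^2-|\u|^2)},
\]
where $c$ is the sound speed; the last step uses $t=\t(\r,\psi)=\r^2|\u|^2$ together with $(\g-1)\r^{\g+1}\S(\psi)=\r^2c^2$. Thus $g+2t\,\pt_t g>0$ exactly when $|\u|^2<c^2$, which by the definition of $\t_c$ in \eqref{tc} is the same as $|\n\psi/y|^2<\t_c(\psi)$; this establishes both directions of the ellipticity equivalence.

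\emph{Main difficulty.} Once Proposition \ref{equivalent pro}, Proposition \ref{BS pro}, and Lemma \ref{g} are available, the whole argument is essentially bookkeeping, so I do not expect a substantive obstacle. The points needing a little care are (a) verifying that one works on the subsonic branch, so that $\r=1/g$ is the correct inversion of \eqref{BSpsi relation} in Lemma \ref{g}(i) — this is exactly where the hypothesis $|\n\psi/y|^2<\t_c(\psi)$ enters — and (b) the sign accounting in the last display, in particular that $t-(\g-1)\r^{\g+1}\S(\psi)=\r^2(|\u|^2-c^2)<0$ throughout the subsonic range, which is statement (b) in the proof of Lemma \ref{g}.
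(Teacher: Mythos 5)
Your proposal is correct and follows essentially the same route as the paper: substitute $\o=-\n\c(\n\psi/(y\r))$ together with \eqref{vorticity express} and $\r=1/g$ from Lemma \ref{g}(i) to get \eqref{elliptic equ}, then read off ellipticity from the eigenvalues $g$ and $g+2t\,\pt_tg$ of the coefficient matrix. Your closed-form identity $g+2t\,\pt_tg=c^2/\bigl(\r(c^2-|\u|^2)\bigr)$ obtained from \eqref{bdt} is just an explicit version of the paper's argument via \eqref{dtg drF} (positivity of $\pt_tg$ and its blow-up as $t\to\t_c(\psi)-$), and it correctly exhibits the ellipticity in the subsonic range and the singular degeneracy at the sonic state.
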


\begin{proof}
Expressing the vorticity $\o$ in terms of the stream function $\psi$ and using \eqref{vorticity express} one has
$$-\n\c\left(\frac{\n\psi}{y\r}\right)=\o=-y\r\B'(\psi)+\frac{y\r^\g}{\g}\S'(\psi).$$
In view of \eqref{rho g} the above equation can be rewritten into \eqref{elliptic equ}.

The equation \eqref{elliptic equ} can be written in the nondivergence form as follows
$$\mathfrak  a^{ij}\bigg(\frac{\n\psi}{y},\psi\bigg)\pt_{ij}\psi+\mathfrak b\bigg(\frac{\n\psi}{y},\psi\bigg)=\frac{y^2\B'(\psi)}{g(|\frac{\n\psi}{y}|^2,\psi)}-\frac{y^2\S'(\psi)}{\g g^\g(|\frac{\n\psi}{y}|^2,\psi)},$$
where the matrix
$$(\mathfrak a^{ij})=g\bigg(\left|\frac{\n\psi}{y}\right|^2,\psi\bigg)I_{2}+2\pt_t g\bigg(\left|\frac{\n\psi}{y}\right|^2,\psi\bigg)\frac{\n\psi}{y}\otimes\frac{\n\psi}{y}$$
is symmetric with the eigenvalues
$$\b_0=g\bigg(\left|\frac{\n\psi}{y}\right|^2,\psi\bigg) \q\text{and}\q \b_1=g\bigg(\left|\frac{\n\psi}{y}\right|^2,\psi\bigg)+2\pt_tg\bigg(\left|\frac{\n\psi}{y}\right|^2,\psi\bigg)\left|\frac{\n\psi}{y}\right|^2,$$
and
$$\mathfrak b=\pt_zg\bigg(\left|\frac{\n\psi}{y}\right|^2,\psi\bigg)|\nabla \psi|^2-\frac{\pt_y\psi}{y}\bigg[g\bigg(\left|\frac{\n\psi}{y}\right|^2,\psi\bigg)+2\pt_tg\bigg(\left|\frac{\n\psi}{y}\right|^2,\psi\bigg)\left|\frac{\n\psi}{y}\right|^2\bigg].
$$
Besides, differentiating the identity $t=\t(\frac{1}{g(t,z)},z)$ gives
\begin{equation}\label{dtg drF}
\pt_tg(t,z)=-\frac{g^2(t,z)}{\pt_\rho\t(\frac1{g(t,z)},z)} \q\text{for } t\in[0,\t_c(z)).
\end{equation}
This implies
$$\pt_tg(t,z)\geq0 \q \text{and}\q \lim_{t\to\t_c(z)-}\pt_tg(t,z)=+\infty.$$
Thus $\b_0$ has uniform upper and lower bounds depending only on $B_*$, $S_*$, $\bar H$, and $\g$, and $\b_1$ has a uniform lower bound but blows up when $|\n\psi/y|^2$ approaches $\t_c(\psi)$. Therefore, the equation \eqref{elliptic equ} is elliptic as long as $|\n\psi/y|^2<\t_c(\psi)$, and is singular when $|\n\psi/y|^2=\t_c(\psi)$. This completes the proof of the lemma.
\end{proof}

\subsection{Reformulation for the jet flows in terms of the stream function}

Let
\begin{equation}\label{Ld}
\Ld:=\r_0\sqrt{2\bar{B}(\bar{H})-2\r_0^{\g-1}\bar{S}(\bar{H})}
\q\text{with}\q
\r_0 :=\left(\frac{\g\ul p}{(\g-1)\bar{S}(\bar{H})}\right)^{\frac1\g}
\end{equation}
be the constant momentum on the free boundary, where $\ul p$ is the outer pressure on the free boundary as in Problem \ref{probelm 1}. From the previous analysis, Problem \ref{probelm 1} can be solved as long as the following problem in terms of the stream function $\psi$ is solved.

\begin{problem}\label{problem 2}
Let $(\bar{B}, \bar{S})\in (C^{1,1}([0,\bar{H}]))^2$ be the Bernoulli function and the entropy function at the upstream satisfying \eqref{BS min}-\eqref{BS condition2}. Let $Q\in(Q_*,Q^*)$ be the mass flux where $Q_*$ and $Q^*$ are defined in Proposition \ref{incoming data}. One looks for a triple $(\psi,\G_\psi,\Ld)$ satisfying $\psi\in C^{2,\alpha}(\{\psi<Q\})\cap C^{1}(\overline{\{\psi<Q\}})$ for any  $\alpha\in(0,1)$, $\pt_{x}\psi>0$ in $\{0<\psi<Q\}$ and
\begin{equation}\label{jet_equ}
	\left\{
	\begin{aligned}
		&\n\c\bigg(g\bigg(\left|\frac{\n\psi}{y}\right|^2,\psi\bigg)\frac{\n\psi}{y}\bigg)=\frac{y\B'(\psi)}{g(|\frac{\n\psi}{y}|^2,\psi)}-\frac{y\S'(\psi)}{\g g^\g(|\frac{\n\psi}{y}|^2,\psi)} &&\text{in } \{0<\psi<Q\},\\
		&\psi =0 &&\text{on } \N_0,\\
		&\psi =Q &&\text{on } \N \cup \Gamma_\psi,\\
		&\left|\frac{\nabla \psi}y\right| =\Lambda &&\text{on } \Gamma_\psi,
	\end{aligned}
	\right.
\end{equation}
where the free boundary $\G_\psi:=\pt\{\psi<Q\}\backslash \N$. Furthermore, the free boundary $\G_\psi$ and the flow
\begin{equation}\label{eq:rup_psi}
(\r,\u,p)=\left(\frac1{g(|\n\psi/y|^2,\psi)},\frac{g(|\n\psi/y|^2,\psi)\nabla^{\perp}\psi}{y},\frac{(\gamma-1)\S(\psi)}{\gamma g^\gamma(|\n\psi/y|^2,\psi)}\right)
\end{equation}
are expected to have the following properties.
\begin{enumerate} 
\item The free boundary $\Gamma_\psi$ is given by a graph $x=\Upsilon(y)$, $y\in (\ubar{H}, 1]$ for some $C^{2,\alpha}$ function $\Upsilon$ and some $\ubar{H}\in (0,1)$.

\item The free boundary $\G_\psi$ fits the nozzle at $A=(0,1)$ continuous differentiably, i.e., $\Upsilon(1)=N(1)$ and $\Upsilon'(1)=N'(1)$.

\item For $x$ sufficiently large, the free boundary is also an $x$-graph, i.e., it can be written as $y=f(x)$ for some $C^{2,\alpha}$ function $f$. Furthermore, one has 
$$\lim_{x\to+\infty}f(x)=\ul{H}  \q \text{and}\q \lim_{x\to+\infty}f'(x)=0.$$

\item The flow is subsonic in the flow region, i.e., $|\n\psi/y|^2<\t_c(\psi)$ in $\{\psi<Q\}$, where $\t_c$ is defined in \eqref{tc}.

\item At the upstream the flow satisfies the asymptotic behavior \eqref{asymptotic upstream}, where the upstream states $(\bar\rho,\bar u,\bar p)$ are determined in Proposition \ref{incoming data}. At the downstream the flow satisfies
$$\lim_{x\to+\infty}(\r,\u,p)=(\ul{\r},\ul{u},0,\ul p)$$
for some functions $\ul{\r}=\ul{\r}(y)$ and $\ul{u}=\ul{u}(y)$, and some positive constant $\ul p$.
\end{enumerate}
\end{problem}
\begin{remark}
In fact, the conditions \eqref{BS condition1} and \eqref{BS condition2} imply the vorticity of the flow is small. It follows from \eqref{k0} that the right-hand side of the equation in \eqref{jet_equ} is small.
\end{remark}

The following proposition shows that once we obtain a solution $\psi$ to Problem \ref{problem 2}, then $\rho,\,\mathbf u$, and $p$ defined in \eqref{eq:rup_psi} solve the Euler system \eqref{Euler system}, even if $\rho,\,\mathbf u$, and $p$ may not be $C^{1,1}$ as required in Proposition \ref{equivalent pro}.

\begin{proposition}\label{prop:equiv_sol}
	 Assume that a function $\psi\in C^{2,\alpha}(\{\psi<Q\})\cap C^{1}(\overline{\{\psi<Q\}})$ is a subsonic solution of \eqref{jet_equ}, where $\alpha\in(0,1)$. Then the flow $(\rho,\mathbf u,p)$ defined in \eqref{eq:rup_psi} solves the Euler system \eqref{Euler system} in the flow region $\{0<\psi<Q\}$.
\end{proposition}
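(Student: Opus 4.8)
The plan is to verify the four equations of \eqref{Euler system} directly for $(\r,\u,p)$ given by \eqref{eq:rup_psi}, rather than quoting Proposition \ref{equivalent pro}, whose proof requires the $C^{1,1}$ regularity that is not available here. First I would record the regularity and two algebraic identities. Since $\psi\in C^{2,\alpha}(\{\psi<Q\})$, since $g$ is smooth in its first argument and $C^{1,1}$ in its second (Lemma \ref{g}), and since $y>0$ throughout $\{0<\psi<Q\}$ (because $\psi=0$ on the axis $\N_0=\{y=0\}$), the formulas \eqref{eq:rup_psi} give $\r,\u,p\in C^{1,\alpha}(\{0<\psi<Q\})$; subsonicity together with Lemma \ref{g} and $\S\geq S_*>0$ gives $\r>0$ and $p>0$, so that \eqref{Euler system} makes classical sense pointwise. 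From \eqref{eq:rup_psi} one has $y\r\u=\n^\perp\psi=(\pt_y\psi,-\pt_x\psi)$ (cf. \eqref{psi gradient}), hence $\n\c(y\r\u)=0$ and $\u\c\n\psi=0$; and unwinding the definition of $g$ via \eqref{t} (equivalently \eqref{BSpsi relation}) shows $S:=\tfrac{\g p}{(\g-1)\r^\g}=\S(\psi)$, $B:=\tfrac{|\u|^2}{2}+\tfrac{\g p}{(\g-1)\r}=\B(\psi)$, and hence $\r E+p=\r B$.

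Three of the four equations are then immediate. The continuity equation is $\n\c(y\r\u)=\n\c\n^\perp\psi=\pt_x\pt_y\psi-\pt_y\pt_x\psi=0$, valid since $\psi\in C^2$. The transport equations \eqref{BS equations} follow from $B=\B(\psi)$, $S=\S(\psi)$ and $\u\c\n\psi=0$: $\u\c\n B=\B'(\psi)\,\u\c\n\psi=0$, and likewise for $S$. The energy equation is then $\n\c(y(\r E+p)\u)=\n\c(y\r B\u)=B\,\n\c(y\r\u)+y\r\,\u\c\n B=0$.

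The remaining equation, and the only place the PDE \eqref{jet_equ} really enters, is the momentum equation \eqref{Euler system}$_2$. With $u=g\pt_y\psi/y$, $v=-g\pt_x\psi/y$ and $\o:=\pt_x v-\pt_y u$, a direct computation gives $\o=-\n\c\big(g(|\n\psi/y|^2,\psi)\tfrac{\n\psi}{y}\big)$, and because $\psi$ solves \eqref{jet_equ} the right-hand side equals $-\tfrac{y\B'(\psi)}{g}+\tfrac{y\S'(\psi)}{\g g^\g}=-y\r\B'(\psi)+\tfrac{y\r^\g}{\g}\S'(\psi)$; thus \eqref{vorticity express} holds in $\{0<\psi<Q\}$. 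Given the continuity equation, \eqref{Euler system}$_2$ is equivalent to the scalar relations \eqref{equiv_eq1}, i.e. $R_1:=u\pt_x u+v\pt_y u+\tfrac{\pt_x p}{\r}=0$ and $R_2:=u\pt_x v+v\pt_y v+\tfrac{\pt_y p}{\r}=0$. Differentiating the defining identities \eqref{B def} and \eqref{S def} yields \eqref{B partial}, which combined with $\o=\pt_x v-\pt_y u$ gives $R_1=\big(\pt_x B-\tfrac{\r^{\g-1}}{\g}\pt_x S\big)-v\o$ and $R_2=\big(\pt_y B-\tfrac{\r^{\g-1}}{\g}\pt_y S\big)+u\o$. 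Finally, using $B=\B(\psi)$, $S=\S(\psi)$, $\n\psi=(-y\r v,y\r u)$ and \eqref{vorticity express}, one finds $\pt_x B-\tfrac{\r^{\g-1}}{\g}\pt_x S=-y\r v\big(\B'(\psi)-\tfrac{\r^{\g-1}}{\g}\S'(\psi)\big)=v\o$ and, symmetrically, $\pt_y B-\tfrac{\r^{\g-1}}{\g}\pt_y S=-u\o$. Hence $R_1=R_2=0$, which is \eqref{equiv_eq1}, and therefore \eqref{Euler system}$_2$ holds.

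I expect the main thing to watch to be the regularity bookkeeping of the first step, which is exactly why this proposition is stated separately from Proposition \ref{equivalent pro}, together with the (routine but essential) passage from the divergence-form equation \eqref{jet_equ} to the pointwise vorticity identity \eqref{vorticity express}; the rest is elementary algebra. In particular, unlike in the proof of Proposition \ref{equivalent pro}, no potential-function argument or upstream asymptotics is needed, because $B=\B(\psi)$ and $S=\S(\psi)$ are built into the definition \eqref{eq:rup_psi} of $(\r,\u,p)$ through $g$.
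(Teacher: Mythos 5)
Your proposal is correct and follows essentially the same route as the paper's own proof: the Bernoulli/entropy relations $B=\B(\psi)$, $S=\S(\psi)$ built into the definition of $g$ are differentiated, the equation \eqref{jet_equ} supplies the vorticity identity \eqref{vorticity express}, and these combine to give \eqref{equiv_eq1}, after which continuity and the transport of $\B(\psi)$ yield the full system \eqref{Euler system}. The only differences are organizational (you pass through \eqref{B partial} rather than differentiating \eqref{BSpsi relation} directly, and you spell out the energy equation explicitly), so no changes are needed.
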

\begin{proof}
	Due to the regularity of the function $\psi$, the functions $\rho$, $\mathbf u:=(u,v)$, and $p$ defined in \eqref{eq:rup_psi} belong to $C^{1,\alpha}(\{\psi<Q\})\cap C^0(\overline{\{\psi<Q\}})$. Since $\rho$ and $\mathbf u$  satisfy \eqref{psi gradient}, it follows from the definition of the function $g$ in Lemma \ref{g} (i) that  the density $\rho=1/g$ satisfies
	\begin{align*}
		\frac{u^2+v^2}{2}+\r^{\g-1}\S(\psi)=	\frac{|\nabla\psi|^2}{2y^2\rho^2}+\r^{\g-1}\S(\psi)=\mathcal{B}(\psi)
	\end{align*}
	in the flow region.
	Differentiating the above equality with respect to $x$ and $y$, respectively, one gets
	\begin{equation}\label{eq_1}
		u\partial_{x}u+v\partial_{x}v +\frac{\r^{\g-1}}\g\S'(\psi)\pt_{x}\psi+\frac{\pt_{x}p}{\r}=\B'(\psi)\partial_{x}\psi,
	\end{equation}
	and
	\begin{align}\label{eq_2}
		u\partial_{y}u+v\partial_{y}v+\frac{\r^{\g-1}}\g\S'(\psi)\pt_{y}\psi+\frac{\pt_{y}p}{\r}=\B'(\psi)\partial_{y}\psi.
	\end{align}
	Note that by the quasilinear equation in \eqref{jet_equ} one has
	$$\B'(\psi)-\frac{\r^{\g-1}}{\g}\S'(\psi)=-\frac{\pt_{x}v-\pt_{y}u}{y\rho}.$$
	Thus the equalities \eqref{eq_1} and \eqref{eq_2} together with \eqref{psi gradient} yield \eqref{equiv_eq1}. Note that $\rho$ and $\mathbf u$ satisfy the continuity equation (the first equation in \eqref{Euler system}). Then the Euler system \eqref{Euler system} follows from the continuity equation, \eqref{equiv_eq1}, and the equality $y\rho\mathbf u\cdot \nabla \B(\psi)=0$ where $\B(\psi)$ satisfies \eqref{BSpsi relation}. This finishes the proof of the proposition.
\end{proof}

\subsection{Subsonic truncation}

One of the major difficulties to solve Problem \ref{problem 2} is that the equation \eqref{elliptic equ} becomes degenerate as the flows approach the sonic state. Hence we use a subsonic truncation so that after the truncation the equation \eqref{elliptic equ} is always uniformly elliptic.

Let $\varpi:\R\to[0,1]$ be a smooth nonincreasing function such that
$$\varpi(s)=\begin{cases}
	1& \text{ if } s\leq-1,\\
	0& \text{ if } s\leq -\frac12,
\end{cases}
\q\text{and}\q
|\varpi'|+|\varpi''|\leq8.$$
For $\v\in(0,\min\{t_*,1\}/4)$ with $t_*$ as in \eqref{tc_bound}, let $\varpi_\v(s):=\varpi(s/\v)$. We define
\begin{equation}\label{gm}
	g_\v(t,z)=g(t,z)\varpi_\v(t-\t_c(z))+(1-\varpi_\v(t-\t_c(z)))g^*,
\end{equation}
where $\t_c(z)$ is defined in \eqref{tc} and $g^*$ is the upper bound for $g$ in \eqref{g bound}. The properties of $g_\v$ are summarized in the following lemma.

\begin{lemma}\label{g properties}
Let $g$ be the function determined in Lemma \ref{g}, and let $g_\v$ be the subsonic truncation of $g$ defined in \eqref{gm}. Then the function $g_\v(t,z)$ is smooth in $t$ and $C^{1,1}$ in $z$. Furthermore, there exist constants $C_*,\, C^*>0$ depending on $B_*$, $S_*$, $\bar H$, and $\g$ such that for all $(t,z)\in[0,\infty)\times\R$ one has
  \begin{align}
  &C_*\leq g_\v(t,z)\leq C^*, \label{gm bound}\\
  &C_*\leq g_\v(t,z)+2\pt_tg_\v(t,z)t\leq C^*\v^{-1},\label{g dtgmt}\\
  &|\pt_zg_\v(t,z)|\leq C^*\v^{-1}\k_0,\quad
  |t\pt_zg_\v(t,z)|\leq C^*\v^{-1}\k_0. \label{dzgm}
  \end{align}
\end{lemma}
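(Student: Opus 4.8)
The plan is to verify each of the three estimates \eqref{gm bound}, \eqref{g dtgmt}, \eqref{dzgm} directly from the definition \eqref{gm} of $g_\v$, using the properties of $g$ collected in Lemma \ref{g}, the identity \eqref{dzg dtg}, the formula \eqref{dtg drF}, the uniform bounds \eqref{g bound} and \eqref{tc_bound}, and the size estimate \eqref{k0} for $\k_0$. The regularity claim is immediate: $g_\v$ is a combination of $g$ (smooth in $t$, $C^{1,1}$ in $z$), the constant $g^*$, and the cutoff $\varpi_\v(t-\t_c(z))$; since $\varpi$ is smooth and $\t_c$ is $C^{1,1}$ by \eqref{tc} (being built from $\B,\S\in C^{1,1}$), the composition and products stay smooth in $t$ and $C^{1,1}$ in $z$. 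One caveat to address is that $g(t,z)$ is only defined for $t<\t_c(z)$, whereas $g_\v$ is asserted on all of $[0,\infty)\times\R$; but on the support of $\varpi_\v(t-\t_c(z))$ one has $t-\t_c(z)\le -\v/2$, i.e. $t\le \t_c(z)-\v/2<\t_c(z)$, so every occurrence of $g$ in \eqref{gm} is evaluated in its domain, and elsewhere $g_\v\equiv g^*$.

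First I would prove \eqref{gm bound}: since $\varpi_\v\in[0,1]$, $g_\v(t,z)$ is a convex combination of $g(t,z)$ and $g^*$ on the region where $g$ is defined, so $g_*\le g_\v\le g^*$ by \eqref{g bound}, and $g_\v=g^*$ outside; thus $C_*:=g_*$ and $C^*:=g^*$ work (these depend only on $B_*,S_*,\bar H,\g$ via \eqref{g bound} and \eqref{tc_bound}). Next, for \eqref{g dtgmt} I would compute
\[
\pt_t g_\v(t,z)=\pt_t g(t,z)\,\varpi_\v(t-\t_c(z))+\big(g(t,z)-g^*\big)\tfrac1\v\varpi'\!\big(\tfrac{t-\t_c(z)}\v\big),
\]
so that
\[
g_\v+2t\pt_t g_\v=\varpi_\v\big(g+2t\pt_t g\big)+(1-\varpi_\v)g^*+2t(g-g^*)\tfrac1\v\varpi'\!\big(\tfrac{t-\t_c(z)}\v\big).
\]
From \eqref{dtg drF} together with facts (a)--(b) in the proof of Lemma \ref{g} one has $\pt_t g\ge0$; a short computation using \eqref{dF} and $\r\le\varrho_m$ shows $g+2t\pt_t g$ is bounded below by a positive constant and, on the truncation zone where $t\le\t_c(z)-\v/2$, bounded above by $C\v^{-1}$ (the blow-up of $\pt_t g$ as $t\to\t_c(z)-$ is cut off at distance $\v/2$). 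The extra term is supported where $\tfrac12\le\tfrac{\t_c(z)-t}\v\le1$, hence is $O(\v^{-1})$ since $|g-g^*|\le g^*-g_*$, $t\le t^*$, $|\varpi'|\le8$; the lower bound is not spoiled because on that set $g-g^*\le0$ while $\varpi'\le0$, so this term is nonnegative. Combining gives \eqref{g dtgmt}.

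Finally, for \eqref{dzgm} I would differentiate \eqref{gm} in $z$:
\[
\pt_z g_\v=\varpi_\v\,\pt_z g-(g-g^*)\tfrac1\v\varpi'\!\big(\tfrac{t-\t_c(z)}\v\big)\t_c'(z),
\]
and bound each term. For the first, \eqref{dzg dtg} gives $|\pt_z g|\le 2\pt_t g\,g^{-2}\big(|\B'|+g^{1-\g}|\S'|\big)$; on the truncation zone $2\pt_t g\le C\v^{-1}$ as above, $g$ is bounded below, and $|\B'(\psi)|+|\S'(\psi)|\le C\k_0$ by the definition of $\k_0$ in \eqref{k0} (extended via \eqref{BS_deri_R}), so $|\pt_z g|\le C\v^{-1}\k_0$ there and $\pt_z g_\v=0$ elsewhere. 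For the second term, $|\t_c'(z)|\le C\k_0$ follows by differentiating \eqref{tc} and using $|\B'|+|\S'|\le C\k_0$ with the bounds \eqref{eq:BS_bounds}, so it is $O(\v^{-1}\k_0)$. This yields the first inequality in \eqref{dzgm}; the second follows identically since the support constraint forces $t\le t^*$, a constant. The main obstacle, and the only place requiring genuine care rather than bookkeeping, is the upper bound in \eqref{g dtgmt}: one must check quantitatively that cutting off a distance $\v/2$ away from the sonic density $\varrho_c(z)$ controls the blow-up of $\pt_t g$, using \eqref{dtg drF} and the nondegeneracy $\pt_\rho\t(\varrho_c,z)=0$ with $\pt_{\rho\rho}\t(\varrho_c,z)<0$ (equivalently $t_*\le\t_c(z)\le t^*$ from \eqref{tc_bound}), which turns a quadratic vanishing of $\pt_\rho\t$ near $\varrho_c$ into the $\v^{-1}$ rate; everything else is routine.
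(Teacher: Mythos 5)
Your proposal is correct and follows essentially the same route as the paper: the same decomposition of $g_\v$, the same formulas for $\pt_t g_\v$ and $\pt_z g_\v$, the identity \eqref{dzg dtg}, the bounds \eqref{g bound}, \eqref{tc_bound}, and \eqref{k0}, and the observation that $\pt_t g_\v$ and $\pt_zg_\v$ are supported in $\{t\le \t_c(z)-\v/2\}$ so that the factor $t$ in \eqref{g dtgmt} and \eqref{dzgm} costs only $t^*$. The one place where you diverge — and which you flag as the main remaining task — is the bound $\pt_t g\le C\v^{-1}$ on the truncation zone: you propose a Taylor/nondegeneracy argument at $\varrho_c$, whereas the paper gets it in one line from the exact identity $\pt_\rho\t(\rho,z)=\frac{2}{\rho}\big(\t(\rho,z)-(\g-1)\rho^{\g+1}\S(z)\big)$, which for $\rho>\varrho_c(z)$ gives $\pt_\rho\t\le \frac{2}{\rho}(\t-\t_c)\le-\v/\rho$ when $t\le\t_c(z)-\v/2$, hence $\pt_t g\le \v^{-1}g$ by \eqref{dtg drF}; note the relation between $\t_c-\t$ and $\pt_\rho\t$ is linear here, so no second-order expansion is needed. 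Your nondegeneracy route does work (uniformly in $z$, since $\pt_{\rho\rho}\t<-4(\g-1)B_*$ on $(\varrho_c,\varrho_m]$, and it even yields the stronger rate $\v^{-1/2}$), but your phrase ``quadratic vanishing of $\pt_\rho\t$'' is misstated: $\pt_\rho\t$ vanishes to first order at $\varrho_c$, while it is $\t_c-\t$ that vanishes quadratically in $\rho-\varrho_c$; you should either correct that bookkeeping or simply use the paper's direct inequality. Everything else (the convex-combination argument for \eqref{gm bound}, the sign observations $\pt_t g\ge0$, $(g-g^*)\varpi_\v'\ge0$ for the lower bound in \eqref{g dtgmt}, and the term-by-term estimate of $\pt_zg_\v$ using $|\t_c'|\le C\k_0$, which replaces the paper's intermediate inequality $|\pt_zg_\v|\le C\k_0\pt_tg_\v$ but gives the same conclusion) is sound.
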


\begin{proof}
	It follows from Lemma \ref{g} and the definition of $\varpi_\v$ that $g_\v$ is smooth with  respect to $t$ and $C^{1,1}$ with respect to $z$.
	
	\emph{(i).}  Clearly \eqref{gm bound} follows directly from the definition of $g_\v$ in \eqref{gm}, \eqref{g bound}, and \eqref{tc_bound}.
	
	\emph{(ii).} To show \eqref{g dtgmt}, we first claim that if $0\leq t\leq\mathfrak t_c(z)-\frac{\v}2$ then
	\begin{equation}\label{dtg bound}
		0<\pt_tg(t,z)\leq \v^{-1}g(t,z).
	\end{equation}
     In view of the expression of $\pt_tg(t,z)$ in \eqref{dtg drF}, it suffices to estimate $\pt_\rho\t(\rho,z)$ at $\rho=\rho(t,z)$ for $0\leq t\leq\mathfrak t_c(z)-\frac{\v}2$. In fact, for $0\leq t\leq\mathfrak t_c(z)-\frac{\v}2$, since $\rho>\varrho_c(t,z)$ and $\t_c(z)=(\g-1)\varrho_c^{\g+1}(z)\S(z)$,
    it follows from \eqref{dF} and \eqref{t} that
    \begin{align*}
    	\pt_\r\t\left(\rho,z\right)
    	&=\frac{2}{\rho}\left(\t\left(\rho,z\right)-(\g-1)\rho^{\g+1}\S(z)\right)\leq\frac{2}{\rho}\left(\t\left(\rho,z\right)-(\g-1)\varrho_c^{\g+1}(z)\S(z)\right)\\
    	&= \frac{2}{\rho}\left(\t\left(\rho,z\right)-\t_c(z)\right)\leq-\frac{\v}{\rho}.
    \end{align*}
    Then using \eqref{rho g tz} and \eqref{dtg drF} one has
    \eqref{dtg bound}.
	
	A straightforward computation gives
	\begin{equation}\label{gmdt}
		\pt_tg_\v(t,z)=\pt_tg(t,z)\varpi_\v(t-\t_c(z))+(g(t,z)-g^*)\varpi_\v'(t-\t_c(z)).
	\end{equation}
     Combining \eqref{dtg bound}, \eqref{g bound} and the definition of $\varpi_\epsilon$ together gives
     \begin{equation}\label{dtgm bound}
     	0<\pt_tg_\v(t,z)\leq C\v^{-1}
     \end{equation}
     for some $C=C(B_*,S_*,\bar H,\gamma)$.
     Note that $\pt_tg_\v(t,z)$ is supported in $\{(t,z):0\leq t\leq\t_c(z)-\v/2, \, z\in\R\}$.
     The estimate \eqref{g dtgmt} follows directly from \eqref{gm bound} and \eqref{dtgm bound}.
	
	\emph{(iii).}  Direct computations yield
	\begin{equation}\label{dzg express}\begin{split}
		\pt_zg_\v(t,z)=&\pt_zg(t,z)\varpi_\v(t-\t_c(z))+(g^*-g(t,z))\varpi'_\v(t-\t_c(z))\t_c'(z)\\
		\stackrel{\eqref{dzg dtg}}{=}&-2\pt_t g(t,z)\frac{\B'(z)-g^{1-\g}(t,z)\S'(z)}{g^2(t,z)}\varpi_\v(t-\t_c(z))\\
		&+(g^*-g(t,z))\varpi'_\v(t-\t_c(z))\t_c'(z),
	\end{split}\end{equation}
   where
   \begin{equation}\label{tc_deri}
  	\t_c'(z)=\left(\frac2{\gamma+1}\right)^{\frac{\gamma+1}{\gamma-1}}\left((\gamma+1)\left(\frac{\B(z)}{\S(z)}\right)^{\frac2{\gamma-1}}\B'(z)-2\left(\frac{\B(z)}{\S(z)}\right)^{\frac{\gamma+1}{\gamma-1}}\S'(z)\right)
  \end{equation}
   by the definition of $\t_c$ in \eqref{tc}.
   Thus using the expression of $\pt_tg_\v$ in \eqref{gmdt}, the boundedness of $g(t,z)$ in \eqref{g bound}, the definition of $\k_0$ in \eqref{k0}, and \eqref{BS_deri_R} one has
   \begin{equation}\label{dzgm_dtgm}
   	|\pt_zg_\v(t,z)|
   	\leq C(|\B'(z)|+|\S'(z)|)\pt_tg_\v(t,z)\leq C\k_0\pt_tg_\v(t,z)
   \end{equation}
   for some $C=C(B_*,S_*,\bar H,\gamma)$. This together with \eqref{dtgm bound} gives the first estimate in \eqref{dzgm}. Furthermore, noting that $\pt_zg_\v=0$ for $t\geq \t_c(z)-\frac{\v}2$ and that $\t_c(z)\leq t^*$ (cf. \eqref{tc_bound}), using \eqref{dzgm_dtgm} and \eqref{dtgm bound} again one has
   $$|t\pt_zg_\v(t,z)|
   \leq t^* C\k_0\pt_tg_\v(t,z)\leq C^*\v^{-1}\k_0.$$
   This finishes the proof for \eqref{dzgm}.
\end{proof}

\section{Variational formulation for the free boundary problem}\label{sec variational formulation}

In this section, we show that the quasilinear equation \eqref{elliptic equ} is an Euler-Lagrange equation for an energy functional, so that the jet problem can be characterized by a variational problem.

Let $\O$ be the domain bounded by $\N_0$ and $\N\cup([0,\infty)\times\{1\})$. Notice that $\O$ is unbounded, so we make an approximation by considering the problems in a series of truncated domains $\O_{\mu,R}:=\O\cap\{-\mu<x<R\}$, where $\mu$ and $R$ are positive numbers.  To define the functional, we set
\begin{equation}\label{G def}
G_\v(t,z):=\frac12\int_0^tg_\v(s,z)ds+\frac{\g-1}{\g}(g^{-\g}_\v(0,z)\S(z)-g_\v^{-\g}(0,Q)\S(Q)),
\end{equation}
and
\begin{equation}\label{Phi def}
\Phi_\v(t,z):=-G_\v(t,z)+2\pt_tG_\v(t,z)t.
\end{equation}
In view of the elliptic condition \eqref{g dtgmt},  straightforward computations yield
\begin{align}\label{Phim dt}
	0<\frac12C_*\leq\pt_t\Phi_\v(t,z)=\frac12g_\v(t,z)+\pt_tg_\v(t,z)t\leq\frac12C^*\v^{-1}.
\end{align}
Since $\Phi_\v(0,Q)=-G_\v(0,Q)=0,$ one has $\Phi_\v(t,Q)>0$ for $t>0$.

Given a function $\psi^\sharp_{\mu,R}\in C(\pt \O_{\mu,R})\cap H^1(\Omega_{\mu,R})$ with $0\leq\psi^\sharp_{\mu,R}\leq Q$. We consider the following minimization problem:
\begin{equation}\label{variation problem}
\text{find }\psi\in K_{\psi^\sharp_{\mu,R}}  \, \text{ s.t. } \, J_{\mu,R,\Ld}^\v(\psi)= \inf_{\phi\in K_{\psi^\sharp_{\mu,R}}} J_{\mu,R,\Ld}^\v(\phi),
\end{equation}
where the functional
\begin{equation}\label{Jm def}
J^\v_{\mu,R,\Ld}(\phi):=\int_{\Omega_{\mu,R}}y\bigg[G_\v\bigg(\left|\frac{\n\phi}{y}\right|^2,\phi\bigg)+\ld_\v^2\chi_{\{\phi<Q\}}\bigg]dX, \q \ld_\v: =\sqrt{\Phi_\v(\Ld^2,Q)}
\end{equation}
with $\Ld$ given by \eqref{Ld}, and the admissible set
\begin{equation}\label{K_admissible}
	K_{\psi^\sharp_{\mu,R}}:=\{\phi\in H^1(\O_{\mu,R}):\phi=\psi^\sharp_{\mu,R}\text{ on } \pt\O_{\mu,R}\}.
\end{equation}

The existence and the H\"{o}lder regularity of minimizers for \eqref{variation problem} will be shown in Lemmas \ref{minimizer existence} and \ref{psi holder}, respectively. Assuming the existence and continuity of a minimizer $\psi$ firstly, we derive the Euler-Lagrange equation for the minimization problem \eqref{variation problem} in the open set $\O_{\mu,R}\cap\{\psi<Q\}$.

\begin{lemma}\label{EL}
Let $\psi$ be a minimizer of the minimization problem \eqref{variation problem}. Assume that $\O_{\mu,R}\cap\{\psi<Q\}$ is open. Then $\psi$ is a solution to
\begin{equation}\label{EL equ}
\n\c\bigg(g_\v\bigg(\left|\frac{\n\psi}{y}\right|^2,\psi\bigg)\frac{\n\psi}{y}\bigg)=y\pt_zG_\v\bigg(\left|\frac{\n\psi}{y}\right|^2,\psi\bigg) \q\text{in } \O_{\mu,R}\cap\{\psi<Q\}.
\end{equation}
Furthermore, if $|\n\psi/y|^2\leq\t_c(\psi)-\epsilon$, it holds that
\begin{equation}\label{Gdz expression}
\pt_zG_\v\bigg(\left|\frac{\n\psi}{y}\right|^2,\psi\bigg)=\frac{\B'(\psi)}{g(|\frac{\n\psi}{y}|^2,\psi)}-\frac{\S'(\psi)}{\g g^\g(|\frac{\n\psi}{y}|^2,\psi)}.
\end{equation}
\end{lemma}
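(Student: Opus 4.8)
The plan is to compute the first variation of $J^\v_{\mu,R,\Ld}$ at the minimizer $\psi$ in the open set where $\psi<Q$, where the indicator term $\ld_\v^2\chi_{\{\phi<Q\}}$ contributes nothing to the variation. First I would fix $\zeta\in C_c^\infty(\O_{\mu,R}\cap\{\psi<Q\})$ and, since this set is open and $\psi<Q$ there (with $\psi$ continuous), observe that for $|s|$ small the competitor $\psi+s\zeta$ still lies in $K_{\psi^\sharp_{\mu,R}}$ and still satisfies $\psi+s\zeta<Q$ on the support of $\zeta$, so the characteristic-function term is locally constant in $s$. Hence $\frac{d}{ds}\big|_{s=0}J^\v_{\mu,R,\Ld}(\psi+s\zeta)=0$ reduces to differentiating only the $\int y\,G_\v(|\n\phi/y|^2,\phi)\,dX$ part.

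Next I would carry out that differentiation. Writing $t=|\n\phi/y|^2$ and $z=\phi$, the chain rule gives
\begin{equation*}
\frac{d}{ds}\bigg|_{s=0}\int_{\O_{\mu,R}} y\,G_\v\bigg(\left|\frac{\n(\psi+s\zeta)}{y}\right|^2,\psi+s\zeta\bigg)\,dX
=\int_{\O_{\mu,R}} y\bigg(\pt_t G_\v\cdot\frac{2\n\psi\c\n\zeta}{y^2}+\pt_z G_\v\cdot\zeta\bigg)\,dX,
\end{equation*}
with $\pt_tG_\v$, $\pt_zG_\v$ evaluated at $(|\n\psi/y|^2,\psi)$. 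From the definition \eqref{G def} one has $\pt_tG_\v(t,z)=\tfrac12 g_\v(t,z)$, so $y\cdot\pt_tG_\v\cdot\frac{2\n\psi\c\n\zeta}{y^2}=g_\v(|\n\psi/y|^2,\psi)\frac{\n\psi}{y}\c\n\zeta$. Setting the variation to zero yields
\begin{equation*}
\int_{\O_{\mu,R}} \bigg(g_\v\bigg(\left|\frac{\n\psi}{y}\right|^2,\psi\bigg)\frac{\n\psi}{y}\c\n\zeta + y\,\pt_zG_\v\bigg(\left|\frac{\n\psi}{y}\right|^2,\psi\bigg)\zeta\bigg)\,dX=0
\end{equation*}
for all such $\zeta$, which is precisely the weak form of \eqref{EL equ}; this is the content of the first assertion. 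The justification that differentiation under the integral sign is legitimate uses the uniform bounds \eqref{gm bound}, \eqref{g dtgmt}, \eqref{dzgm} on $g_\v$ and its derivatives together with $\psi\in H^1$, so the integrands are dominated uniformly for small $s$.

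For the second assertion I would simply compute $\pt_zG_\v$ in the genuinely subsonic regime $t\le\t_c(z)-\v$. There $\varpi_\v(t-\t_c(z))=1$ and $\varpi_\v'(t-\t_c(z))=0$, so $g_\v\equiv g$ and $g_\v(0,z)=g(0,z)=\varrho_m^{-1}(z)$ (the zero-momentum, i.e. maximum, density) with $g(0,z)^{1-\g}=\varrho_m^{\g-1}(z)=\B(z)/\S(z)$; thus the boundary term in \eqref{G def} is $\frac{\g-1}{\g}(\B(z)-\B(Q))$, whose $z$-derivative is $\frac{\g-1}{\g}\B'(z)$. Differentiating $G_\v(t,z)=\frac12\int_0^t g(s,z)\,ds+\frac{\g-1}{\g}(\B(z)-\B(Q))$ in $z$ gives $\pt_zG_\v(t,z)=\frac12\int_0^t\pt_zg(s,z)\,ds+\frac{\g-1}{\g}\B'(z)$, and then I would insert the identity \eqref{dzg dtg}, namely $\pt_zg=-2\pt_tg\,(\B'-g^{1-\g}\S')/g^2$, and integrate in $s$. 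The integrand $-2\pt_sg\,(\B'-g^{1-\g}\S')/g^2$ is an exact $s$-derivative up to the explicitly $z$-dependent coefficients; carrying out the integration (using $\pt_s(g^{-1})=-g^{-2}\pt_sg$ and $\pt_s(g^{1-\g}/( \g-1))$-type primitives) collapses the $\frac{\g-1}{\g}\B'$ terms and leaves exactly $\frac{\B'(\psi)}{g(|\n\psi/y|^2,\psi)}-\frac{\S'(\psi)}{\g g^\g(|\n\psi/y|^2,\psi)}$, which is \eqref{Gdz expression}.

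The only mildly delicate point — the ``hard part,'' though it is routine — is the bookkeeping in this last integration by parts: one must track the $z$-dependence carried by $\B'(z)$, $\S'(z)$ entering $\pt_zg$ and check that the contributions at $s=0$ (where $g(0,z)=\varrho_m^{-1}$, $g^{1-\g}(0,z)=\B/\S$) cancel against the constant $\frac{\g-1}{\g}\B'(z)$ coming from the boundary term in the definition of $G_\v$, so that only the values at $s=t=|\n\psi/y|^2$ survive. Alternatively, and perhaps more cleanly, one can verify \eqref{Gdz expression} by differentiating the defining relation for $G_\v$ together with the Bernoulli identity $\B(z)=\frac{t}{2}g^2(t,z)/\,\cdot\,$ hidden in \eqref{BSpsi relation}, i.e. use that $g$ is characterized implicitly by $t=\t(1/g(t,z),z)=2g^{-2}(\B(z)-g^{1-\g}\S(z))$; differentiating this constraint in $z$ reproduces \eqref{dzg dtg} and the same cancellation appears. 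Either way, no new estimates beyond Lemmas \ref{g} and \ref{g properties} are needed, and the equivalence with the pointwise equation \eqref{EL equ} follows from $\psi\in C^{2,\alpha}$ interior regularity (to be established separately) plus integration by parts.
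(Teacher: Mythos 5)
Your first part (the derivation of \eqref{EL equ} by an interior first variation, using that the indicator term is locally constant in $s$ because $\psi$ is continuous and $<Q$ on the compact support of $\zeta$, and that $\pt_tG_\v=\tfrac12 g_\v$) is correct and is exactly the paper's argument.

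The verification of \eqref{Gdz expression}, however, contains a concrete algebraic error that breaks the cancellation you rely on. In the region $t\le\t_c(z)-\v$ you correctly have $g_\v=g$ and $g(0,z)=\varrho_m(z)^{-1}$, but then $g^{-\g}(0,z)\S(z)=\varrho_m^\g(z)\S(z)=\varrho_m(z)\,\B(z)$, \emph{not} $\B(z)$: from \eqref{rho cm} only $\varrho_m^{\g-1}\S=\B$ holds. Hence the $t$-independent term in \eqref{G def} is $\tfrac{\g-1}{\g}\bigl(\varrho_m(z)\B(z)-\varrho_m(Q)\B(Q)\bigr)$, whose $z$-derivative is $\varrho_m(z)\B'(z)-\tfrac1\g\varrho_m^\g(z)\S'(z)$, and not $\tfrac{\g-1}{\g}\B'(z)$ as you claim. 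This matters: integrating \eqref{dzg dtg} gives $\tfrac12\int_0^t\pt_zg(s,z)\,ds=\B'(z)\bigl(\rho(t,z)-\varrho_m(z)\bigr)-\tfrac1\g\S'(z)\bigl(\rho^\g(t,z)-\varrho_m^\g(z)\bigr)$, and only the correct boundary-term derivative $\varrho_m\B'-\tfrac1\g\varrho_m^\g\S'$ cancels the endpoint contributions at $s=0$, leaving $\B'/g-\S'/(\g g^\g)$ as in \eqref{Gdz expression}; with your value $\tfrac{\g-1}{\g}\B'$ the computation leaves spurious terms $\bigl(\tfrac{\g-1}{\g}-\varrho_m\bigr)\B'+\tfrac1\g\varrho_m^\g\S'$ and the claimed identity does not follow. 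This endpoint bookkeeping is precisely what the paper carries out (it computes $\pt_z\rho(0,z)$ from \eqref{bdz} at $t=0$ and uses \eqref{II}), so your overall strategy is the right one and the slip is repairable, but as written the proof of the second assertion does not close; your alternative suggestion of differentiating the implicit relation $t=\t(1/g(t,z),z)$ only reproduces \eqref{dzg dtg} and does not by itself supply the missing evaluation at $t=0$.
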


\begin{proof}
Let $\eta\in C^\infty_0(\O_{\mu,R}\cap\{\psi<Q\})$. Direct computations give
$$\frac{d}{d\tau}J_{\mu,R,\Ld}^\v(\psi+\tau\eta)\bigg|_{\tau=0}
=\int_{\O_{\mu,R}}2\pt_tG_\v\bigg(\left|\frac{\n\psi}{y}\right|^2,\psi\bigg)\frac{\n\psi}{y}\c\n\eta
+y\pt_zG_\v\bigg(\left|\frac{\n\psi}{y}\right|^2,\psi\bigg)\eta.$$
By the definition of $G_\v$ in \eqref{G def}, minimizers of \eqref{variation problem} satisfy the equation \eqref{EL equ}.

Next, by the definitions of $G_\v$ in \eqref{G def} and $g_\v$ in \eqref{gm} one has
$$\pt_zG_\v(t,z)=\frac12\int_0^t\pt_z g(s,z)ds-(\g-1)g^{-\g-1}(0,z)\pt_zg(0,z)\S(z)+\frac{\g-1}{\g}g^{-\g}(0,z)\S'(z)$$
in $\mathcal R_\v:=\{(t,z):0\leq t\leq\t_c(z)-\v, z\in\R\}$.
Using \eqref{dzg dtg} yields
\begin{align*}
\frac12\int_0^t\pt_z g(s,z)ds&=-\int_0^t\pt_t g(s,z)\frac{\B'(z)-g^{1-\g}(s,z)\S'(z)}{g^2(s,z)}ds\\
&=\int_0^t\pt_t\r(s,z)\B'(z)ds-\frac1\g\int_0^t\pt_t\r^\g(s,z)\S'(z)ds\\
&=\B'(z)(\r(t,z)-\r(0,z))-\frac1\g\S'(z)(\r^\g(t,z)-\r^\g(0,z)).
\end{align*}
Taking $t=0$ in \eqref{bdz} gives
\begin{equation*}\label{dtr0z}
\pt_z\r(0,z)=\frac{\B'(z)-\r^{\g-1}(0,z)\S'(z)}{(\g-1)\r^{\g-2}(0,z)\S(z)},
\end{equation*}
so that
\begin{align}\label{II}
-(\g-1)g^{-\g-1}(0,z)\pt_zg(0,z)\S(z)&=(\g-1)\r^{\g-1}(0,z)\pt_z\r(0,z)\S(z)\nonumber\\
&=\r(0,z)\left(\B'(z)-\r^{\g-1}(0,z)\S'(z)\right).
\end{align}
Moreover, it holds
$$\frac{\g-1}{\g}g^{-\g}(0,z)\S'(z)=\frac{\g-1}{\g}\r^\g(0,z)\S'(z).$$
Combining the above equalities together gives
\begin{equation}\label{Gmdz expression}
	\pt_zG_\v(t,z)=\frac{\B'(z)}{g(t,z)}-\frac{\S'(z)}{\g g^\g(t,z)} \q\text{in } \mathcal R_\v.
\end{equation}
This finishes the proof for the lemma.
\end{proof}

The following lemma shows that minimizers of \eqref{variation problem} satisfy the free boundary condition in a weak sense.

\begin{lemma}\label{free BC}
Let $\psi$ be a minimizer of the minimization problem \eqref{variation problem}. Assume that $\O_{\mu,R}\cap\{\psi<Q\}$ is open. Then
$$\Phi_\v\bigg(\left|\frac{\n\psi}{y}\right|^2,\psi\bigg)=\ld_\v^2 \quad\text{on } \G_{\psi}:=\O_{\mu,R}\cap\pt\{\psi<Q\}$$
in the sense that
$$\lim_{s\to0+}\int_{\pt\{\psi<Q-s\}}y\bigg[\Phi_\v\bigg(\left|\frac{\n\psi}{y}\right|^2,\psi\bigg)-\ld_\v^2\bigg](\eta\c\nu)d\mathcal{H}^1=0 \q\text{for any }
\eta\in C_0^\infty(\O_{\mu,R};\R^2),$$
where $\mathcal{H}^1$ is the one-dimensional Hausdorff measure.
\end{lemma}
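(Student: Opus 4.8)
The plan is to test the minimality of $\psi$ against an \emph{inner (domain) variation}, which is the standard device for extracting free-boundary conditions in the Alt--Caffarelli framework. Fix $\eta\in C_0^\infty(\O_{\mu,R};\R^2)$. For $|\tau|$ small, $\Phi_\tau:=\mathrm{id}+\tau\eta$ is a diffeomorphism of $\O_{\mu,R}$ which is the identity near $\pt\O_{\mu,R}$, so $\psi_\tau:=\psi\circ\Phi_\tau^{-1}$ lies in $K_{\psi^\sharp_{\mu,R}}$ and $J^\v_{\mu,R,\Ld}(\psi_\tau)\ge J^\v_{\mu,R,\Ld}(\psi)$, hence $\frac{d}{d\tau}J^\v_{\mu,R,\Ld}(\psi_\tau)\big|_{\tau=0}=0$. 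Changing variables back to $\psi$ in the integral defining $J^\v_{\mu,R,\Ld}(\psi_\tau)$, using $\det(I+\tau D\eta)=1+\tau\,\n\c\eta+O(\tau^2)$ and $(I+\tau D\eta)^{-T}=I-\tau(D\eta)^T+O(\tau^2)$, differentiating at $\tau=0$, and invoking the relations $2\pt_tG_\v=g_\v$ and $\Phi_\v=-G_\v+2t\,\pt_tG_\v$, I obtain the first-variation identity
\begin{equation}\label{plan:fv}
0=\int_{\O_{\mu,R}}\Big[y\big(G_\v(T,\psi)+\ld_\v^2\chi_{\{\psi<Q\}}\big)\,\n\c\eta-\frac{g_\v(T,\psi)}{y}\,\pt_i\eta_j\pt_i\psi\pt_j\psi+\eta_2\big(\ld_\v^2\chi_{\{\psi<Q\}}-\Phi_\v(T,\psi)\big)\Big]\,dX,
\end{equation}
where $T:=|\n\psi/y|^2$; since $\n\psi=0$ a.e. on $\{\psi=Q\}$ and $G_\v(0,Q)=\Phi_\v(0,Q)=0$, the integrand vanishes there, so the integral reduces to one over $\O_{\mu,R}\cap\{\psi<Q\}$, on which $\chi_{\{\psi<Q\}}=1$.

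Next I use the Euler--Lagrange equation. By hypothesis $\O_{\mu,R}\cap\{\psi<Q\}$ is open, and there \eqref{EL equ} is \emph{uniformly} elliptic thanks to the subsonic truncation (Lemma \ref{g properties}), so $\psi\in C^{2,\alpha}_{\mathrm{loc}}$ and $\n\c\big(g_\v(T,\psi)\tfrac{\n\psi}{y}\big)=y\,\pt_zG_\v(T,\psi)$ holds classically. Consider the vector field
\[
P:=y\big(G_\v(T,\psi)+\ld_\v^2\big)\,\eta-\frac{g_\v(T,\psi)}{y}\,(\eta\c\n\psi)\,\n\psi .
\]
Expanding $\n\c P$, substituting the Euler--Lagrange equation for $\n\c\big(g_\v\tfrac{\n\psi}{y}\big)$, and using once more $2\pt_tG_\v=g_\v$ and $\Phi_\v=-G_\v+2t\,\pt_tG_\v$, a direct (purely algebraic) computation shows that the bracketed integrand in \eqref{plan:fv} equals $\n\c P$ pointwise in $\O_{\mu,R}\cap\{\psi<Q\}$: the terms proportional to $\pt_zG_\v\,(\eta\c\n\psi)$ cancel, the terms proportional to $\eta\c\n(|\n\psi|^2)$ cancel, and the remaining $\eta_2$-terms sum to zero. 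Therefore \eqref{plan:fv} reads $\int_{\O_{\mu,R}\cap\{\psi<Q\}}\n\c P\,dX=0$.

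It remains to convert this bulk identity into the boundary statement. Since $g_\v$ is bounded (Lemma \ref{g properties}), $G_\v$ grows at most linearly in $t$, and $\n\psi\in H^1_{\mathrm{loc}}$ while $y$ is bounded below on $\mathrm{supp}\,\eta$, both $P$ and $\n\c P$ belong to $L^1(\O_{\mu,R}\cap\{\psi<Q\})$ and have compact support, so $\int_{\{\psi<Q-s\}\cap\O_{\mu,R}}\n\c P\,dX\to0$ as $s\to0+$. By Sard's theorem, for a.e. small $s>0$ the level set $\{\psi=Q-s\}\cap\O_{\mu,R}$ is a $C^{2,\alpha}$ curve contained in $\{\psi<Q\}$; applying the divergence theorem on $\{\psi<Q-s\}\cap\O_{\mu,R}$ (localized near $\mathrm{supp}\,\eta$, so the $\pt\O_{\mu,R}$-part drops out because $\eta$ has compact support), with outer unit normal $\nu=\n\psi/|\n\psi|$ on $\{\psi=Q-s\}$ and the identity $\eta\c\n\psi=(\eta\c\nu)|\n\psi|$ there, one computes
\[
P\c\nu=y\big(G_\v(T,\psi)+\ld_\v^2-g_\v(T,\psi)\,T\big)(\eta\c\nu)=-\,y\big(\Phi_\v(T,\psi)-\ld_\v^2\big)(\eta\c\nu)\qquad\text{on }\{\psi=Q-s\}.
\]
Hence $\int_{\pt\{\psi<Q-s\}}y\big[\Phi_\v(|\n\psi/y|^2,\psi)-\ld_\v^2\big](\eta\c\nu)\,d\mathcal{H}^1=-\int_{\{\psi<Q-s\}\cap\O_{\mu,R}}\n\c P\,dX\to0$ as $s\to0+$, which is the assertion.

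The main obstacle is twofold. First, one must verify that after using the Euler--Lagrange equation the bulk integrand in \eqref{plan:fv} collapses \emph{exactly} to $\n\c P$; this is the heart of the matter and hinges on the precise definitions of $G_\v$, $\Phi_\v$, and $g_\v$ (in particular $2\pt_tG_\v=g_\v$ and $\Phi_\v=-G_\v+2t\,\pt_tG_\v$) together with \eqref{EL equ}. Second, the passage $s\to0+$ relies on the interior $C^{2,\alpha}$-regularity of $\psi$ in $\{\psi<Q\}$ (hence on the subsonic truncation, which keeps \eqref{EL equ} uniformly elliptic), on Sard's theorem to make almost every level set a smooth curve, on the $L^1$-bounds on $P$ and $\n\c P$ above, and on the small observation that the contribution of $\{\psi=Q\}$ to the first variation is negligible. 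The change of variables and the $\tau$-expansion are routine.
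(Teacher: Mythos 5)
Your proposal is correct and follows essentially the same route as the paper: an inner (domain) variation $\psi_\tau=\psi\circ(\mathrm{id}+\tau\eta)^{-1}$, the resulting first-variation identity, and then the Euler--Lagrange equation \eqref{EL equ} plus the divergence theorem on the sublevel sets $\{\psi<Q-s\}$ to produce the boundary limit, with $G_\v+\ld_\v^2-g_\v T=\ld_\v^2-\Phi_\v$ on the level sets. Packaging the computation through the vector field $P$ is just a tidy reorganization of the paper's integrand manipulation (I checked the cancellations; they hold), so the two arguments coincide in substance.
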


\begin{remark}[Relation between $\lambda_\epsilon$ and $\Lambda$]\label{ld Ld}
	If the free boundary $\Gamma_\psi$ is smooth and $\psi$ is smooth near $\Gamma_\psi$, then it follows from the monotonicity of $t \mapsto \Phi_\epsilon(t,z)$ for each $z$ and the definition of $\lambda_\epsilon$ in \eqref{Jm def}  that $|\nabla\psi/y|=\Lambda$ on $\Gamma_\psi.$
		Moreover, since
		$$\lambda_\epsilon^2=\Phi_\epsilon(\Lambda^2, Q) =\Phi_\epsilon(0, Q)+\int_0^{\Lambda^2}\pt_t\Phi_\epsilon(s,Q)ds,$$
		then in view of \eqref{Phim dt} and $\Phi_\epsilon(0,Q)=0$ one has
		\begin{align}\label{ld_Ld}
			\frac12C_*\Lambda^2	\leq\lambda_\epsilon^2\leq \frac12C^*\epsilon^{-1}\Lambda^2,
		\end{align}
		where $C_*$ and $C^*$ are the same constants as in \eqref{g dtgmt}.
\end{remark}

\noindent\emph{Proof of Lemma \ref{free BC}.}
Let $\eta=(\eta_1,\eta_2)\in C_0^\infty({\O_{\mu,R}};\R^2)$ and $\eta_\tau(X):=X+\tau\eta(X)$ for $\tau\in\R$. If $|\tau|$ is sufficiently small, then $\eta_\tau$ is a diffeomorphism of $\Omega_{\mu,R}$. Define $\psi_\tau(X):=\psi(\eta_\tau^{-1}(X))$. Since $\psi_\tau\in \K_{\psi^\sharp_{\mu,R}}$ and $\psi$ is a minimizer, then
\begin{align*}
0\leq& J^\v_{\mu,R,\Ld}(\psi_\tau)-J^\v_{\mu,R,\Ld}(\psi)\\
=&\int_{{\O_{\mu,R}}}(y+\tau\eta_2)\bigg[{G_\v}\bigg(\left|\frac{\n\psi(\n\eta_\tau)^{-1}}{y+\tau\eta_2}\right|^2,\psi\bigg)
+\ld_\v^2\chi_{\{\psi<Q\}}\bigg]{\rm det}(\n\eta_{\tau})\\
&-\int_{{\O_{\mu,R}}}y\bigg[{G_\v}\bigg(\left|\frac{\n\psi}{y}\right|^2,\psi\bigg)+\ld_\v^2\chi_{\{\psi<Q\}}\bigg]\\
=&\tau\int_{{\O_{\mu,R}}}y\bigg[{G_\v}\bigg(\left|\frac{\n\psi}{y}\right|^2,\psi\bigg)+\ld_\v^2\chi_{\{\psi<Q\}}\bigg]\n\c\eta\\
&+\tau\int_{{\O_{\mu,R}}}\bigg[{G_\v}\bigg(\left|\frac{\n\psi(\n\eta_\tau)^{-1}}{y+\tau\eta_2}\right|^2,\psi\bigg)
+\ld_\v^2\chi_{\{\psi<Q\}}\bigg]\eta_2\\
&-2\tau\int_{{\O_{\mu,R}}}y\pt_t {G_\v}\bigg(\left|\frac{\n\psi}{y}\right|^2,\psi\bigg)\frac{\n\psi}{y}\bigg(\frac{\eta_2I}{y+\tau\eta_2}+\n\eta\bigg)\frac{(\n\psi)^T}{y}+o(\tau).
\end{align*}
Hence dividing $\tau$ on both sides of the above equality and letting $\tau\to0$ yield
\begin{align*}
0=&\int_{{\O_{\mu,R}}}y\bigg[{G_\v}\bigg(\left|\frac{\n\psi}{y}\right|^2,\psi\bigg)+\ld_\v^2\chi_{\{\psi<Q\}}\bigg]\n\c\eta\\
&+\int_{{\O_{\mu,R}}}\bigg[{G_\v}\bigg(\left|\frac{\n\psi}{y}\right|^2,\psi\bigg)
+\ld_\v^2\chi_{\{\psi<Q\}}\bigg]\eta_2\\
&-2\int_{{\O_{\mu,R}}}y\pt_t {G_\v}\bigg(\left|\frac{\n\psi}{y}\right|^2,\psi\bigg)\frac{\n\psi}{y}\bigg(\frac{\eta_2I}{y}+\n\eta\bigg)\frac{(\n\psi)^T}{y}.
\end{align*}
Note that
\begin{align*}
&y\bigg[{G_\v}\bigg(\left|\frac{\n\psi}{y}\right|^2,\psi\bigg)+\ld_\v^2\bigg]\n\c\eta\\
=&\n\cdot\bigg\{y\bigg[{G_\v}\bigg(\left|\frac{\n\psi}{y}\right|^2,\psi\bigg)+\ld_\v^2\bigg]\eta\bigg\}
-2y\pt_tG_\v\bigg(\left|\frac{\n\psi}{y}\right|^2,\psi\bigg)\frac{\n\psi}{y}\n\bigg(\frac{\n\psi}{y}\bigg)\eta\\
&-y\pt_zG_\v\bigg(\left|\frac{\n\psi}{y}\right|^2,\psi\bigg)\n\psi\c\eta
-\bigg[G_\v \bigg(\left|\frac{\n\psi}{y}\right|^2,\psi\bigg)+\ld_\v^2\bigg]\eta_2
\end{align*}
and
$$\frac{\n\psi}{y}\n\bigg(\frac{\n\psi}{y}\bigg)\eta+\frac{\n\psi}{y}\n\eta\frac{(\n\psi)^T}{y}
=\frac{\n\psi}{y}\c\n\bigg(\frac{\n\psi}{y}\c\eta\bigg)+\frac1{y^3}\left(\eta_1\pt_x\psi\pt_y\psi-\eta_2(\pt_x\psi)^2\right).$$
Using the divergence theorem and \eqref{EL equ} one has
\begin{align*}
0=&\lim_{s\to0+}\int_{{\O_{\mu,R}}\cap\{\psi<Q-s\}}\Bigg\{y\bigg[{G_\v}\bigg(\left|\frac{\n\psi}{y}\right|^2,\psi\bigg)+\ld_\v^2\bigg]\n\c\eta
+\bigg[{G_\v}\bigg(\left|\frac{\n\psi}{y}\right|^2,\psi\bigg)
+\ld_\v^2\bigg]\eta_2\\
&-2\pt_t {G_\v}\bigg(\left|\frac{\n\psi}{y}\right|^2,\psi\bigg)\left|\frac{\n\psi}{y}\right|^2\eta_2-2y\pt_t {G_\v}\bigg(\left|\frac{\n\psi}{y}\right|^2,\psi\bigg)\frac{\n\psi}{y}\n\eta\frac{(\n\psi)^T}{y}\Bigg\}\\
=&\lim_{s\to0+}\int_{\pt\{\psi<Q-s\}}y\bigg[G_\v\bigg(\left|\frac{\n\psi}{y}\right|^2,\psi\bigg)
-2\pt_t G_\v\bigg(\left|\frac{\n\psi}{y}\right|^2,\psi\bigg)\left|\frac{\n\psi}{y}\right|^2+\ld_\v^2\bigg](\eta\cdot\nu)\\
+&\lim_{s\to0+}\int_{{\O_{\mu,R}}\cap\{\psi<Q-s\}}\bigg[2\n\cdot\bigg(\pt_t G_\v\bigg(\left|\frac{\n\psi}{y}\right|^2,\psi\bigg)\frac{\n\psi}{y}\bigg)-y\pt_z G_\v\bigg(\left|\frac{\n\psi}{y}\right|^2,\psi\bigg)\bigg]\n\psi\cdot\eta\\
=&\lim_{s\to0+}\int_{\pt\{\psi<Q-s\}}y\bigg[-\Phi_\v\bigg(\left|\frac{\n\psi}{y}\right|^2,\psi\bigg)+\ld_\v^2\bigg](\eta\cdot\nu).
\end{align*}
\bx

\section{The existence and regularity for the free boundary problem}\label{sec existence and regularity}

In this section, we study the existence and regularity of minimizers for the minimization problem \eqref{variation problem}, as well as the regularity of the free boundary away from the nozzle.

\subsection{Existence of minimizers}\label{subsec_existence}
For the ease of notations in the rest of this section, let
\begin{equation}\label{notation JDG}
\D:=\O_{\mu,R}=\O\cap\{-\mu<x<R\},\quad
\MG(\bp,z):=G_\v(|\bp|^2,z),\q \J:=J^\v_{{\mu,R},\Ld}, \q  \ld:=\ld_\v.
\end{equation}
Then $\D$ is a bounded Lipschitz domain in $\R^2$ and is contained in the infinite strip $\mathbb R\times[0,\bar H]$, $\MG: \R^2\times \R\to \R$ is smooth in $\bp$ and $C^{1,1}$ in $z$, and $\ld$ is a positive constant. The problem \eqref{variation problem} can be rewritten as
\begin{equation}\label{minimization problem}
\text{find }\psi\in \K_{\psi^\sharp}  \, \text{ s.t. } \, \J(\psi)= \inf_{\phi\in \K_{\psi^\sharp}} \J(\phi)
\end{equation}
with
\begin{equation*}\label{J}
\J(\phi):=\int_\D y\left[\MG\left(\frac{\n\phi}{y},\phi\right)+\ld^2\chi_{\{\phi<Q\}}\right]dX,
\end{equation*}
and the admissible set
$$\K_{\psi^\sharp}:=\{\phi\in H^1(\D):\phi=\psi^\sharp \text{ on } \pt\D\}.$$
Here $\psi^\sharp:\overline{\D}\rightarrow \R$ is given, $\psi^\sharp\in H^1(\D)\cap C(\pt\D)$, and $0\leq \psi^\sharp\leq Q$ on $\pt\D$.

The properties of $\MG$ are summarized in the following proposition.

\begin{proposition}\label{Gproperties pro}
Let $G_\v$ be defined in \eqref{G def} and $\MG$ be defined in \eqref{notation JDG}. Then the following properties hold.
\begin{enumerate}
  \item[\rm (i)] There exist two positive constants $\mathfrak b_*=\mathfrak b_*(B_*,S_*,\bar H,\g)$ and $\mathfrak b^*=\mathfrak b^*(B_*,S_*,\bar H,\g,\v)$ such that
  \begin{align}
  &\mathfrak b_*|\bp|^2\leq p_i\pt_{p_i}\MG(\bp,z)\leq \mathfrak b_*^{-1}|\bp|^2,\label{G dp}\\
  &\mathfrak b_*|\mathbf{\xi}|^2\leq \mathbf{\xi}_i\pt_{p_ip_j}\MG(\bp,z)\mathbf{\xi}_j\leq \mathfrak b^*|\mathbf{\xi}|^2, \quad \text{for all } \mathbf{\xi}\in \R^2.\label{G dpp}
  \end{align}

  \item[\rm (ii)] One has
    \begin{equation}\label{G support}
  	\begin{split}
  	\pt_z\MG(\bp,z)=0 \text{ in } \R^2\times(-\infty,0],\q
  	\pt_z\MG(\bp,z)\geq 0  \text{ in } \R^2\times [Q,\infty).
  	\end{split}
  \end{equation}
   \item[\rm (iii)] There exists a constant
  \begin{equation}\label{delta}
  	\d:=C\v^{-1}\k_0(\k_0+1)\leq C_\d,
  \end{equation}
  where $\k_0$ is defined in \eqref{k0}, $C=C(B_*,S_*,\bar H,\g)$, and $C_\d=C_\d(B_*,S_*,\bar H,\g,\v)$, such that
  \begin{align}
  &\v^{-1}|\pt_z\MG(\bp,z)|+|\bp\c\pt_{{\bp}z}\MG(\bp,z)|+|\pt_{\bp z}\MG(\bp,z)|+|\pt_{zz}\MG(\bp,z)|\leq \d,\label{G dzdzpdzz}\\
  & \MG(\mathbf 0,Q)=0,\q \MG(\bp,z)\geq \frac{\mathfrak b_*}2|\bp|^2-\delta\min\{Q,(Q-z)_+\}.\label{G geq}
  \end{align}
\end{enumerate}
\end{proposition}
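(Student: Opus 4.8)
The plan is to unpack the definition $\MG(\bp,z)=G_\v(|\bp|^2,z)$ and translate each asserted bound into a statement about $G_\v$ and its $t$- and $z$-derivatives, then invoke the properties of $g_\v$ collected in Lemma \ref{g properties}. First I would record the chain-rule formulas: $\pt_{p_i}\MG=2\pt_tG_\v\,p_i$, $\pt_{p_ip_j}\MG=2\pt_tG_\v\,\delta_{ij}+4\pt_{tt}G_\v\,p_ip_j$, $\pt_z\MG=\pt_zG_\v$, $\pt_{p_iz}\MG=2\pt_{tz}G_\v\,p_i$, $\pt_{zz}\MG=\pt_{zz}G_\v$. From \eqref{G def} we have $\pt_tG_\v(t,z)=\tfrac12 g_\v(t,z)$, hence $\pt_{tt}G_\v=\tfrac12\pt_tg_\v$. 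Therefore $p_i\pt_{p_i}\MG=g_\v(|\bp|^2,z)|\bp|^2$ and the eigenvalues of the Hessian $(\pt_{p_ip_j}\MG)$ are exactly $g_\v$ and $g_\v+2\pt_tg_\v|\bp|^2$. With this identification, (i) is immediate: \eqref{G dp} follows from the two-sided bound \eqref{gm bound} on $g_\v$, and \eqref{G dpp} follows from \eqref{gm bound} together with the lower bound in \eqref{g dtgmt} and the fact that $\pt_tg_\v\ge0$ (for the upper eigenvalue one uses \eqref{g dtgmt} again, which gives $g_\v+2\pt_tg_\v t\le C^*\v^{-1}$, so $\mathfrak b^*$ depends on $\v$ as stated).

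For (ii), I would note that $\pt_z\MG(\bp,z)=\pt_zG_\v(|\bp|^2,z)$, and differentiating \eqref{G def} in $z$ shows $\pt_zG_\v$ is built from $\pt_zg_\v$ and $\S'$; since $\S'(z)=0$ and $\B'(z)=\pt_zg_\v(\cdot,z)=0$ on $(-\infty,0]$ (from \eqref{eq:sign_B} and the structure of $g_\v$ via $\t_c$, whose derivative \eqref{tc_deri} also vanishes there), we get $\pt_z\MG=0$ on $\R^2\times(-\infty,0]$. On $[Q,\infty)$ one has $\S'\equiv0$ and $\B'\ge0$; using the explicit formula \eqref{Gmdz expression} in the unsaturated region $\mathcal R_\v$, $\pt_zG_\v=\B'(z)/g-\S'(z)/(\g g^\g)=\B'(z)/g\ge0$, and in the truncated region one checks the sign from \eqref{dzg express} is controlled the same way; this yields \eqref{G support}.

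For (iii), the quantitative bounds \eqref{G dzdzpdzz} come from combining \eqref{dzgm} with the chain-rule formulas and integrating in $t$ where needed: $|\pt_z\MG|=|\pt_zG_\v|\le\tfrac12\int_0^{|\bp|^2}|\pt_zg_\v|\,ds+(\text{boundary terms})$, each factor of $|\pt_zg_\v|$ or $|t\pt_zg_\v|$ being bounded by $C^*\v^{-1}\k_0$; similarly $|\bp\c\pt_{\bp z}\MG|=|\pt_zg_\v(|\bp|^2,z)|\,|\bp|^2=|\,t\pt_zg_\v|\le C^*\v^{-1}\k_0$, $|\pt_{\bp z}\MG|=2|\pt_{tz}G_\v||\bp|$ which after using $\pt_{tz}G_\v=\tfrac12\pt_zg_\v$ and Cauchy--Schwarz is $\le C\v^{-1}\k_0$, and $|\pt_{zz}\MG|=|\pt_{zz}G_\v|$ requires one more $z$-derivative of \eqref{Gmdz expression}/\eqref{dzg express}, producing terms like $\B''$, $\S''$, $(\B')^2$, $(\S')^2$ — these are controlled by $\k_0(\k_0+1)$ after using $\|\B'\|_{C^{0,1}},\|\S'\|_{C^{0,1}}\le\k_0$ from \eqref{k0} and \eqref{BS_deri_R}; this accounts for the shape $\d=C\v^{-1}\k_0(\k_0+1)$. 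Finally $\MG(\mathbf 0,Q)=G_\v(0,Q)=0$ by the definition \eqref{G def}, and the lower bound in \eqref{G geq} follows by writing $\MG(\bp,z)=\MG(\bp,Q)+\int_Q^z\pt_z\MG(\bp,s)\,ds$, bounding $\MG(\bp,Q)\ge\tfrac12\int_0^{|\bp|^2}g_\v(s,Q)\,ds\ge\tfrac{\mathfrak b_*}2|\bp|^2$ via \eqref{gm bound} (noting the boundary correction term in \eqref{G def} vanishes at $z=Q$), and estimating the integral remainder by $\d\cdot\min\{Q,(Q-z)_+\}$ using \eqref{G support} (which kills the contribution for $z\ge Q$) together with $|\pt_z\MG|\le\d$ from \eqref{G dzdzpdzz}. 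The main obstacle is bookkeeping: keeping the $\v$-dependence and the $\k_0$-dependence separate and correctly tracking which terms pick up an extra factor of $\k_0$ (the second-derivative-in-$z$ terms via $\B''$, $\S''$ and the squared first-derivative terms coming from differentiating the $g^{-\g-1}\pt_zg$ expression in $\pt_zG_\v$), so that the constant $\d$ genuinely has the claimed form with $C$ independent of $\v$ and $\k_0$.
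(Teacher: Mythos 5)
Your part (i) and the lower bound \eqref{G geq} are handled essentially as in the paper, but there are two genuine gaps. The first is in part (ii), for $z\geq Q$. Writing $\pt_z\MG$ via the paper's identity \eqref{dzGm expression}, the contribution from the truncation zone is
$-\int_0^{|\bp|^2}\bigl(\frac{\B'(z)}{g}-\frac{\S'(z)}{\g g^{\g}}-\frac12(g^*-g)\t_c'(z)\bigr)\varpi_\v'(\tau-\t_c(z))\,d\tau$, and on $[Q,\infty)$ one has $\S'=0$, so $\t_c'(z)$ is a \emph{positive} multiple of $\B'(z)$ by \eqref{tc_deri}. Hence the term $-\frac12(g^*-g)\t_c'(z)$ enters with the opposite sign to $\B'/g$, and at points where $|\bp|^2\geq\t_c(z)-\v/2$ (so the $\varpi_\v$-weighted boundary term vanishes) the sign of $\pt_z\MG$ is \emph{not} automatic: one must verify that $\frac1g\geq\frac12(g^*-g)\t_c'(z)/\B'(z)$, i.e. the quantitative inequality $g(g^*-g)\bigl(\tfrac{2\B}{(\g+1)\S}\bigr)^{\frac2{\g-1}}\leq1$ of \eqref{label_8}, and this holds only because the oscillation of $\B/\S$ is small, which is where the smallness of $\k$ in \eqref{BS condition2} enters. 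Your phrase ``in the truncated region one checks the sign from \eqref{dzg express} is controlled the same way'' skips exactly this step, and without it the asserted sign in \eqref{G support} can fail.

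The second gap is quantitative, in part (iii). For the first term you bound $|\pt_zg_\v|$ pointwise by $C\v^{-1}\k_0$ (from \eqref{dzgm}) and integrate over the $\tau$-support; this only gives $|\pt_z\MG|\leq C\v^{-1}\k_0$, hence $\v^{-1}|\pt_z\MG|\leq C\v^{-2}\k_0$, which overshoots the claimed bound $\v^{-1}|\pt_z\MG|\leq\d=C\v^{-1}\k_0(\k_0+1)$ by a factor $\v^{-1}$ (recall $C$ in $\d$ must be independent of $\v$, and this sharper form is what is actually used later, e.g. in Lemma \ref{psi0 sup-subsol}). The paper avoids the loss by using the refined inequality $|\pt_zg_\v|\leq C\k_0\,\pt_tg_\v$ (see \eqref{dzgm_dtgm}) and integrating $\pt_tg_\v$ exactly, $\int_0^{|\bp|^2}\pt_tg_\v\,d\tau=g_\v(|\bp|^2,z)-g_\v(0,z)\leq C$, so that $|\pt_z\MG|\leq C\k_0$ with no $\v^{-1}$ at all. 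A similar care is needed for $\pt_{zz}\MG$: differentiating \eqref{dzg express} in $z$ produces $\varpi_\v''$-terms of size $\v^{-2}$ and $\pt_{zt}(1/g)$-terms, and only after integrating over the support of $\varpi_\v''$ (measure $\leq\v$) and one integration by parts moving $\pt_t$ onto $\varpi_\v$ does one retain a single factor $\v^{-1}$; your sketch lists the $\B''$, $\S''$, $(\B')^2$, $(\S')^2$ terms but does not show where the single $\v^{-1}$ in $\d$ comes from. Your bounds for $|\bp\c\pt_{\bp z}\MG|$ and $|\pt_{\bp z}\MG|$, and the derivation of \eqref{G geq} (modulo invoking the vanishing of $\pt_z\MG$ for $z\leq0$ to get the value $Q$ in the minimum), are fine.
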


\begin{proof}
\emph{(i).} In view of the definition of $G_\v$ in \eqref{G def}, straightforward computations yield
\begin{align*}
&p_i\pt_{p_i}\MG(\bp,z)=g_\v(|\bp|^2,z)|\bp|^2,\\
&\pt_{p_ip_j}\MG(\bp,z)=g_\v(|\bp|^2,z)\d_{ij}+2\pt_tg_\v(|\bp|^2,z)p_ip_j.
\end{align*}
Thus one gets \eqref{G dp} and \eqref{G dpp} from \eqref{gm bound} and \eqref{g dtgmt}.

\emph{(ii).} Note that
$$g_\v(t,z)=g(t,z) \quad\text{in } \{(t,z): 0\leq t\leq \t_c(z)-\v,z\in\R\}.$$
By the definition of $\MG$ one has
\begin{equation}\label{dzG_express1}\begin{split}
\pt_z\MG(\bp,z)&=\frac12\int_0^{|\bp|^2}\pt_zg_\v(\tau,z)d\tau-(\g-1)g^{-\g-1}(0,z)\pt_zg(0,z)\S(z)+\frac{\g-1}{\g}g^{-\g}(0,z)\S'(z)\\
&\stackrel{\eqref{II}}{=}\frac12\int_0^{|\bp|^2}\pt_zg_\v(\tau,z)d\tau+\frac{\B'(z)}{g(0,z)}-\frac{\S'(z)}{\g g^\g(0,z)}.
\end{split}\end{equation}
This together with the explicit expression for $\pt_zg_\epsilon$ in \eqref{dzg express} and an integration by parts yields that
\begin{equation}\label{dzGm expression}
	\begin{split}
		\pt_z \mathcal G(\bp,z)=&-\int_0^{|\bp|^2}\left(
		\frac{\mathcal{B}'(z)}{g(\tau,z)}-\frac{\mathcal{S}'(z)}{\g g(\tau,z)^\g}-\frac{1}2
		(g^\ast - g(\tau,z)) \t_c'(z) \right)\varpi'_\epsilon(\tau-\t_c(z)) \ d\tau\\
		&+\left(\frac{\mathcal{B}'(z)}{g(|\bp|^2, z)}-\frac{\mathcal{S}'(z)}{\g g(|\bp|^2, z)^\g}\right)\varpi_\epsilon(|\bp|^2-\t_c(z)).
	\end{split}
\end{equation}
Note that $\mathcal B'(z)=\mathcal S'(z)=0$ on $(-\infty, 0]$ and $\B'(z)\geq0$, $\S'(z)=0$ on $[Q,\infty)$, cf. \eqref{eq:sign_B}.
If we can show
\begin{align}\label{label_8}
	g(\tau,z)(g^\ast - g(\tau,z)) \left(\frac{2\B(z)}{(\g+1)\S(z)}\right)^{\frac2{\g-1}}\leq 1
\end{align}
in the set $\mathcal R:=\{(\tau,z):\tau\in [0,\t_c(z)),\, z\in(-\infty,0]\cup[Q,\infty)\}$, then it follows from the expression of $\t_c'$ in \eqref{tc_deri}, $\varpi'_\v\leq 0$, $\varpi_\v\geq 0$, and $g\geq 0$ that  $\pt_z \mathcal G(\bp, z)$ is a positive multiple of $\mathcal{B}'(z)$ in $\mathcal R$.
In view of \eqref{eq:sign_B}, we consequently conclude that \eqref{G support} holds true.
Now it remains to prove \eqref{label_8}. In fact, by \eqref{g bound} and \eqref{rho cm} one has
\begin{align}\label{label_12}
	0\leq 4g(\tau,z)(g^\ast-g(\tau,z))\leq (g^\ast)^2 = \left(\inf_{z\in\mathbb R}\frac{2\B(z)}{(\g+1)\S(z)}\right)^{-\frac2{\g-1}}.
\end{align}
Since the constant $\k$ in \eqref{BS condition2} can be sufficiently small depending on $B_*$, $S_*$, $\bar H$, and $\g$, using \eqref{label_12} and the bounds of $\B$ and $\S$ in \eqref{eq:BS_bounds}, one obtains \eqref{label_8}. This finishes the proof of (ii).

\emph{(iii).} It follows from the upper bound of $\k_0$ in \eqref{k0} (where $\k$ is sufficiently small) that the constant $\d$ defined in \eqref{delta} satisfies $\d\leq C_\d$ for some $C_\d=C_\d(B_*,S_*,\bar H,\g,\v)$.

\emph{Proof for \eqref{G dzdzpdzz}.}
Combining the expression of $\pt_z\MG$ in \eqref{dzG_express1}, the inequality \eqref{dzgm_dtgm}, the estimate for $g_\v$ in \eqref{gm bound}, and the definition of $\k_0$ in \eqref{k0} together yields the estimate for the first term in \eqref{G dzdzpdzz}.

A direct differentiation of $\pt_z\mathcal G$ gives
\begin{align*}
	\pt_{p_iz}\mathcal G(\bp,z) = p_i \pt_zg_\v (|\bp|^2,z).
\end{align*}
Note that $\pt_zg_\v(t,z)=0$ for $t\geq \t_c(z)-\frac{\v}{2}$.
Thus using \eqref{dzgm} one has
$$|\bp\c\pt_{{\bp}z}\mathcal G(\bp,z)|+|\pt_{\bp z}\mathcal G(\bp,z)|\leq  C^*\v^{-1} \kappa_0 (t^*+\sqrt{t^*})
\leq C\v^{-1}\k_0,$$
where $t^*$ is the upper bound of $\t_c(z)$ in \eqref{tc_bound} and $C=C(B_*,S_*,\bar H,\g)$. This gives the estimate for the second and third terms in \eqref{G dzdzpdzz}.

In the end, by the expression for $\pt_z\MG$ in \eqref{dzG_express1} one has
\begin{equation}\label{dzzG}\begin{split}
		\pt_{zz}\MG(\bp,z)=&\frac12\int_0^{|\bp|^2}\pt_{zz}g_\v(\tau,z)d\tau+\frac{\B''(z)}{g(0,z)}-\frac{\B'(z)}{g^2(0,z)}\pt_zg(0,z)\\
		&-\frac{\S''(z)}{\g g^\g(0,z)}+\frac{\S'(z)}{g^{\g+1}(0,z)}\pt_zg(0,z).
\end{split}\end{equation}
To estimate the first term, we infer from \eqref{dzg express}
that
\begin{align*}
\pt_{zz}g_\v(\tau,z) =&2\left(\mathcal{B}''(z)\pt_t\left(\frac{1}{g(\tau,z)}\right) + \mathcal{B}'(z) \pt_{zt} \left(\frac{1}{g(\tau,z)}\right) \right) \varpi_\v\left(\tau-\t_c(z)\right) \\
&-\frac2{\gamma}\left(\mathcal{S}''(z)\pt_t\left(\frac{1}{g^\gamma(\tau,z)}\right) + \mathcal{S}'(z) \pt_{zt} \left(\frac{1}{g^\gamma(\tau,z)}\right) \right) \varpi_\v\left(\tau-\t_c(z)\right) \\
&+  2\pt_z g(\tau,z)\frac{d}{dz}\varpi\left(\tau-\t_c(z)\right) +(g(\tau,z)-g^\ast)\frac{d^2}{dz^2}\varpi\left(\tau-\t_c(z)\right).
\end{align*}
We plug it into the expression of $\pt_{zz}\MG$. For the term involving $\pt_{zt}(\frac{1}{g(\tau,z)})$, an integration by parts yields
\begin{align*}
	&\int_0^{|\bp|^2} \pt_{zt}\left(\frac{1}{g(\tau,z)}\right)\varpi_\epsilon\left(\tau-\t_c(z)\right) \ d\tau\\ =& -\frac{\pt_zg}{g^2}(|\bp|^2, z)\varpi_\epsilon \left(|\bp|^2-\t_c(z)\right)+\frac{\pt_zg}{g^2}(0,z)
	+\int_0^{|\bp|^2}\frac{\pt_zg}{g^2}(\tau,z) \varpi'_\epsilon(\tau-\t_c(z))\ d\tau.
\end{align*}
The term involving $\pt_{zt}(\frac1{g^{\gamma}(\tau,z)})$ can be  estimated similarly as above. Note that
$$|\varpi''_\epsilon(\tau-\t_c(z))|\leq 8\epsilon^{-2}\quad \text{and}\quad |{\rm supp}(\varpi''(\tau-\t_c(z)))|\leq \v.$$
Thus we infer \eqref{k0}, \eqref{dzgm} and \eqref{g bound} that
\begin{align*}
	|\pt_{zz}\MG|\leq C\v^{-1}\k_0(\k_0+1),
\end{align*}
where $C=C(B_*,S_*,\bar H,\gamma)$.
Thus we obtain the estimate for the last term in \eqref{G dzdzpdzz}.

\emph{Proof for \eqref{G geq}.}
In view of \eqref{G def}, one has $\mathcal G(\mathbf0,Q)=0$.
The second inequality in \eqref{G geq} follows directly from the strong convexity in $\bp$ and the estimates of derivatives in $z$. More precisely, the strong convexity property \eqref{G dpp} gives that
\begin{align*}
	\mathcal G(\bp,z)\geq \mathcal G(\mathbf 0,z)+ \frac{\mathfrak b_\ast}{2}|\bp|^2,
\end{align*}
where we have also used that $\nabla_{\mathbf p} \mathcal G(\mathbf 0,z)=0$ (since  $\nabla_{\mathbf p} \mathcal G(\bp,z)=2{\mathbf p}\pt_tg_\v(|\bp|^2,z)$). To estimate $\mathcal G(\mathbf0,z)$ we use a first order Taylor expansion at $z=Q$ together with the estimate of $|\pt_z\MG|$ in \eqref{G dzdzpdzz}. Note that $\v<1/4$. Then for $z\in [0,Q]$, one has
\begin{align*}
	\mathcal G(\mathbf0,z)\geq -\delta(Q-z).
\end{align*}
Moreover, in view of \eqref{G support}, one has
$$\mathcal G(\mathbf 0,z)=\mathcal G(\mathbf0,0)\geq -\delta Q \ \text{ for }z<0,\q\mathcal G(\mathbf 0,z)\geq\mathcal G(\mathbf 0,Q)=0  \ \text{ for } z>Q.$$ Combining the above estimates together we obtain the second inequality of \eqref{G geq}.
This completes the proof of the proposition.
\end{proof}

With Proposition \ref{Gproperties pro} at hand, the existence of minimizers for the minimization problem \eqref{minimization problem} follows from standard theory for calculus of variations.

\begin{lemma}\label{minimizer existence}
Assume $\MG$ satisfies \eqref{G dpp} and \eqref{G geq}. Then the minimization problem \eqref{minimization problem} has a minimizer.
\end{lemma}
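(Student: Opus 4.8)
The plan is to establish existence via the direct method in the calculus of variations, using the structural hypotheses on $\MG$ recorded in Proposition~\ref{Gproperties pro}. First I would observe that the admissible set $\K_{\psi^\sharp}$ is nonempty, since $\psi^\sharp\in H^1(\D)$ itself belongs to it, and that the functional $\J$ is bounded below on $\K_{\psi^\sharp}$: indeed, the inequality $\MG(\bp,z)\geq\frac{\mathfrak b_*}{2}|\bp|^2-\delta\min\{Q,(Q-z)_+\}$ from \eqref{G geq} together with $0\le\chi_{\{\phi<Q\}}\le 1$ and $y\le\bar H$ on $\D$ gives
\begin{equation*}
	\J(\phi)\ge \int_{\D} y\,\frac{\mathfrak b_*}{2}\left|\frac{\n\phi}{y}\right|^2 dX-\delta Q|\D|\ge \frac{\mathfrak b_*}{2\bar H}\int_{\D}|\n\phi|^2\,dX-\delta Q|\D|,
\end{equation*}
so $\inf_{\K_{\psi^\sharp}}\J=:m$ is finite. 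Hence a minimizing sequence $\{\phi_k\}\subset\K_{\psi^\sharp}$ exists.

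Next I would extract weak compactness. From the displayed coercivity bound, $\J(\phi_k)\le m+1$ for large $k$ forces $\|\n\phi_k\|_{L^2(\D)}$ to be bounded; combined with the fixed boundary trace $\psi^\sharp$ on $\pt\D$ and the Poincar\'e inequality on the bounded Lipschitz domain $\D$, the sequence $\{\phi_k\}$ is bounded in $H^1(\D)$. Passing to a subsequence, $\phi_k\rightharpoonup\psi$ weakly in $H^1(\D)$, hence $\psi\in\K_{\psi^\sharp}$ because the affine subspace $\K_{\psi^\sharp}$ is weakly closed, and $\phi_k\to\psi$ strongly in $L^2(\D)$ and pointwise a.e.\ along a further subsequence.

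Finally I would pass to the limit in $\J$ term by term. For the gradient term, write $F(X,\n\phi):=y\,\MG(\n\phi/y,\phi)$; convexity of $\bp\mapsto\MG(\bp,z)$ — which follows from $\pt_{p_ip_j}\MG\ge\mathfrak b_*I$ in \eqref{G dpp} — together with the continuity of $\MG$ in $z$ and the a.e.\ convergence $\phi_k\to\psi$ gives, by the standard Tonelli–Serrin lower semicontinuity theorem for integral functionals with a convex integrand in the gradient variable, that $\int_{\D}y\,\MG(\n\psi/y,\psi)\,dX\le\liminf_k\int_{\D}y\,\MG(\n\phi_k/y,\phi_k)\,dX$. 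For the free-boundary penalty term, $\chi_{\{\phi<Q\}}$ is lower semicontinuous under a.e.\ convergence — indeed $\chi_{\{\psi<Q\}}(X)\le\liminf_k\chi_{\{\phi_k<Q\}}(X)$ pointwise a.e. — so Fatou's lemma (the integrand being nonnegative) yields $\int_{\D}y\,\ld^2\chi_{\{\psi<Q\}}\,dX\le\liminf_k\int_{\D}y\,\ld^2\chi_{\{\phi_k<Q\}}\,dX$. Adding these two inequalities gives $\J(\psi)\le\liminf_k\J(\phi_k)=m$, and since $\psi\in\K_{\psi^\sharp}$ we conclude $\J(\psi)=m$, so $\psi$ is a minimizer.

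The only mildly delicate point — and the one I would be careful to state precisely rather than the coercivity bookkeeping — is the lower semicontinuity of the characteristic-function term: one must handle the $z$-dependence of $\MG$ simultaneously with the gradient term (which is why invoking the Tonelli–Serrin theorem in its form allowing lower-order dependence on $\phi$ is convenient), and one must note that the penalty integrand, although discontinuous, is only being used through the inequality $\chi_{\{\psi<Q\}}\le\liminf\chi_{\{\phi_k<Q\}}$ a.e., which is exactly what Fatou needs. No upper semicontinuity or continuity of the full functional is required. This is entirely standard once Proposition~\ref{Gproperties pro} is in hand, so the proof is short.
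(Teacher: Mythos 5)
Your proof is correct and follows essentially the same route as the paper: the direct method, with coercivity from \eqref{G geq}, weak $H^1$ compactness via the fixed trace and the Poincar\'e inequality, convexity of $\MG$ in $\bp$ for the gradient term, and Fatou-type lower semicontinuity of $\chi_{\{\cdot<Q\}}$ under a.e.\ convergence. The only cosmetic differences are that the paper secures finiteness of the infimum by exhibiting one competitor of finite energy (your ``bounded below'' alone does not exclude $\inf=+\infty$, though in that degenerate case existence is trivial anyway), and it treats both terms in a single Fatou application rather than splitting off a Tonelli--Serrin step for the gradient part.
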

\begin{proof}
Obviously, there exists a function $\tilde\psi\in \K_{\psi^\sharp}$ such that  $\J(\tilde\psi)<\infty$. In view of \eqref{G geq}, one can find a minimizing sequence $\{\psi_n\}$ such that
$$\lim_{n\to\infty}\J(\psi_n)=\inf_{\phi\in \K_{\psi^\sharp}}\J(\phi)$$
and
$$\frac{\mathfrak b_*}{2\bar H}\int_\D|\n\psi_n|^2\leq \frac{\mathfrak b_*}2\int_\D\frac{|\n\psi_n|^2}{y}\leq \J(\psi_n)+\d Q\bar H|\D|.$$
Then it follows from the Poincar\'{e} inequality that $\psi_n-\psi^\sharp$ is uniformly bounded in $H^{1}(\D)$. Thus there exist a subsequence (relabeled) and a function $\psi\in H^{1}(\D)$ such that
$$\psi_n\to\psi  \ \text{ in }L^2(\D)\cap L^2(\pt\D),\q \psi_n\to\psi \ \text{ a.e. in } \D, \q \text{and}\q \n\psi_n\rightharpoonup\n\psi \  \text{ in } L^2(\D).$$
Note that $\MG$ is convex with respect to $\bp$. Using Fatou's lemma yields
\begin{equation}\begin{split}
\J(\psi)&=\int_{\D}y\left[\MG\left(\frac{\n\psi}{y},\psi\right)+\ld^2\chi_{\{\psi<Q\}}\right]\\
&\leq\liminf_{n\to\infty}\int_\D y\left[\MG\left(\frac{\n\psi_n}{y},\psi_n\right)+\ld^2\chi_{\{\psi_n<Q\}}\right]\\
&=\liminf_{n\to\infty}\J(\psi_n),
\end{split}\end{equation}
Therefore $\psi\in\K_{\psi^\sharp}$ is a minimizer.
\end{proof}

\subsection{$L^\infty$-estimate and H\"{o}lder regularity}
In this subsection we establish the  $L^{\infty}$-bound and the H\"{o}lder regularity for a minimizer.
Firstly, one can show that a minimizer of the minimization problem \eqref{minimization problem} is a supersolution of the elliptic equation
\begin{equation}\label{elliptic equG}
\pt_i\pt_{p_i}\MG\left(\frac{\n\psi}{y},\psi\right)-y\pt_z\MG\left(\frac{\n\psi}{y},\psi\right)=0.
\end{equation}

\begin{lemma}\label{supersol lem}
Let $\psi$ be a minimizer of \eqref{minimization problem}. Then $\psi$ satisfies
\begin{equation}\label{supersolution}
\int_{\D}\left[\pt_{p_i}\MG\left(\frac{\n\psi}{y},\psi\right)\pt_i\eta+y\pt_z\MG\left(\frac{\n\psi}{y},\psi\right)\eta\right]\geq0,
\q\text{for all } \eta\geq0, \ \eta\in C_0^\infty(\D).
\end{equation}
\end{lemma}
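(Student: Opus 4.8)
The plan is to exploit the one-sided nature of the variational inequality: since the indicator $\chi_{\{\phi<Q\}}$ only \emph{decreases} when $\phi$ is increased, perturbing a minimizer $\psi$ upward can only add a nonnegative amount to the functional, and this yields the supersolution inequality. Concretely, I would fix $\eta\in C_0^\infty(\D)$ with $\eta\geq0$ and, for $\tau>0$, consider the competitor $\psi_\tau:=\psi+\tau\eta$. Since $\psi=\psi^\sharp$ on $\pt\D$ and $\eta$ has compact support, $\psi_\tau\in\K_{\psi^\sharp}$, so minimality gives $\J(\psi_\tau)\geq\J(\psi)$, i.e.
\begin{equation*}
\int_\D y\left[\MG\left(\frac{\n\psi+\tau\n\eta}{y},\psi+\tau\eta\right)-\MG\left(\frac{\n\psi}{y},\psi\right)\right]dX
+\ld^2\int_\D y\left(\chi_{\{\psi_\tau<Q\}}-\chi_{\{\psi<Q\}}\right)dX\geq0.
\end{equation*}

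The second integral is the key point: because $\tau>0$ and $\eta\geq0$, we have $\psi_\tau\geq\psi$, so $\{\psi_\tau<Q\}\subseteq\{\psi<Q\}$ and hence $\chi_{\{\psi_\tau<Q\}}-\chi_{\{\psi<Q\}}\leq0$ pointwise. Therefore the whole second term is $\leq0$, and dropping it only strengthens the inequality:
\begin{equation*}
\int_\D y\left[\MG\left(\frac{\n\psi+\tau\n\eta}{y},\psi+\tau\eta\right)-\MG\left(\frac{\n\psi}{y},\psi\right)\right]dX\geq0.
\end{equation*}
Now I divide by $\tau$ and let $\tau\to0+$. By Proposition \ref{Gproperties pro}, $\MG$ is $C^1$ (smooth in $\bp$, $C^{1,1}$ in $z$) with the derivative bounds \eqref{G dp}, \eqref{G dzdzpdzz}, so the difference quotient is dominated uniformly (using $\psi\in H^1(\D)$ and the linear growth of $\pt_{\bp}\MG$ in $\bp$ from \eqref{G dp}, plus the $L^\infty$ bound on $\pt_z\MG$), and dominated convergence applies to give
\begin{equation*}
\lim_{\tau\to0+}\frac{\J_{\text{bulk}}(\psi_\tau)-\J_{\text{bulk}}(\psi)}{\tau}
=\int_\D\left[\pt_{p_i}\MG\left(\frac{\n\psi}{y},\psi\right)\pt_i\eta+y\,\pt_z\MG\left(\frac{\n\psi}{y},\psi\right)\eta\right]dX\geq0,
\end{equation*}
which is exactly \eqref{supersolution}. (Here the factor $y$ cancels against the $1/y$ inside $\MG$ in the gradient term, leaving $\pt_{p_i}\MG\cdot\pt_i\eta$; the $z$-derivative term keeps the weight $y$.)

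The only place requiring care — and the natural candidate for the ``main obstacle'' — is the justification of passing the limit inside the integral, since $\D$ touches the axis $\{y=0\}$ where the weight degenerates. However, this is not a serious difficulty here: the weight $y$ is bounded (by $\bar H$) on $\D$, it appears with a favorable sign, and the integrand's $\tau$-derivative is controlled by $|\pt_{\bp}\MG|\,|\n\eta|+y|\pt_z\MG|\,|\eta|\leq C(|\n\psi|+|\n\eta|)|\n\eta|+C\eta$, which is integrable because $\psi,\eta\in H^1(\D)$ and $\eta\in C_0^\infty$. One should also note that one cannot test with $-\eta$ (that would require $\psi-\tau\eta$, which enlarges $\{\phi<Q\}$ and the sign breaks), which is precisely why only the one-sided inequality \eqref{supersolution} holds and $\psi$ is merely a \emph{super}solution rather than a genuine solution of \eqref{elliptic equG} across the free boundary — consistent with the role this lemma plays in the subsequent $L^\infty$ and Hölder estimates.
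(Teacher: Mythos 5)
Your proposal is correct and follows essentially the same route as the paper's proof: perturb upward by $\tau\eta$ with $\tau>0$, use $\{\psi+\tau\eta<Q\}\subset\{\psi<Q\}$ to drop the indicator term with the right sign, then divide by $\tau$ and pass to the limit. The only difference is that you spell out the dominated-convergence justification, which the paper leaves implicit.
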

\begin{proof}
For any $\tau>0$, since $\{\psi+\tau\eta<Q\}\subset \{\psi<Q\}$, it holds
\begin{align*}
0\leq&\frac 1{\tau}(\J(\psi+\tau\eta)-\J(\psi))\\
\leq&\frac1\tau\int_{\D}y\left[\MG\left(\frac{\n(\psi+\tau\eta)}{y},\psi+\tau\eta\right)-\MG\left(\frac{\n\psi}{y},\psi\right)\right]\\
=&\frac1\tau\int_{\D}y\left[\MG\left(\frac{\n(\psi+\tau\eta)}{y},\psi+\tau\eta\right)-\MG\left(\frac{\n\psi}{y},\psi+\tau\eta\right)\right]\\
&+\frac1\tau\int_{\D}y\left[\MG\left(\frac{\n\psi}{y},\psi+\tau\eta\right)-\MG\left(\frac{\n\psi}{y},\psi\right)\right].
\end{align*}
Letting $\tau\to0$ yields \eqref{supersolution}.
\end{proof}

Next, one has the following $L^\infty$-estimate for a minimizer $\psi$.

\begin{lemma}\label{lem_minimizer_bound}
Let $\psi$ be a minimizer of \eqref{minimization problem}, then
$$0\leq \psi\leq Q.$$
\end{lemma}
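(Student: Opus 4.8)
The plan is to prove the two bounds $\psi\leq Q$ and $\psi\geq 0$ separately, each by a truncation/comparison argument exploiting the minimality of $\psi$ together with the structural properties of $\MG$ recorded in Proposition \ref{Gproperties pro}.

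\textbf{Upper bound.} First I would show $\psi\leq Q$ a.e. in $\D$. Set $\phi:=\min\{\psi,Q\}$. Since $\psi^\sharp\leq Q$ on $\pt\D$, the truncation does not change the boundary values, so $\phi\in\K_{\psi^\sharp}$ is an admissible competitor. On the set $\{\psi>Q\}$ one has $\n\phi=0$, $\chi_{\{\phi<Q\}}=0=\chi_{\{\psi<Q\}}$, so
\[
\J(\psi)-\J(\phi)=\int_{\D\cap\{\psi>Q\}}y\left[\MG\Big(\frac{\n\psi}{y},\psi\Big)-\MG(\mathbf 0,\psi)\right]dX.
\]
By the convexity of $\MG$ in $\bp$ and $\n_\bp\MG(\mathbf 0,z)=0$ (recorded in the proof of Proposition \ref{Gproperties pro}), $\MG(\frac{\n\psi}{y},\psi)\geq\MG(\mathbf 0,\psi)$; moreover by \eqref{G support}, $\pt_z\MG(\mathbf 0,z)\geq 0$ for $z\geq Q$, so $\MG(\mathbf 0,\psi)\geq\MG(\mathbf 0,Q)$ on $\{\psi>Q\}$, and then from \eqref{G geq} we could even drop to $0$; in any case the integrand is $\geq\MG(\mathbf 0,\psi)-\MG(\mathbf 0,\psi)=0$ after comparing $\psi$ with $\phi=Q$ there. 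The cleaner route: compare the integrand with $\MG(\mathbf 0,\psi)$ and note $\MG(\frac{\n\psi}{y},\psi)\geq\MG(\mathbf0,\psi)$ pointwise, giving $\J(\psi)\geq\J(\phi)$, hence $\J(\psi)=\J(\phi)$ by minimality. To upgrade this to $\psi=\phi$, I would use strict convexity of $\MG$ in $\bp$ (the lower bound in \eqref{G dpp}): equality forces $\n\psi=0$ a.e. on $\{\psi>Q\}$, whence $|\{\psi>Q\}|=0$ since $\psi-Q$ is $H^1$ and vanishes where it is $\leq 0$.

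\textbf{Lower bound.} For $\psi\geq 0$, set $\phi:=\max\{\psi,0\}$, again admissible since $\psi^\sharp\geq 0$ on $\pt\D$. On $\{\psi<0\}$ we have $\n\phi=0$ and $\chi_{\{\phi<Q\}}=\chi_{\{\psi<Q\}}=1$, so the $\ld^2$ terms cancel and
\[
\J(\psi)-\J(\phi)=\int_{\D\cap\{\psi<0\}}y\left[\MG\Big(\frac{\n\psi}{y},\psi\Big)-\MG(\mathbf 0,0)\right]dX.
\]
By convexity in $\bp$, $\MG(\frac{\n\psi}{y},\psi)\geq\MG(\mathbf 0,\psi)$, and by \eqref{G support} $\pt_z\MG(\bp,z)=0$ for $z\leq 0$, so $\MG(\mathbf 0,\psi)=\MG(\mathbf 0,0)$ on $\{\psi<0\}$. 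Hence $\J(\psi)\geq\J(\phi)$, so equality holds, and strict convexity again forces $\n\psi=0$ a.e. on $\{\psi<0\}$, which together with $\psi\in H^1$ yields $|\{\psi<0\}|=0$.

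\textbf{Main obstacle.} The only subtle point is passing from ``$\J(\psi)=\J(\text{truncation})$'' to ``$\psi=\text{truncation}$'', i.e. ruling out a positive-measure set where $\psi$ exceeds $Q$ (or is negative) but has zero gradient. This is handled by the uniform strict convexity \eqref{G dpp} in $\bp$ together with the elementary fact that an $H^1$ function whose gradient vanishes a.e. on the open-in-measure set $\{\psi>Q\}$ cannot actually take values above $Q$ on a set of positive measure (one can also argue directly: $(\psi-Q)_+\in H^1_0$-type function with $\n(\psi-Q)_+=0$ a.e., so $(\psi-Q)_+\equiv 0$). Alternatively, and perhaps more robustly, one invokes Lemma \ref{supersol lem}: $\psi$ is a supersolution of \eqref{elliptic equG}, and since the constant $Q$ is a solution with $\psi\leq Q$ on $\pt\D$ in the appropriate sense, a weak maximum principle gives $\psi\leq Q$; a symmetric comparison with the subsolution $0$ (using $\pt_z\MG\geq 0$ near $z=0$ from \eqref{G support}, or rather the sign structure there) gives $\psi\geq 0$. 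I would present the truncation argument as the primary proof since it is self-contained and uses only minimality.
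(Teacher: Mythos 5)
Your proof is correct and follows essentially the same route as the paper: both compare $\psi$ with its truncation at $Q$ (resp.\ at $0$), using admissibility from $0\leq\psi^\sharp\leq Q$, the sign conditions \eqref{G support}, and the convexity/coercivity of $\MG$ in $\bp$ to force $\n\psi=0$ a.e.\ on $\{\psi>Q\}$ (resp.\ $\{\psi<0\}$); the paper merely does this infinitesimally, taking $\psi^\tau=\psi+\tau\min\{0,Q-\psi\}$ and letting $\tau\to0$ to reach the first-variation inequality, while you compare directly with $\min\{\psi,Q\}$ and $\max\{\psi,0\}$ and use the integrated monotonicity of $z\mapsto\MG(\mathbf 0,z)$. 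One cosmetic fix: in your upper-bound display the competitor's value on $\{\psi>Q\}$ is $\MG(\mathbf 0,Q)$, not $\MG(\mathbf 0,\psi)$, which is exactly what your subsequent chain $\MG(\n\psi/y,\psi)\geq\MG(\mathbf 0,\psi)\geq\MG(\mathbf 0,Q)$ correctly accounts for.
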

\begin{proof}
Set $\psi^\tau=\psi+\tau\min\{0,Q-\psi\}$ for $\tau>0$. Since $\psi^\sharp\leq Q$ on $\pt\D$, one has $\psi^\tau=\psi^\sharp$ on $\pt\D$ and thus $\psi^\tau\in \K_{\psi^\sharp}$. Since $\{\psi^\tau<Q\}=\{\psi<Q\}$ and $\MG$ is convex with respect to $\bp$, using the same arguments as in Lemma \ref{supersol lem} gives
\begin{align*}
0&\leq \frac1{\tau}(\J(\psi^\tau)-\J(\psi))\\
&\leq \int_{\D} \pt_{p_i}\MG
\left(\frac{\n \psi}{y},\psi^\tau\right)\pt_i(\min\{0,Q-\psi\})+\frac1\tau\int_{\D}y\left[\MG\left(\frac{\n\psi}{y},\psi^{\tau}\right)-\MG\left(\frac{\n\psi}{y},\psi\right)\right].
\end{align*}
Taking $\tau\to0$, by \eqref{G support} and \eqref{G dp} one gets
\begin{align*}
0&\leq\int_{\D\cap\{\psi>Q\}}-\pt_{p_i}\MG\left(\frac{\n\psi}{y},\psi\right)\pt_i\psi+\int_{\D\cap\{\psi>Q\}}y\pt_z\MG\left(\frac{\n\psi}{y},\psi\right)(Q-\psi)\\
&\leq\int_{\D\cap\{\psi>Q\}}-\pt_{p_i}\MG\left(\frac{\n\psi}{y},\psi\right)\pt_i\psi
\leq-\mathfrak b_*\int_{\D\cap\{\psi>Q\}}\frac{|\n\psi|^2}{y},
\end{align*}
This implies $\psi\leq Q$.

Similarly the lower bound can be obtained by setting $\tilde\psi^\tau=\psi-\tau\min(0,\psi)$ with $\tau>0$. This completes the proof of the lemma.
\end{proof}

Now we prove a comparison principle for the associated elliptic equation \eqref{elliptic equG}.

\begin{lemma}\label{comparison principle}
	Given a domain $\tilde\D\subset \R\times (a,a+\tilde h)$ for some $a\in[0,\bar H]$ and $\tilde h>0$. Let $\psi\in H^1(\tilde\D)$ be a supersolution of the equation \eqref{elliptic equG} in the sense of \eqref{supersolution} and $\phi\in H^1(\tilde\D)$ be a solution of \eqref{elliptic equG} in the sense of
	\begin{equation}\label{solution}
		\int_{\tilde\D}\left[\pt_{p_i}\MG\left(\frac{\n\phi}{y},\phi\right)\pt_i\eta+y\pt_z\MG\left(\frac{\n\phi}{y},\phi\right)\eta\right]=0,\quad \text{for all } \eta \in C_0^\infty(\tilde\D).
	\end{equation}
	Assume that $\psi\geq\phi$ on $\pt \tilde\D$. Then $\psi\geq\phi$ in $\tilde\D$ as long as $\tilde h\leq h_0$ for some  constant $h_0>0$ depending on $B_*,\, S_*,\, \bar H,\, \g$, and $\v$.
\end{lemma}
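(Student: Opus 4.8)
The plan is to run the standard energy comparison argument: test the weak inequality \eqref{supersolution} (for $\psi$) and the weak equation \eqref{solution} (for $\phi$) against $\eta:=(\phi-\psi)_+$, and to exploit that although the zeroth-order coefficient $y\,\pt_z\MG(\n\psi/y,\psi)$ need not be monotone in $\psi$, it has uniformly small derivatives in $z$ (this smallness is quantified by $\d$ in \eqref{delta}), which combined with the thinness $\tilde h$ of the strip containing $\tilde\D$ is enough to close the estimate through a Poincar\'e inequality. First I would note that $\eta=(\phi-\psi)_+\in H^1_0(\tilde\D)$ (since $\psi\geq\phi$ on $\pt\tilde\D$) and $\eta\geq0$; using the linear growth of $\pt_{\bp}\MG$ in $\bp$ from \eqref{G dp} together with the bounds \eqref{G dpp} and \eqref{G dzdzpdzz}, the formulations \eqref{supersolution} and \eqref{solution} extend from $C_0^\infty(\tilde\D)$ to this test function by density. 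Subtracting the equation from the inequality, and noting that the integrand vanishes off $E:=\tilde\D\cap\{\phi>\psi\}$, on which $\eta=\phi-\psi$ and $\n\eta=\n\phi-\n\psi$, yields
\[
\int_E\Big[\big(\pt_{p_i}\MG(\tfrac{\n\psi}{y},\psi)-\pt_{p_i}\MG(\tfrac{\n\phi}{y},\phi)\big)\pt_i\eta+y\big(\pt_z\MG(\tfrac{\n\psi}{y},\psi)-\pt_z\MG(\tfrac{\n\phi}{y},\phi)\big)\eta\Big]\geq0.
\]

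The core step is to split each of these differences, via the fundamental theorem of calculus, into a ``gradient part'' (integrating $\pt_{p_ip_j}\MG$, resp.\ $\pt_{p_iz}\MG$, along the segment joining $\n\phi/y$ to $\n\psi/y$) and a ``$z$-part'' (integrating $\pt_{p_iz}\MG$, resp.\ $\pt_{zz}\MG$, from $\phi$ to $\psi$). Writing $w:=\eta$, the gradient part of the first difference, paired with $\pt_i\eta$, produces $-\tfrac1y\,\tilde a_{ij}\pt_jw\,\pt_iw$ with $\tilde a_{ij}:=\int_0^1\pt_{p_ip_j}\MG\big(\tfrac{\n\phi+s\n(\psi-\phi)}{y},\psi\big)ds$ symmetric and $\geq\mathfrak b_*$ by \eqref{G dpp}, hence $\leq-\tfrac{\mathfrak b_*}{y}|\n w|^2$ — the coercive ``good'' term. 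The remaining three contributions — the $z$-part of $\pt_{p_i}\MG$, and the gradient part and the $z$-part of $\pt_z\MG$ — are each bounded on $E$, via \eqref{G dzdzpdzz}, by $C\d\big(|\n w|\,w+y\,w^2\big)$. Absorbing the cross term by Young's inequality against $\tfrac{\mathfrak b_*}{2y}|\n w|^2$, using $\tfrac1y\geq\tfrac1{\bar H+\tilde h}$ and $y\leq\bar H+\tilde h$ on the strip, then $\d\leq C_\d$, and finally the Poincar\'e inequality $\int_E w^2\leq C\tilde h^2\int_E|\n w|^2$ (valid since $w\in H^1_0(\tilde\D)$ and $\tilde\D\subset\R\times(a,a+\tilde h)$ lies in a horizontal strip of width $\tilde h$), one arrives at
\[
0\leq\Big(-\frac{\mathfrak b_*}{2(\bar H+\tilde h)}+C'(\bar H+\tilde h)\tilde h^2\Big)\int_E|\n w|^2,\qquad C'=C'(B_*,S_*,\bar H,\g,\v).
\]
Taking $h_0=h_0(B_*,S_*,\bar H,\g,\v)\leq1$ small enough that the bracket is negative whenever $\tilde h\leq h_0$ forces $\n w\equiv0$ on $E$, hence $w\equiv0$, i.e.\ $\phi\leq\psi$ in $\tilde\D$.

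I expect the main obstacle to be the bookkeeping in the core step rather than any single estimate: one must correctly identify which piece of each split difference carries the coercive gradient term and check that every other piece is genuinely lower order, controlled by the smallness parameter $\d$ and by a power of $\tilde h$. In particular, since $\v$ (and hence $\d$) is fixed here, smallness of $\d$ alone does not suffice — the strip must be thin, so the Poincar\'e step is essential, and $h_0$ unavoidably depends on $\v$. A secondary point is to treat the factor $1/y$ using only $\tfrac1y\geq\tfrac1{\bar H+\tilde h}$, so that the possible degeneracy at $y=0$ (when $a=0$) does not interfere with the argument.
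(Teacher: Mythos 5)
Your proposal is correct and follows essentially the same route as the paper's proof: test with $(\phi-\psi)^+$, split the differences into a coercive gradient part controlled by the convexity bound \eqref{G dpp} and lower-order $z$-parts of size $\d$ via \eqref{G dzdzpdzz}, then absorb via Young and the Poincar\'e inequality in the thin strip to force $\nabla(\phi-\psi)^+\equiv 0$. The only cosmetic difference is that you use a fundamental-theorem-of-calculus decomposition where the paper uses convexity plus the triangle inequality, which amounts to the same estimates.
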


\begin{proof}
	Take $\eta=(\phi-\psi)^+$ as the test function in \eqref{supersolution} and \eqref{solution}, then
	$$\int_{\tilde\D}\left[\pt_{p_i}\MG\left(\frac{\n\phi}{y},\phi\right)-\pt_{p_i}\MG\left(\frac{\n\psi}{y},\psi\right)\right]\pt_i\eta
	+\int_{\tilde\D}y\left[\pt_z\MG\left(\frac{\n\phi}{y},\phi\right)-\pt_z\MG\left(\frac{\n\psi}{y},\psi\right)\right]\eta\leq 0.$$
	Using the convexity of $\MG$ in \eqref{G dpp} and the triangle inequality yields
	\begin{align*}
		&\int_{\tilde\D} \left[\pt_{p_i}\MG\left(\frac{\n\phi}{y},\phi\right)-\pt_{p_i}\MG\left(\frac{\n\psi}{y},\psi\right)\right]\pt_i\eta\\
		\geq& \mathfrak b_*\int_{\tilde\D}\frac{|\n\eta|^2}{y}+\int_{\tilde\D}\left[\pt_{p_i}\MG\left(\frac{\n\psi}{y},\phi\right)-\pt_{p_i}\MG\left(\frac{\n\psi}{y},\psi\right)\right]\pt_i\eta,
	\end{align*}
	where the last integral in the above inequality can be estimated from \eqref{G dzdzpdzz} as
	\begin{align*}
		&\int_{\tilde\D}\left[\pt_{p_i}\MG\left(\frac{\n\psi}{y},\phi\right)-\pt_{p_i}\MG\left(\frac{\n\psi}{y},\psi\right)\right]\pt_i\eta\\
		=&\int_{\tilde\D}\left[(\phi-\psi)\int_0^1\pt_{p_iz}\MG\left(\frac{\n\psi}{y},s\phi+(1-s)\psi\right)ds\right]\pt_i\eta
		\geq-\d\int_{\tilde\D}|\n\eta|\eta.
	\end{align*}
	Similarly, using \eqref{G dzdzpdzz} and the triangle inequality yields
	$$\int_{\tilde\D}y\left[\pt_z\MG\left(\frac{\n\phi}{y},\phi\right)-\pt_z\MG\left(\frac{\n\psi}{y},\psi\right)\right]\eta
	\geq -\d\int_{\tilde\D}(|\n\eta|\eta+y\eta^2).$$
	Combining the above estimates together gives
	$$\mathfrak b_*\int_{\tilde\D}\frac{|\n\eta|^2}{y}\leq 2\d\int_{\tilde\D}|\n\eta|\eta+\d\int_{\tilde\D}y\eta^2.$$
	Since the domain $\tilde\D$ is contained in the finite strip $\R\times(a,a+\tilde h)$ where $a\in[0,\bar H]$ and $\tilde h>0$,
	applying the Cauchy-Schwarz inequality and the Poincar\'{e} inequality to $\eta(x,\cdot)$ for each $x$, one has
	\begin{align*}
	\int_{\tilde\D}|\n\eta|^2
	&\leq \frac{\bar H+\tilde h}{\mathfrak b_*}\left[\frac{\mathfrak b_*}{2(\bar H+\tilde h)}\int_{\tilde\D}|\n\eta|^2+\frac{2\d^2(\bar H+\tilde h)}{\mathfrak b_*}\int_{\tilde D}\eta^2+\d(\bar H+\tilde h)\int_{\tilde\D}\eta^2\right]\\
	&\leq \frac{1}2\int_{\tilde\D}|\n\eta|^2+C(\bar H+\tilde h)^2\left(\frac{2\d}{\mathfrak b_*}+1\right)\frac{\d}{\mathfrak b_*}\tilde h^2\int_{\tilde\D}|\n\eta|^2,
	\end{align*}
   where $C>0$ is a universal constant. Note that $\d/\mathfrak b_*\leq C(B_*,S_*,\bar H,\g,\v)$ by the definitions of $\mathfrak b_*$ and $\d$ in Proposition \ref{Gproperties pro}.
   Thus if $\tilde h\leq h_0$ for some $h_0=h_0(B_*,S_*,\bar H,\g,\v)$ sufficiently small, then $\n\eta=0$ in $\tilde\D$, which implies that $\eta=0$ in $\tilde\D$. This completes the proof of the lemma.
\end{proof}
\begin{remark}\label{rmk_CP}
	It follows from the proof of Lemma \ref{comparison principle} that if the $L^{\infty}$-bounds of $|\pt_{\bp z}\MG|$ and $|\pt_{zz}\MG|$ are sufficiently small depending on $B_*$, $S_*$, $\bar H$, $\g$, and $\tilde h$, then the comparison principle still holds in the domain $\tilde D$ even if the width $\tilde h$ is large.
\end{remark}

Now we are in position to prove the H\"{o}lder regularity for minimizers. The proof follows from standard Morrey type estimates.

\begin{lemma}\label{psi holder}
Let $\psi$ be a minimizer of \eqref{minimization problem}, then $\psi\in C_{{\rm loc}}^{0,\a}(\D)$ for any $\a\in(0,1)$. Moreover,
$$\|\psi\|_{C^{0,\a}({\D'})}\leq C(\mathfrak b_*,\d, \D', Q,\ld,\a) \q\text{ for any } {\D'}\Subset \D.$$
\end{lemma}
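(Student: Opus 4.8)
The statement is a standard interior Hölder estimate for minimizers of a functional with a strongly convex principal part and bounded lower-order terms, so the plan is to run a Morrey/De Giorgi type argument adapted to the weight $y$. Recall that $\D\subset\R\times[0,\bar H]$, so the weight $y$ is bounded above by $\bar H$; however $y$ may vanish on $\N_0$, so the estimate is only claimed on $\D'\Subset\D$, where $y$ is bounded below by a positive constant $c_0=c_0(\D')$. On such a subdomain the functional is comparable (with constants depending on $\mathfrak b_*$, $c_0$, $\bar H$) to the model functional $\int |\n\phi|^2$ plus a bounded perturbation, so the classical theory applies.

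\emph{Step 1 (Caccioppoli inequality).} Fix a ball $B_{2r}(x_0)\Subset\D$ and a cutoff $\zeta\in C_0^\infty(B_{2r})$ with $\zeta\equiv1$ on $B_r$, $|\n\zeta|\le C/r$. Compare $\psi$ with the competitor $\phi=\psi-\zeta^2(\psi-\bar\psi)$, where $\bar\psi$ is the average of $\psi$ over $B_{2r}$ (note $0\le\psi\le Q$ by Lemma \ref{lem_minimizer_bound}, so $\phi$ is admissible after the trivial modification that does not affect the interior estimate). Using $\J(\psi)\le\J(\phi)$, the strong convexity bound \eqref{G dpp}, the bounds \eqref{G dp}, \eqref{G dzdzpdzz} on $\pt_z\MG$ and $\MG$ itself, and the elementary identity $\MG(\bp,z)\le C(|\bp|^2+|z|)$, one absorbs the gradient terms coming from the perturbation and obtains
\begin{equation*}
\int_{B_r}|\n\psi|^2\,dX\le \frac{C}{r^2}\int_{B_{2r}}|\psi-\bar\psi|^2\,dX+C r^2,
\end{equation*}
with $C=C(\mathfrak b_*,\d,c_0,\bar H,Q,\ld)$; the $Cr^2$ term collects the contribution of $\ld^2\chi_{\{\cdot<Q\}}$ and of $\pt_z\MG$.

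\emph{Step 2 (Morrey decay).} Apply the Sobolev–Poincaré inequality in $\R^2$ to the right-hand side: $r^{-2}\int_{B_{2r}}|\psi-\bar\psi|^2\le C(\int_{B_{2r}}|\n\psi|)^2$, which by Hölder is $\le C r^{2-\tfrac{2}{q}}(\int_{B_{2r}}|\n\psi|^q)^{2/q}$ for $q<2$; more directly, one iterates the Caccioppoli estimate through the standard hole-filling / Widman technique to get a decay $\int_{B_r}|\n\psi|^2\le C(r/R)^{2\a}\big(\int_{B_R}|\n\psi|^2+R^2\big)$ for every $\a\in(0,1)$ and all $B_R\Subset\D$. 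Equivalently, one invokes the classical result (e.g. Giaquinta, or Gilbarg–Trudinger Ch.~8) that a $Q$-minimizer of a functional with quadratic growth and the structure conditions verified in Proposition \ref{Gproperties pro} lies in $C^{0,\a}_{\rm loc}$; Morrey's lemma then converts the decay of the Dirichlet integral into the claimed Hölder seminorm bound on $\D'$.

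\emph{Main obstacle.} The only genuine subtlety compared with the textbook case is the degenerate weight $y$, which forces the estimate to be interior and makes all constants depend on ${\rm dist}(\D',\N_0)$ through $c_0$; away from $\N_0$ the weight is a harmless smooth bounded-below factor and the argument is routine. A minor bookkeeping point is the presence of the free-boundary term $\ld^2\chi_{\{\psi<Q\}}$, but since $0\le\chi_{\{\psi<Q\}}\le1$ it contributes only a bounded right-hand side of order $r^2$ in the Caccioppoli inequality and does not affect the decay rate. Hence the proof reduces to citing the standard Morrey estimate once the structure conditions \eqref{G dp}, \eqref{G dpp}, \eqref{G dzdzpdzz}, \eqref{G geq} are in hand, which is exactly what Proposition \ref{Gproperties pro} provides.
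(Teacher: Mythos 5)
Your Step 1 (the Caccioppoli inequality obtained by testing minimality against the cut-off competitor $\phi=\psi-\zeta^2(\psi-\bar\psi)$, with the terms $\ld^2\chi_{\{\cdot<Q\}}$ and $\pt_z\MG$ contributing only $O(r^2)$) is sound, but Step 2 contains a genuine gap: hole-filling/Widman iteration of a Caccioppoli inequality, or equivalently the quasi-minimizer (De Giorgi class) theory you cite, yields decay $\int_{B_r}|\n\psi|^2\leq C(r/R)^{2\a_0}\big(\int_{B_R}|\n\psi|^2+R^2\big)$ only for \emph{some} fixed small exponent $\a_0$ determined by the ratio $\theta=C/(C+1)<1$ of the hole-filled constants (equivalently by the ellipticity ratio $\mathfrak b^*/\mathfrak b_*$), not "for every $\a\in(0,1)$" as you assert. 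Since the lemma claims $\psi\in C^{0,\a}_{\rm loc}$ for \emph{any} $\a\in(0,1)$, your argument as written proves a strictly weaker statement.

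To reach arbitrary $\a<1$ one must exploit that the integrand is a smooth, uniformly convex perturbation of a quadratic functional, which is exactly what the paper's proof does: on each small ball $B_r\subset\D'$ (with $r$ small enough that the comparison principle of Lemma \ref{comparison principle} applies, giving $0\leq\phi\leq Q$) it compares $\psi$ with the minimizer $\phi$ of $\int_{B_r}y\,\MG(\n\phi/y,\phi)$ having trace $\psi$ on $\pt B_r$, i.e.\ the solution of \eqref{homosol}. Minimality of $\psi$ plus the strong convexity \eqref{G dpp} and the bounds \eqref{G dzdzpdzz} then give
\begin{equation*}
\int_{B_r}|\n\psi-\n\phi|^2\leq C\big(|\psi-\phi|^2+\ld^2\chi_{\{\psi=Q\}}\big)\text{-terms}\leq Cr^2,
\end{equation*}
while interior elliptic regularity for $\phi$ yields the linear decay $\|\n\phi\|_{L^2(B_{\hat r})}\leq C(\hat r/r)\|\n\phi\|_{L^2(B_r)}+Cr^2$. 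Combining the two and invoking the standard iteration lemma produces $\int_{B_\rho}|\n\psi|^2\leq C\rho^{2\a}$ for every $\a\in(0,1)$, and Morrey's lemma then gives the claimed H\"older bound with the stated dependence of the constant. So the missing ingredient in your proposal is this freezing/comparison step with the solution of the associated Euler--Lagrange problem; the direct Caccioppoli route cannot, by itself, deliver the full range of exponents.
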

\begin{proof}
Given any $B_r\subset \D'\Subset \D$ with $r$ sufficiently small such that the comparison principle (cf. Lemma \ref{comparison principle}) holds in $B_r$. Let $\phi\in H^1(B_r)$ be a minimizer for the energy functional $\int_{B_r}y\MG(\frac{\n\phi}{y},\phi)$ among $\phi\in H^1(B_r)$ with boundary trace $\phi=\psi$ on $\pt B_r$.
Then $\phi$ is a solution of the following problem
\begin{equation}\label{homosol}
	\begin{cases}
		\pt_i\pt_{p_i}\MG\left(\frac{\n\phi}{y},\phi\right)-y\pt_z\MG\left(\frac{\n\phi}{y},\phi\right)=0 &\text{in } B_r\\
		\phi=\psi &\text{on } \pt B_r.\end{cases}
\end{equation}
By the comparison principle one has $0\leq \phi\leq Q$ in $B_r$. Moreover, it follows from the interior regularity for elliptic PDE (cf. for example \cite{HL_book}) that
\[\|\nabla\phi\|_{L^2(B_{\hat r})}\leq C \frac{\hat r}{r}\|\nabla \phi\|_{L^2(B_{r})}+ C r^2\quad \text{for any}\,\, 0< \hat r\leq r,\]
for some $C=C(\mathfrak b_*, \delta,\D')$. Define
	\[
	\tilde{\psi}:=\left\{
	\begin{aligned}
		& \phi\quad && \text{in } B_r,\\
		& \psi\quad && \text{in }\D \setminus B_r.
	\end{aligned}\right.
	\]
	Clearly, $\tilde{\psi}\in H^1(\D)$.
	Since $\psi$ is a minimizer of \eqref{minimization problem},
	it holds
	\begin{equation}\label{eq1}
	\int_{B_r}y\left[\MG
	\left(\frac{\n\psi}{y},\psi\right)-\MG\left(\frac{\n\phi}{y},\phi\right)\right]\leq \ld^2\int_{B_r}y(\chi_{\{\phi<Q\}}-\chi_{\{\psi<Q\}}).	
	\end{equation}
By the Taylor expansion, \eqref{G dpp} and \eqref{G dzdzpdzz}, one has
\begin{align*}
	&\int_{B_r}y\left[\MG\left(\frac{\n\psi}{y},\psi\right)-\MG\left(\frac{\n\phi}{y},\phi\right)\right]\\
	\geq&\int_{B_r}\left[\pt_{p_i}\MG\left(\frac{\n\phi}{y},\phi\right)\pt_i(\psi-\phi)+y\pt_{z}\MG\left(\frac{\n\phi}{y},\phi\right)(\psi-\phi)\right]\\
	&+\int_{B_r}\frac y2\left(\mathfrak b_*\left|\frac{\nabla\psi-\n\phi}y\right|^2-\d\left|\frac{\n\psi-\n\phi}{y}\right||\psi-\phi|-\d|\psi-\phi|^2\right).
\end{align*}
Then using \eqref{homosol} and the Cauchy-Schwarz inequality to the right-hand side of the above inequality gives
\begin{align}\label{eq2}
	\int_{B_r}y\left[\MG\left(\frac{\n\psi}{y},\psi\right)-\MG\left(\frac{\n\phi}{y},\phi\right)\right]
	\geq&\frac{\mathfrak b_*}4\int_{B_r}\frac{|\nabla\psi-\n\phi|^2}y-C\int_{B_r}y|\psi-\phi|^2,
\end{align}
where $C=C(\mathfrak b_*,\d)$. Combining \eqref{eq1} and \eqref{eq2} together, 
one has
$$\int_{B_r}|\n\psi-\n\phi|^2\leq C(\mathfrak b_*,\d,\D')\int_{B_r}(|\psi-\phi|^2+\lambda^2\chi_{\{\psi=Q\}})
\leq C(\mathfrak b_*,\d,\D', Q, \ld)r^2.$$
The desired H\"{o}lder regularity then follows from the standard Morrey type estimates (cf. \cite[the proof for Theorem 3.8]{HL_book}).
\end{proof}

\subsection{Lipschitz regularity and nondegeneracy}
In this subsection we establish the (optimal) Lipschitz regularity and the nondegeneracy property of minimizers, which play an important role to get the measure theoretic properties of the free boundary.

Consider the rescaled function
\begin{equation}\label{def:psi*}
\psi^*_{\bar X,r}(X):=\frac{Q-\psi(\bar X+rX)}{r},\q \text{for} \  \bar X\in\D, \ r\in(0,1).
\end{equation}
If $\psi$ is a minimizer of \eqref{minimization problem}, then $\psi^*_{\bar X,r}$ is a minimizer of
\begin{equation*}\label{J scale}
\J_{\bar X,r}(\phi):=\int_{\D_{\bar X,r}}(\bar y+ry)\left[\tilde\MG_r\left(\frac{\n\phi}{\bar y+ry},\phi\right)
+\ld^2\chi_{\{\phi>0\}}\right]dX
\end{equation*}
over the admissible set
$$\K_{\bar X,r,\psi^{\sharp}}:=\left\{\phi\in H^1(\D_{\bar X,r}):\phi(X)=(Q-\psi^\sharp(\bar X+rX))/r \text{ on } \pt\D_{\bar X,r}\right\},$$
where
\begin{equation}\label{Gsacle}
\tilde\MG_r(\bp,z):=\MG(-\bp,Q-rz) \q\text{and}\q
\D_{\bar X,r}:=\left\{(X-\bar X)/r: X\in\D\right\}.
\end{equation}
Moreover, $\psi^*_{\bar X,r}$ satisfies
\begin{equation}\label{ellipticsub scale}
\pt_i\pt_{p_i}\tilde\MG_r\left(\frac{\n\psi^*_{\bar X,r}}{\bar y+ry},\psi^*_{\bar X,r}\right)-(\bar y+ry)\pt_z\tilde\MG_r\left(\frac{\n\psi^*_{\bar X,r}}{\bar y+ry},\psi^*_{\bar X,r}\right)\geq0 \ \ \text{in }\D_{\bar X,r},
\end{equation}
and
\begin{equation}\label{ellipticequ scale}
	\pt_i\pt_{p_i}\tilde\MG_r\left(\frac{\n\psi^*_{\bar X,r}}{\bar y+ry},\psi^*_{\bar X,r}\right)-(\bar y+ry)\pt_z\tilde\MG_r\left(\frac{\n\psi^*_{\bar X,r}}{\bar y+ry},\psi^*_{\bar X,r}\right)=0 \ \text{ in } \D_{\bar X,r}\cap\{\psi^*_{\bar X,r}>0\}.
\end{equation}
It follows from Proposition \ref{Gproperties pro}
that the function $\tilde\MG_r$ satisfies the following properties:
  \begin{align}
	&\mathfrak b_\ast|\bp|^2 \leq p_i \pt_{p_i} \tilde{\mathcal G}_r (\bp, z)\leq \mathfrak b_\ast^{-1} |\bp|^2,\label{Gr_dp}\\
	&\mathfrak b_\ast |\xi|^2 \leq \xi_i \pt_{p_ip_j}\tilde{\mathcal G}_r(\bp,z)\xi_j\leq \mathfrak b^\ast|\xi|^2, \quad \text{for all } \mathbf{\xi}\in \R^2,\label{Gr_dpp}\\
	&\pt_z\tilde\MG_r(\bp,z)=0  \text{ in } \R^2\times[Q/r,\infty), \q \pt_z\tilde\MG_r(\bp,z)\leq0 \text{ in } \R^2\times (-\infty,0], \label{label_1}\\
& \v^{-1}|\pt_z \tilde\MG_{r}(\bp,z)|
+|\pt_{\bp z}\tilde\MG_{r}(\bp,z)|\leq r\delta\leq rC_\d, \q |\pt_{zz}\tilde\MG_{r}(\bp,z)|\leq r^2\delta,\label{eq_rescale3}\\
&\tilde\MG_{r}(0,0)=0,\q\tilde\MG_{r}(\bp,z)\geq\frac{\mathfrak b_*}2|\bp|^2-\d rz_+,\label{eq_rescale2}
\end{align}
where the constants $\mathfrak b_*=\mathfrak b_*(B_*,S_*,\bar H,\g)$,  $\mathfrak b^*=\mathfrak b^*(B_*,S_*,\bar H,\g,\v)$, $\d$, and $C_\d=C_\d(B_*, S_*,\bar H, \g,\v)$ are the same as in Proposition \ref{Gproperties pro}. Besides, using $\pt_z\tilde\MG_r(\bp,0)\leq0$ and the second estimate in \eqref{eq_rescale3} one has
\begin{align}
	& \pt_z\tilde\MG_r (\bp, z)\leq  r^2 \delta z_+. \label{eq_rescale1}
\end{align}
Furthermore, similar arguments as in Lemma \ref{comparison principle} (see also Remark \ref{rmk_CP}) give that, there exists a constant
\begin{equation}\label{r_comparison_principle}
	r^*:=r^*(B_*, S_*,\bar H, \g,\v)>0
\end{equation}
such that solutions to the rescaled equation \eqref{ellipticequ scale} enjoy the comparison principle in $\D_{\bar X,r}\cap B_4$ as well  provided $r\leq r^*$.

\begin{lemma}\label{linear growth}
Let $\psi$ be a minimizer of \eqref{minimization problem}. Let $\bar{X}=(\bar x,\bar y)\in\D\cap\{\psi<Q\}$ satisfy
${\rm dist}(\bar{X},\G_{\psi})\leq \bar r\min\{\ld,1\}{\rm dist}(\bar{X},\pt\D)$, where $\bar r=\bar r(B_*, S_*,\bar H, \g,\v)\in(0,1/4)$. Then there exists a constant  $C=C(B_*, S_*,\bar H, \g,\v)>0$ such that
$$Q-\psi(\bar{X})\leq C\ld\bar y {\rm dist}(\bar{X},\G_{\psi}).$$
\end{lemma}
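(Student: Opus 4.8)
## Proof Proposal for Lemma \ref{linear growth}

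The plan is to follow the classical Alt–Caffarelli linear growth argument adapted to the weighted/rescaled setting established above. The key point is to work with the rescaled minimizer $\psi^*_{\bar X,r}$ defined in \eqref{def:psi*}, choosing $r$ comparable to $\mathrm{dist}(\bar X,\Gamma_\psi)$, and to exploit that after rescaling the coefficient function $\tilde{\MG}_r$ has the nearly-homogeneous structure recorded in \eqref{Gr_dp}–\eqref{eq_rescale1} — in particular the perturbation terms carry a factor $r$ or $r^2$ and so are harmless for $r\le r^*$.

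First I would set $d:=\mathrm{dist}(\bar X,\Gamma_\psi)$ and argue by contradiction: suppose $Q-\psi(\bar X) \ge C\lambda\bar y\, d$ for a large constant $C$ to be chosen. Pick $r$ a fixed small multiple of $d$ (so that $B_{4}\subset\D_{\bar X,r}$ by the hypothesis $\mathrm{dist}(\bar X,\Gamma_\psi)\le \bar r\min\{\lambda,1\}\mathrm{dist}(\bar X,\partial\D)$, and $r\le r^*$ so the comparison principle of \eqref{r_comparison_principle} applies). Then $w:=\psi^*_{\bar X,r}$ is a nonnegative minimizer on $\D_{\bar X,r}$ with $w(0)=(Q-\psi(\bar X))/r$ large. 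Next I would compare $w$ on $B_1$ with the solution $h$ of the homogeneous equation \eqref{ellipticequ scale} (no free-boundary term) having boundary data $w$ on $\partial B_1$; by the comparison principle $h\ge 0$, and since $\pt_z\tilde\MG_r$ is controlled by \eqref{eq_rescale1}, Harnack's inequality for $h$ (which is a positive supersolution-type bound, using the uniform ellipticity \eqref{Gr_dpp} and smallness of the lower-order terms) gives $h\ge c\, w(0)$ throughout $B_{1/2}$ — in particular $h$ is large. On the other hand, the minimality of $w$ against the competitor equal to $h$ in $B_1$ and $w$ outside yields the energy inequality
\begin{equation*}
\int_{B_1}(\bar y+ry)\Big[\tilde\MG_r\Big(\tfrac{\n w}{\bar y+ry},w\Big)-\tilde\MG_r\Big(\tfrac{\n h}{\bar y+ry},h\Big)\Big]\le \lambda^2\int_{B_1}(\bar y+ry)\,\chi_{\{w=0\}},
\end{equation*}
and the left side is bounded below by $c\int_{B_1}|\n w-\n h|^2$ minus controllable error terms (Taylor expansion plus \eqref{eq_rescale3}), exactly as in Lemma \ref{psi holder}. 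Since $\{w=0\}\cap B_1\neq\emptyset$ (because $d=\mathrm{dist}(\bar X,\Gamma_\psi)$ forces the free boundary to enter a fixed ball after rescaling), there is a point $X_0\in B_1$ with $w(X_0)=0$; combining $h\ge c\,w(0)$ at $X_0$ with $\int_{B_1}|\n w-\n h|^2\lesssim \lambda^2|B_1|$ and a trace/Poincaré estimate bounds $|h(X_0)-w(X_0)|=|h(X_0)|$ from above by $C\lambda$, forcing $c\,w(0)\le C\lambda$, i.e. $Q-\psi(\bar X)\le C'\lambda r\le C''\lambda\bar y\, d$ — contradicting the assumption for $C$ large. (The factor $\bar y$ appears because the weight $\bar y+ry$ in $\D_{\bar X,r}$ is comparable to $\bar y$ on $B_4$.)

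I expect the main obstacle to be the careful bookkeeping of the weight $y=\bar y+ry$ and the perturbation terms in $\tilde\MG_r$: one must verify that all the ``error'' contributions from $\pt_z\tilde\MG_r$, $\pt_{\bp z}\tilde\MG_r$, $\pt_{zz}\tilde\MG_r$ (of size $O(r)$ or $O(r^2)$ by \eqref{eq_rescale3}) do not spoil either the Harnack inequality for $h$ or the coercivity estimate $\int|\n w-\n h|^2\lesssim(\text{free boundary measure})$, and that the Harnack constant and $r^*$ can be taken uniform in $\bar X$ (depending only on $B_*,S_*,\bar H,\g,\v$). A secondary technical point is confirming that the free boundary genuinely enters the fixed ball $B_{1/2}$ (not just $B_1$) after rescaling — this requires choosing the rescaling radius $r$ as a sufficiently small fixed fraction of $d$ and possibly iterating the comparison step on a dyadic sequence of balls, which is the standard Alt–Caffarelli device and should go through verbatim given the uniform estimates in Proposition \ref{Gproperties pro}.
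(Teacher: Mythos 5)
Your overall skeleton (rescale by the distance to the free boundary, compare with the homogeneous replacement $h$, invoke minimality) is the same as the paper's, but the decisive quantitative step is not valid as written. From the energy comparison you obtain at best $\int_{B_1}|\nabla(w-h)|^2\leq C\lambda^2\bar y^2\int_{B_1}\chi_{\{w=0\}}$, and no trace or Poincar\'e inequality converts an $L^2$ bound on $\nabla(w-h)$ into a pointwise bound $|h(X_0)-w(X_0)|\leq C\lambda$ at a single point $X_0$: in two dimensions $H^1$ does not embed into $L^\infty$, and even an $L^2$ bound on $w-h$ says nothing about one point. Worse, when you replace $\lambda^2\int\chi_{\{w=0\}}$ by $\lambda^2|B_1|$ you discard exactly the structure the proof needs. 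The paper keeps the factor $|\{\psi^*=0\}\cap B_2(X_0)|$ in the gradient estimate \eqref{gradient estimate}, proves the matching Poincar\'e-type inequality $C_0m_0|\mathcal D_0|^{1/2}\leq C\|\nabla(\phi-\psi^*)\|_{L^2(B_2(X_0))}$ of \eqref{pctype estimate} by integrating along rays from the free boundary point --- which requires the barrier lower bound \eqref{barrier2}, $\phi\gtrsim m_0(2-|X-X_0|)$, i.e.\ a \emph{linear} lower bound up to $\partial B_2(X_0)$, not merely the Harnack bound $h\geq c\,m_0$ on an interior ball --- and then cancels $|\mathcal D_0|^{1/2}$ on both sides to conclude $m_0\leq C\lambda\bar y$; the degenerate case $|\mathcal D_0|=0$ is treated separately (the gradient estimate then forces $\psi^*\equiv\phi$, contradicting that $X_0$ is a free boundary point). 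Your pointwise deduction cannot be repaired without reinstating this measure-weighted cancellation, so the proposal has a genuine gap at its conclusion.

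Two further points on the setup. Choosing $r$ a \emph{small} fixed fraction of $d=\mathrm{dist}(\bar X,\Gamma_\psi)$ pushes the free boundary to distance $d/r\gg 1$ in the rescaled variables, so $\{w=0\}$ would not meet $B_1$ at all; the paper takes $r=d$ exactly, so that free boundary points of $\psi^*$ lie in $\overline B_1$ and the auxiliary problem can be centered at $X_0\in\Gamma_{\psi^*}\cap\overline B_1$ with radius $2$. Also, the smallness needed to run Harnack with the inhomogeneous term and to absorb the $O(r)$, $O(r^2)$ perturbations of $\tilde\MG_r$ is of the form $r\lesssim\min\{\lambda,1\}\bar y$, which is precisely what the hypothesis $\mathrm{dist}(\bar X,\Gamma_\psi)\leq\bar r\min\{\lambda,1\}\mathrm{dist}(\bar X,\partial\mathcal D)$ supplies; it is this $\bar y$-dependence (the source of the factor $\bar y$ in the final estimate), not merely $r\leq r^*$, that has to be tracked through each step.
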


\begin{proof}
Let $\psi_{\bar X,r}^*(X)$ be defined in \eqref{def:psi*} with $r:={\rm dist}(\bar{X},\G_{\psi})\in(0,\bar r{\rm dist}(\bar X,\pt \D))$, where $\bar r>0$ is a small constant to be determined later. Then $\psi_{\bar X,r}^*$ satisfies \eqref{ellipticsub scale}-\eqref{ellipticequ scale}. Suppose
\begin{align}\label{m0}
	m_0:=\psi^*_{\bar X,r}(0)\geq C_{m}\ld\bar y
\end{align}
for a sufficiently large constant
$C_{m}=C_{m}(B_*, S_*,\bar H, \g,\v)>0$.
If we can derive a contradiction, then the desired conclusion follows.
For notational convenience, we omit the subscript $\bar X$ and $r$ of $\psi^*_{\bar X,r}$ in the following proof.

\emph{Step 1. Construction of the barrier function.}  For any given ${X_0}\in \G_{\psi^*}\cap\overline{B}_1$, let $\phi$ be the solution of
\begin{equation}\label{barrier}
\begin{cases}
\pt_i\pt_{p_i}\tilde\MG_r\left(\frac{\n\phi}{\bar y+ry},\phi\right)-(\bar y+ry)\pt_z\tilde \MG_r\left(\frac{\n\phi}{\bar y+ry},\phi\right)=0 & \text{in } B_{2}({X_0}),\\
\phi=\psi^* & \text{on } \pt B_{2}({X_0}).
\end{cases}\end{equation}
In view of \eqref{ellipticsub scale}, the function $\phi\geq\psi^*$ in $B_{2}({X_0})$ provided $r\leq r^*\bar y/\bar H\leq r^*$, where  $r^*$ is given by \eqref{r_comparison_principle}.
In particular, $\phi(0)\geq \psi^*(0)=m_0$. Thus it follows from \eqref{eq_rescale3}, \eqref{m0} and the Harnack inequality (cf. \cite[Theorems 8.17 and 8.18]{GT_book}) that there exists a constant $C_0=C_0(B_*, S_*,\bar H, \g,\v)>0$ such that
\begin{equation}\label{barrier1}
\phi(X)\geq C_0m_0 \q \text{for any } X\in B_{\frac12}({X_0}),
\end{equation}
provided $r\leq r_1\ld\bar y$ for some $r_1=r_1(B_*, S_*,\bar H, \g,\v)>0$.
We claim that if $r\leq r_1\bar y$ for a possibly smaller $r_1$ with the same parameter dependence, then there exists a constant $\bar C<1/2$ such that
\begin{equation}\label{barrier2}
\phi(X)\geq \bar C C_0m_0(2-|X-{X_0}|) \q \text{for any } X\in B_{2}({X_0}).
\end{equation}
Indeed, by \eqref{barrier1}, it suffices to prove \eqref{barrier2} in $B_{2}({X_0})\backslash B_{\frac12}({X_0})$. For any fixed large constant $\tau>4$, set
$$\Phi_0(X):=C_0m_0\left(e^{-\tau|X-{X_0}|^2}-e^{-4\tau}\right).$$
Then for $X\in B_{2}(X_0)\backslash B_{\frac12}(X_0)$, it follows from \eqref{Gr_dpp}, \eqref{eq_rescale3}, and \eqref{eq_rescale1} that
\begin{align*}
&\pt_i\pt_{p_i}\tilde{\MG}_r\left(\frac{\n\Phi_0}{\bar y+ry},\Phi_0\right)-(\bar y+ry)\pt_z\tilde \MG_r\left(\frac{\n\Phi_0}{\bar y+ry},\Phi_0\right)\\
=&\pt_{p_ip_j}\tilde\MG_r\left(\frac{\n\Phi_0}{\bar y+ry},\Phi_0\right)\frac{\pt_{ij}\Phi_0}{\bar y+ry}
-r\pt_{p_ip_2}\tilde\MG_r\left(\frac{\n\Phi_0}{\bar y+ry},\Phi_0\right)\frac {\pt_i\Phi_0}{(\bar y+ry)^2}\\
&+\pt_{p_iz}\tilde\MG_r\left(\frac{\n\Phi_0}{\bar y+ry},\Phi_0\right)\pt_i\Phi_0-(\bar y+ry)\pt_z\tilde\MG_r\left(\frac{\n\Phi_0}{\bar y+ry},\Phi_0\right)\\
\geq& C_0m_0e^{-\tau|X-X_0|^2}\bigg[\frac{2\tau\mathfrak b_*}{\bar y+ry} \left(2\tau|X-X_0|^2-1\right)
-\frac{2\tau r\mathfrak b^*}{(\bar y+ry)^2}|X-X_0|\\
&-2\tau r\delta|X-X_0|-r^2\d(\bar y+ry)\bigg]\\
\geq&0,
\end{align*}
provided $r/\bar y$ is sufficiently small depending on $B_*,\, S_*,\,\bar H, \, \g$, and $\v$. Note that  $\phi\geq \Phi_0$ on $\pt(B_{2}(X_0)\backslash B_{\frac12}(X_0))$. Then it follows from the comparison principle that $\phi\geq \Phi_0$ in $B_{2}({X_0})\backslash B_{\frac12}({X_0})$. 
Since one can find a positive constant $\bar C=\bar C(\tau)<1/2$ such that
$$\Phi_0\geq \bar CC_0m_0(2-|X-X_0|) \q \text{for any } X\in B_2(X_0)\backslash B_{\frac12}(X_0),$$
the claim \eqref{barrier2} follows.

\emph{Step 2. Gradient estimate.} We claim if $r\leq r_2\bar y$ for some $r_2=r_2(B_*, S_*,\bar H, \g,\v)>0$, then
\begin{equation}\label{gradient estimate}
\int_{B_{2}({X_0})}|\n(\phi-\psi^*)|^2\leq C(\mathfrak b_*)\ld^2\bar y^2\int_{B_{2}({X_0})}\chi_{\{\psi^*=0\}}.
\end{equation}
Indeed, the minimality of $\psi^*$ yields
\begin{align*}
	&\int_{B_{2}({X_0})}(\bar y+ry)\left[\tilde\MG_r\left(\frac{\n\psi^*}{\bar y+ry},\psi^*\right)-\tilde\MG_r\left(\frac{\n\phi}{\bar y+ry},\phi\right)\right]\\
	\leq & \ld^2\int_{B_{2}({X_0})}(\bar y+ry)(\chi_{\{\phi>0\}}-\chi_{\{\psi^*>0\}})
	\leq \ld^2\int_{B_{2}({X_0})}(\bar y+ry)\chi_{\{\psi^*=0\}}.
\end{align*}
Using the Taylor expansion, \eqref{barrier}, \eqref{Gr_dpp} and \eqref{eq_rescale3}, one has
\begin{equation}\label{ineq1}\begin{split}
		&\int_{B_{2}({X_0})}(\bar y+ry)\left[\tilde\MG_r\left(\frac{\n\psi^*}{\bar y+ry},\psi^*\right)-\tilde\MG_r\left(\frac{\n\phi}{\bar y+ry},\phi\right)\right]\\
		\geq&\frac12\int_{B_2(X_0)}(\bar y+ry)\bigg(\mathfrak b_*\left|\frac{\n(\psi^*-\phi)}{\bar y+ry}\right|^2-r\delta\left|\frac{\n(\psi^*-\phi)}{\bar y+ry}\right||\psi^*-\phi|
		-{r^2}\delta|\psi^*-\phi|^2\bigg).
	\end{split}
\end{equation}
By the H\"{o}lder and Poincar\'{e} inequalities, the last two terms can be absorbed by the first term on the right-hand side of \eqref{ineq1}, provided $r\leq r_2\bar y$ with $r_2$ sufficiently small depending on $B_*,\, S_*,\,\bar H, \,\g$, and $\v$. Thus one has \eqref{gradient estimate}.

\emph{Step 3. Poincar\'{e} type estimate. } We claim the following Poincar\'{e} type estimate
\begin{equation}\label{pctype estimate}
C_0m_0|\D_0|^{\frac12}\leq C\|\n(\phi-\psi^*)\|_{L^2(B_{2}({X_0}))} \q\text{with}\ \ \D_0:=\{X\in B_{{2}}({X_0}): \psi^*(X)=0\},
\end{equation}
provided $r\leq r_1\min\{\ld,1\}\bar y$ where $r_1$ is the same as in Step 1. For $Z\in B_1({X_0})$, consider a transformation $\mathscr A_Z$ from $B_{2}({X_0})$ to itself which fixes $\pt B_{2}({X_0})$ and maps $Z$ to ${X_0}$, for instance, $\mathscr A_Z^{-1}(X)=\frac{2-|X-{X_0}|}{2}(Z-{X_0})+X$. Set $\psi_{Z}^*(X):=\psi^*(\mathscr A_Z^{-1}(X))$ and $\phi_Z(X):=\phi(\mathscr A_Z^{-1}(X))$. Given a direction $\xi\in \pt B_1$, define $r_\xi:=\inf \mathcal{R}_\xi$ if
$$\mathcal{R}_\xi:=\{s:1/4\leq s\leq 2,\,\psi_{Z}^*({X_0}+s\xi)=0\}\neq \emptyset.$$
Hence
\begin{equation*}\label{ineq2}\begin{split}
\phi_{Z}({X_0}+r_\xi\xi)&=\int_{2}^{r_\xi}\frac{d}{ds}(\phi_Z-\psi^*_{Z})({X_0}+s\xi)ds\leq \int_{r_\xi}^{2}\left|\n(\phi_Z-\psi^*_{Z})({X_0}+s\xi)\right|ds\\
&\leq\sqrt{2-r_\xi}\left(\int_{r_\xi}^{2}|\n(\phi_Z-\psi^*_{Z})({X_0}+s\xi)|^2ds\right)^{\frac12}.
\end{split}\end{equation*}
On the other hand, by \eqref{barrier2}, one has
\begin{equation*}\label{ineq3}
\phi_Z({X_0}+r_\xi\xi)\geq \bar CC_0m_0\left(2-\left|\frac{2-r_\xi}{2}(Z-{X_0})+r_\xi\xi\right|\right)\geq \frac12\bar CC_0m_0(2-r_\xi).
\end{equation*}
Combining the above two inequalities together gives
$$(C_0m_0)^2(2-r_\xi)\leq \frac4{\bar C^2}\int_{r_\xi}^{2}|\n(\phi_Z-\psi^*_{Z})({X_0}+s\xi)|^2ds.$$
An integration of $\xi$ over $\pt B_1$ yields
\begin{equation}\label{eq:scale1}
(C_0m_0)^2\int_{B_{2}({X_0})\backslash B_{1/2}(Z)}\chi_{\{\psi^*=0\}}\leq C\int_{B_{2}({X_0})}|\n(\phi-\psi^*)|^2.
\end{equation}
More precisely, noting that $|\mathscr A^{-1}_Z(X)-Z|\geq 1/2$ implies $|X-X_0|\geq 1/4$, changing back to the original variables and using the polar coordinates one has
\begin{align*}
	\int_{B_2(X_0)\setminus B_{1/2}(Z)} \chi_{\{\psi^*=0\}}& \leq C\int_{B_2(X_0)\setminus B_{1/4}(X_0)} \chi_{\{\psi^*_Z =0\}}\\
	&=C \int_{\pt B_1} \int_{1/4}^2  \chi_{\{\psi^*_Z=0\}} s\ ds d\xi \leq C \int_{\pt B_1} (2-r_\xi) \ d\xi.
\end{align*}
For the right hand side of \eqref{eq:scale1} we simply estimate
\begin{align*}
	\int_{\pt B_1}\int_{r_\xi}^2|\nabla (\phi_Z-\psi^*_Z)(X_0+s\xi)|^2dsd\xi \leq C \int_{B_2(X_0)} |\nabla (\phi-\psi^*)|^2 dX.
\end{align*}
Thus \eqref{eq:scale1} follows. A further integration over $Z\in B_1({X_0})$ yields
$$(C_0m_0)^2|\D_0|\leq  C\int_{B_{2}({X_0})}|\n(\phi-\psi^*)|^2,$$
where $C$ is a universal constant. This is the desired estimate \eqref{pctype estimate}.

\emph{Step 4. Decay estimate.}  Combining \eqref{pctype estimate} and \eqref{gradient estimate} gives
$$m_0|\D_0|^{\frac{1}{2}}\leq C\ld\bar y|\D_0|^{\frac{1}{2}}$$
for some $C=C(B_*, S_*,\bar H, \g,\v)$, provided
$$r\leq \bar r\min\{\ld,1\}{\rm dist}(\bar{X},\pt\D)\leq\bar r\min\{\ld,1\}\bar y$$
where $\bar r:=\min\{r_1,r_2\}$. If $|\D_0|>0$, then $m_0\leq C\ld\bar y$, which contradicts with  \eqref{m0} since the constant $C_m$ is sufficiently large. Otherwise, $\psi^*=\phi$ a.e. in $B_{2}({X_0})$. By the interior regularity theory for elliptic equations (cf. \cite{GT_book}), $\psi^*$ and $\phi$ are continuous. Thus $\psi^*=\phi$ pointwise in $B_{2}({X_0})$, which contradicts with the fact that ${X_0}$ is a free boundary point. This finishes the proof of the lemma.
\end{proof}

Lemma \ref{linear growth} together with the elliptic estimates away from the free boundary gives the Lipschitz regularity of minimizers.

\begin{proposition}\label{Lipschitz}
Let $\psi$ be a minimizer of \eqref{minimization problem}, then $\psi\in C_{{\rm loc}}^{0,1}(\D)$. Moreover, for any connected domain ${\D'}\Subset \D$ containing a free boundary point, the Lipschitz constant of $\psi$ in $\D'$ is estimated by $C\ld$, where $C$ depends on $B_*,\, S_*,\,\bar H,\, \g,\, \v,\,\D'$, and $\D$.
\end{proposition}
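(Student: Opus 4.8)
The plan is to combine interior elliptic regularity for the Euler--Lagrange equation \eqref{elliptic equG} with the linear growth estimate of Lemma \ref{linear growth}. First I would note that on the open set $\{\psi<Q\}$ the minimizer $\psi$ is a \emph{local} minimizer of $\int y\,\MG(\n\phi/y,\phi)$, since compactly supported variations there do not change the term $\ld^2\chi_{\{\phi<Q\}}$; because Proposition \ref{Gproperties pro} gives that the integrand is smooth, uniformly convex in $\bp$ (cf. \eqref{G dpp}) with bounded $z$-derivatives (cf. \eqref{G dzdzpdzz}), the classical De Giorgi--Nash--Moser and Schauder theory yields $\psi\in C^{1,\alpha}_{{\rm loc}}(\{\psi<Q\})$ for every $\alpha\in(0,1)$, and hence $\psi\in C^{1,\alpha}_{{\rm loc}}(\D)$ once one recalls that $\n\psi\equiv0$ on the interior of $\{\psi=Q\}$. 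Thus the whole statement reduces to a uniform pointwise bound $|\n\psi(\bar X)|\le C\ld$ at points $\bar X\in\D'\cap\{\psi<Q\}$, with the announced dependence of $C$.

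Next I would establish this bound for points adjacent to the free boundary. Fix $\bar X=(\bar x,\bar y)$ with $r:={\rm dist}(\bar X,\G_\psi)\le\bar r\min\{\ld,1\}\,{\rm dist}(\bar X,\pt\D)$, so that $B_r(\bar X)\subset\{\psi<Q\}$ and $B_r(\bar X)\Subset\D$, and pass to the rescaled function $w:=\psi^*_{\bar X,r}$ of \eqref{def:psi*}. Then $w>0$ in $B_1(0)$, $w$ solves the rescaled equation \eqref{ellipticequ scale} there — uniformly elliptic with constants depending only on $B_*,S_*,\bar H,\g,\v$ by \eqref{Gr_dpp}, and with lower order terms of size $\le r\d$ by \eqref{eq_rescale3} — and $w(0)=(Q-\psi(\bar X))/r\le C\ld\bar y$ by Lemma \ref{linear growth}. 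Using $r\d\le C\ld$ (distinguishing $\ld\le1$ and $\ld>1$ and using $r\le\bar r\min\{\ld,1\}{\rm diam}(\D)$), the Harnack inequality gives $\sup_{B_{1/2}(0)}w\le C(w(0)+r\d)\le C\ld$, and the interior gradient estimate then gives $|\n\psi(\bar X)|=|\n w(0)|\le C\ld$.

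Finally, for a point $\bar X\in\D'\cap\{\psi<Q\}$ not covered above, i.e. with ${\rm dist}(\bar X,\G_\psi)>\bar r\min\{\ld,1\}{\rm dist}(\D',\pt\D)=:\rho>0$, the ball $B_{\rho'}(\bar X)$ with $\rho'=\rho'(\D',\D)>0$ lies inside $\{\psi<Q\}\cap\D$, and the interior $C^{1,\alpha}$ estimate for \eqref{elliptic equG} bounds $|\n\psi(\bar X)|$ by $C(\rho')\sup_{B_{\rho'/2}(\bar X)}(Q-\psi)$. Since $\D'$ is connected and contains a free boundary point, a Harnack chain for the positive solution $Q-\psi$ of \eqref{elliptic equG}, joining $\bar X$ to a point near $\G_\psi$ where the previous step gives $Q-\psi\le C\ld$, shows $\sup_{\D'\cap\{\psi<Q\}}(Q-\psi)\le C(\D',\D)\ld$; combined with the second step this gives $|\n\psi|\le C\ld$ on all of $\D'$, with $C$ depending on $B_*,S_*,\bar H,\g,\v,\D',\D$. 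I expect the main obstacle to be the second step: turning the purely geometric linear growth estimate of Lemma \ref{linear growth} into a pointwise gradient bound with the correct (linear) dependence on $\ld$ forces one to work in the scale-invariant variables and to track carefully the effect of the $y$-weight and of the lower order terms under rescaling; the chaining argument is routine but genuinely uses both the connectedness of $\D'$ and the fact that $\D'$ meets the free boundary, which is why these hypotheses appear in the statement.
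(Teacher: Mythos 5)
Your overall strategy is essentially the paper's: you split into the two regimes ${\rm dist}(\bar X,\G_\psi)\le\bar r\min\{\ld,1\}\,{\rm dist}(\bar X,\pt\D)$ and its complement, treat the first by passing to the rescaled function \eqref{def:psi*}, invoking Lemma \ref{linear growth} and interior elliptic estimates for the rescaled equation \eqref{ellipticequ scale}, and treat the second by combining the Harnack inequality, the connectedness of $\D'$ and the presence of a free boundary point to reduce to the first regime. The small deviations (you bound $\psi^*_{\bar X,r}$ on $B_{1/2}$ by Harnack from its value at the origin and then use a quasilinear interior gradient estimate, whereas the paper bounds $\psi^*_{\bar X,r}$ on $B_1$ directly from Lemma \ref{linear growth} and applies De Giorgi--Nash--Moser to $\pt_x\psi^*_{\bar X,r}$ after differentiating the equation) are immaterial.

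There is, however, one incorrect assertion in your opening reduction: the claim that $\psi\in C^{1,\alpha}_{\rm loc}(\D)$ ``once one recalls that $\n\psi\equiv0$ on the interior of $\{\psi=Q\}$.'' This is false: $\psi$ cannot be $C^1$ across $\G_\psi$, because the free boundary condition gives $|\n\psi/y|\to\Ld>0$ as one approaches $\G_\psi$ from $\{\psi<Q\}$ (cf.\ Lemma \ref{free BC} and Corollary \ref{halfplane sol}), while $\n\psi\equiv0$ on the plateau side; Lipschitz continuity is precisely the optimal regularity across the free boundary, which is what the proposition asserts. The slip is harmless for the rest of your argument, but the reduction should be stated as the standard gluing: $\psi$ is $C^{1,\alpha}$ in the open set $\{\psi<Q\}$, constant on $\{\psi=Q\}$, and once $|\n\psi|\le C\ld$ is known on $\D'\cap\{\psi<Q\}$, integrating along segments (any segment joining the two regions meets $\pt\{\psi<Q\}$, where $\psi=Q$) yields the Lipschitz bound $C\ld$ on $\D'$. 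With that sentence replaced, your proof matches the paper's.
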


\begin{proof}
(i) Let $\bar X\in\{\psi<Q\}\cap \D$. If ${\rm dist}(\bar X,\G_\psi)>\bar r\min\{\ld,1\}{\rm dist}(\bar X,\pt\D)$, where $\bar r$ is given in Lemma \ref{linear growth}, then the standard interior elliptic estimate gives $\psi\in C^{2,\a}$. If ${\rm dist}(\bar X,\G_\psi)\leq\bar r\min\{\ld,1\}{\rm dist}(\bar X,\pt\D)$, then let
$$\psi^*_{\bar X,r}(X):=\frac{Q-\psi(\bar X+rX)}{r}, \q r:={\rm dist}(\bar X,\G_\psi).$$
By Lemma \ref{linear growth}, there is a constant $C=C(B_*, S_*,\bar H, \g,\v)>0$ such that
$$0\leq \psi^*_{\bar X,r}(X)\leq C\ld\bar y \q\text{for any } X\in B_1.$$
Note that $\psi^*_{\bar X,r}$ satisfies \eqref{ellipticequ scale} in $B_1$. By virtue of \eqref{G dpp}, $\pt_x\psi^*_{\bar X,r}$ (or $\pt_y\psi^*_{\bar X,r}$) solves a uniformly elliptic equation of divergence form and thus is $C^{0,\a}$ by the De Giorgi-Nash-Moser estimate (cf. \cite{GT_book}).
Hence one has
$$|\n\psi(\bar X)|=|\n\psi^*_{\bar X,r}(0)|\leq C\left(\|\psi^*_{\bar X,r}\|_{L^{\infty}(B_1)}+\|\pt_z\tilde\MG_r\|_{L^{\infty}(B_1)}\right)\leq C\ld $$
for some $C=C(B_*, S_*,\bar H, \g,\v,\bar y)$, where the upper bound for $|\pt_z\tilde\MG_r|$ in \eqref{eq_rescale3} has been used to get the last inequality. Combining the above two cases we conclude that $\psi\in C^{0,1}_{\rm loc}(\mathcal{D})$. Furthermore, $|\nabla\psi(x)|\leq C\lambda$, where $C=C(B_*, S_*,\bar H, \g,\v,\bar y)$.

(ii) If ${\D'}\Subset \D$ is connected and ${\D'}$ contains a free boundary point, then it follows from the Harnack inequality (\cite[Theorem 8.17]{GT_book}), the connectedness of ${\D'}$, and Lemma \ref{linear growth} that
$$Q-\psi\leq C\ld \q\text{in }{\D'}$$
for some $C$ depending on $B_*,\, S_*,\,\bar H, \,\g,\,\v,\, {\D'}$, and $\D$. Thus similar arguments as in the second case of (i) give that $|\nabla \psi(x)|\leq C \lambda$.
\end{proof}

\begin{remark}\label{Lipschitz boundary}
By the boundary estimate for the elliptic equation, $\psi$ is Lipschitz up to the $C^{1,\a}$ portion $\Sigma\subset \pt \D\cap\{y>0\}$ as long as the boundary data $\psi^\sharp\in C^{0,1}(\D\cup \Sigma)$.  Moreover, if $\D'$ is a subset of $\overline{\D}\cap\{y>0\}$ with ${\D'}\cap \pt\D$ being $C^{1,\a}$, ${\D'}\cap \D$ is connected, and ${\D'}$ contains a free boundary point, then $|\n\psi|\leq C\ld$ in ${\D'}$.
\end{remark}

The next lemma gives the nondegeneracy of minimizers, whose proof is also based on the minimality of the energy and suitable barrier functions as in Lemma \ref{linear growth}.

\begin{lemma}\label{nondegeneracy}
	Let $\psi$ be a minimizer of \eqref{minimization problem}. Then for any $\vartheta>1$ and any $0<a<1$, there exist constants $c_a,\, r_*>0$  depending on $B_*,\, S_*,\,\bar H,\, \g,\,\v,\, \vartheta$, and $a$, such that for any $B_r(\bar{X})\subset\D$ with $\bar X=(\bar x,\bar y)$ and $r\leq \min\{r_*\bar y,\frac{c_a\lambda}{\d\v}\}$, if
	\begin{equation}\label{nondegeneracy 1}
		\frac 1r\left(\dashint_{B_r(\bar{X})}|Q-\psi|^\vartheta\right)^{\frac 1{\vartheta}}\leq c_a\ld \bar y,
	\end{equation}
	then $\psi=Q$ in $B_{ar}(\bar{X})$.
\end{lemma}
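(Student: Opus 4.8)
The plan is to follow the Alt--Caffarelli scheme for the weighted functional $\J_{\bar X,r}$, reducing to the rescaled minimizer and using that, on $\{\psi<Q\}$, $\psi$ solves the uniformly elliptic equation \eqref{elliptic equG}. First I would pass to $\psi^{*}:=\psi^{*}_{\bar X,r}$ from \eqref{def:psi*}, which is a minimizer of $\J_{\bar X,r}$ on $B_{1}$ (cf.\ \eqref{J scale}--\eqref{Gsacle}). By Lemma \ref{lem_minimizer_bound} one has $\psi^{*}\geq0$; by Proposition \ref{Lipschitz}, $\psi^{*}$ is Lipschitz with $|\nabla\psi^{*}|\leq C\ld$, so $\nabla\psi^{*}=0$ a.e.\ on $\{\psi^{*}=0\}$; and $\psi^{*}$ satisfies the subsolution inequality \eqref{ellipticsub scale} on $B_{1}$ and the equation \eqref{ellipticequ scale} on $B_{1}\cap\{\psi^{*}>0\}$. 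The hypothesis \eqref{nondegeneracy 1} becomes $\big(\dashint_{B_{1}}|\psi^{*}|^{\vartheta}\big)^{1/\vartheta}\leq c_{a}\ld\bar y$, and the goal becomes $\psi^{*}\equiv0$ on $B_{a}$. The two smallness conditions on $r$ enter as follows: $r\leq r_{*}\bar y$ (with $r_{*}$ small) makes $\bar y+ry$ comparable to $\bar y$ on $B_{1}$, makes the comparison principle \eqref{r_comparison_principle} available there, and, after multiplying \eqref{ellipticsub scale} by $\bar y+ry$, keeps the first-order coefficients of the resulting nondivergence equation bounded uniformly in $\bar y$; while $r\leq c_{a}\ld/(\d\v)$ makes every term carrying a factor $r\d$ (as in \eqref{eq_rescale3}--\eqref{eq_rescale1}) of size $\lesssim c_{a}\ld$, hence harmless.

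Next I would prove an $L^{\infty}$ bound. Multiplying \eqref{ellipticsub scale} by $\bar y+ry$ and invoking the ellipticity \eqref{Gr_dpp} together with the smallness \eqref{eq_rescale3}, \eqref{eq_rescale1}, the nonnegative function $\psi^{*}$ is a subsolution of a uniformly elliptic equation in nondivergence form with bounded (indeed small) lower-order coefficients, so the local maximum principle (e.g.\ \cite[Theorem 8.17]{GT_book}) yields
\[
\sup_{B_{(1+a)/2}}\psi^{*}\ \leq\ C\Big(\dashint_{B_{1}}|\psi^{*}|^{\vartheta}\Big)^{1/\vartheta}\ \leq\ C\,c_{a}\,\ld\,\bar y,\qquad C=C(B_{*},S_{*},\bar H,\g,\v,\vartheta,a).
\]
This is the only place the exponent $\vartheta>1$ is used: it converts the $L^{\vartheta}$-average in the hypothesis into a pointwise bound.

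Then comes the competitor argument. Set $s:=\tfrac{1+a}{2}$ and fix a radial cut-off $\zeta\in C_{0}^{\infty}(B_{s})$ with $\zeta\equiv1$ on $B_{a}$, $0\leq\zeta\leq1$, $|\nabla\zeta|\leq C/(1-a)$. Then $v:=(1-\zeta)\psi^{*}$ is an admissible competitor ($v=\psi^{*}$ near $\pt\D_{\bar X,r}$), $0\leq v\leq\psi^{*}$, $v\equiv0$ on $B_{a}$, and $\{v>0\}\subset\{\psi^{*}>0\}$ with $\{\psi^{*}>0\}\setminus\{v>0\}\supset\{\psi^{*}>0\}\cap B_{a}$. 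Plugging $v$ into $\J_{\bar X,r}(\psi^{*})\leq\J_{\bar X,r}(v)$, the $\ld^{2}\chi$-terms produce the lower bound $\tfrac{\ld^{2}\bar y}{2}\,|\{\psi^{*}>0\}\cap B_{a}|$ on the left, while the $\tilde\MG_{r}$-terms on the right are controlled, by the quadratic growth \eqref{Gr_dp} and the $z$-smallness \eqref{eq_rescale3}, \eqref{eq_rescale2}, by $C\big(\tfrac1{\bar y}\int_{B_{s}}|\nabla v|^{2}+\text{l.o.t.}\big)$. Since $|\nabla v|^{2}\lesssim|\nabla\psi^{*}|^{2}+\psi^{*2}|\nabla\zeta|^{2}$ and, by the sharp Caccioppoli estimate for the subsolution $\psi^{*}$ (using $\nabla\psi^{*}=0$ a.e.\ on $\{\psi^{*}=0\}$),
\[
\int_{B_{s}}|\nabla\psi^{*}|^{2}\ \leq\ C\,\|\psi^{*}\|_{L^{\infty}(B_{(1+s)/2})}^{2}\,|\{\psi^{*}>0\}\cap B_{(1+s)/2}|+\text{l.o.t.},
\]
the previous step gives $|\{\psi^{*}>0\}\cap B_{a}|\leq C_{a}\,c_{a}^{2}\,|B_{1}|$ and $\int_{B_{s}}|\nabla\psi^{*}|^{2}\leq C_{a}(c_{a}\ld\bar y)^{2}$.

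Finally, to upgrade this density smallness to $\psi^{*}\equiv0$ on $B_{a}$, I would argue by contradiction. If $\psi^{*}$ were positive somewhere in $B_{a}$, then by the previous step $\{\psi^{*}>0\}\cap B_{(1+a)/2}$ cannot have full measure, so there is a free boundary point $Z_{0}\in B_{(1+a)/2}$; the weak free boundary condition of Lemma \ref{free BC}, which by the monotonicity of $t\mapsto\Phi_{\v}(t,Q)$ and \eqref{ld_Ld} amounts to $|\nabla\psi^{*}|\asymp\Ld\bar y\asymp\ld\bar y$ on $\G_{\psi^{*}}$, forces $\psi^{*}$ to rise linearly off $\G_{\psi^{*}}$ near $Z_{0}$, whence $\sup_{B_{(1-a)/4}(Z_{0})}\psi^{*}\gtrsim_{a}\ld\bar y$; the interior Harnack inequality for \eqref{ellipticequ scale} and the Lipschitz bound then give $\dashint_{B_{1}}|\psi^{*}|^{\vartheta}\gtrsim_{a}(\ld\bar y)^{\vartheta}$, contradicting the hypothesis once $c_{a}$ is small. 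Unscaling yields $\psi\equiv Q$ on $B_{ar}(\bar X)$. I expect this last step to be the main obstacle: the competitor/energy estimate only shows that the non-coincidence set in $B_{a}$ has small measure, and turning this into the exact identity genuinely requires the free boundary condition (equivalently, a dyadic iteration of the density estimate fed by linear growth at free boundary points), as in the irrotational case \cite{ACF85} and the two-dimensional rotational case \cite{LSTX2023}; here it is further complicated by the weighted geometry (the factors $\bar y+ry$ and $1/y$) and by the fact that at this stage only the weak form of the free boundary condition is available.
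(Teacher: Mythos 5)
Your first two steps (rescaling to $\psi^*_{\bar X,r}$ and converting the $L^\vartheta$-average hypothesis into the pointwise bound $\sup\psi^*\leq C c_a\ld\bar y$ via the local maximum principle, with the two smallness conditions on $r$ entering exactly as you describe) coincide with the paper's proof. The genuine gap is in your final step. Your cutoff competitor $v=(1-\zeta)\psi^*$, after discarding the Dirichlet part of the energy of $\psi^*$ and applying Caccioppoli, only yields the measure estimate $|\{\psi^*>0\}\cap B_a|\lesssim c_a^2$, and your proposed upgrade to $\psi^*\equiv 0$ is not justified: you claim that the weak free boundary condition of Lemma \ref{free BC} ``forces $\psi^*$ to rise linearly off $\Gamma_{\psi^*}$'', but linear growth away from free boundary points is essentially equivalent to the nondegeneracy statement you are trying to prove, and at this stage of the theory it is not available. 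Lemma \ref{free BC} is only a domain-variation identity on level sets $\{\psi<Q-s\}$ and gives no pointwise lower bound near a free boundary point without additional regularity or density information; the density and measure-theoretic properties of $\Gamma_\psi$ (Proposition \ref{measure}) are themselves proved \emph{after} and \emph{from} this nondegeneracy lemma, so invoking them (or their consequence, linear growth) here is circular. As written, the contradiction argument therefore does not close.

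The paper closes the loop differently, entirely within the energy comparison and without any use of the free boundary condition. One compares $\psi^*$ with $\phi=\min\{\psi^*,\Phi_0\}$, where $\Phi_0$ is an explicit strict supersolution on the annulus $B_{\sqrt a}\setminus B_a$ vanishing on $B_a$ and equal to $M_a$ on $\partial B_{\sqrt a}$; integrating the strict supersolution inequality against $(\psi^*-\Phi_0)_+$ turns the minimality inequality into the upper bound
\begin{equation*}
\int_{B_a}(\bar y+ry)\Bigl[\tilde\MG_r\Bigl(\tfrac{\n\psi^*}{\bar y+ry},\psi^*\Bigr)+\ld^2\chi_{\{\psi^*>0\}}\Bigr]\;\leq\;C\,\frac{M_a}{\bar y}\int_{\partial B_a}\psi^*,
\end{equation*}
and then a trace inequality together with $0\leq\psi^*\leq M_a$, Cauchy--Schwarz, and the coercivity \eqref{eq_rescale2} bounds $\int_{\partial B_a}\psi^*$ by $\tfrac{C}{\ld}\bigl(\tfrac{M_a}{\ld\bar y}+1\bigr)$ times the \emph{same} energy. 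Since $M_a/(\ld\bar y)=c_*c_a$ can be made small, the full energy on $B_a$, and in particular $\ld^2\int_{B_a}(\bar y+ry)\chi_{\{\psi^*>0\}}$, must vanish, which gives $\psi^*\equiv0$ in $B_a$ directly. If you want to salvage your route, you would need to replace the appeal to Lemma \ref{free BC} by an independent argument forcing the positivity set to be empty rather than merely small; the two-sided energy estimate above is precisely the standard Alt--Caffarelli device that accomplishes this.
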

\begin{proof}
	As in Lemma \ref{linear growth}, let $\psi_{\bar X, r}^*(X)$ be defined in \eqref{def:psi*} for $r\in(0,r_*\bar y)$, where $r_*>0$ is a small constant to be determined later. Then $\psi^*_{\bar X,r}$ satisfies \eqref{ellipticsub scale}-\eqref{ellipticequ scale}. Now we need to prove that if
	$$\left(\dashint_{B_1}|\psi_{\bar X,r}^*|^\vartheta\right)^{\frac 1{\vartheta}}\leq c_a\ld \bar y,$$
	then $\psi^*_{\bar X,r}=0$ in $B_{a}$. From now on, we drop the subscript $\bar X$ and $r$ of $\psi^*_{\bar X,r}$ without ambiguity. By the $L^{\infty}$-estimate for the subsolution of the elliptic equation \eqref{ellipticequ scale} (cf. \cite[Theorem  8.17]{GT_book}) and the estimate \eqref{eq_rescale3}, for any $a\in(0,1)$, there exists a constant $c_*=c_*(B_*, S_*,\bar H, \g,\v, \vartheta,a)>0$ such that
	\begin{equation}\label{Ma def}
		\begin{split}
		\sup_{B_{\sqrt{a}}}\psi^*\leq&  \frac{c_*}{4}(\|\psi^*\|_{L^\vartheta(B_1)}+\|(\bar y+ry)\pt_{z}\tilde{\MG}_r\|_{L^{\infty}(B_1)})
		\leq\frac{c_*}2(c_a\ld\bar y+\bar yr\d\v)\\
	    \leq& c_*c_a\ld\bar y:=M_a,
		\end{split}
	\end{equation}
	provided $r\leq\min\{\frac{\bar y}4,\frac{c_a\ld}{\d\v}\}$. Thus it suffices to show that if $c_a$ is sufficiently small depending on $B_*$, $S_*$, $\bar H$, $\g$, $\v$, $\vartheta$, and $a$,
	then $\psi^*=0$ in $B_{a}$. This is proved in three steps.
	
	\emph{Step 1. Upper bound of the energy.} Let $c_*$ in \eqref{Ma def} be sufficiently large. We claim that if $r\leq\min\{r_*\bar y,\frac{c_a\ld}{\d\v}\}$ for some $r_*$ sufficiently small depending on $B_*,\, S_*,\,\bar H, \,\g,\,\v$, and $a$, then there exists $C=C(B_*, S_*,\bar H, \g,a)>0$ such that
	\begin{equation}\label{upper energy}
		\int_{B_{a}}(\bar y+ry)\left[\tilde\MG_r\left(\frac{\n\psi^*}{\bar y+ry},\psi^*\right)+\ld^2\chi_{\{\psi^*>0\}}\right]
		\leq 
		C\frac{M_a}{\bar y}\int_{\pt B_{a}}\psi^*,
	\end{equation}
	where $\tilde\MG_r$ is defined in \eqref{Gsacle}. Let
	$$\Phi_0(X):=\begin{cases}
		M_a\frac{e^{-\tau a^2}-e^{-\tau|X|^2}}{e^{-\tau a^2-e^{-\tau a}}} &\text{for } X\in B_{\sqrt{a}}\backslash B_{a},\\
		0 &\text{for } X\in B_{a}.
	\end{cases}$$
	Similar as the proof for Lemma \ref{linear growth}, choose $\tau=\tau(a)$ sufficiently large and let $r\leq r_*\bar y$ for a sufficiently small $r_*=r_*(B_*, S_*,\bar H, \g,\v,a)>0$, then there exists $C_1=C_1(B_*, S_*,\bar H, \g,a)>0$ such that
	\begin{equation}\label{ineq4}
		\pt_i\pt_{p_i}\tilde\MG_r\left(\frac{\n\Phi_0}{\bar y+ry},\Phi_0\right)\leq -C_1\frac{M_a}{\bar y+ry}\q \text{in } B_{\sqrt a}\backslash B_{a}.
	\end{equation}
	Let $\phi:=\min\{\psi^*,\Phi_0\}$. Note that $\phi=\psi^*$ on $\pt B_{\sqrt a}$, $\phi\equiv 0$ in $B_{a}$, and $\{X\in B_{\sqrt a}:\phi(X)>0\}\subset\{X\in B_{\sqrt a}:\psi^*(X)>0\}$. Since $\psi^*$ is an energy minimizer, one has
	\begin{equation}\label{ineq5}
		\begin{split}
			&\int_{B_{a}}(\bar y+ry)\left[\tilde\MG_r\left(\frac{\n\psi^*}{\bar y+ry},\psi^*\right)+\ld^2\chi_{\{\psi^*>0\}}\right]\\
			\leq&\int_{B_{\sqrt a}\backslash B_{a}}(\bar y+ry)\left[\tilde\MG_r\left(\frac{\n\phi}{\bar y+ry},\phi\right)-\tilde\MG_r\left(\frac{\n\psi^*}{\bar y+ry},\psi^*\right)\right].
	\end{split}\end{equation}
	By the convexity of $\bp\mapsto\tilde\MG_r(\bp,z)$ one has
	\begin{equation}\label{ineq6}\begin{split}
			&\int_{B_{\sqrt a}\backslash B_{a}}(\bar y+ry)\left[\tilde\MG_r\left(\frac{\n\phi}{\bar y+ry},\phi\right)-\tilde\MG_r\left(\frac{\n\psi^*}{\bar y+ry},\psi^*\right)\right]\\
			\leq&\int_{B_{\sqrt a}\backslash B_{a}}-\pt_{p_i}\tilde\MG_r\left(\frac{\n\phi}{\bar y+ry},\phi\right)\pt_i(\psi^*-\phi)\\
			&+\int_{B_{\sqrt a}\backslash B_{a}} (\bar y+ry)\int_0^1(\phi-\psi^*)\pt_z\tilde\MG_r\left(\frac{\n\psi^*}{\bar y+ry},s\phi+(1-s)\psi^*\right)ds\\
			\leq&\int_{(B_{\sqrt a}\backslash B_{a})\cap\{\psi^*>\Phi_0\}}\left[-\pt_{p_i}\tilde\MG_r\left(\frac{\n\Phi_0}{\bar y+ry},\Phi_0\right)\pt_i(\psi^*-\Phi_0)+r\delta(\bar y+ry)(\psi^*-\Phi_0)\right],
	\end{split}\end{equation}
	where the last inequality follows from the bound for $\pt_z\tilde \MG_r$ in \eqref{eq_rescale3}.
	Multiplying \eqref{ineq4} by $(\psi^*-\Phi_0)_+$ and integrating by parts (noting that $\psi^*-\Phi_0=\psi^*$ on $\pt B_{a}$ and $(\psi^*-\Phi_0)_+=0$ on $\pt B_{\sqrt a}$) one has
	\begin{equation}\label{ineq9}
	\begin{split}
		&\int_{(B_{\sqrt a}\backslash B_{a})\cap\{\psi^*>\Phi_0\}}-\pt_{p_i}\tilde\MG_r\left(\frac{\n\Phi_0}{\bar y+ry},\Phi_0\right)\pt_i(\psi^*-\Phi_0)\\
		\leq&-\int_{\pt B_{a}}\psi^*\pt_{p_i}\tilde\MG_r\left(\frac{\n\Phi_0}{\bar y+ry},\Phi_0\right)\c\mathbf{\nu}_i-\int_{(B_{\sqrt a}\backslash B_{a})\cap\{\psi^*>\Phi_0\}}C_1\frac{M_a}{\bar y+ry}(\psi^*-\Phi_0).
	\end{split}\end{equation}
	Since $r\leq\frac{c_a\ld}{\d\v}$ and $c_*$ in \eqref{Ma def} is sufficiently large, the inequality \eqref{ineq9} together with \eqref{ineq5}-\eqref{ineq6} yields that
	$$\int_{B_{a}}(\bar y+ry)\left[\tilde\MG_r\left(\frac{\n\psi^*}{\bar y+ry},\psi^*\right)+\ld^2\chi_{\{\psi^*>0\}}\right]
	\leq -\int_{\pt B_{a}}\psi^*\pt_{p_i}\tilde\MG_r\left(\frac{\n\Phi_0}{\bar y+ry},\Phi_0\right)\c\mathbf{\nu}_i.$$
	In view of the expression for $|\n\Phi_0|$ on $\pt B_a$ as well as \eqref{Gr_dp}, there is a constant $C=C(B_*, S_*,\bar H, \g,a)$ such that
	$$-\pt_{p_i}\tilde\MG_r\left(\frac{\n\Phi_0}{\bar y+ry},\Phi_0\right)\c\mathbf{\nu}_i\leq C\frac{M_a}{\bar y} \quad\text{on }\partial B_a.$$
	Combining the above two estimates gives the desired estimate \eqref{upper energy}.
	
	\emph{Step 2. Lower bound of the energy.} We claim that there is a constant $C=C(B_*, S_*,\bar H, \g,\v, \vartheta,a)>0$ such that
	\begin{equation}\label{lowerbound}
		\int_{\pt B_{a}}\psi^*\leq \frac {C(1+c_a^2)}{\ld}\left(\frac{M_a}{\ld\bar y}+1\right)
		\int_{B_{a}}(\bar y+ry)\left[\tilde\MG_r\left(\frac{\n\psi^*}{\bar y+ry},\psi^*\right)+\ld^2\chi_{\{\psi^*>0\}}\right],
	\end{equation}
	provided $r\leq \min\{\frac{\bar y}4,\frac{c_a\ld}{\d\v}\}$. Indeed, by the trace estimate there exists a constant $C=C(a)>0$ such that
	$$\int_{\pt B_{a}}\psi^*\leq C\left(\int_{B_{a}}\psi^*+\int_{B_{a}}|\n\psi^*|\right).$$
	Since $0\leq\psi^*\leq M_a$ in $B_{a}$ implies
	$$\int_{B_{a}}\psi^*\leq M_a\int_{B_{a}}\chi_{\{\psi^*>0\}}\leq \frac{M_a}{\bar y}\int_{B_{a}}(\bar y+ry)\chi_{\{\psi^*>0\}},$$
	and since the Cauchy-Schwarz inequality gives
	\begin{align*}
		\int_{B_{a}}|\n\psi^*|\leq\int_{B_{a}}\frac{|\n\psi^*|^2}{\ld(\bar y+ry)}+\int_{B_{a}}\ld(\bar y+ry)\chi_{\{\psi^*>0\}},
	\end{align*}
	it follows from the above inequalities that
	$$\int_{\pt B_{a}}\psi^*\leq \frac C{\ld}\left(\frac{M_a}{\ld
		\bar y}+1\right)\int_{B_{a}}(\bar y+ry)\left(\left|\frac{\n\psi^*}{\bar y+ry}\right|^2+\ld^2\chi_{\{\psi^*>0\}}\right).$$
	By \eqref{eq_rescale2} and \eqref{Ma def} one has
	\begin{equation}\label{ineq8}\begin{split}
			\int_{B_{a}}(\bar y+ry)\left|\frac{\n\psi^*}{\bar y+ry}\right|^2
			&\leq \frac2{\mathfrak b_*}\int_{B_{a}}(\bar y+ry)\left[\tilde\MG_r\left(\frac{\n\psi^*}{\bar y+ry},\psi^*\right)+\d rM_a\chi_{\{\psi^*>0\}}\right]\\
			&\leq C(1+c_a^2)\int_{B_{a}}(\bar y+ry)\left[\tilde\MG_r\left(\frac{\n\psi^*}{\bar y+ry},\psi^*\right)+\ld^2\chi_{\{\psi^*>0\}}\right],
	\end{split}\end{equation}
    where $C=C(B_*, S_*,\bar H, \g,\v, \vartheta,a)$.
    Combining all the above estimates together yields \eqref{lowerbound}.
	
	\emph{Step 3. Conclusion.} From \eqref{upper energy} and \eqref{lowerbound}, if $r\leq \min\{r_*\bar y,\frac{c_a\lambda}{\d\v}\}$, there is a constant $C=C(B_*, S_*,\bar H, \g,\v,\vartheta,a)$ such that
	\begin{align*}
		&\int_{B_{a}}(\bar y+ry)\left[\tilde\MG_r\left(\frac{\n\psi^*}{\bar y+ry},\psi^*\right)+\ld^2\chi_{\{\psi^*>0\}}\right]\\
		\leq&C(1+c_a^2)\frac{M_a}{\ld\bar y}\left(\frac{M_a}{\ld\bar y}+1\right)\int_{B_{a}}(\bar y+ry)\left[\tilde\MG_r\left(\frac{\n\psi^*}{\bar y+ry},\psi^*\right)+\ld^2\chi_{\{\psi^*>0\}}\right].
	\end{align*}
	Take $c_a$ in \eqref{Ma def} sufficiently small such that $M_a/(\ld\bar y)$ is sufficiently small. Then
	$$\int_{B_{a}}(\bar y+ry)\left[\tilde\MG_r\left(\frac{\n\psi^*}{\bar y+ry},\psi^*\right)+\ld^2\chi_{\{\psi^*>0\}}\right]=0.$$
	This together with the inequality \eqref{ineq8} yields $\psi^*\equiv 0$ in $B_{a}$. Hence the proof of the lemma is complete.
\end{proof}

\subsection{Measure theoretic properties and  regularity of the free boundary}

From the Lipschitz regularity (Proposition \ref{Lipschitz}) and nondegeneracy property  (Lemma \ref{nondegeneracy}) of a minimizer $\psi$, one has the following measure theoretic properties of the free boundary $\Gamma_{\psi}:=\pt\{\psi<Q\}\cap \mathcal{D}$.
The proof is the same as that for Theorem 2.8 and Theorem 3.2 in  \cite{ACF84}, so we omit the details here.
\begin{proposition}\label{measure}
Let $\psi$ be a minimizer of \eqref{minimization problem}. Then the following statements hold.
\begin{itemize}
  \item [\rm (i)] $\mathcal{H}^1(\G_{\psi}\cap {\D'})<\infty$ for any ${\D'}\Subset \D$.

  \item [\rm (ii)] There is a Borel measure $\zeta_\psi$ such that
  $$\pt_i\pt_{p_i}\MG\left(\frac{\n\psi}{y},\psi\right)-y\pt_z\MG\left(\frac{\n\psi}{y},\psi\right)=\zeta_\psi \mathcal{H}^{1}\lfloor\G_\psi.$$

\item [\rm (iii)] For any $\D'\Subset \mathcal{D}$, there exist  positive constants $C$, $c$, and (small) $r_0$ depending on $B_*,\, S_*,\,\bar H,\, \g,\,\v,\,\ld,\,\D'$, and $\D$, 
such that for any $B_r(X)\subset \D'$ with $X\in \Gamma_\psi$ and $r\in(0,r_0)$,
one has
\begin{equation*}
	\begin{split}
		&c \leq \frac{|B_r(X)\cap \{\psi<Q\}|}{|B_r(X)|} \leq C;\\
		&c\leq \zeta_\psi\leq C,\quad cr\leq \mathcal{H}^{1}(B_r(X)\cap \Gamma_\psi)\leq Cr.
	\end{split}
\end{equation*}
  \end{itemize}
\end{proposition}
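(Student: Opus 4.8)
The plan is to follow the now-standard program of Alt--Caffarelli (as in \cite{ACF84}) for Bernoulli-type free boundary problems, adapting it to the weighted functional with the extra lower-order terms coming from $\pt_z\MG$. All three items are local statements near a free boundary point, so I fix a subdomain $\D'\Subset\D$ and work inside balls $B_r(X)\subset \D'$ with $X\in\Gamma_\psi$ and $r$ small. The two inputs are the optimal Lipschitz bound $|\n\psi|\le C\ld$ from Proposition \ref{Lipschitz} (with the constant depending on $B_*,S_*,\bar H,\g,\v,\ld,\D',\D$) and the nondegeneracy from Lemma \ref{nondegeneracy}, which together force the free boundary to have positive density from both sides.

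\textbf{Density estimates.} First I would prove the two-sided volume density bound in (iii). The upper bound $|B_r(X)\cap\{\psi<Q\}|\le C|B_r(X)|$ is immediate once one knows $\{\psi<Q\}$ cannot fill all of $B_r(X)$; but actually the genuinely needed direction is the lower bound on $|B_r(X)\cap\{\psi<Q\}|$, which follows from the linear growth estimate of Lemma \ref{linear growth}: since $X\in\Gamma_\psi$, there are points near $X$ where $Q-\psi\gtrsim \ld\bar y\,r$, and combining this with the Lipschitz bound $|\n\psi|\le C\ld$ shows $\{\psi<Q\}$ occupies a fixed fraction of $B_r(X)$. The reverse inequality $|B_r(X)\cap\{\psi<Q\}|\le C|B_r(X)|$ (equivalently $|B_r(X)\cap\{\psi=Q\}|\ge c|B_r(X)|$) comes from the nondegeneracy Lemma \ref{nondegeneracy}: if $\{\psi=Q\}$ had too small a density in $B_r(X)$ one could verify hypothesis \eqref{nondegeneracy 1} at a nearby point and conclude $\psi\equiv Q$ in a ball touching $\Gamma_\psi$, a contradiction. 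Here one must track that $r$ stays below the thresholds $r_*\bar y$ and $c_a\ld/(\d\v)$, which is why $r_0$ depends on the listed parameters; on $\D'$, $\bar y$ is bounded below, so this is harmless.

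\textbf{The measure $\zeta_\psi$ and its bounds.} Next, by Lemma \ref{supersol lem}, $\psi$ is a supersolution of \eqref{elliptic equG}, so the distribution
\[
\mu:=\pt_i\pt_{p_i}\MG\!\left(\frac{\n\psi}{y},\psi\right)-y\pt_z\MG\!\left(\frac{\n\psi}{y},\psi\right)
\]
is a nonnegative Radon measure supported in $\overline{\{\psi=Q\}}$, and since $\psi$ solves the equation in the open set $\{\psi<Q\}$ (Lemma \ref{EL}) and is smooth there, $\mu$ is in fact supported on $\Gamma_\psi$. The finiteness $\mathcal H^1(\Gamma_\psi\cap\D')<\infty$ of item (i) and the surface-measure comparison $\mu = \zeta_\psi\,\mathcal H^1\lfloor\Gamma_\psi$ with $c\le\zeta_\psi\le C$ of items (ii)--(iii) are obtained exactly as in \cite[Theorem 2.8, Theorem 3.2]{ACF84}: one tests the supersolution/subsolution inequalities against suitable truncations of $(Q-\psi)$ and uses the representation $\mu(B_r(X)) = \int_{\pt B_r(X)} \partial_{p_i}\MG(\n\psi/y,\psi)\nu_i\,d\mathcal H^1 - \text{(lower order)}$, estimating the boundary integral above by the Lipschitz bound ($\mu(B_r(X))\le Cr$) and below by nondegeneracy plus the density estimate ($\mu(B_r(X))\ge cr$). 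The lower-order term $y\,\pt_z\MG$ is harmless because $|\pt_z\MG|\le\d$ is bounded (Proposition \ref{Gproperties pro}(iii)), contributing only $O(r^2)$. Then $cr\le\mathcal H^1(B_r(X)\cap\Gamma_\psi)\le Cr$ follows from the density bound on $\mu$ together with the density bound on $\{\psi<Q\}$, via a standard covering/Vitali argument, and this in turn yields $\mathcal H^1(\Gamma_\psi\cap\D')<\infty$.

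\textbf{Main obstacle.} The routine-looking but delicate point is keeping all constants uniform: the thresholds in Lemma \ref{linear growth} and Lemma \ref{nondegeneracy} involve $\bar y$ (the $y$-coordinate), and the weight $y$ degenerates as $y\to 0$. Restricting to $\D'\Subset\D$ is what saves us, since there $c\le y\le C$; but one must be careful that $\D'$ may abut $\{y>0\}$ portions of $\pt\D$ only through Remark \ref{Lipschitz boundary}, and the free-boundary regularity statements are interior ($\Gamma_\psi\cap\D'$ with $\D'\Subset\D$), so no boundary subtlety actually enters here. Beyond that, the proof is a line-by-line transcription of \cite{ACF84} with the extra $\pt_z\MG$ term absorbed using \eqref{G dzdzpdzz}, so I would simply cite \cite{ACF84} for the details, as the paper does.
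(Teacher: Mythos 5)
Your proposal is correct and takes essentially the same route as the paper, whose proof consists of citing Theorems 2.8 and 3.2 of \cite{ACF84}, with the Lipschitz bound (Proposition \ref{Lipschitz}) and the nondegeneracy (Lemma \ref{nondegeneracy}) as the only inputs, and with the weight $y$ and the bounded lower-order term $y\pt_z\MG$ handled exactly as you indicate by working in $\D'\Subset\D$. Two small slips in your sketch, which do not affect validity since you, like the paper, defer the details to \cite{ACF84}: the existence of points in $B_r(X)$ with $Q-\psi\gtrsim\ld r$ is the contrapositive of Lemma \ref{nondegeneracy} rather than a consequence of Lemma \ref{linear growth} (which gives the opposite, upper, bound), and the positive density of $\{\psi=Q\}$ cannot be obtained by verifying \eqref{nondegeneracy 1} directly (smallness of $|\{\psi=Q\}\cap B_r(X)|$ does not make the average of $Q-\psi$ small), but follows from the comparison-with-replacement argument in \cite{ACF84}.
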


It follows from Lemma \ref{linear growth}, Lemma \ref{nondegeneracy} and Remark \ref{ld Ld} that $|\n\psi/y|=O(\Ld)$ near the free boundary in a weak sense.
In order to get the regularity of the free boundary, we derive the following strengthened estimates for $|\nabla \psi/y|$ near the free boundary.

\begin{lemma}\label{gradientpsi estimate}
Let $\psi$ be a minimizer of \eqref{minimization problem}.
Then for any ${\D'}\Subset \D$, there exist $\a\in(0,1)$, $r_0\in(0,1)$, and $C>0$ depending on $B_*,\, S_*,\,\bar H,\, \g,\,\v,\,\ld,\,\D'$, and $\D$, such that for any ball $B_{2r}(\bar X)\subset {\D'}$ with $\bar X\in\Gamma_\psi$ and $r\in(0,r_0)$, one has
$$\sup_{B_r(\bar X)}\left|\frac{\n\psi}{y}\right|^2\leq \Ld^2+Cr^\a.$$
\end{lemma}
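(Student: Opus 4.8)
The plan is to follow the classical Alt–Caffarelli strategy for upgrading the weak gradient bound near the free boundary into a pointwise estimate with H\"older-type decay, adapted to the present degenerate-in-$y$ setting. Fix $\bar X=(\bar x,\bar y)\in\Gamma_\psi$ and a small ball $B_{2r}(\bar X)\subset\mathcal D'$. Since $\mathcal D'\Subset\mathcal D\subset\R\times[0,\bar H]$ and we will take $r_0$ small, the coefficient $y$ is comparable to $\bar y$ on $B_{2r}(\bar X)$, so the equation \eqref{elliptic equG} behaves like a uniformly elliptic equation with constant coefficients up to an error of order $r$. The key quantity to control is $w:=|\n\psi/y|^2$. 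First I would record that in the noncoincidence set $\{\psi<Q\}$ the function $\psi$ solves the uniformly elliptic equation \eqref{EL equ} (equivalently \eqref{elliptic equG}), and by Proposition \ref{Lipschitz} (and Remark \ref{Lipschitz boundary}) one has $|\n\psi|\leq C\ld$ on $B_{2r}(\bar X)$, so $w\leq C\ld^2/\bar y^2$ there; the goal is to improve the constant $C$ down to essentially $\Lambda^2$ with an $O(r^\a)$ remainder, using that $|\n\psi/y|=\Lambda$ holds on $\Gamma_\psi$ in the weak sense of Lemma \ref{free BC}.

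The main steps are as follows. \textbf{Step 1 (interior bound for a derivative).} On $B_{2r}(\bar X)\cap\{\psi<Q\}$ each first derivative $\pt_k\psi$ solves a uniformly elliptic equation in divergence form obtained by differentiating \eqref{EL equ}; by the De Giorgi–Nash–Moser estimate (as already used in the proof of Proposition \ref{Lipschitz}), $\pt_k\psi\in C^{0,\a}$ with a modulus controlled by $\ld$ and $r$. Combining the two components, $w$ is H\"older continuous up to the free boundary on the side $\{\psi<Q\}$, with oscillation $\mathrm{osc}_{B_\rho}\,w\leq C(\rho/r)^\a\sup_{B_{2r}}w+Cr^\a$ for $\rho\leq r$, where the $Cr^\a$ absorbs the contributions of the lower-order terms $y\pt_z\MG$ and of the $y$-dependence of the coefficients (here one uses the bounds \eqref{G dp}–\eqref{G dzdzpdzz} and $\delta\leq C_\d$). \textbf{Step 2 (boundary value).} One shows $w\to\Lambda^2$ as one approaches $\Gamma_\psi$ nontangentially, quantitatively. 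Here is where Lemma \ref{free BC} and the nondegeneracy Lemma \ref{nondegeneracy} together with the linear growth Lemma \ref{linear growth} enter: the blow-up of $\psi$ at a free boundary point (in the rescaled variable $\psi^*_{\bar X,\rho}$) is, after passing $\rho\to0$, a half-plane solution $\max(0,\ell\cdot X)$ with $|\ell|=\lambda_\v$-compatible slope; but the precise slope of the limiting profile, read through $\Phi_\v$, is exactly $\Lambda$ by the monotonicity of $t\mapsto\Phi_\v(t,z)$ and the definition $\lambda_\v^2=\Phi_\v(\Lambda^2,Q)$ (cf. Remark \ref{ld Ld}). This gives $|\n\psi^*/\bar y|\to\Lambda$ along the free boundary, i.e. $w\big|_{\Gamma_\psi}=\Lambda^2$ in the appropriate limiting sense. \textbf{Step 3 (combination).} For $X\in B_r(\bar X)$ with $\psi(X)<Q$, pick the nearest free boundary point $X^\flat\in\Gamma_\psi$, set $d=|X-X^\flat|\leq r$, and interpolate: by Step 1, $w(X)\leq w(X^\flat)+C(d/r)^\a\sup_{B_{2r}(\bar X)}w+Cr^\a$; by Step 2, $w(X^\flat)\leq\Lambda^2+Cr^\a$ (the error accounting for the fact that $X^\flat$ lies in a ball of radius $r$, over which $\B,\S,\t_c$ vary by $O(r)$); and since $\sup_{B_{2r}}w\leq C\Lambda^2$ is already bounded, the term $C(d/r)^\a\sup w$ needs a separate argument — this is handled by a dyadic iteration / Campanato-type decay: on balls $B_{2^{-j}r}(X^\flat)$ one shows $\sup w\leq\Lambda^2+C\sum_{i\le j}2^{-i\a}r^\a\le\Lambda^2+Cr^\a$, using at each scale Step 1's oscillation decay and Step 2's boundary pinning. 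On $\{\psi=Q\}$ one has $\n\psi=0$ a.e., so $w=0\le\Lambda^2$ trivially. Assembling these gives $\sup_{B_r(\bar X)}w\leq\Lambda^2+Cr^\a$.

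I expect the genuine obstacle to be \textbf{Step 2}, the identification of the exact boundary slope and making it quantitative, rather than merely $O(\lambda_\v)$. The weak free-boundary condition of Lemma \ref{free BC} is an integral identity involving $\Phi_\v(|\n\psi/y|^2,\psi)$, and translating it into a pointwise statement $|\n\psi/y|=\Lambda$ requires knowing the free boundary is sufficiently regular (which at this stage is only the measure-theoretic and reduced-boundary regularity of Proposition \ref{measure}) or, alternatively, running a blow-up argument and invoking the classification of global minimizers / the matching of the constant $\lambda_\v$ with $\Lambda$ through the strict monotonicity in \eqref{Phim dt}. A secondary technical nuisance is that the coefficient $1/y$ degenerates as $y\to0$, so all constants in the H\"older estimates of Step 1 depend on a positive lower bound for $y$ on $\mathcal D'$; since $\mathcal D'\Subset\mathcal D$ and (once the free boundary is known to stay away from the axis, Proposition \ref{y graph}) $\Gamma_\psi$ lies in $\{y\ge\ubar H>0\}$, this is harmless, but it must be tracked. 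The remaining steps are routine elliptic regularity and the standard Alt–Caffarelli iteration, so no further difficulty is anticipated there.
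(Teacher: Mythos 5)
Your plan has a genuine gap, and it sits exactly where you suspected, plus one place you did not flag. In Step 1 you claim an oscillation estimate for $w=|\n\psi/y|^2$ of the form $\mathrm{osc}_{B_\rho}w\leq C(\rho/r)^\a\sup_{B_{2r}}w+Cr^\a$ ``up to the free boundary on the side $\{\psi<Q\}$''. De Giorgi--Nash--Moser applied to $\pt_k\psi$ only gives interior H\"older continuity in $\{\psi<Q\}$, with constants that blow up like a negative power of ${\rm dist}(\cdot,\G_\psi)$; at this stage of the paper $\G_\psi$ is only known in the measure-theoretic sense of Proposition \ref{measure}, and H\"older continuity of $\n\psi$ up to $\G_\psi$ is essentially equivalent to the free boundary regularity that is proved \emph{after} (and using) this lemma. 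So the interpolation in your Step 3, which needs that oscillation decay on balls touching $\G_\psi$, is not available. Step 2 has the same circularity in a second form: the identification of blow-up limits as half-plane solutions with slope $\Ld\bar y$ is Corollary \ref{halfplane sol}, which in the paper is derived from Proposition \ref{prop_1}, which in turn rests on the present lemma; and even granting the qualitative statement $\limsup_{X\to\bar X,\,\psi<Q}|\n\psi/y|=\Ld$ (which the paper does import from the ACF-type argument), it carries no rate, whereas your Step 3 needs the quantitative pinning $w(X^\flat)\leq\Ld^2+Cr^\a$ --- which is precisely the content of the lemma being proved.

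The paper's proof avoids both difficulties by a different mechanism that your proposal does not contain. A Bernstein-type computation (differentiating the equation as in Chapter 15 of Gilbarg--Trudinger) shows that $w$ is a \emph{subsolution} of a uniformly elliptic divergence-form equation $\mathcal Lw\geq-\pt_i\mathfrak g^i-\mathfrak f$ with $\mathfrak g^i,\mathfrak f\in L^\infty$ controlled by the Lipschitz bound of Proposition \ref{Lipschitz}. One then sets $W_\tau=(w-\Ld^2-\tau)^+$ in $\{\psi<Q\}$ and $W_\tau=0$ in $\{\psi=Q\}$; the qualitative limsup statement makes this extension Lipschitz, no two-sided continuity of $\n\psi$ across $\G_\psi$ being needed. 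Applying the weak Harnack inequality to the supersolution $W_\tau^*(r)-W_\tau$ and using the positive-density estimate for $\{\psi=Q\}$ from Proposition \ref{measure}(iii) gives $\sup_{B_{r/2}}W_0\leq C_0\sup_{B_r}W_0+k(r)$ with $C_0<1$, and the iteration lemma yields the geometric decay $\sup_{B_r}W_0\leq Cr^\a+k(r^\beta r_0^{1-\beta})$, i.e.\ the claimed bound. In short, the decay comes from the measure-theoretic thickness of the coincidence set seen through the one-sided subsolution property of $w$, not from H\"older continuity of $\n\psi$ plus boundary pinning; as written, your argument cannot be completed without first establishing free boundary regularity, which would invert the logical order of the paper.
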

\begin{proof}
The proof is similar to that for \cite[Lemma 4.11]{LSTX2023}. It follows from the equation of $\psi$ and the methods in \cite[Chapter 15]{GT_book} that $w:=|\n\psi/y|^2$ satisfies
\begin{align*}
	&\frac12\pt_i\left(\mathfrak{a}^{ij}\pt_j  w\right)-\mathfrak a^{ij}\pt_i\left(\frac{\pt_k\psi}{y}\right)\pt_j\left(\frac{\pt_k\psi}{y}\right)+\pt_i\left(\mathfrak b^{ij}\frac{\pt_j\psi}{y}\right)-\mathfrak b^{ij}\pt_i\left(\frac{\pt_j\psi}{y}\right)\\&+\pt_z\mathfrak a^{ij}\pt_i\left(\frac{\pt_j\psi}{y}\right)yw-\pt_z\mathfrak a^{ij}\pt_i\psi\pt_k\left(\frac{\pt_j\psi}{y}\right)\frac{\pt_k\psi}{y}=0
\end{align*}
in $\D\cap\{\psi<Q\}$, where
\begin{align}\label{eq_ab}
	&\mathfrak{a}^{ij}:=\pt_{p_ip_j}\MG\left(\frac{\n\psi}{y},\psi\right)\q\text{and}\q
	\mathfrak{b}^{ij}:=\frac1{y^2}\mathfrak a^{ik}(\pt_j\psi\d_{k2}-\pt_k\psi\d_{j2})+\mathfrak c\d_{ij}
\end{align}
with
$$\mathfrak{c}:=\pt_{p_iz}\MG\left(\frac{\n\psi}{y},\psi\right)\pt_i\psi-y\partial_{z}\MG\left(\frac{\n\psi}{y},\psi\right).$$
By Young's inequality it holds that
$$\left|\mathfrak b^{ij}\pt_i\left(\frac{\pt_j\psi}{y}\right)\right|
\leq \frac{1}{\mathfrak b_\ast}|\mathfrak b^{ij}|^2+\frac{\mathfrak b_\ast}{4}\left|\n\left(\frac{\n\psi}y\right)\right|^2.$$
Moreover, by the definitions of $\MG$ in \eqref{notation JDG} and $G_\v$ in \eqref{G def} one derives
\begin{align*}
	\pt_z\mathfrak a^{ij}(\bp,z)=2\pt_{tz}G_\epsilon(|\bp|^2,z) \delta_{ij}+ 4\pt_{ttz}G_\epsilon(|\bp|^2,z) p_ip_j.
\end{align*}
This combined with $2|\bp|^2\pt_{tz}G_\epsilon(|\bp|^2,z)=\bp\cdot \pt_{\bp z}\MG(\bp,z)$ gives that
\begin{align*}
	&\pt_z\mathfrak a^{ij}\pt_i\left(\frac{\pt_j\psi}{y}\right)yw-\pt_z\mathfrak a^{ij}\pt_i\psi\pt_k\left(\frac{\pt_j\psi}{y}\right)\frac{\pt_k\psi}{y}\\
	=&2\pt_{tz}G_\epsilon \left(\left|\frac{\nabla\psi}y\right|^2,\psi\right)
	\cdot y\left(\pt_i\left(\frac{\pt_i\psi}{y}\right)\left|\frac{\nabla\psi}y\right|^2 -\frac{\pt_i\psi}{y}\pt_k\left(\frac{\pt_i\psi}{y}\right)\frac{\pt_k\psi}{y}\right)\\
	=&\n\psi\c\pt_{\bp z}\MG \left(\frac{\nabla\psi}y,\psi\right)
	\c\left(\n\c\left(\frac{\n\psi}{y}\right) -\frac{\n\psi}{|\n\psi|}\n\left(\frac{\n\psi}{y}\right)\frac{(\n\psi)^T}{|\n\psi|}\right).
\end{align*}
Thus using Young's inequality again yields
\begin{align*}
	&\left|\pt_z\mathfrak a^{ij}\pt_i\left(\frac{\pt_j\psi}{y}\right)yw-\pt_z\mathfrak a^{ij}\pt_i\psi\pt_k\left(\frac{\pt_j\psi}{y}\right)\frac{\pt_k\psi}{y} \right|\\
	\leq& \frac{1}{\mathfrak b_\ast}\left|\nabla\psi\cdot \pt_{\bp z}\MG\left(\frac{\nabla\psi}y,\psi\right)\right|^2+ \frac{\mathfrak b_\ast}{2}\left|\n\left(\frac{\n\psi}y\right)\right|^2.
\end{align*}
Since
\begin{align*}
\mathfrak a^{ij} \pt_i\left(\frac{\pt_k\psi}{y}\right)\pt_j\left(\frac{\pt_k\psi}{y}\right) \geq \mathfrak b_\ast\left|\n\left(\frac{\n\psi}y\right)\right|^2,
\end{align*}
which follows from the convexity of the functional, cf. \eqref{G dpp}, one concludes that $w$ is a subsolution to the uniformly elliptic linear equation
\begin{align*}
	\mathcal{L}w:=\frac{1}{2} \pt_i(\mathfrak a^{ij}\pt_j w) \geq - \pt_i \mathfrak g^i-\mathfrak f,
\end{align*}
where
\begin{align*}
	 \mathfrak g^i:= \mathfrak b^{ij}\frac{\pt_j \psi}y \quad\text{and}\q
	 \mathfrak f:=\frac{1}{\mathfrak b_\ast}\left(|\mathfrak b^{ij}|^2+\left|\nabla\psi\cdot \pt_{\bp z}\MG\left(\frac{\nabla\psi}y,\psi\right)\right|^2\right).
\end{align*}
Note that it follows from the definition of $\mathfrak b^{ij}$ in \eqref{eq_ab}, \eqref{G dpp}, and \eqref{G dzdzpdzz} that
$$|\mathfrak b^{ij}|\leq C(B_*, S_*,\bar H, \g,\v,\D')\left(\|\n\psi\|_{L^\infty(\D')}+1\right).$$
Thus there exists $C=C(B_*, S_*,\bar H, \g,\v,\D')>0$ such that $\mathfrak g^i$ and $\mathfrak f$ satisfy
\begin{align*}
	&\|\mathfrak g^i\|_{L^\infty(\D')}
	\leq C \|\nabla\psi\|_{L^\infty(\D')}(\|\nabla\psi\|_{L^\infty(\D')}+1),\\
	&\|\mathfrak f\|_{L^\infty(\D')}\leq C\mathfrak b_\ast^{-1}(\|\nabla\psi\|^2_{L^\infty(\D')}+1).
\end{align*}

Now for $\tau\geq0$, let
$$W_\tau:=\begin{cases}
(|\frac{\n\psi}{y}|^2-\Ld^2-\tau)^+ &\text{in }\{\psi<Q\},\\
0 & \text{in }\{\psi=Q\}.
\end{cases}$$
Using the same arguments as in \cite[Lemma 3.4]{ACF84} yields
$$\limsup_{X\to \bar X,\psi(X)<Q}\left|\frac{\n\psi(X)}{y}\right|=\Lambda,$$
thus combining this with the interior regularity of $\psi$ together one has $W_\tau\in C^{0,1}(B_r(\bar X))$.
Denote $W_\tau^*(r):=\sup_{B_r(\bar X)}W_{\tau}$. Then $W_{\tau}^*(r)-W_{\tau}$ satisfies
$$\begin{cases}
\mathcal L(W_\tau^*(r)-W_\tau)\leq \pt_i \mathfrak g^i+\mathfrak f &\text{in } B_{2r}(\bar X),\\
W_{\tau}^*(r)-W_{\tau}=W_{\tau}^*(r) &\text{in } B_{2r}(\bar X)\cap\{\psi=Q\}.
\end{cases}$$
In view of \cite[Theorem 8.18]{GT_book} with $\vartheta\in[1,\infty)$ one has
$$r^{-\frac2{\vartheta}}\|W_{\tau}^*(r)-W_{\tau}\|_{L^\vartheta(B_r(\bar X))}\leq C\left(\inf_{B_{r/2}(\bar X)}(W_{\tau}^*(r)-W_{\tau})+k(r)\right),$$
where $C=C(B_*, S_*,\bar H, \g,\v,\vartheta)$ and $k(r)=\mathfrak b_*^{-1}(r\|\mathfrak f\|_{L^\infty(\D')}+r^2\sum_i\|\mathfrak g^i\|_{L^\infty(\D')})$. From the positive density property of $B_r(\bar X)\cap\{\psi=Q\}$ (cf. Proposition \ref{measure} (iii)) we infer that
$$r^{-\frac2{\vartheta}}\|W_{\tau}^*(r)-W_{\tau}\|_{L^\vartheta(B_r(\bar X))}\geq c W_{\tau}^*(r)$$
for some $c=c(B_*, S_*,\bar H, \g,\v,\ld,\D',\D)$, provided $r\leq r_0$.
Taking $\tau\to0$ and a rearrangement of the above two inequalities yield
$$W_0^*\left(\frac r2\right)=\sup_{B_{r/2}(\bar X)}W_0\leq C_0 W_0^*(r)+k(r),\q C_0=1-\frac {c}{C}\in(0,1).$$
It follows from the iteration lemma (cf. \cite[Lemma 8.23]{GT_book}) that for any $\beta\in (0,1)$
\begin{align*}
	W^*_0(r)\leq C W^*_0(r_0)\left(\frac{r}{r_0}\right)^\alpha+ k(r^\beta r_0^{1-\beta}),
\end{align*}
where $C=C(C_0)>0$ and $\alpha=\alpha(C_0,\beta)\in (0,1)$. Recalling the Lipschitz bound for $\psi$ in Proposition \ref{Lipschitz}, one  obtains the desired estimate.
\end{proof}

\begin{remark}\label{rmk_gradu}
In Lemma \ref{gradientpsi estimate}, if $B_{2r}(\bar X)\subset B_{2R}(\bar X)\subset \D'$ with $R\in(0,r_0)$, then there exists a constant $C=C(B_*, S_*,\bar H, \g,\v,\ld,\D',\D)>0$ such that
\begin{equation}\label{eq_gpsiy}
\sup_{B_r(\bar X)}\left|\frac{\nabla\psi}y\right|^2\leq \Lambda^2+C\left(\frac rR\right)^{\alpha}.
\end{equation}
This follows from applying the proof of Lemma \ref{gradientpsi estimate} to $\psi_{\bar X,R}(X)=\psi(\bar X+RX)/R$.
\end{remark}

Similar arguments as in \cite[Proposition 4.12]{LSTX2023} give the following gradient estimate. 

\begin{proposition}\label{prop_1}
Let $\psi$ be a minimizer of \eqref{minimization problem}.
Then for any $B_{2r}(\bar X)\subset \D'\Subset\D$ with $\bar X\in\G_\psi$ and $r\in(0,r_0)$, where $r_0$ is the same as in Lemma \ref{gradientpsi estimate}, one has
$$\dashint_{B_r(\bar X)\cap\{\psi<Q\}}\left(\Ld^2-\left|\frac{\n\psi}{y}\right|^2\right)^+\leq C|\ln r|^{-1},$$
where $C=C(B_*, S_*,\bar H, \g,\v,\ld,\D',\D)>0$ is a constant.
\end{proposition}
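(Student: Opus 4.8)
The plan is to follow the strategy of \cite[Proposition 4.12]{LSTX2023}, which itself adapts the energy--comparison argument of \cite{ACF84,ACF85}, while controlling the additional terms produced by the weight $y$ and by the $z$--dependence of $\MG$.

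\emph{Reduction to a signed average.} First I would invoke Lemma~\ref{gradientpsi estimate} and Remark~\ref{rmk_gradu}, which give $|\n\psi/y|^2\le\Ld^2+C(r/r_0)^\a$ on $B_r(\bar X)$; hence
$$\Big(\Ld^2-\big|\tfrac{\n\psi}{y}\big|^2\Big)^{+}\le \Ld^2-\big|\tfrac{\n\psi}{y}\big|^2+C\Big(\tfrac{r}{r_0}\Big)^{\a},$$
and since $(r/r_0)^\a=o(|\ln r|^{-1})$ it suffices to bound the signed average of $\Ld^2-|\n\psi/y|^2$ over $B_r(\bar X)\cap\{\psi<Q\}$. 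The relevance of the value $\Ld^2$ is that the free boundary relation $\ld_\v^2=\Phi_\v(\Ld^2,Q)$ in \eqref{Jm def}, together with the strict monotonicity \eqref{Phim dt} of $t\mapsto\Phi_\v(t,z)$, identifies $\Ld^2$ as the equilibrium value of $|\n\psi/y|^2$ along $\{\psi=Q\}$; in particular the weak free boundary condition of Lemma~\ref{free BC}, on the level sets $\{\psi<Q-s\}$, reads $\Phi_\v(|\n\psi/y|^2,\psi)=\ld_\v^2$ weakly on $\G_\psi$.

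\emph{The quantitative step.} The heart of the argument is an energy comparison against a competitor $\tilde\psi\in\K_{\psi^\sharp}$ which on $B_r(\bar X)$ replaces $\psi$ by (a mollification of) the half--plane profile attached to a free boundary point $\bar Y\in\G_\psi\cap B_r(\bar X)$ — an approximate solution $\ell$ of \eqref{elliptic equG} with $|\n\ell/y|\equiv\Ld$ on $\{\ell<Q\}$ up to an error $O((r/r_0)^\a)$, whose coincidence set has the size dictated by the density bounds of Proposition~\ref{measure} — the passage from $\ell$ back to $\psi$ being done on an annulus via the planar logarithmic cutoff $\zeta_r$, which satisfies $\int|\n\zeta_r|^2\le C|\ln r|^{-1}$. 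Inserting $\tilde\psi$ into $\J(\psi)\le\J(\tilde\psi)$ and using the strong convexity \eqref{G dpp}, the identity $\Phi_\v(\Ld^2,Q)=\ld_\v^2$, the Lipschitz bound of Proposition~\ref{Lipschitz} and the smallness bounds \eqref{G dzdzpdzz} for $\pt_z\MG,\ \pt_{\bp z}\MG,\ \pt_{zz}\MG$, the difference of energies is reduced to, on one side, a positive multiple of $\int_{B_r(\bar X)\cap\{\psi<Q\}}y\,(\Ld^2-|\n\psi/y|^2)$ plus a measure discrepancy between the two coincidence sets (absorbed through Proposition~\ref{measure}), and on the other side an annulus error $\le C\int|\n\zeta_r|^2\le C|\ln r|^{-1}$ after using the Lipschitz bound; dividing by $|B_r(\bar X)|$ and by the weight $\bar y$, which is bounded below on $\D'\Subset\D$, yields the claimed estimate. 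Equivalently, one may organise this as a telescoping of a Weiss--type monotone quantity over the dyadic scales between $r$ and $r_0$, the sum being controlled by the Lipschitz energy and the factor $|\ln r|^{-1}$ coming from the number of scales, which is the route of \cite{ACF84}.

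\emph{Main obstacle.} The delicate point, absent in the unweighted planar problem of \cite{ACF84}, is the construction and analysis of the competitor in the presence of the factor $1/y$ and of the $\psi$--dependence of $\MG$: the model profile cannot be taken affine but must be an approximate solution of the weighted equation \eqref{elliptic equG}, and the lower bound for the bulk integrand has to be arranged so that the convexity gain \eqref{G dpp} dominates the first--order contributions $\pt_z\MG$, $\pt_{\bp z}\MG$ — this is precisely where the smallness $\d\le C_\d$ from Proposition~\ref{Gproperties pro} and the smallness of $r$ enter — while the coincidence sets of $\psi$ and of the competitor are matched quantitatively via the density estimates of Proposition~\ref{measure}. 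A minor additional check is that the modification stays localised strictly inside $\D$, so that $\tilde\psi\in\K_{\psi^\sharp}$; this is automatic once $r$ is small enough that $B_{2r}(\bar X)\Subset\D'$.
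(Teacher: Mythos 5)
Your reduction step (using Lemma \ref{gradientpsi estimate} to replace $(\Ld^2-|\n\psi/y|^2)^+$ by the signed quantity up to $O(r^\a)$) is fine, but the quantitative core of your argument does not produce the stated bound. Note first that, by the density estimate in Proposition \ref{measure}(iii), the proposition is equivalent to $\int_{B_r(\bar X)\cap\{\psi<Q\}}(\Ld^2-|\n\psi/y|^2)^+\leq Cr^2|\ln r|^{-1}$, i.e.\ the \emph{localized} excess at scale $r$ must be $O(r^2/|\ln r|)$. Your comparison yields (at best) $c\int_{B_r\cap\{\psi<Q\}}y\,(\Ld^2-|\n\psi/y|^2)\leq C\int|\n\zeta_r|^2\leq C|\ln r|^{-1}$, and ``dividing by $|B_r|$'' then gives $Cr^{-2}|\ln r|^{-1}$, which is far weaker than the claim (worse than the trivial bound $\Ld^2$ for small $r$). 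The logarithmic cutoff cannot repair this: if $\zeta_r$ interpolates between the scales $r$ and $r_0$, the competitor differs from $\psi$ on all of $B_{r_0}$, so the comparison also generates terms of size $O(r_0^2)=O(1)$ (not $O(r^2/|\ln r|)$); if instead the gluing is done on $B_{2r}\setminus B_r$, then $\int|\n\zeta_r|^2$ is of order one and there is no logarithmic gain at all. In addition, the term $\ld_\v^2\int y\,(\chi_{\{\tilde\psi<Q\}}-\chi_{\{\psi<Q\}})$ created by replacing $\psi$ with a half-plane profile is generically of order $r^2$ with no sign control; Proposition \ref{measure} only says both coincidence sets have measure comparable to $r^2$, so this discrepancy cannot be ``absorbed'' — it is of exactly the same order as the quantity you need to bound by $r^2/|\ln r|$. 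This is the essential difficulty of Bernoulli-type problems: at a single scale the volume term and the Dirichlet excess are comparable, so a direct competitor comparison cannot isolate $\Ld^2-|\n\psi/y|^2$.

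For comparison, the paper does not prove Proposition \ref{prop_1} directly but invokes the argument of \cite[Proposition 4.12]{LSTX2023}, which follows the ACF scheme: one uses the stationarity/weak free boundary identity of Lemma \ref{free BC} (where the $\ld_\v^2$ and $\Phi_\v$ contributions combine into a flux $[\ld_\v^2-\Phi_\v](\eta\cdot\nu)$ that is small near $\G_\psi$ by Lemma \ref{gradientpsi estimate} and \eqref{Phim dt}), together with the representation and density estimates of Proposition \ref{measure}, over the whole range of scales $\rho\in(r,r_0)$; the factor $|\ln r|^{-1}$ arises because a scale-invariant excess is shown to have bounded total when integrated against $d\rho/\rho$ over $\sim|\ln r|$ dyadic scales, combined with an almost-monotonicity that lets the smallest scale be controlled by the average over scales. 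Your closing remark about ``telescoping a Weiss-type monotone quantity'' gestures at this mechanism, but no Weiss-type formula is established for this weighted, $z$-dependent functional (and it is not the route of \cite{ACF84}, which predates it), and in your write-up it is asserted as an equivalent reformulation of the competitor argument rather than carried out; the two inputs that make the multi-scale argument work (bounded dyadic sum and almost-monotonicity) are missing. As it stands, the proposal therefore has a genuine gap at its main step.
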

Combining Proposition \ref{prop_1} and the arguments in \cite[Corollary 4.4]{ACF84} together we have the following corollary. 

\begin{corollary}\label{halfplane sol}
Let $\psi$ be a minimizer of \eqref{minimization problem}.
Then every blow up limit of $\psi$ at $\bar{X}=(\bar x,\bar y)\in\G_\psi\cap \D$ is a half plane solution with slope $\Ld\bar y$ in a neighborhood of the origin.
\end{corollary}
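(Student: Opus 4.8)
The plan is to carry out the Alt--Caffarelli--Friedman blow-up analysis, feeding in the one-sided gradient bounds obtained above. Fix $\bar X=(\bar x,\bar y)\in\G_\psi\cap\D$; since $\psi=0$ on $\N_0$ while $\psi=Q$ only on $\N\cup\G_\psi$, one has $\bar y>0$. For a sequence $r_k\downarrow 0$ set $\psi_k:=\psi^*_{\bar X,r_k}$ as in \eqref{def:psi*}, defined on the domains $\D_{\bar X,r_k}$, which exhaust $\R^2$ as $k\to\infty$. By the Lipschitz regularity of Proposition \ref{Lipschitz} (and Remark \ref{Lipschitz boundary} near $\pt\D$), the $\psi_k$ are nonnegative, vanish at the origin, and are uniformly Lipschitz on every compact set; hence, along a subsequence, $\psi_k\to\psi_0$ locally uniformly and weakly in $H^1_{\rm loc}(\R^2)$, with $\psi_0\geq 0$ Lipschitz and $\psi_0(0)=0$, and $0\in\pt\{\psi_0>0\}$ by the non-degeneracy of Lemma \ref{nondegeneracy}.

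The next step is to identify the limiting equation. By \eqref{Gsacle} and \eqref{eq_rescale3}--\eqref{eq_rescale2}, $\tilde\MG_{r_k}(\bp,z)=\MG(-\bp,Q-r_kz)\to\MG(-\bp,Q)$ in $C^1_{\rm loc}$ while all its $z$-derivatives are $O(r_k)$, and $\bar y+r_ky\to\bar y$ uniformly on compact sets; passing to the limit in the rescaled equation \eqref{ellipticequ scale} (using the uniform ellipticity \eqref{Gr_dpp} together with interior De Giorgi--Nash--Moser estimates to upgrade the convergence of $\nabla\psi_k$ on compact subsets of $\{\psi_0>0\}$) shows that $\psi_0$ solves the constant-coefficient, uniformly elliptic, divergence-form equation $\pt_i\pt_{p_i}\MG(\nabla\psi_0/\bar y,Q)=0$ in the open set $\{\psi_0>0\}$.

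The third step is to pin down $|\nabla\psi_0|$ on its positivity set. From Lemma \ref{gradientpsi estimate} and Remark \ref{rmk_gradu}, $|\nabla\psi_k/(\bar y+r_ky)|^2\leq\Ld^2+C(r_k/R)^\a$ on $B_R$ for each fixed $R<r_0$, so letting $k\to\infty$ gives $|\nabla\psi_0/\bar y|\leq\Ld$ a.e. On the other hand, rescaling Proposition \ref{prop_1} to balls centered at $\bar X$ and letting $k\to\infty$ — here the non-degeneracy of Lemma \ref{nondegeneracy} together with the density estimates of Proposition \ref{measure} ensure that no measure of the positivity set is lost in the limit, so that $\chi_{\{\psi_k>0\}}\to\chi_{\{\psi_0>0\}}$ in $L^1_{\rm loc}$ — yields $\dashint_{B_\rho\cap\{\psi_0>0\}}\big(\Ld^2-|\nabla\psi_0/\bar y|^2\big)^+=0$ for every $\rho>0$, hence $|\nabla\psi_0/\bar y|=\Ld$ a.e.\ in $\{\psi_0>0\}$. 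Thus $\psi_0$ is a nonnegative Lipschitz solution of a homogeneous uniformly elliptic equation in $\{\psi_0>0\}$, vanishing continuously on the complement, with $|\nabla\psi_0|\equiv\Ld\bar y$ a.e.\ there and $0\in\pt\{\psi_0>0\}$. The classification argument of \cite[Corollary 4.4]{ACF84} — a solution of such an equation whose gradient has constant modulus on its positivity set and which vanishes continuously across the free boundary must coincide, near any free boundary point, with the half-plane solution $X\mapsto\Ld\bar y\,(X\cdot\nu)^+$ for some unit vector $\nu$ — then shows that every blow-up limit $\psi_0$ is a half-plane solution with slope $\Ld\bar y$ in a neighborhood of the origin.

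I expect the main obstacle to be the convergence $\chi_{\{\psi_k>0\}}\to\chi_{\{\psi_0>0\}}$ in $L^1_{\rm loc}$: this is precisely where the non-degeneracy estimate of Lemma \ref{nondegeneracy} and the positive-density properties of Proposition \ref{measure} are indispensable, and it does not follow from soft compactness alone. Once this is in hand, both the passage to the limit in the averaged gradient bound of Proposition \ref{prop_1} and (if one prefers to phrase it that way) the identification of $\psi_0$ as a local minimizer of the frozen functional $\phi\mapsto\int\bar y\big[\MG(\nabla\phi/\bar y,Q)+\ld^2\chi_{\{\phi>0\}}\big]$ go through, and the remaining ingredients — compactness of the rescalings, the constant-coefficient limiting PDE, and the rigidity of blow-up limits — are a direct transcription of \cite{ACF84}.
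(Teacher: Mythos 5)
Your proposal is correct and follows essentially the route the paper intends: the paper's own "proof" is precisely the combination of the averaged gradient bound of Proposition \ref{prop_1} (together with the upper bound from Lemma \ref{gradientpsi estimate}, Lipschitz compactness, and nondegeneracy) with the rigidity argument of \cite[Corollary 4.4]{ACF84}, which you have simply written out in detail, including the frozen-coefficient limiting equation at $(\bar y,Q)$. The extra care you take with $\chi_{\{\psi_k>0\}}\to\chi_{\{\psi_0>0\}}$ is sound but not strictly needed for the identity $|\nabla\psi_0|=\Ld\bar y$ a.e.\ in $\{\psi_0>0\}$, since that step only requires gradient convergence on compact subsets of the open positivity set of the limit.
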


As in the two-dimensional case (\cite{LSTX2023}), now we can use the improvement of flatness arguments to show that the free boundary is locally a $C^{1,\a}$ graph for some $\a\in(0,1)$  (cf. \cite{ACF84}). Higher regularity of the free boundary follows from \cite[Theorem 2]{KN1977_regularity}. For the completeness, we state the result here without proof.

\begin{proposition}
Let $\psi$ be a minimizer of \eqref{minimization problem}. 
The free boundary $\G_\psi$ is locally $C^{k+1,\a}$ if $\MG(\bp,z)$ is $C^{k,\a}$ in its components ($k\geq1$, $0<\alpha<1$), and it is locally real analytic if $\MG(\bp,z)$ is real analytic.
\end{proposition}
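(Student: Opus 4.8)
The starting point is what has just been recorded: by the improvement of flatness arguments (as in \cite{LSTX2023,ACF84}), near every $\bar X=(\bar x,\bar y)\in\G_\psi$ the free boundary is a $C^{1,\a}$ graph and $\psi$ is $C^{1,\a}$ up to $\G_\psi$ from within $\{\psi<Q\}$, so that the free boundary condition $|\n\psi/y|=\Ld$ holds in the classical sense; note $\bar y>0$ (since $\psi(\bar X)=Q>0$ while $\psi=0$ on $\N_0$), hence $\n\psi\neq0$ on $\G_\psi$ because $\Ld>0$. Setting $v:=Q-\psi$, one has $v>0$ in $\{\psi<Q\}$, $v=0$ on $\G_\psi$, $|\n v|=\Ld y$ on $\G_\psi$, and $v$ solves the quasilinear elliptic equation \eqref{elliptic equG} with $\MG$ replaced by $\bp\mapsto\MG(-\bp,Q-z)$. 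The plan is to verify that we are in the setting of \cite[Theorem 2]{KN1977_regularity} and to run its proof, which amounts to the classical Kinderlehrer--Nirenberg scheme: flatten the free boundary by a partial hodograph transform, obtain an elliptic equation in a half-ball with a nonlinear oblique boundary condition on the flat part, and bootstrap via boundary Schauder estimates.

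Concretely, I would localize at a fixed $\bar X\in\G_\psi$ and, after a rotation of coordinates (under which the coefficient $y=x_2$ appearing in \eqref{elliptic equG} becomes an affine, hence real analytic, function of the new variables, staying positive near $\bar X$), arrange that $\pt_{x_2}v(\bar X)=|\n v(\bar X)|=\Ld\bar y>0$. Then $(z_1,z_2):=(x_1,v(x_1,x_2))$ is a $C^{1,\a}$ change of independent variables carrying a neighbourhood of $\bar X$ onto a half-ball $B_r^+\subset\{z_2>0\}$ and $\G_\psi$ onto its flat part $\{z_2=0\}$, and its inverse $w(z_1,z_2):=x_2$ is $C^{1,\a}$ up to $\{z_2=0\}$. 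Using $\pt_{x_2}v=1/\pt_{z_2}w$, $\pt_{x_1}v=-\pt_{z_1}w/\pt_{z_2}w$ and the corresponding formulas for the second derivatives, equation \eqref{elliptic equG} transforms into a quasilinear equation for $w$ in $B_r^+$ which is uniformly elliptic there (because $\pt_{z_2}w$ stays bounded away from $0$ near $\bar X$, $\n v$ being nonvanishing) and whose coefficients are as regular as the derivatives of $\MG$ permit, as functions of $(\n w,w,z_1)$ — the old coefficient $y$ now entering through $y=w$. The condition $|\n_x v|^2=\Ld^2x_2^2$ on $\G_\psi$ becomes a fully nonlinear boundary condition on $\{z_2=0\}$ relating $\pt_{z_1}w$, $\pt_{z_2}w$ and $w$, which is smooth (indeed real analytic) in these arguments and, since $\n v\neq0$, is a regular oblique derivative condition.

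Finally I would run the bootstrap: freezing the H\"older coefficients and applying Schauder estimates for the linear oblique derivative problem up to $\{z_2=0\}$ improves $w\in C^{1,\a}$ to $w\in C^{2,\a}$; with $\n w$ now more regular the coefficients and the boundary data improve correspondingly and the estimate applies again, and iterating one gains one derivative at a time until the regularity of $w$ saturates at $C^{k+1,\a}$, the order dictated by $\MG\in C^{k,\a}$ (this is precisely the bookkeeping of \cite[Theorem 2]{KN1977_regularity}). Since $\G_\psi$ is locally the graph $x_2=w(z_1,0)$, it is $C^{k+1,\a}$. When $\MG$ is real analytic the same scheme first yields $w\in C^\infty$ up to $\{z_2=0\}$, and then the analyticity theorem for solutions of nonlinear elliptic equations with analytic nonlinear oblique boundary conditions (Morrey; see again \cite[Theorem 2]{KN1977_regularity}) gives that $w$, hence $\G_\psi$, is real analytic. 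The only point that is not pure bookkeeping is the middle step — that after the hodograph transform the equation remains uniformly elliptic and the transformed boundary operator is a non-degenerate oblique derivative condition — but both facts rest on the single input $|\n\psi/y|=\Ld>0$ on $\G_\psi$, which is already at hand, so no real obstacle arises.
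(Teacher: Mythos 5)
Your proposal is correct and follows exactly the route the paper intends: the $C^{1,\alpha}$ regularity of $\G_\psi$ and of $\psi$ up to the free boundary from the improvement-of-flatness step, followed by the Kinderlehrer--Nirenberg partial hodograph transform and Schauder/analyticity bootstrap of \cite[Theorem 2]{KN1977_regularity}, with the nondegeneracy $|\n\psi/y|=\Ld>0$ guaranteeing both the invertibility of the transform and the obliqueness of the transformed boundary condition. The paper states the result without proof and cites precisely this theorem, so your argument is a faithful filling-in of the omitted details rather than a different approach.
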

\begin{remark}
In our case the function $\mathcal{G}(\bp,z)$ is $C^{1,1}$ in its components (cf. Section \ref{subsec_existence}), thus the free boundary is locally $C^{2,\alpha}$ for any $\alpha\in (0,1)$.
\end{remark}

\section{Fine properties for the free boundary problem}\label{sec fine properties}

In this section, we obtain the uniqueness and monotonicity of the solution for the truncated problem \eqref{variation problem} with given specific boundary conditions. The monotonicity of the solution is crucial to prove the graph property of the free boundary, as well as the equivalence between the Euler system and the stream function formulation.

\begin{center}
	\includegraphics[height=5cm, width=10cm]{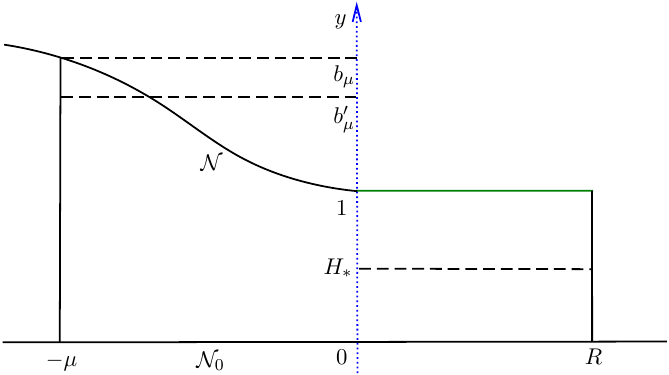}\\
	{\small Figure 1. The truncated domain $\Omega_{\mu, R}$.}
\end{center}

To describe the boundary data on $\pt\Omega_{\mu,R}$, some notations are needed. Let $b_\mu\in(1,\bar H)$ be such that $N(b_\mu)=-\mu$, where $N$ is defined in \eqref{nozzle}. Choose a point $(-\mu,b_\mu')$ with  $0<b_\mu-b'_\mu<(b_\mu-1)/4$. 
Let $H_*:=H_*(\Lambda)$ be such that $\Ld H_*^2e^{1-H_*}=Q$ if $\Ld>Q$, and $H_*=1$ if $\Ld\leq Q$. Define
	\begin{equation*}
		\psi^\dag(y) :=\left\{
		\begin{aligned}
			&\min\left(\Ld y^2e^{1-y}, Q\right)\quad &\text{if } H_*<1,\\
			&Q y^2e^{1-y}\quad &\text{if } H_*=1.
		\end{aligned}
		\right.
	\end{equation*}
	Let $s\in(3/2,2)$ be a fixed constant.
Set
\begin{eqnarray}\label{psi0}
	\psi^\sharp_{\mu, R}(x,y):=
	\left\{ \begin{split}
		& 0 &&\text { if } x=-\mu,\, 0<y<b_\mu',\\
		& Q\left(\frac{y-b_\mu'}{b_\mu-b'_\mu}\right)^{s} && \text{ if } x=-\mu,\, b_\mu'\leq y\leq b_\mu,\\
		& Q &&\text{ if } (x,y)\in \N\cup ([0,R]\times \{1\}),\\
		& \psi^\dag(y) &&\text{ if } x=R,\, 0<y<1,\\
		& 0 &&\text{ if } -\mu\leq x\leq R,\, y=0.
	\end{split}\right.
\end{eqnarray}
	Note that $\psi^\sharp_{\mu, R}$ is continuous and it satisfies $0\leq \psi^\sharp_{\mu, R}\leq Q$.

\begin{lemma}\label{psi0 sup-subsol}
	Assume that the constant $\k$ in \eqref{BS condition2} is sufficiently small depending on $B_*,\, S_*,\, \bar H$, and $\g$. Then  $\psi^\sharp_{\mu,R}(-\mu,\c)$ is a subsolution to \eqref{EL equ} and $\psi^\sharp_{\mu,R}(R,\c)$ is a supersolution to \eqref{EL equ} in $\O_{\mu,R}$.
\end{lemma}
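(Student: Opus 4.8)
The plan is to exploit that, viewed as functions on $\O_{\mu,R}$, both boundary data depend on $y$ alone, so that the Euler--Lagrange operator in \eqref{EL equ} reduces to an ordinary differential expression. For $\phi=\phi(y)$ and $t:=(\phi')^2/y^2$ one computes directly
\[
\n\c\Big(g_\v\big(|\tfrac{\n\phi}{y}|^2,\phi\big)\tfrac{\n\phi}{y}\Big)=\frac1y\Big(\phi''-\frac{\phi'}{y}\Big)\big(g_\v+2t\,\pt_t g_\v\big)+\frac{(\phi')^2}{y}\,\pt_z g_\v,
\]
with $g_\v,\pt_t g_\v,\pt_z g_\v$ evaluated at $(t,\phi)$. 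Hence the statement amounts to checking, on the set $\{0<\phi<Q\}$, that the right-hand side above is $\ge y\,\pt_z G_\v(t,\phi)$ for the upstream datum $\phi=\psi^\sharp_{\mu,R}(-\mu,\cdot)$ and $\le y\,\pt_z G_\v(t,\phi)$ for the downstream datum $\phi=\psi^\sharp_{\mu,R}(R,\cdot)$. The first summand on the right is the \emph{leading} one; its factor $g_\v+2t\pt_t g_\v$ lies in $[C_*,C^*\v^{-1}]$ by \eqref{g dtgmt}, while the remaining summand and the term $y\pt_z G_\v$ will turn out to be of size $O(\k_0)$.

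First I would fix the sign of $\phi''-\phi'/y$ for each profile in \eqref{psi0}. For the upstream datum, on $\{0<\phi<Q\}=(b'_\mu,b_\mu)$ we have $\phi(y)=Q\tau(y)^s$ with $\tau:=(y-b'_\mu)/(b_\mu-b'_\mu)\in(0,1)$, and
\[
\phi''-\frac{\phi'}{y}=\frac{Qs}{(b_\mu-b'_\mu)^2}\,\tau^{\,s-2}\Big(s-2+\frac{b'_\mu}{y}\Big).
\]
Since $s>3/2$ and $b'_\mu/y\ge b'_\mu/b_\mu>3/4$ (the latter because $0<b_\mu-b'_\mu<(b_\mu-1)/4$), the last bracket exceeds $\tfrac14$; using also $\tau^{\,s-2}\ge1$, $Q\ge Q_*$, and $b_\mu-b'_\mu<(\bar H-1)/4$, we get $\phi''-\phi'/y\ge c_1 Q_*$ for some $c_1=c_1(B_*,S_*,\bar H,\g)>0$. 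For the downstream datum, on $\{0<\phi<Q\}\subset(0,1)$ we have $\phi=\Ld\,y^2e^{1-y}$ (or $\phi=Q\,y^2e^{1-y}$ if $\Ld\le Q$), whence $\phi''-\phi'/y=\Ld\,y(y-3)e^{1-y}$ and $\tfrac1y(\phi''-\phi'/y)=\Ld(y-3)e^{1-y}\le-2\Ld\le-2Q_*$ on $(0,1)$ (the bound with $Q$ in place of $\Ld$ in the other case, and $Q\ge Q_*$). In both cases the leading term carries the required sign, with $\big|\tfrac1y(\phi''-\phi'/y)\big|\ge c\,Q_*$ for a constant $c=c(B_*,S_*,\bar H,\g)>0$.

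Next I would absorb the remainder. For $\tfrac{(\phi')^2}{y}\pt_z g_\v$ I would write $(\phi')^2/y=yt$ and invoke the pointwise estimate $|\pt_z g_\v|\le C\k_0\,\pt_t g_\v$ established in the proof of Lemma~\ref{g properties} (cf. \eqref{dzgm_dtgm}; it is this refined bound, rather than the $O(\v^{-1}\k_0)$ bound \eqref{dzgm}, that makes the smallness threshold for $\k$ independent of $\v$), giving $\big|\tfrac{(\phi')^2}{y}\pt_z g_\v\big|\le C\bar H\k_0\,t\pt_t g_\v\le \tfrac{C\bar H}{2}\k_0\,(g_\v+2t\pt_t g_\v)$, which for $\k$ small is dominated by the leading term. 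For the right-hand side, $|y\,\pt_z G_\v(t,\phi)|\le\bar H\,\v\d\le C\k_0(\k_0+1)$ by \eqref{G dzdzpdzz} and the definition \eqref{delta} of $\d$, again uniform in $\v$. Since $\k_0\le C\k^{1-1/(2\g)}$ by \eqref{k0} and $Q_*=\k^{1/(4\g)}$ by Proposition~\ref{incoming data}, and since $\g>1$ forces $1-\tfrac1{2\g}>\tfrac1{4\g}$, the leading term (of order $\ge\k^{1/(4\g)}$) beats both $O(\k^{1-1/(2\g)})$ contributions once $\k$ is small depending only on $B_*,S_*,\bar H,\g$. This yields the inequalities on the open sets $\{0<\phi<Q\}$; where $\psi^\sharp_{\mu,R}(-\mu,\cdot)\equiv0$ the function is constant and both sides of \eqref{EL equ} vanish by \eqref{G support}, and the junction at $y=b'_\mu$ is $C^1$ while the corner of $\psi^\sharp_{\mu,R}(R,\cdot)$ (at $y=H_*$, resp. $y=1$) is concave, hence of the correct sign for a supersolution -- so these junctions do not obstruct the conclusion.

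The crux, and the main obstacle, is exactly the interplay of (i) the sign of $\phi''-\phi'/y$ -- which is why the power $s$ is taken in $(3/2,2)$ for the upstream barrier and the Ansatz $y^2e^{1-y}$ (for which $\phi''-\phi'/y=\Ld\,y(y-3)e^{1-y}<0$ on $(0,1)$) for the downstream barrier -- together with (ii) the need to keep the required smallness of $\k$ uniform in the subsonic-truncation parameter $\v$, which dictates using the pointwise comparison $|\pt_z g_\v|\le C\k_0\pt_t g_\v$ and the $\v$-free bound on $|y\pt_z G_\v|$; the rest is routine differentiation.
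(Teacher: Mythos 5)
Your proof is correct and follows essentially the same route as the paper: reduce \eqref{EL equ} to the one-dimensional identity for $y$-dependent profiles, show the leading term $(g_\v+2t\pt_tg_\v)\frac1y(\phi''-\phi'/y)$ has the right sign with magnitude of order $Q\geq Q_*=\k^{\frac1{4\g}}$, and absorb the $O(\k_0)$ terms via $|\pt_zg_\v|\leq C\k_0\pt_tg_\v$ and \eqref{G dzdzpdzz}, with $\k_0\leq C\k^{1-\frac1{2\g}}$. You merely spell out the downstream computation ($\phi''-\phi'/y=\Ld y(y-3)e^{1-y}<0$) and the junction points, which the paper dismisses with ``similarly''.
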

\begin{proof}
	Denote $\vp(y):=Q\left(\frac{y-b_\mu'}{b_\mu-b'_\mu}\right)^{s}$. Straightforward computations yield
	\begin{align*}
		&\left(g_\v\left(\left|\frac{\vp'}{y}\right|^2,\vp\right)\frac{\vp'}{y}\right)'\\
		=&\left(g_\v+2\pt_tg_\v\left(\frac{\vp'}{y}\right)^2\right)\left(\frac{\vp''}{y}-\frac{\vp'}{y^2}\right)+\pt_zg_\v\frac{(\vp')^2}{y}\\
		=&\left(g_\v+2\pt_tg_\v\left(\frac{\vp'}{y}\right)^2\right)\frac{Qs}{y(b_\mu-b'_\mu)^2}\left(\frac{y-b_\mu'}{b_\mu-b'_\mu}\right)^{s-2}\left(s-2+\frac{b_\mu'}{y}\right)
		+\pt_zg_\v\frac{(\vp')^2}{y}.
	\end{align*}
	Recall that the estimates \eqref{g dtgmt},  \eqref{dzgm_dtgm}, and \eqref{G dzdzpdzz} give
	$$g_\v+2\pt_tg_\v\left(\frac{\vp'}{y}\right)^2\geq C>0,
	\q |\pt_zg_\v|\leq C\k_0\pt_tg_\v, \q |\pt_zG_\v|\leq C\k_0(\k_0+1).$$
    Moreover, since $\k$ in \eqref{BS condition2} is sufficiently small depending on $B_*,\, S_*,\, \bar H$, and $\g$, by \eqref{k0} and Proposition \ref{incoming data} one has
	\begin{equation}\label{label_2}
	\k_0\leq C\k^{1-\frac1{2\g}}\leq C\k^{\frac1{4\g}}\leq CQ,
    \end{equation}
	where $C$ depends on $B_*,\, S_*,\, \bar H$, and $\g$.
	Therefore, the function $\vp$ satisfies
	\begin{equation}\label{subsolution 1}
		\left(g_\v\left(\left|\frac{\vp'}{y}\right|^2,\vp\right)\frac{\vp'}{y}\right)'-y\pt_zG_\v\left(\left|\frac{\vp'}{y}\right|^2,\vp\right)>0,\q \text{on }(b_\mu',b_\mu).
	\end{equation}
	As $0$ is a solution to \eqref{EL equ}, the function $\psi^\sharp_{\mu,R}(-\mu,\c)$ is a subsolution to \eqref{EL equ}.
	
	Similarly one can show $\psi^\sharp_{\mu,R}(R,\c)$ is a supersolution to \eqref{EL equ}, provided $\k$ is sufficiently small depending on $B_*,\, S_*,\, \bar H$, and $\g$. This finishes the proof of the lemma.
\end{proof}

\begin{lemma}\label{psi bound lem}
	Let $\psi$ be a minimizer of the truncated problem \eqref{variation problem} in $\O_{\mu,R}$ with the boundary value $\psi^\sharp_{\mu,R}$ constructed in \eqref{psi0}. Assume that the constant $\k$ in \eqref{BS condition2} is sufficiently small depending on $B_*,\, S_*,\, \bar H$, and $\g$. Then
	\begin{equation}\label{psi bound}
		\psi^\sharp_{\mu,R}(-\mu,y)<\psi(x,y)\leq\psi^\sharp_{\mu,R}(R,y) \q\text{for all } (x,y)\in \O_{\mu,R}.
	\end{equation}
\end{lemma}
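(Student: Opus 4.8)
The plan is to prove the two inequalities in \eqref{psi bound} separately by comparison arguments, exploiting the sub/supersolution properties of the lateral boundary data established in Lemma \ref{psi0 sup-subsol} together with the comparison principle (Lemma \ref{comparison principle}) and the structure of $\O_{\mu,R}\subset\mathbb R\times[0,\bar H]$. First I would note that $\psi$ is a minimizer, hence, by Lemma \ref{supersol lem}, a supersolution of \eqref{elliptic equG} throughout $\O_{\mu,R}$, and by Lemma \ref{EL} an exact solution of \eqref{EL equ} in the open set $\O_{\mu,R}\cap\{\psi<Q\}$. The functions $\underline\psi(x,y):=\psi^\sharp_{\mu,R}(-\mu,y)$ (read as a function of $y$ only, extended in $x$) and $\overline\psi(x,y):=\psi^\sharp_{\mu,R}(R,y)$ are, respectively, a subsolution and a supersolution of \eqref{EL equ} in $\O_{\mu,R}$ by Lemma \ref{psi0 sup-subsol}; moreover both take values in $[0,Q]$, as does $\psi$ by Lemma \ref{lem_minimizer_bound}.

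For the upper bound $\psi\leq\overline\psi$: on the open set $U:=\O_{\mu,R}\cap\{\psi<Q\}$ both $\psi$ and $\overline\psi$ solve (respectively satisfy super-/sub-) the uniformly elliptic equation \eqref{EL equ}; on $\partial U$ one has either $\psi=Q\geq\overline\psi$ (on the part of $\partial U$ interior to $\O_{\mu,R}$, i.e.\ the free boundary and $\N\cup([0,R]\times\{1\})$) or $\psi=\psi^\sharp_{\mu,R}\leq\overline\psi$ on the relevant portion of $\partial\O_{\mu,R}$ — here one checks directly from \eqref{psi0} that $\psi^\sharp_{\mu,R}(x,y)\leq\psi^\dag(y)=\overline\psi$ on the left wall, bottom, and top, using $\Lambda y^2 e^{1-y}\le\psi^\dag$ and the definition of $\psi^\dag$. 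Since $\overline\psi$ is a supersolution, the comparison principle of Lemma \ref{comparison principle} (applied on $U$, which is contained in the strip $\mathbb R\times[0,\bar H]$; one may need to subdivide the $y$-range into finitely many strips of width $\le h_0$ and chain the comparisons, or invoke Remark \ref{rmk_CP} since here the relevant $|\pt_{\bp z}\MG|,|\pt_{zz}\MG|$ are controlled by $\delta=C\v^{-1}\k_0(\k_0+1)$ which is small when $\k$ is small) yields $\psi\leq\overline\psi$ on $U$; and on $\{\psi=Q\}$ the inequality $\psi=Q\le\overline\psi$ fails unless $\overline\psi=Q$, so one actually argues on $\{\psi>Q-\text{something}\}$ — more cleanly, set $w:=(\psi-\overline\psi)^+$, note $w$ is supported in $\{\psi<Q\}$ hence in $U$ where \eqref{EL equ} holds for both, test the difference of the weak formulations with $w$, and run the Caccioppoli-type estimate in the proof of Lemma \ref{comparison principle} to conclude $w\equiv0$.

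For the lower bound $\psi>\underline\psi$: here $\psi$ is merely a supersolution of \eqref{elliptic equG}/\eqref{EL equ} on all of $\O_{\mu,R}$ (Lemma \ref{supersol lem}), while $\underline\psi$ is a subsolution and on $\partial\O_{\mu,R}$ one has $\underline\psi\leq\psi^\sharp_{\mu,R}=\psi$ (again checked from \eqref{psi0}: $Q((y-b_\mu')/(b_\mu-b_\mu'))^s$ versus the various pieces, using $s\in(3/2,2)$). The comparison principle then gives $\psi\geq\underline\psi$ in $\O_{\mu,R}$, and the strong maximum principle (applicable since $\psi-\underline\psi$ is a nonnegative supersolution of a linear uniformly elliptic equation obtained by subtraction, with lower-order terms controlled by $\delta$) upgrades this to the strict inequality $\psi>\underline\psi$ wherever $\psi<Q$; on the set $\{\psi=Q\}$ the strict inequality is automatic since $\underline\psi\le Q$ with equality only on the top wall where $\psi=Q$ is boundary data — here one should note $\underline\psi(x,y)=Q$ only for $y\ge b_\mu$, i.e.\ outside $\overline{\O_{\mu,R}}$, so indeed $\underline\psi<Q$ throughout $\O_{\mu,R}$ and strictness holds everywhere.

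The main obstacle I anticipate is not the comparison step itself but the bookkeeping of two technical points: (a) the comparison principle in Lemma \ref{comparison principle} is only stated for strips of width $\le h_0$, so for a nozzle with $\bar H$ possibly large one must either iterate over a finite stack of thin strips (arranging the intermediate boundary values correctly) or invoke Remark \ref{rmk_CP} to note that, because $\delta=C\v^{-1}\k_0(\k_0+1)$ is made small by the smallness of $\k$ in \eqref{BS condition2}, the comparison principle in fact holds on the full strip $\mathbb R\times[0,\bar H]$; and (b) the free boundary is not known a priori to be nice, so the cleanest route is the Caccioppoli/energy argument with test function $(\psi-\overline\psi)^+$ (resp.\ $(\underline\psi-\psi)^+$) rather than a pointwise maximum-principle comparison, since the former automatically localizes to $\{\psi<Q\}$ where the equation is genuinely elliptic and both competitors solve it. Once $w\equiv0$ is obtained from the energy estimate, strictness follows from the strong maximum principle applied on the open connected components of $\{\psi<Q\}$.
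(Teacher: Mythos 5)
Your lower bound is in the same spirit as the paper's (the paper compares the strict subsolution $Q\bigl((y-b_\mu')/(b_\mu-b_\mu')\bigr)^{s}$ with the global supersolution $\psi$ in the thin strip $\O_{\mu,R}\cap\{b_\mu'<y<b_\mu\}$ and uses the strong maximum principle; since $\psi^\sharp_{\mu,R}(-\mu,\cdot)\equiv 0$ below $b_\mu'$, the strict inequality then holds in all of $\O_{\mu,R}$), and your variant is acceptable modulo the strip-width bookkeeping you already flag.

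The upper bound, however, contains a genuine gap. The pivotal step in your ``cleaner'' route --- set $w=(\psi-\overline\psi)^+$ and ``note $w$ is supported in $\{\psi<Q\}$'' --- is unjustified and is essentially the statement to be proved: nothing known at this stage prevents the coincidence set $\{\psi=Q\}$, and hence free boundary points, from occurring at heights $y<H_*$, where $\overline\psi=\psi^\dag(y)<Q$ and therefore $w>0$. (Excluding free boundary points below $H_*$ is precisely how this upper bound is later used in Proposition \ref{y graph}.) Once $w$ may charge $\{\psi=Q\}$, the argument cannot close: there $\psi$ satisfies only the supersolution inequality of Lemma \ref{supersol lem}, which is the wrong sign for bounding $\psi$ from above, and across $\Gamma_\psi$ the distributional operator applied to $\psi$ carries a singular free-boundary term whose sign is again unfavorable, so testing the difference of the weak formulations with $w$ gives nothing. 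The paper's proof supplies exactly the ingredient you are missing: it first regularizes the functional ($y\mapsto y+\iota$ as in \eqref{J_approx}) so that the minimizers $\psi_\iota$ are continuous up to the symmetry axis, and then runs a sliding argument with the barrier $\Psi^{\tau}(y)=\psi^\sharp_{\mu,R}(R,y+\iota)+\tau$, showing $\tau_*:=\inf\{\tau:\psi_\iota\le\Psi^{\tau}\}=0$: interior touching in $\{0<\psi_\iota<Q\}$ is excluded by the strong maximum principle, touching on $\pt\O_{\mu,R}$ by the boundary data, and touching at a free boundary point by the Hopf lemma combined with the free boundary condition, because the barrier's normalized slope $\Lambda(2-y-\iota)e^{1-y-\iota}$ strictly exceeds $\Lambda$ for $y<1$ by the specific choice of $\psi^\dag$. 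Any correct proof of the upper bound must, in one way or another, use this free boundary condition to rule out contact of the comparison function with $\psi$ on $\Gamma_\psi$; a pure energy/comparison argument that ignores the possibility of crossing the free boundary, as proposed, would fail.
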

\begin{proof}
	The proof is divided into two steps.
	
	\emph{Step 1. Lower bound.}
	Let $\D':=\O_{\mu,R}\cap\{b_\mu'<y<b_\mu\}$. By Lemma \ref{psi0 sup-subsol}, $\psi^\sharp_{\mu,R}(-\mu,\c)\leq\psi$ on $\pt \D'$. Furthermore, it follows from \eqref{subsolution 1} that $\psi^\sharp_{\mu,R}(-\mu,\c)$ is a strict subsolution in $\D'$. Thus by the comparison principle and the strong maximum principle, $\psi^\sharp_{\mu,R}(-\mu,y)<\psi(x,y)$ in $\D'$. Since $\psi^\sharp_{\mu,R}(-\mu,y)\equiv0$ if $y\in[0,b_\mu']$, one has $\psi^\sharp_{\mu,R}(-\mu,y)<\psi(x,y)$ in $\O_{\mu,R}$.

\emph{Step 2. Upper bound.}
The idea is the same as that in \cite[Lemma 4.1]{ACF85}.
We consider $\psi$ as a limit of minimizers for the modified minimization problems.
For any small $\iota>0$, introduce the functional
\begin{equation}\label{J_approx}
	\mathcal J_\iota(\phi):=\int_{\Omega_{\mu,R}}(y+\iota)\bigg[G_\v\bigg(\left|\frac{\n\phi}{y+\iota}\right|^2,\phi\bigg)+\ld_\v^2\chi_{\{\phi<Q\}}\bigg]dX,
\end{equation}
where $\ld_\epsilon$ is the same as in \eqref{Jm def}. As in Section \ref{sec existence and regularity}, one can prove that
\begin{itemize}
	\item [\rm (a)] there exists a minimizer $\psi_\iota$ for the functional $\J_\iota$ in the admissible set \eqref{K_admissible} with the boundary value $\psi^\sharp_{\mu,R}$ constructed in \eqref{psi0};
	\item [\rm (b)] the minimizer $\psi_\iota$ satisfies $0\leq\psi_\iota\leq Q$ in $\O_{\mu,R}$ and  $\psi_\iota\in C^{0,1}_{\rm loc}(\O_{\mu,R})$;
	\item [\rm (c)] the minimizer $\psi_\iota$ satisfies the elliptic equation $\mathscr L\psi_\iota=0$ in $\O_{\mu,R}\cap\{0<\psi_\iota<Q\}$, where
	\begin{equation}\label{equ_psi_iota}
		\mathscr L\phi:=\n\c\bigg(g_\v\bigg(\left|\frac{\n\phi}{y+\iota}\right|^2,\phi\bigg)\frac{\n\phi}{y+\iota}\bigg)-(y+\iota)\pt_zG_\v\bigg(\left|\frac{\n\phi}{y+\iota}\right|^2,\phi\bigg).
	\end{equation}
\end{itemize}
 Furthermore, 
 there exists a sequence $\iota_k\to0$, such that the corresponding sequence of minimizers  $\psi_{\iota_k}$ converges to a minimizer $\psi$ of the problem \eqref{variation problem}. Thus if we show that
\begin{equation}\label{ineq_1}
	\psi_\iota(x,y)\leq \psi_{\mu,R}^\sharp(R,y+\iota) \q\text{in }\O_{\mu,R}
\end{equation}
for sufficiently small $\iota>0$, then the upper bound for $\psi$ follows.

Consider the function
$\Psi^{\tau}(y):= \psi^\sharp_{\mu, R}(R,y+\iota)+\tau$ for $\tau>0$. In view of the proof of Lemma \ref{psi0 sup-subsol}, the function $\Psi^{\tau}$ is a supsolution to \eqref{equ_psi_iota} if $\k_0$ is sufficiently small. Since $\psi_\iota$ is bounded, if $\tau$ is large enough then $\psi_\iota\leq \Psi^{\tau}$. Denote $\tau_*=\inf\{\tau:\psi_\iota\leq\Psi^{\tau}\}$. We claim $\tau_*=0$. Suppose by contradiction that $\tau_*>0$. Note that $\psi_\iota$ is continuous up to $\{y=0\}$ and $\psi_\iota=\psi_{\mu,R}^\sharp$ on $\pt\O_{\mu,R}$. Thus $\psi_\iota=\Psi^{\tau}$ cannot be achieved at $\partial\O_{\mu,R}$. Since $\mathscr L\psi_\iota=0$ and $\mathscr L\Psi^{\tau}\leq0$ in $\O_{\mu,R}\cap\{0<\psi_\iota<Q\}$, the strong maximum principle implies that the functions $\psi_\iota$ and $\Psi^{\tau}$ must touch at a free boundary point $X_*=(x_*,y_*)$ with $0<y_*<1$. Then by the Hopf lemma
$$\Ld=\left|\frac{\n\psi_\iota(X_*)}{y_*+\iota}\right|>\left|\frac{\n\Psi^{\tau_*}(X_*)}{y_*+\iota}\right|=\Ld(2-y_*-\iota)e^{1-y_*-\iota}>\Ld,$$
which is a contradiction. Therefore $\tau_*=0$. This means that the inequality \eqref{ineq_1} holds.

Let $\iota\to0$ one obtains $\psi(x, y)\leq\psi_{\mu,R}^\sharp(R,y)$. This finishes the proof of the lemma.
\end{proof}

\begin{remark}\label{rmk_psi_regularity}
	 Note that every function $\psi_{\iota_k}$ constructed in Lemma \ref{psi bound lem} is a solution to the elliptic equation \eqref{equ_psi_iota} (with $\iota$ being replaced by $\iota_k$) in $\O_{\mu,R}\cap\{0<\psi_{\iota_k}<Q\}$ with the boundary condition $\psi_{\iota_k}=\psi_{\mu,R}^\sharp$ on $\pt\O_{\mu,R}$. By elliptic estimates  $\psi_{\iota_k}\in C^{2,\a}_{\rm loc}(\O_{\mu,R}\cap\{\psi_{\iota_k}<Q\})\cap C^{0}(\overline{\O_{\mu,R}\cap\{\psi_{\iota_k}<Q\}})$ for any $\a\in(0,1)$. Then the limit $\psi$ of $\{\psi_{\iota_k}\}$, which is a minimizer of the minimization problem \eqref{variation problem}, satisfies $\psi\in C^{2,\a}_{\rm loc}(\O_{\mu,R}\cap\{\psi<Q\})\cap C^{0}(\overline{\O_{\mu,R}\cap\{\psi<Q\}})$.	
\end{remark}

With Lemma \ref{psi bound lem} at hand, we can prove the following proposition, which claims that the minimizer for the truncated problem \eqref{variation problem} is unique and monotone increasing in the $x$-direction.

\begin{proposition}\label{psi monotonic}
Under the same assumption as in Lemma \ref{psi bound lem}, $\psi$ is the unique minimizer to \eqref{variation problem}. Furthermore, $\pt_{x}\psi\geq0$ in $\O_{\mu,R}$.
\end{proposition}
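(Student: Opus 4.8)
The plan is to establish monotonicity first and then deduce uniqueness from it, following the strategy of \cite{ACF85} and \cite{LSTX2023} adapted to the axially symmetric setting. The key device is the classical sliding (moving plane) argument in the $x$-direction, which is available here precisely because the boundary data $\psi^\sharp_{\mu,R}$ and the domain $\Omega_{\mu,R}$ are ordered under horizontal translation: the upstream profile $\psi^\sharp_{\mu,R}(-\mu,\cdot)$ is a subsolution, the downstream profile $\psi^\sharp_{\mu,R}(R,\cdot)$ is a supersolution (Lemma \ref{psi0 sup-subsol}), the nozzle boundary $\mathcal N\cup([0,R]\times\{1\})$ and $\mathcal N_0$ are graphs that are monotone under $x$-translation, and by Lemma \ref{psi bound lem} the minimizer $\psi$ is trapped strictly between these two profiles. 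Concretely, for $\tau>0$ set $\psi_\tau(x,y):=\psi(x+\tau,y)$ on the shifted domain and compare $\psi_\tau$ with $\psi$ on the overlap $\Omega_{\mu,R}\cap(\Omega_{\mu,R}-\tau e_1)$. One first checks that $\psi_\tau\geq\psi$ for $\tau$ close to $R+\mu$ (when the overlap is a thin strip near the downstream end, where $\psi$ is close to the supersolution $\psi^\sharp_{\mu,R}(R,\cdot)$ and $\psi_\tau$ inherits ordering from the boundary), then decreases $\tau$ and shows the ordering persists down to $\tau=0^+$, giving $\partial_x\psi\geq0$.

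The mechanism for persistence is a comparison principle for the (truncated, uniformly elliptic) equation \eqref{EL equ}, together with the free boundary condition. In the region where both $\psi$ and $\psi_\tau$ are below $Q$, both solve the same uniformly elliptic equation, and the coefficients are $C^{1,1}$ in $z$ with the smallness \eqref{G dzdzpdzz}; hence the comparison principle of Lemma \ref{comparison principle} (and Remark \ref{rmk_CP}) applies, at least after slicing $\Omega_{\mu,R}$ into finitely many horizontal sub-strips of width $\le h_0$, or directly since $|\partial_{\mathbf p z}\mathcal G|,|\partial_{zz}\mathcal G|$ are $O(\kappa_0)$ small. Across the free boundary one uses the weak free boundary condition (Lemma \ref{free BC}) together with the nondegeneracy and Lipschitz bounds, exactly as in \cite[Section 5]{ACF85}: if the ordering $\psi_\tau\geq\psi$ fails to be strict, a first touching point is either an interior point (excluded by the strong maximum principle for the elliptic equation), a boundary point on $\mathcal N_0$, $\mathcal N$, or $\{y=1\}$ (excluded because the ordered boundary data force strict inequality there, via Lemma \ref{psi bound lem} and the Hopf lemma), or a point where $\{\psi_\tau<Q\}$ and $\{\psi<Q\}$ first meet on the free boundary, at which the Hopf lemma at a touching free boundary point gives a strict inequality of the normal derivatives that contradicts $|\nabla\psi/y|=|\nabla\psi_\tau/y|=\Lambda$ there. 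Running $\tau\downarrow0$ yields $\psi(x+\tau,y)\geq\psi(x,y)$ for all $\tau>0$, i.e. $\partial_x\psi\geq0$ in $\Omega_{\mu,R}$.

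For uniqueness, suppose $\psi_1,\psi_2$ are both minimizers. Each satisfies Lemma \ref{psi bound lem} and is monotone in $x$ by the above. One compares a horizontal translate of one against the other: by the trapping in Lemma \ref{psi bound lem}, for $\tau>0$ small, $\psi_1(\cdot+\tau,\cdot)$ lies above $\psi_2$ near $\partial\Omega_{\mu,R}$ in the appropriate sense, and the same touching-point analysis shows $\psi_1(\cdot+\tau,\cdot)\geq\psi_2$ on the overlap for all admissible $\tau>0$; letting $\tau\downarrow0$ gives $\psi_1\geq\psi_2$, and by symmetry $\psi_1\leq\psi_2$, hence $\psi_1=\psi_2$. (Alternatively, uniqueness follows from strict convexity of $\mathcal G$ in $\mathbf p$ on the common set $\{\psi_1<Q\}\cap\{\psi_2<Q\}$ combined with the fact that the two free boundaries must coincide, again by the comparison argument.) The main obstacle I anticipate is the touching-point analysis at free boundary points: ensuring that the Hopf lemma is applicable there requires the $C^{1,\alpha}$ regularity of the free boundary (available from Section \ref{sec existence and regularity}) and care that the factor $y$ (which degenerates only at $y=0$, away from the free boundary since $\ubar H>0$ will be shown later, but here only the strip $\{0<y<1\}$ is relevant) does not spoil the ellipticity — this is why the comparison is set up on sub-strips bounded away from $y=0$, and the weak free boundary condition of Lemma \ref{free BC} must be upgraded to a pointwise statement on the $C^{1,\alpha}$ portion of $\Gamma_\psi$ before the Hopf argument can be invoked.
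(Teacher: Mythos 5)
Your proposal takes a genuinely different route from the paper, and the route as described has real gaps. The paper's proof (following \cite{ACF85}) rests on one variational identity that you never use: because the integrand depends only on $|\nabla\phi/y|^2$ and $\phi$, and because Lemma \ref{psi bound lem} guarantees that $\min\{\psi^\tau,\psi\}$ and $\max\{\psi^\tau,\psi\}$ are admissible for the shifted and unshifted problems respectively, one has
$J^\tau(\psi^\tau)+J(\psi)=J^\tau(\min\{\psi^\tau,\psi\})+J(\max\{\psi^\tau,\psi\})$,
hence $\max\{\psi^\tau,\psi\}$ is itself a minimizer of $J$. For each fixed $\tau>0$ the strict ordering then follows in one stroke: a first interior touching point $\bar X$ with $\max\{\psi^\tau,\psi\}(\bar X)<Q$ would, by the Hopf lemma applied to the difference of the two solutions, force a kink of $\max\{\psi^\tau,\psi\}$ at $\bar X$, contradicting the interior $C^{1,\alpha}$ regularity of minimizers. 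No comparison principle, no pointwise free boundary condition, and no continuation in $\tau$ are needed; uniqueness is obtained by the same device applied to $\psi_1^\tau$ and $\psi_2$.

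In your sliding scheme the steps left open are exactly the hard ones. First, Lemma \ref{comparison principle} is a statement for a solution/supersolution pair with ordered boundary data in a thin horizontal strip (or under smallness of $|\pt_{\bp z}\MG|,|\pt_{zz}\MG|$, which are only bounded by $\d\sim\v^{-1}\k_0$ and are not small under the hypotheses of Lemma \ref{psi bound lem} unless $\k$ is additionally tied to $\v$); slicing $\O_{\mu,R}$ into strips does not help, because on the interior slice boundaries the ordering of $\psi_\tau$ and $\psi$ is unknown --- it is what you are trying to prove --- and the set $\{\psi_\tau<\psi\}$ need not be thin nor carry ordered data on all of its boundary. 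Second, the first-touching analysis is not closed: at this stage of the paper the free boundary condition is known pointwise only on interior regular portions of $\G_\psi$, continuous fit at $A=(0,1)$ is established only later (Section \ref{sec fit}), and the contact could occur at the lateral boundary $\{x=R\}$, at the junction of $\G_\psi$ with the fixed boundary, or with the negativity set hugging the axis $\N_0$, where both functions vanish and the $1/y$ degeneracy makes a Hopf argument unavailable; your remark about sub-strips away from $y=0$ does not exclude this. (Also, with $\psi_\tau(x,y)=\psi(x+\tau,y)$ the overlap for $\tau$ near $R+\mu$ sits at the upstream end, not the downstream end; in fact the boundary ordering on the vertical edges holds for every $\tau$ by Lemma \ref{psi bound lem}, so the delicate issue is not continuation in $\tau$ but transmitting the ordering across the free boundary, which is precisely what the min/max identity accomplishes.) Finally, your alternative uniqueness remark is circular: the functional is not jointly convex in $(\nabla\phi,\phi)$ (it contains $\chi_{\{\phi<Q\}}$ and genuine $z$-dependence), and the coincidence of the two free boundaries is what has to be proved.
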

\begin{proof}
 The proof is essentially the same as that in \cite[Theorem 4.2]{ACF85}. For completeness we provide the details here.

\emph{Step 1. $\pt_{x}\psi\geq0$.}
For $\tau>0$ small, let
$$\psi^\tau(x,y):=\psi(x-\tau,y) \q \text{for } (x,y)\in \O_{\mu,R}^\tau:=\{(x+\tau,y): (x,y)\in \O_{\mu,R}\}.$$
Assume
$$\psi(x,y)=\psi(R,y) \ \text{ in } \O^\tau_{\mu,R}\backslash \O_{\mu,R} \q\text{and} \q \psi^\tau(x,y)=\psi(-\mu,y) \ \text{ in } \O_{\mu,R}\backslash\O^\tau_{\mu,R}.$$
Since $\psi$ is a minimizer for $J_{\mu,R,\Ld}^\v$ over $\K_{\psi^\sharp_{\mu,R}}$, then $\psi^\tau$ is a minimizer for $J_{\mu,R,\Ld}^{\v,\tau}$ in $\K^\tau_{\psi^{\sharp}_{\mu,R}}$, where
$$J_{\mu,R,\Ld}^{\v,\tau}(\phi):=\int_{\O_{\mu,R}}y\bigg[G_\v\bigg(\left|\frac{\n\phi(x+\tau,y)}{y}\right|^2,\phi(x+\tau,y)\bigg)+\ld_\v^2\chi_{\{\phi(x+\tau,y)<Q\}}\bigg]dX,$$
and
$$\K^\tau_{\psi^{\sharp}_{\mu,R}}:=\{\phi\in H^{1}(\O_{\mu,R}):\phi(x+\tau,y)=\psi^\sharp_{\mu,R}(x,y) \text{ on }\pt\O_{\mu,R}\}.$$
For ease of notations, we denote
$$J:=J_{\mu,R,\Ld}^\v, \q J^\tau:=J_{\mu,R,\Ld}^{\v,\tau},\q \K:=\K_{\psi^\sharp_{\mu,R}},\q \K^\tau:=\K^\tau_{\psi^{\sharp}_{\mu,R}}.$$
By Lemma \ref{psi bound lem} one has
$$\min\{\psi^\tau,\psi\}\in \K^{\tau} \q\text{and}\q \max\{\psi^\tau,\psi\}\in\K.$$
Since the energy functional $J(\phi)$ depends only on $|\n\phi/y|^2$ and $\phi$, one can  verify that
$$J^\tau(\psi^\tau)+J(\psi)=J^\tau(\min\{\psi^\tau,\psi\})+J(\max\{\psi^\tau,\psi\}).$$
This together with the minimality of $\psi^\tau$ and $\psi$, gives
\begin{equation}\label{J equ1}
J(\psi)=J(\max\{\psi^\tau,\psi\}).
\end{equation}

We shall show that $\psi^\tau<\psi$ in $\O_{\mu,R}$. By Lemma \ref{psi bound lem} and the continuity of $\psi$, one has $\psi^\tau<\psi$ near $\{x=-\mu\}\cap \O_{\mu,R}$. If the strict inequality was not true in the connected component of $\O_{\mu,R}\cap\{\psi<Q\}$ containing $\{x=-\mu\}$, then one would find a ball $B\Subset \O_{\mu,R}$ and $\bar X\in\pt B$ with $\psi(\bar X)>0$ such that
$$\psi^\tau<\psi \q\text{in } B,\q \psi^\tau(\bar X)=\psi(\bar X).$$
By the Hopf lemma one has
$$\lim_{X\to\bar X \atop X\in B}\frac{\pt(\psi^\tau-\psi)}{\pt\nu}(X)>0,$$
where $\nu$ is the outer unit normal of $\pt B$ at $\bar X$. Thus there exists a curve through $\bar X$ such that along this curve $\max\{\psi^\tau,\psi\}$ is not $C^1$ at $\bar X$. On the other hand, since $\max\{\psi^\tau,\psi\}$ is a minimizer of $J$ by \eqref{J equ1}, and $\max\{\psi^\tau,\psi\}(\bar X)<Q$, then by the interior regularity for the minimizers, $\max\{\psi^\tau,\psi\}\in C^{1,\a}$ around $\bar X$. This is a contradiction. Since by the strong maximum principle every component $\{\psi<Q\}$ has to touch $\pt\O_{\mu,R}$, and moreover, by the construction of boundary value, $\{\psi<Q\}\cap \pt\O_{\mu,R}$ is a connect arc, then $\{\psi<Q\}\cap\O_{\mu,R}$ consists only one component. Consequently, $\psi^\tau<\psi$ in $\O_{\mu,R}\cap\{\psi<Q\}$. Taking $\tau\to0$ yields $\pt_{x}\psi\geq0$.

\emph{Step 2. Uniqueness.} Assume that $\psi_1$ and $\psi_2$ are two minimizers with the same boundary data. Let $\psi_1^\tau(x,y)=\psi_1(x-\tau,y).$ Same arguments as above give that $\psi_1^\tau\leq\psi_2$ in $\O_{\mu,R}$. Taking $\tau\to0$ yields $\psi_1\leq\psi_2$. Similarly $\psi_2\leq \psi_1$. Thus $\psi_1=\psi_2$.
\end{proof}

Denote the free boundary points which lie strictly below $\{y=1\}$ by
\begin{equation}\label{Gamma truncated}
	\G_{\mu,R,\Ld}:=\pt\{\psi<Q\}\cap\{(x,y): (x,y)\in (-\mu,R]\times (0,1)\}.
\end{equation}
Before proving that the free boundary is a graph, we first show the non-oscillation property of the free boundary.

\begin{lemma}\label{nonoscillation}
Let $\D'\subset\{(x,y): y\geq H_*\}$ be a domain in $\O_{\mu,R}\cap\{\psi<Q\}$ bounded by two disjoint arcs $\G_1$ and $\G_1'$ of the free boundary and by $\{x=k_1\}$ and $\{x=k_2\}$. Suppose that $\G_1$ ($\G_1'$) lies in $\{k_1<x<k_2\}$ with endpoints $(k_1,l_1)$ and $(k_2, l_2)$ ($(k_1,l_1')$ and $(k_2, l_2')$). Suppose that ${\rm dist}((0,1),\D')\geq c_0$ for some $c_0>0$ and $|k_1-k_2|\leq 1$. Then
$$|k_1-k_2|\leq C\max\{|l_1-l_1'|,|l_2-l_2'|\},$$
for some $C=C(B_*,S_*,\bar H,\g,\v,H_*,c_0,\Ld)$.
\end{lemma}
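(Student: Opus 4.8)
\emph{Proof strategy.} The plan is to adapt the non-oscillation argument of \cite{ACF85}: one controls $v:=Q-\psi$ from above near the free boundary by its linear growth (Proposition \ref{Lipschitz} and Remark \ref{Lipschitz boundary}) and from below by the nondegeneracy (Lemma \ref{nondegeneracy}), and one uses the monotonicity $\pt_x\psi\ge0$ (Proposition \ref{psi monotonic}) to organize the geometry of $\D'$; the two bounds are then reconciled by a comparison argument inside the lens-shaped region $\D'$. To set up, write $v:=Q-\psi$, so that $v>0$ in $\D'$, $v=0$ on $\G_1\cup\G_1'$, and $v$ solves a uniformly elliptic equation in $\D'$ by Lemma \ref{EL}. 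Since $\D'\subset\{H_*\le y\le\bar H\}$ with $\mathrm{dist}((0,1),\D')\ge c_0$ and $|k_1-k_2|\le1$, all the relevant constants — the ellipticity constants, the source bound, and (by Proposition \ref{Lipschitz} and Remark \ref{Lipschitz boundary}) $\|\n v\|_{L^\infty(\D')}\le C\Ld$ — depend only on $B_*,S_*,\bar H,\g,\v,H_*,c_0,\Ld$. Put $L:=|k_1-k_2|\le1$ and $m:=\max\{|l_1-l_1'|,|l_2-l_2'|\}$; the goal is $L\le Cm$, and we may assume $m\le\eta_0 L$ for a small $\eta_0$ to be chosen, otherwise there is nothing to prove.

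First I would extract the geometry of $\D'$ from the monotonicity. By Proposition \ref{psi monotonic}, $\{\psi(\cdot,y)<Q\}$ is an interval unbounded to the left for each $y$, so $\G_\psi$ meets every horizontal line in at most one point and $\{\psi<Q\}$ has a single connected component (cf.\ the proof of Proposition \ref{psi monotonic}). Hence $\G_1$ and $\G_1'$ lie in two disjoint horizontal strips, say $\G_1\subset\{a\le y\le b\}$ above and $\G_1'\subset\{a'\le y\le b'\}$ below with $b'<a$; moreover the open rectangle $R:=(k_1,k_2)\times(b',a)$ is contained in $\D'$, and, since an endpoint of $\G_1$ sits at height $l_i\ge a$ and the corresponding endpoint of $\G_1'$ at height $l_i'\le b'$, one gets $m\ge a-b'$. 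Next, near the vertical cuts $S_i:=\{x=k_i\}\cap\overline{\D'}$ I would note that every point of $S_i$ lies within distance $\tfrac12|l_i-l_i'|\le\tfrac m2$ of an endpoint of $\G_1$ or $\G_1'$ where $v=0$, so the linear growth gives $v\le C\Ld m$ on $S_1\cup S_2$; a parallel analysis of the relative positions of the four endpoints confines $\D'$ to a horizontal band of width $O(m)$.

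The decisive step is then the comparison. I would construct, in $\D'$, a supersolution $h$ of the equation for $v$ with $h\ge C\Ld m$ on $S_1\cup S_2$ and $h\ge0$ on $\G_1\cup\G_1'$, whose values in the core $\{k_1+L/4<x<k_2-L/4\}\cap\D'$ are quantitatively small, of the order $C\Ld m\,\omega(L/m)+C\delta$ with $\omega(s)\to0$ as $s\to\infty$; the comparison principle (Lemma \ref{comparison principle} and Remark \ref{rmk_CP}, applicable since $\D'$ sits in a strip of controlled width) then gives $v\le h$ in the core. On the other hand, applying Lemma \ref{nondegeneracy} at a free-boundary point $\bar X=(x_0,y_0)\in\G_1$ with $x_0\in[k_1+L/4,k_2-L/4]$ and $y_0\ge H_*$ — taking $\vartheta=2$, $a$ fixed, and a radius $\rho$ comparable to the local width of $\D'$ at $\bar X$, for which the admissibility conditions $\rho\le\min\{r_*y_0,c_a\Ld/(\delta\v)\}$ and $B_\rho(\bar X)\subset\D$ hold because $\delta$ (equivalently $\k$) is small, cf.\ \eqref{delta} — one produces a point of the core at which $v\ge c\Ld H_*\rho$. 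Comparing the two estimates, absorbing the $\delta$-term because $\k$ is small and the $\omega(L/m)$-term because $m\le\eta_0 L$ with $\eta_0$ small, forces the local width of $\D'$, and with it $L$, to be at most $Cm$. The main obstacle is precisely this comparison step: the lens $\D'$ can be arbitrarily thin and need not be a graph domain (the monotonicity forbids two free-boundary points on a common horizontal line, but still permits short horizontal pieces of $\G_\psi$), so $h$ must be chosen to capture the correct decay of the boundary data on $S_1\cup S_2$ toward the interior while simultaneously controlling the contribution of the arcs $\G_1\cup\G_1'$ and of the lower-order/source terms — this is the point where the monotonicity of $\psi$ and the measure-theoretic description of $\G_\psi$ from Section \ref{sec existence and regularity} (following \cite{ACF85}) enter essentially; once the width is controlled, a short iteration yields the stated bound $|k_1-k_2|\le C\max\{|l_1-l_1'|,|l_2-l_2'|\}$.
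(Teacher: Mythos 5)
Your proposal is a genuinely different route from the paper, but as written it has a real gap: the decisive object --- the supersolution $h$ in the lens whose interior values are of order $C\Ld m\,\omega(L/m)+C\delta$ --- is never constructed, and you yourself flag its construction as ``the main obstacle,'' so the argument does not close. Moreover, two of the inputs it leans on fail as stated. First, the confinement of $\D'$ to a horizontal band of width $O(m)$ does not follow from the endpoint data: the arcs $\G_1,\G_1'$ are pieces of the free boundary whose vertical extents are comparable to $|l_1-l_2|$ and $|l_1'-l_2'|$, which are not controlled by $m=\max\{|l_1-l_1'|,|l_2-l_2'|\}$ (two steep, nearly parallel arcs crossing the slab give small $m$ but a tall lens); this undercuts both the thin-strip decay $\omega(L/m)$ and the choice of $\rho$ ``comparable to the local width.'' Second, Lemma \ref{nondegeneracy} lower-bounds the $L^\vartheta$-average of $Q-\psi$ over the \emph{full} ball $B_\rho(\bar X)\subset\D$; with $\rho$ of the order of the lens width such a ball centered on $\G_1$ generally leaves $\overline{\D'}$ and, beyond the layer where $\psi=Q$, can re-enter $\{\psi<Q\}$ where your interior upper bound says nothing, so the average can be large without contradicting the comparison estimate and no contradiction is forced. (Also note this lemma is proved before Proposition \ref{y graph}, so any graph-type structure you invoke must be rederived directly from $\pt_x\psi\ge0$ in Proposition \ref{psi monotonic}.)

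The paper's actual proof is far more economical and bypasses all of this: integrate the divergence-form equation \eqref{EL equ} over $\D'$. On $\G_1\cup\G_1'$ the flux density is $g_\v\big(|\n\psi/y|^2,\psi\big)\,\pt_\nu\psi/y\ge C_*\Ld$, so that contribution is at least $C_*\Ld\,\mathcal H^1(\G_1\cup\G_1')\ge C_*\Ld|k_1-k_2|$; the flux through the two vertical segments is at most $C\ld\max\{|l_1-l_1'|,|l_2-l_2'|\}$ by the Lipschitz bounds (Proposition \ref{Lipschitz} and Remark \ref{Lipschitz boundary}); and the volume term is at most $\d\bar H|\D'|\le\d\bar H\max\{|l_1-l_1'|,|l_2-l_2'|\}$ using $|k_1-k_2|\le1$. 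Combining these three estimates with \eqref{ld_Ld} gives $|k_1-k_2|\le C\max\{|l_1-l_1'|,|l_2-l_2'|\}$ directly --- no barrier, no nondegeneracy, and no fine control of the lens geometry is needed. If you want to salvage your approach, the flux identity is also the natural place to see why the free-boundary condition $|\n\psi/y|=\Ld$ should enter linearly in $|k_1-k_2|$, which is exactly what your barrier construction was struggling to reproduce.
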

\begin{proof}
Integrating the equation \eqref{EL equ} in $\D'$ yields
\begin{align*}
&\int_{\G_1\cup\G_1'}g_\v\left(\left|\frac{\n\psi}{y}\right|^2,\psi\right)\frac{\pt_{\nu}\psi}{y}+\int_{\pt \D'\cap(\{x=k_1\}\cup\{x=k_2\})}g_\v\left(\left|\frac{\n\psi}{y}\right|^2,\psi\right)\frac{\pt_{\nu}\psi}{y}\\
=&\int_{\D'}y\pt_zG_\v\left(\left|\frac{\n\psi}{y}\right|^2,\psi\right).
\end{align*}
Since $\pt_{\nu}\psi/y=\Lambda$ along $\G_1$ and $\G_1'$ and $g_\v\geq C_*(B_*,S_*,\bar H,\g)$ by \eqref{gm bound}, one has
$$\int_{\G_1\cup\G_1'}g_\v\left(\left|\frac{\n\psi}{y}\right|^2,\psi\right)\frac{\pt_{\nu}\psi}{y}\geq C_*\Ld\mathcal H^1(\G_1\cup\G_1')\geq C_*\Lambda|k_1-k_2|.$$
Applying the interior Lipschitz regularity (Proposition \ref{Lipschitz}) and the boundary estimate for $\psi$ (Remark \ref{Lipschitz boundary}) gives
$$\left|\int_{\pt \D'\cap(\{x=k_1\}\cup\{x=k_2\})}g_\v\left(\left|\frac{\n\psi}{y}\right|^2,\psi\right)\frac{\pt_{\nu}\psi}{y}\right|\leq C\lambda\max\{|l_1-l_1'|,|l_2-l_2'|\},$$
where $C$ depends on the $B_*$, $S_*$, $\bar H$, $\g$, $\v$, $H_*$,  and $c_0$. Moreover, note that $|k_1-k_2|\leq 1$. By \eqref{G dzdzpdzz} it holds
\begin{equation*}
	\left|\int_{\D'}y\pt_zG_\v\left(\left|\frac{\n\psi}{y}\right|^2,\psi\right)\right|
	\leq \d\bar H|\D'|\leq \d\bar H\max\{|l_1-l_1'|,|l_2-l_2'|\}.
\end{equation*}
Combining the above estimates and \eqref{ld_Ld} together gives the desired inequality.
\end{proof}

The next lemma gives the uniqueness of the Cauchy problem for the elliptic equation, which is crucial for the proof of the graph property of the free boundary.

\begin{lemma}\label{Cauchy uniqueness}
Let $\tilde\D\Subset\R^2\cap\{y>0\}$ be a bounded connected piecewise $C^{2,\alpha}$ domain with $\alpha\in(0,1)$, and let $\Sigma\subset \pt\tilde\D$ be a nonempty open set. Let $\psi,\ \tilde{\psi}\in H^{1}_{\rm loc}(\tilde\D)$ be two solutions to the Cauchy problem
\begin{equation*}\left\{
\begin{aligned}
	&\pt_i\pt_{p_i}\MG\bigg(\frac{\n\psi}{y},\psi\bigg)-y\mathcal W\bigg(\frac{\n\psi}{y},\psi\bigg)=0 &&\text{ in } \tilde\D,\\
	&\psi=Q, \ \ \frac1y\pt_{\nu}\psi=\Ld &&\text{ on } \Sigma,
\end{aligned}	
\right.
\end{equation*}
where $Q$ and $\Ld$ are constants. Assume that $\MG$ is uniformly elliptic, cf. \eqref{G dp}-\eqref{G dpp}, and that $\pt_{p_ip_jp_k}\mathcal{G}$, $\pt_{p_ip_jz} \mathcal{G}$, $\pt_{p_iz}\mathcal{G}$,
$\pt_{p_i}\mathcal{W}$, $\pt_z\mathcal{W}\in L^\infty$. Then $\psi=\tilde{\psi}$.
\end{lemma}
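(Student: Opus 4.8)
The plan is to reduce the problem to a standard Carleman-type uniqueness statement for second-order linear elliptic equations with Cauchy data. First I would set $w := \psi - \tilde\psi$ and derive the equation that $w$ satisfies. Writing the nonlinear operator as $\mathcal L[\psi] = \pt_i\bigl(\pt_{p_i}\MG(\n\psi/y,\psi)\bigr) - y\mathcal W(\n\psi/y,\psi)$ (note $\pt_i\pt_{p_i}\MG$ in the statement is shorthand for the divergence-form expression, since $\pt_{p_i}\MG$ depends on $x$ through $\psi$ and $y$), I would subtract the two equations and use the fundamental theorem of calculus in the standard way: for each coefficient, write $\pt_{p_i}\MG(\n\psi/y,\psi)-\pt_{p_i}\MG(\n\tilde\psi/y,\tilde\psi) = \int_0^1 \tfrac{d}{ds}\pt_{p_i}\MG\bigl(\tfrac{\n\psi_s}{y},\psi_s\bigr)\,ds$ with $\psi_s := s\psi+(1-s)\tilde\psi$. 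This produces a linear divergence-form equation
\[
\pt_i\bigl(a^{ij}(X)\pt_j w\bigr) + \pt_i\bigl(b^i(X) w\bigr) + c^i(X)\pt_i w + d(X) w = 0 \q\text{in } \tilde\D,
\]
where $a^{ij}(X) = \int_0^1 \pt_{p_ip_j}\MG(\n\psi_s/y,\psi_s)\,ds$, and the lower-order coefficients $b^i,c^i,d$ are built from $\pt_{p_ip_jp_k}\MG$, $\pt_{p_ip_jz}\MG$, $\pt_{p_iz}\MG$, $\pt_{p_i}\mathcal W$, $\pt_z\mathcal W$ (times factors of $y$, $1/y$, $\n\psi_s$, which are bounded on $\tilde\D\Subset\{y>0\}$ using the interior Lipschitz regularity of $\psi,\tilde\psi$). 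By the uniform ellipticity \eqref{G dp}--\eqref{G dpp}, $a^{ij}$ is uniformly elliptic on $\tilde\D$, and by the hypothesis that the listed derivatives of $\MG$ and $\mathcal W$ lie in $L^\infty$, all coefficients $a^{ij},b^i,c^i,d$ are bounded (and $a^{ij}\in C^{0,\alpha}$, say, from the $C^{2,\alpha}$-type structure, although merely $L^\infty$ suffices for the Aronszajn-type theorem if one is careful). On $\Sigma$ we have $w = 0$ and $\pt_\nu w = 0$, i.e.\ $w$ has vanishing Cauchy data on the $C^{2,\alpha}$ portion $\Sigma$.

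Next I would invoke the classical strong unique continuation / Cauchy uniqueness theorem for second-order elliptic operators: a solution of a uniformly elliptic equation with bounded (Lipschitz, or even $C^{0,\alpha}$) leading coefficients and bounded lower-order coefficients that vanishes to infinite order at a boundary point where it has zero Cauchy data must vanish identically near that point; combined with the connectedness of $\tilde\D$ this forces $w\equiv 0$ in all of $\tilde\D$. Concretely, one first flattens $\Sigma$ near an interior point of $\Sigma$ by a $C^{2,\alpha}$ change of variables, extends $w$ by zero across the flattened boundary (the zero Cauchy data guarantees the extension is a weak solution of an elliptic equation of the same type in the doubled domain — one checks the jump conditions vanish), and then applies Aronszajn's unique continuation theorem (or the version in \cite{GT_book} / Hörmander) to conclude $w$ vanishes in a full neighborhood of that point; a standard connectedness/propagation argument along chains of balls then gives $w\equiv 0$ on $\tilde\D$, hence $\psi = \tilde\psi$.

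The main obstacle I anticipate is the regularity bookkeeping needed to legitimately extend $w$ across $\Sigma$ by zero and identify the extension as a weak solution of a uniformly elliptic equation: one must verify that the extended coefficients are still admissible (the extension across $\Sigma$ of $a^{ij}$ etc.\ need only be bounded and measurable, which is automatic, but ellipticity of the extension must be preserved, which it is since one extends by reflection or simply by the identity matrix scaled appropriately), and that the weak formulation picks up no distributional contribution on $\Sigma$ — this is exactly where the vanishing of both $w$ and $\pt_\nu w$ on $\Sigma$ is used. Once the extension is in place, the unique continuation theorem is off-the-shelf. A secondary technical point is ensuring $w\in H^1_{\rm loc}$ up to $\Sigma$ with enough regularity ($C^1$ up to $\Sigma$, which follows from the boundary Schauder estimates given that $\psi,\tilde\psi$ solve the equation classically near $\Sigma$ with $C^{2,\alpha}$ boundary and the stated smoothness of $\MG,\mathcal W$) so that "zero Cauchy data" is meaningful pointwise.
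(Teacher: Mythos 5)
Your proposal follows essentially the same route as the paper: write the difference of the two solutions, linearize via the fundamental-theorem-of-calculus trick along $\psi_s$, observe that the resulting divergence-form equation is uniformly elliptic with bounded lower-order coefficients and that the difference has zero Cauchy data on $\Sigma$, extend by zero across $\Sigma$, and conclude by unique continuation plus a chain-of-balls argument (the paper cites the Koch--Tataru unique continuation theorem where you invoke Aronszajn, and it likewise relies on boundary regularity to make the Cauchy data meaningful pointwise). The only caveat is your parenthetical that ``merely $L^\infty$'' leading coefficients would suffice for unique continuation --- that is false in general dimension (it holds in two dimensions by Bers--Nirenberg type results), but it is harmless here since your main argument, like the paper's, uses the H\"older/Lipschitz regularity of the leading coefficients coming from the regularity of $\psi,\tilde\psi$.
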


\begin{proof}
The proof relies on the unique continuation for the uniformly elliptic equations with Lipschitz coefficients. First, it follows from the boundary regularity for the elliptic equation 
that $\psi$, $\tilde\psi\in C^{1,\a}(\tilde\D\cup\Sigma)$. Let $\vp:=\tilde\psi-\psi$. Then $\vp$ solves
$$\begin{cases}\begin{split}
&\pt_i\left(\mathfrak{a}_{ij}\pt_j\vp\right)=-\pt_i(\mathfrak{b}_i\vp)+\mathfrak{\tilde b}_i\pt_i\vp+\mathfrak{c}\vp &&\text{ in } \tilde\D,\\
&\vp=\frac1y\pt_{\nu}\vp=0 &&\text{ on } \Sigma,
\end{split}\end{cases}$$
where
\begin{align*}
&\mathfrak{a}_{ij}:=\frac1y\int_0^1\pt_{p_ip_j}\MG\left(\frac{\n\tilde\psi_s}{y},\tilde\psi_s\right)ds,\q \mathfrak{b}_i:=\int_0^1\pt_{p_iz}\MG\left(\frac{\n\tilde\psi_s}{y},\tilde\psi_s\right)ds,\\
&\mathfrak{\tilde b}_i:=\int_0^1\pt_{p_i}\mathcal W\left(\frac{\n\tilde\psi_s}{y},\tilde\psi_s\right)ds,
\q \mathfrak{c}:=y\int_0^1\pt_z\mathcal W\left(\frac{\n\tilde\psi_s}{y},\tilde\psi_s\right)ds,
\end{align*}
with $\tilde\psi_s=\tilde\psi+s(\psi-\tilde\psi)$.
The Cauchy data allow to extend $\vp$ into a solution in a neighborhood $U\supset \Sigma$ by setting $\vp=0$ outside of $U\cap \tilde\D$. It follows from the assumptions on $\MG$ and $\mathcal W$ that $\mathfrak{b}_i$, $\mathfrak{\tilde b}_i$ and $\mathfrak{c}\in L^{\infty}$, and $\mathfrak a_{ij}$ is uniformly elliptic and $C^{1,\alpha}$. Thus by the unique continuation property, cf. for example \cite[Theorem 1.1]{KT2001_unique_continuation}, one has $\vp\equiv0$ in $U\cup \tilde\D$. This means that $\psi=\tilde\psi$. Since $\tilde{\D}$ is connected, using chain of balls argument one has $\psi=\tilde\psi$ in $\tilde{\D}$.
\end{proof}

Now we are ready to prove the free boundary is a graph.

\begin{proposition}\label{y graph}
Let $\G_{\mu,R,\Ld}$ be defined in \eqref{Gamma truncated}.
If $\G_{\mu,R,\Ld}\neq\emptyset$, then it can be represented as a graph of a function of $y$, i.e., there exists a function $\ur_{\mu,R,\Ld}$ and $\ul H_{\mu,R,\Ld}\in[H_*,1)$ such that
\begin{equation}\label{fb_express}
	\G_{\mu,R,\Lambda}=\{(x,y): x=\ur_{\mu,R,\Ld}(y),\, y\in(\ul H_{\mu,R,\Ld},1)\}.
\end{equation}
Furthermore, $\ur_{\mu,R,\Ld}$ is continuous, $-\mu<\ur_{\mu,R,\Ld}\leq R$ and $\lim_{y\to1-}\ur_{\mu,R,\Ld}(y)$ exists.
\end{proposition}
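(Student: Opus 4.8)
The goal is to show that the portion of the free boundary lying strictly below $\{y=1\}$ is an $x$-graph over an interval $(\ul H_{\mu,R,\Ld},1)$ in $y$. The strategy, following \cite[Theorem 5.1]{ACF85} and the analogous two-dimensional argument in \cite{LSTX2023}, rests on two ingredients already available: the monotonicity $\pt_x\psi\ge 0$ (Proposition \ref{psi monotonic}) together with the strict comparison $\psi^\sharp_{\mu,R}(-\mu,y)<\psi(x,y)$ and $\psi(x,y)\le\psi^\sharp_{\mu,R}(R,y)$ (Lemma \ref{psi bound lem}); the non-oscillation estimate (Lemma \ref{nonoscillation}); and the Cauchy uniqueness statement (Lemma \ref{Cauchy uniqueness}). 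First I would use the monotonicity to deduce that each horizontal slice $\{y=y_0\}\cap\{\psi<Q\}$ is an interval $(-\mu, \ur(y_0))$ (or empty, or all of $(-\mu,R]$), so that the set $\{\psi<Q\}$ below $\{y=1\}$ is an ordinate set and its upper (right) boundary is the graph $x=\ur_{\mu,R,\Ld}(y)$; here $\ur(y)=\sup\{x:(x,y)\in\O_{\mu,R},\ \psi(x,y)<Q\}$, with the value $R$ on the part where the slice never reaches $Q$.

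\textbf{Key steps.} (1) Fix $y_0\in(0,1)$ and consider the open slice $I_{y_0}:=\{x:(x,y_0)\in\O_{\mu,R}\cap\{\psi<Q\}\}$. Because $\pt_x\psi\ge 0$ and $\psi$ is continuous, if $\psi(x_1,y_0)=Q$ for some $x_1$ then $\psi(x,y_0)=Q$ for all $x\ge x_1$ in $\O_{\mu,R}$; combined with the lower bound $\psi(x,y_0)>\psi^\sharp_{\mu,R}(-\mu,y_0)\ge 0$ near $x=-\mu$, which in particular shows $\psi<Q$ there (since $\psi^\sharp_{\mu,R}(-\mu,y_0)<Q$ for $y_0<b_\mu$), the slice $I_{y_0}$ is a single interval of the form $(-\mu,\ur(y_0))$, possibly with $\ur(y_0)=R$. (2) The lower endpoint $\ul H_{\mu,R,\Ld}$ is $\inf\{y_0: I_{y_0}\ne\O_{\mu,R}\cap\{y=y_0\}\text{ is a proper subinterval}\}$; I would check $\ul H_{\mu,R,\Ld}\ge H_*$ by comparing $\psi$ with the explicit boundary profile $\psi^\dag$ at $x=R$ (which already equals $Q$ for $y\le H_*$ when $H_*<1$) via the upper bound in Lemma \ref{psi bound lem}, so that $\psi(x,y)=Q$ for all $y\le H_*$ and no free boundary point lies below $H_*$; and $\ul H_{\mu,R,\Ld}<1$ since by hypothesis $\G_{\mu,R,\Ld}\neq\emptyset$. (3) Continuity of $\ur_{\mu,R,\Ld}$: semicontinuity from one side is immediate from the openness of $\{\psi<Q\}$ and from monotonicity; the opposite semicontinuity is where one rules out vertical jumps in the graph — this is exactly what the non-oscillation Lemma \ref{nonoscillation} prevents (two free-boundary arcs at nearly the same $x$-level cannot be far apart in $y$), so that $\ur_{\mu,R,\Ld}$ cannot oscillate, and hence is continuous. (4) Existence of $\lim_{y\to 1-}\ur_{\mu,R,\Ld}(y)$: again monotonicity of $\psi$ in $x$ forces $\ur$ to be, up to the non-oscillation control, essentially monotone near $y=1$, or more robustly, any two subsequential limits would produce two distinct free boundary arcs accumulating at $\{y=1\}$ at the same height, contradicting Lemma \ref{nonoscillation}; hence the limit exists. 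The bounds $-\mu<\ur_{\mu,R,\Ld}\le R$ follow directly from $\ur(y)\in I_{y}^{\,\mathrm{closure}}\subset[-\mu,R]$ together with the strict lower bound of Lemma \ref{psi bound lem}.

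\textbf{The role of Cauchy uniqueness and the main obstacle.} The delicate point — and the reason Lemma \ref{Cauchy uniqueness} is invoked — is excluding the possibility that a whole \emph{vertical segment} $\{x=x_0\}\times(a,b)$ belongs to $\G_{\mu,R,\Ld}$, i.e. that $\psi\equiv Q$ on one side of a vertical free-boundary segment while $\psi<Q$ on the other; if that happened the ``graph over $y$'' could be multivalued or the upper envelope $\ur$ could fail to describe $\G$ faithfully. Here one argues that along such a vertical piece the free boundary condition $|\n\psi/y|=\Ld$ and $\psi=Q$ would give Cauchy data for the elliptic equation \eqref{EL equ} on an open curve, and Lemma \ref{Cauchy uniqueness} then forces $\psi$ to coincide with the explicit solution $\psi\equiv Q$ in a neighborhood — contradicting that the other side lies in $\{\psi<Q\}$. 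The main obstacle I anticipate is precisely the careful bookkeeping in Step (3)–(4): translating the geometric non-oscillation bound and the Cauchy-uniqueness rigidity into the statement that $\G_{\mu,R,\Ld}$ is \emph{exactly} the graph $\{x=\ur_{\mu,R,\Ld}(y)\}$ with $\ur_{\mu,R,\Ld}$ continuous — one must handle the interaction with the fixed nozzle wall $\N$ near $y=1$, the endpoints where $\G_{\mu,R,\Ld}$ meets $\{x=-\mu\}$ or $\{x=R\}$, and the lower accumulation at $y=\ul H_{\mu,R,\Ld}$, all of which require combining monotonicity, the explicit boundary data, and the measure-theoretic regularity of $\G_\psi$ from Proposition \ref{measure}.
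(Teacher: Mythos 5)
Your steps (1), (2) and the bounds $-\mu<\ur_{\mu,R,\Ld}\le R$ follow the paper's route (monotonicity from Proposition \ref{psi monotonic} plus the barriers of Lemma \ref{psi bound lem}, giving the slice description, $\ur_{\mu,R,\Ld}>-\mu$, and $\ul H_{\mu,R,\Ld}\ge H_*$), and your use of Lemma \ref{nonoscillation} for the existence of $\lim_{y\to1-}\ur_{\mu,R,\Ld}(y)$ is also what the paper does. The genuine gap is in your continuity step (3) and in where you aim the Cauchy-uniqueness rigidity. A discontinuity of $\ur_{\mu,R,\Ld}$ at $\bar y$ means precisely that a \emph{horizontal} segment $[\bar x,\bar x+\tau]\times\{\bar y\}$ lies on the free boundary (because $\pt_x\psi\ge0$), with a rectangle on one side of it contained in $\{\psi<Q\}$. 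Lemma \ref{nonoscillation} cannot rule this out: it requires a subdomain of $\{\psi<Q\}$ bounded by \emph{two disjoint} free-boundary arcs spanning $k_1<x<k_2$ together with two vertical lines, and it bounds the horizontal width $|k_1-k_2|$ by the vertical gaps between the arcs at the endpoints; in the jump configuration there is locally only one arc (the graph joined to the horizontal segment), so its hypotheses are not met and its conclusion says nothing about $\tau$. Your paraphrase that it prevents "two free-boundary arcs at nearly the same $x$-level from being far apart in $y$" is not what the lemma asserts.

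Moreover, you invoke Lemma \ref{Cauchy uniqueness} only to exclude \emph{vertical} free-boundary segments, but a vertical segment is no obstruction to the $y$-graph property: by monotonicity it simply means $\ur_{\mu,R,\Ld}$ is locally constant there. The paper applies the Cauchy-uniqueness argument exactly to the horizontal segment produced by a jump: on $[\bar x,\bar x+\tau]\times\{\bar y\}$ one has $\psi=Q$ and $\frac1y\,\pt_\nu\psi=\Ld$, which are Cauchy data on a horizontal line for the elliptic equation in an adjacent rectangle inside $\{\psi<Q\}$; the same Cauchy problem admits a one-dimensional solution $\psi_{1d}(y)$, so Lemma \ref{Cauchy uniqueness} forces $\psi\equiv\psi_{1d}$ in $\O_{\mu,R}$, contradicting the prescribed boundary values (e.g.\ $\psi=0$ on $\N_0$ and $\psi=Q$ on $\N$). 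Redirecting your rigidity argument to this horizontal configuration closes the gap; as written, the continuity of $\ur_{\mu,R,\Ld}$ is not proved.
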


\begin{proof}
It follows from Proposition \ref{psi monotonic} that $\pt_{x}\psi\geq0$. Hence there is a function $\ur_{\mu,R,\Ld}(y)$ with values in $[-\mu,R]$ such that for any $y\in(0,1)$,
\begin{equation}\label{111}
	\psi(x,y)<Q \q \text{if and only if } x<\ur_{\mu,R,\Ld}(y).
\end{equation}
Since $\psi(-\mu,y)=0$ for $y\in[0, b_\mu']$, then $\ur_{\mu,R,\Ld}>-\mu$.

It follows from Lemma \ref{psi bound lem} that one has
$$\psi(x,y)\leq \psi^\sharp_{\mu,R}(R,y)<Q \q\text{in }\R\times [0,H_*).$$
This implies that there is no free boundary points in the strip $\R\times[0,H_*)$. Thus there exists a constant $\ul H_{\mu,R,\Ld}\geq H_*$ such that the free boundary is a graph $x=\ur_{\mu,R,\Ld}(y)$ for $y\in(\ul H_{\mu,R,\Ld},1)$.

Finally, one can show $\ur_{\mu,R,\Ld}$ is continuous. Clearly, by \eqref{111} the function $\ur_{\mu,R,\Ld}$ is lower semi-continuous.  Suppose there is a jump point $\bar y$ such that
$$\lim_{y\to\bar y}\ur_{\mu,R,\Ld}(y)=\ur_{\mu,R,\Ld}(\bar y)+\tau$$
for some $\tau>0$. Let $\bar x:=\ur_{\mu,R,\Ld}(\bar y)$. Since $\pt_x\psi\geq0$, the line segment $[\bar x,\bar x+\tau]\times\{\bar y\}$ belongs to the free boundary. Therefore, one can find an upper or lower rectangular neighborhood $U$ of $[\bar x,\bar x+\tau]\times\{\bar y\}$ such that $\psi$ solves following system
$$\begin{cases}\begin{split}
&\n\c\bigg(g_\v\bigg(\left|\frac{\n\psi}{y}\right|^2,\psi\bigg)\frac{\n\psi}{y}\bigg)-y\pt_z G_\v\bigg(\left|\frac{\n\psi}{y}\right|^2,\psi\bigg)=0 && \text{ in } U,\\
&\psi=Q, \q  \frac1y\pt_{\nu}\psi=\Ld && \text{ on } \pt U\cap\{y=\bar y\},
\end{split}\end{cases}$$
which admits a one-dimensional solution $\psi_{1d}(y)$. Then by the uniqueness of the Cauchy problem (cf. Lemma \ref{Cauchy uniqueness}), one has $\psi=\psi_{1d}$ in $\O_{\mu,R}$. This is however impossible. Thus $\ur_{\mu,R,\Ld}$ is continuous. The existence of $\lim_{y\to1-}\ur_{\mu,R,\Ld}(y)$ follows from Lemma \ref{nonoscillation}.
\end{proof}

\section{Continuous fit and smooth fit}\label{sec fit}
In this section, we first establish the continuous fit between the free boundary and the nozzle. More precisely, for given Bernoulli function $\bar B$, entropy function $\bar S$ and mass flux $Q$ at the upstream as in Proposition \ref{incoming data}, there exists a momentum $\Ld>0$ along the free boundary, such that the free boundary attaches the outlet of the nozzle. The continuous fit implies the smooth fit, that is, the free boundary joins the nozzle in a continuous differentiable fashion.

\subsection{Continuous fit}\label{subsec continuousfit}
In this subsection, $\mu$, $R$ and $\v$ are fixed. To emphasize the dependence on the parameter $\Ld$, we let $\psi_{\Ld}:=\psi_{\mu,R,\Ld}$ denote the minimizer of the truncated problem \eqref{variation problem} and let
$$\G_\Ld:=\G_{\mu,R,\Ld}=\left\{(x,y): x=\ur_\Ld(y) \text{ for } y\in(\ul H_{\Ld},1)\right\}$$
denote the the free boundary in \eqref{fb_express}, where $\ur_\Ld(y):=\ur_{\mu,R,\Ld}(y)$ and $\ul H_{\Ld}:=\ul H_{\mu,R,\Ld}.$

The following lemma shows that $\psi_{\Ld}$ and $\ur_\Ld$ depend on $\Ld$ continuously, whose proof is basically the same as that in \cite[Lemma 6.1]{LSTX2023} so we omit it here.

\begin{lemma}\label{Ld dependence}
If $\Ld_n\to\Ld$, then $\psi_{\Ld_n}\to \psi_{\Ld}$ uniformly in $\O_{\mu,R}$ and $\ur_{\Ld_n}(y)\to \ur_{\Ld}(y)$ for each $y\in (l_{\Ld},1]$.
\end{lemma}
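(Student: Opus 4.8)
\textbf{Proof plan for Lemma \ref{Ld dependence}.}

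The plan is to exploit the uniqueness and monotonicity of the minimizers established in Proposition \ref{psi monotonic} together with a compactness argument. First I would fix the sequence $\Ld_n\to\Ld$ and record the uniform estimates for the minimizers $\psi_{\Ld_n}$: by Lemma \ref{lem_minimizer_bound} one has $0\leq\psi_{\Ld_n}\leq Q$ on all of $\O_{\mu,R}$, and by Proposition \ref{Lipschitz} (and Remark \ref{Lipschitz boundary} for the boundary portion), together with the bound \eqref{ld_Ld} relating $\ld_\v$ and $\Ld$, the Lipschitz norms of $\psi_{\Ld_n}$ on compact subsets of $\overline{\O_{\mu,R}}\cap\{y>0\}$ are bounded uniformly in $n$ (since $\Ld_n$ stays in a bounded interval). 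Hence, passing to a subsequence, $\psi_{\Ld_n}\to\psi_*$ uniformly on compact subsets and weakly in $H^1$ for some limit $\psi_*\in\K_{\psi^\sharp_{\mu,R}}$.

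Next I would identify $\psi_*$ as the minimizer $\psi_\Ld$. Since $\ld_{\v}=\sqrt{\Phi_\v(\Ld^2,Q)}$ depends continuously on $\Ld$, the functionals $J^\v_{\mu,R,\Ld_n}$ converge to $J^\v_{\mu,R,\Ld}$ in the appropriate sense; using the convexity of $G_\v$ in $\bp$ and Fatou's lemma (exactly as in the proof of Lemma \ref{minimizer existence}) one gets lower semicontinuity, $J^\v_{\mu,R,\Ld}(\psi_*)\leq\liminf_n J^\v_{\mu,R,\Ld_n}(\psi_{\Ld_n})$, the subtle point being the lower semicontinuity of the $\chi_{\{\phi<Q\}}$ term, which is handled by the nondegeneracy/measure-theoretic properties (Proposition \ref{measure}) controlling the characteristic functions in the limit, combined with the uniform Lipschitz convergence near the free boundary. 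On the other hand, for any competitor $\phi\in\K_{\psi^\sharp_{\mu,R}}$ the minimality of $\psi_{\Ld_n}$ gives $J^\v_{\mu,R,\Ld_n}(\psi_{\Ld_n})\leq J^\v_{\mu,R,\Ld_n}(\phi)\to J^\v_{\mu,R,\Ld}(\phi)$, so $\psi_*$ is a minimizer; by the uniqueness statement in Proposition \ref{psi monotonic}, $\psi_*=\psi_\Ld$. Since the limit is independent of the subsequence, the whole sequence $\psi_{\Ld_n}$ converges to $\psi_\Ld$; the convergence is uniform on all of $\O_{\mu,R}$ because near $\{y=0\}$ all functions are trapped between the explicit barriers of Lemma \ref{psi bound lem} (which force them close to $0$), so no loss of uniformity occurs there.

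For the convergence of the free boundaries, I would use the graph representation $\G_{\Ld}=\{x=\ur_\Ld(y):y\in(\ul H_\Ld,1)\}$ from Proposition \ref{y graph} together with $\pt_x\psi_\Ld\geq0$. Fix $y\in(l_\Ld,1]$. Given $\epsilon>0$, the nondegeneracy (Lemma \ref{nondegeneracy}) gives a point near $(\ur_\Ld(y)-\epsilon,y)$ where $Q-\psi_\Ld$ is bounded below by a definite amount, hence by uniform convergence $\psi_{\Ld_n}<Q$ there for large $n$, forcing $\ur_{\Ld_n}(y)\geq\ur_\Ld(y)-\epsilon$; conversely, the linear growth estimate (Lemma \ref{linear growth}) applied at points to the right of $(\ur_\Ld(y)+\epsilon,y)$ shows $Q-\psi_\Ld$ is, up to the sign pattern of the monotonicity, forced to have $\psi_\Ld=Q$ there, so by uniform convergence $\psi_{\Ld_n}=Q$ (equivalently $\psi_{\Ld_n}\geq Q$, hence $=Q$ by Lemma \ref{lem_minimizer_bound}) there for large $n$, giving $\ur_{\Ld_n}(y)\leq\ur_\Ld(y)+\epsilon$. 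Combining, $\ur_{\Ld_n}(y)\to\ur_\Ld(y)$.

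\textbf{Main obstacle.} I expect the genuine difficulty to be the passage to the limit of the measure term $\chi_{\{\phi<Q\}}$, i.e. showing that the ``wetted'' regions $\{\psi_{\Ld_n}<Q\}$ converge to $\{\psi_\Ld<Q\}$ in measure, with no jump in the positive-density estimates across the limit; this is where one must invoke the uniform nondegeneracy and Lipschitz bounds (Lemmas \ref{linear growth}, \ref{nondegeneracy}, Proposition \ref{measure}) rather than soft functional-analytic arguments alone. The pointwise convergence of $\ur_\Ld(y)$ at $y$ where $\ur_\Ld$ is continuous is then routine, but one should be slightly careful at a possible jump point of $\ur_\Ld$ — however Proposition \ref{y graph} already rules jumps out, so $\ur_\Ld$ is continuous and the argument goes through cleanly.
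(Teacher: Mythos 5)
Your overall route (uniform $L^\infty$ and Lipschitz bounds, compactness, lower semicontinuity of the energy, identification of the limit via the uniqueness in Proposition \ref{psi monotonic}, then free-boundary convergence via the monotonicity $\pt_x\psi\geq0$) is exactly the route the paper intends, since it refers the proof to the two-dimensional analogue. However, the upper-bound direction of the free-boundary convergence is argued incorrectly, and this is a genuine gap. You claim that since $\psi_\Ld=Q$ at points to the right of $(\ur_\Ld(y)+\epsilon,y)$, ``by uniform convergence $\psi_{\Ld_n}=Q$ (equivalently $\psi_{\Ld_n}\geq Q$)'' there for large $n$. Uniform convergence only gives $\psi_{\Ld_n}\geq Q-\delta_n$ with $\delta_n\to0$; it never yields equality, so the inference fails as written (and Lemma \ref{linear growth}, an upper bound on $Q-\psi$, is irrelevant here). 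The correct mechanism is the nondegeneracy Lemma \ref{nondegeneracy} applied to $\psi_{\Ld_n}$ with constants uniform in $n$ (possible because $\Ld_n$ stays in a compact subinterval of $(0,\infty)$, so $\ld_{\v,n}$ is bounded above and below by \eqref{ld_Ld}): choose a small ball $B_r$ centered at $(\ur_\Ld(y)+\epsilon,y)$ contained in $\{\psi_\Ld=Q\}$ (possible by continuity of $\ur_\Ld$); uniform convergence makes the $L^\vartheta$-average of $Q-\psi_{\Ld_n}$ on $B_r$ smaller than the nondegeneracy threshold $c_a\ld_{\v,n}\bar y r$ for large $n$, whence $\psi_{\Ld_n}\equiv Q$ on $B_{ar}$ and $\ur_{\Ld_n}(y)\leq\ur_\Ld(y)+\epsilon$. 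In short, you invoke nondegeneracy only for the trivial direction (where soft continuity of $\psi_\Ld$ already suffices) and omit it precisely where it is indispensable.

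Two further points. First, your stated ``main obstacle'' is misdiagnosed: for the minimality of the limit one only needs $\chi_{\{\psi_*<Q\}}\leq\liminf_n\chi_{\{\psi_{\Ld_n}<Q\}}$ pointwise, which is immediate because $\{\psi_*<Q\}$ is open and $\psi_{\Ld_n}\to\psi_*$ locally uniformly; Fatou then handles the measure term, and Proposition \ref{measure} plays no role there (convergence in measure of the wetted regions is only relevant to the free-boundary statement, where it again comes from nondegeneracy). Second, the admissible class is not fixed along the sequence: the boundary datum $\psi^\sharp_{\mu,R}$ in \eqref{psi0} depends on $\Ld$ through $\psi^\dag$ (and $H_*$), so $\psi_{\Ld_n}$ and $\psi_\Ld$ minimize over different classes $\K_{\psi^\sharp_{\mu,R}}$. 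This is harmless because $\psi^\dag$ depends continuously (uniformly in $y$) on $\Ld$, but one must adjust competitors accordingly (e.g.\ correct a fixed competitor near $\{x=R\}$ by the small boundary discrepancy) before passing to the limit in $J^\v_{\mu,R,\Ld_n}(\psi_{\Ld_n})\leq J^\v_{\mu,R,\Ld_n}(\phi_n)$; your proposal treats the class as fixed and should be amended on this point.
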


The continuous dependence of the solution on the parameter $\Ld$ together with the nondegeneracy of the minimizer (Lemma \ref{nondegeneracy}) gives the next lemma.

\begin{lemma}\label{continuous fit}
There exists a constant $C=C(B_*,S_*,\bar H,\g,\v)$ such that the following statements hold.
\begin{itemize}
	\item[(i)] If $\Ld\geq CQ$, then the free boundary $\G_{\Ld}$ is nonempty and $\ur_{\Ld}(1)<0$.
	
	\item[(ii)] If $\Ld\leq C^{-1}Q$, then $\ur_{\Ld}(1)>0$.
\end{itemize}
\end{lemma}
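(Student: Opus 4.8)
The idea is to use the construction of the boundary data $\psi^\sharp_{\mu,R}$ together with the comparison principle and the nondegeneracy of the minimizer to force the free boundary into the two asserted configurations. The key parameters are the slope $\Lambda$ along the free boundary and the incoming mass flux $Q$, which enters both the boundary profile $\psi^\dag$ and the constant $\lambda_\epsilon$.

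\textbf{Part (i): large $\Lambda$.} First I would show that if $\Lambda\geq CQ$ for a suitably large $C=C(B_*,S_*,\bar H,\gamma,\epsilon)$, then $\{\psi<Q\}$ cannot fill up $\Omega_{\mu,R}\cap\{y<1\}$; i.e.\ $\Gamma_\Lambda\neq\emptyset$. This is a nondegeneracy argument: suppose $\Gamma_\Lambda=\emptyset$, so $\psi<Q$ throughout $\Omega_{\mu,R}\cap\{y<1\}$ except possibly on $\N\cup([0,R]\times\{1\})$. Recalling Remark~\ref{ld Ld}, $\lambda_\epsilon^2\geq \tfrac12 C_*\Lambda^2$, so the "cost'' of the set $\{\psi=Q\}$ in the functional is comparable to $\Lambda^2$ times its measure, while the competitor obtained by flattening $\psi$ appropriately near $\{y=1\}$ has Dirichlet energy controlled by $Q^2$ (via the boundary data $\psi^\dag$ and $\psi^\sharp_{\mu,R}$, which are uniformly Lipschitz with constant $\lesssim Q$). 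Comparing energies forces $\{\psi=Q\}$ to have positive measure below $\{y=1\}$ once $\Lambda/Q$ is large. More precisely, I would take a free boundary point $\bar X$ and invoke the nondegeneracy Lemma~\ref{nondegeneracy}: if the free boundary stays within $\{y=1\}$, a barrier comparison near $A=(0,1)$ gives $Q-\psi\lesssim \Lambda\bar y\,\mathrm{dist}$, but the boundary data on $\N$ forces $\psi=Q$, contradicting the growth rate near a hypothetical free boundary pinned at $\{y=1\}$ when $\Lambda$ is large. To get $\Upsilon_\Lambda(1)<0$, i.e.\ the free boundary reaches the left of the nozzle outlet, I would compare $\psi_\Lambda$ with the one-dimensional supersolution $\psi^\dag(y)=\min(\Lambda y^2e^{1-y},Q)$ built into $\psi^\sharp_{\mu,R}(R,\cdot)$: by Lemma~\ref{psi bound lem}, $\psi_\Lambda(x,y)\leq\psi^\sharp_{\mu,R}(R,y)$, and since for $\Lambda$ large $\psi^\dag(y)=Q$ already for $y\geq H_*(\Lambda)$ with $H_*\to0$, the set $\{\psi_\Lambda=Q\}$ must be large, pushing the graph $x=\Upsilon_\Lambda(y)$ to the left; a contradiction-type argument using $\partial_x\psi\geq0$ and the continuous fit location then yields $\Upsilon_\Lambda(1)<0$.

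\textbf{Part (ii): small $\Lambda$.} Here I would show $\{\psi<Q\}$ is large, so the free boundary sits to the \emph{right} of the outlet. When $\Lambda\leq C^{-1}Q$, then $H_*(\Lambda)=1$ and $\psi^\dag(y)=Qy^2e^{1-y}<Q$ for all $y\in(0,1)$; thus the boundary data on $\{x=R\}$ is strictly below $Q$. I would construct a subsolution to \eqref{EL equ} that stays below $Q$ on a large portion of $\Omega_{\mu,R}$ — e.g.\ a slight modification of $Qy^2e^{1-y}$ or a linear interpolation between the inlet data and the outlet data — using that the right-hand side of \eqref{EL equ} is $O(\kappa_0)=O(Q)$ small (cf.\ the Remark after Problem~\ref{problem 2} and \eqref{label_2}). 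By the comparison principle (Lemma~\ref{comparison principle}, or its large-width version in Remark~\ref{rmk_CP} since $|\partial_{\mathbf pz}\MG|,|\partial_{zz}\MG|$ are $O(\kappa_0\epsilon^{-1})$ small), $\psi$ lies above this subsolution, hence $\psi<Q$ on a set reaching past $x=0$; together with $\partial_x\psi\geq0$ and the fact that any free boundary point must have $|\nabla\psi/y|\approx\Lambda$ being too small to match the jump imposed at the nozzle wall $\N$ near $A$, I conclude the free boundary detaches to the right, i.e.\ $\Upsilon_\Lambda(1)>0$.

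\textbf{Main obstacle.} The delicate point is Part~(i), specifically ruling out that the free boundary clings to the line $\{y=1\}$ and never dips below it — establishing $\Upsilon_\Lambda(1)<0$ rather than merely $\Gamma_\Lambda\neq\emptyset$. This requires carefully combining the upper barrier $\psi^\sharp_{\mu,R}(R,\cdot)$ from Lemma~\ref{psi bound lem} with the nondegeneracy estimate and the monotonicity $\partial_x\psi\geq0$, while tracking the $y$-dependent slope $\Lambda\bar y$ (from Corollary~\ref{halfplane sol}) and ensuring the small-vorticity error terms ($O(\kappa_0)$ in the equation, $O(\delta)$ in the comparison principle) do not spoil the comparison on the relevant scales; the constant $C$ must be chosen to absorb all of $B_*,S_*,\bar H,\gamma,\epsilon$ uniformly in $\mu,R$. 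I expect the argument to mirror \cite[Lemma 6.2]{LSTX2023} closely, with the extra care being the factor of $y$ everywhere and the behavior near the symmetry axis, which is harmless here since we only look in $\{y\in(H_*,1)\}$ bounded away from $0$.
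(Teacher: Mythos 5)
There is a genuine gap in both parts. In part (i), your key step for $\Upsilon_\Lambda(1)<0$ reverses the direction of the bound from Lemma~\ref{psi bound lem}: the inequality $\psi_\Lambda(x,y)\le\psi^\sharp_{\mu,R}(R,y)=\psi^\dag(y)$ is an \emph{upper} bound, so the fact that $\psi^\dag\equiv Q$ above $H_*(\Lambda)$ gives no information at all there (it only rules out free boundary points \emph{below} $H_*$); it cannot force $\{\psi_\Lambda=Q\}$ to be large or push the graph to the left. Your energy heuristic is also misstated, since the term $\lambda_\epsilon^2\chi_{\{\phi<Q\}}$ charges the set $\{\phi<Q\}$, not $\{\phi=Q\}$, and the growth estimate $Q-\psi\lesssim\Lambda\bar y\,\mathrm{dist}$ you invoke is the upper bound of Lemma~\ref{linear growth}, which becomes weaker, not stronger, as $\Lambda$ grows, so no contradiction is drawn. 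The paper's argument is the contrapositive with a clean quantitative contradiction: assume $\Gamma_\Lambda\neq\emptyset$ and $\Upsilon_\Lambda(1)\ge0$; since $\Gamma_\Lambda$ then connects $(\Upsilon_\Lambda(1),1)$ to $(R,\ubar H_\Lambda)$ with $\ubar H_\Lambda\ge H_*$, one can pick a free boundary point $X_*=(x_*,y_*)$ with $y_*>1/4$ and a ball $B_{ry_*}(X_*)$ of radius independent of $\Lambda,Q$ staying away from the lateral boundaries; Lemma~\ref{nondegeneracy} (with $\vartheta=2$, $a=1/2$) then forces
$\frac{Q}{ry_*}\ge\frac1{ry_*}\big(\dashint_{B_{ry_*}(X_*)}|Q-\psi|^2\big)^{1/2}\ge c\,\lambda y_*\ge C\Lambda y_*$,
i.e.\ $\Lambda\le CQ$ (the case $\Gamma_\Lambda=\emptyset$ is handled by the same nondegeneracy applied at a point of the flat boundary $\{y=1\}$), which is exactly what your sketch never closes.

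In part (ii), the subsolution step is a non sequitur: the comparison principle with a subsolution lying below $Q$ yields a \emph{lower} bound $\psi\ge w$, which can never show $\psi<Q$ anywhere; to push the free boundary to the right you need an argument producing a lower bound on $\Lambda$ under the hypothesis $\Upsilon_\Lambda(1)\le0$. The paper does this by a touching argument from below: raise the bump $\eta_\tau$ (supported near $x=R/2$) until its graph first touches $\Gamma_\Lambda$ at an interior point $\bar X$, solve the auxiliary Dirichlet problem for \eqref{EL equ} in $\O_{\mu,R}\cap\{y<\eta_{\bar\tau}(x)\}$ with data $Q$ on the bump and $0$ on $\{y=0\}$, and use the comparison principle (Lemma~\ref{comparison principle}) to get $\phi\ge\psi_\Lambda$, hence $\frac1{\bar y}\pt_\nu\phi(\bar X)\le\Lambda$; the blow-up (linear asymptotics) of $\phi$ at $\bar X$ gives $\frac1{\bar y}\pt_\nu\phi(\bar X)\ge CQ$, so $\Lambda\ge CQ$, and the contrapositive is (ii). Your closing remark about the gradient being ``too small to match the jump at the nozzle wall near $A$'' points in the right direction but is not an argument; the missing ingredient in both parts is the explicit quantitative comparison between $\Lambda$ (through $\lambda_\epsilon$, via \eqref{ld_Ld}) and $Q$ at a well-chosen interior free boundary or touching point.
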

\begin{proof}
(i) Suppose that $\G_{\Ld}$ is nonempty and $\ur_{\Ld}(1)\geq0$. We claim that $\Ld/Q$ cannot be too large. Let $r_*=r_*(B_*,S_*,\bar H,\g,\v)$ and $c_{1/2}=c_{1/2}(B_*,S_*,\bar H,\g,\v)$ be the constants determined in Lemma \ref{nondegeneracy} with $\vartheta=2$ and $a=1/2$. Since $\G_{\Ld}$ connects $(\ur_{\Ld}(1),1)$ to $(R,\ul H_{\Ld})$, there exist $X_*=(x_*,y_*)\in \G_\Ld$ (with $y_*>1/4$) and $r\in(0,r_*/4)$
independent of $\Ld$ and $Q$ such that $B_{ry_*}(X_*)\subset \O_{\mu,R}\cap\{y>1/4\}$ or $B_{ry_*}(X_*)\subset \{x>0,y>1/4\}$. If $\frac{c_{1/2}\ld}{\d\v}\leq r_*y_*$, then in view of the relation between $\ld$ and $\Ld$ in \eqref{ld_Ld} the proof is finished (note that $\d\v\leq CQ$ for some $C=C(B_*,S_*,\bar H,\g)$ by the definition of $\d$ in \eqref{delta}, the upper bound of $\k_0$ in \eqref{k0}, and the lower bound of $Q$ in Proposition \ref{incoming data}). If $\frac{c_{1/2}\ld}{\d\v}>r_*y_*$, then it follows from the nondegeneracy lemma (Lemma \ref{nondegeneracy}) and \eqref{ld_Ld} that
$$\frac Q{ry_*}\geq \frac1{ry_*}\left(\dashint_{B_{ry_*}(X_*)}|Q-\psi|^2\right)^{\frac12}\geq c_{1/2}\ld y_* \geq C \Ld y_*,$$
where $C>0$ depends on $B_*$, $S_*$, $\bar H$, $\g$, and $\v$. This leads to a contradiction if $\Ld/Q$ is large.

If the free boundary is empty, then one can get a contradiction by taking any $X_*=(x_*,1)$ with $x_*>0$ and using the linear nondegeneracy for nonnegative solutions along flat boundaries. This finishes the proof of (i).

(ii) Suppose that $\ur_{\Ld}\leq 0$. We show that $\Ld/Q$ cannot be too small. For $\tau>0$ and $\bar\sigma>0$ sufficiently small such that the comparison principle holds in $\R\times(0,2\bar \sigma)$ (cf. Lemma \ref{comparison principle}), denote
$$\eta_\tau(x):=\begin{cases}\begin{split}
&\tau \exp\left\{\frac{(x-\frac R2)^2}{(x-\frac R2)^2-\bar\sigma}\right\} &&  \text{if } \left|x-\frac R2\right|<\bar\sigma,\\
&0 && \text{otherwise}.
\end{split}\end{cases}$$
Choose $\bar\tau$ such that $y=\eta_{\bar\tau}(x)$ touches the free boundary at some point $\bar X=(\bar x,\bar y)$ with $0<\bar x<R$. Let $\D'=\O_{\mu,R}\cap\{(x,y): y<\eta_{\bar\tau}(x)\}$ and $\phi$ be a solution to
$$\begin{cases}\begin{split}
&\n\c\bigg(g_\v\bigg(\left|\frac{\n\phi}{y}\right|^2,\phi\bigg)\frac{\n\phi}{y}\bigg)-y\pt_zG_\v\bigg(\left|\frac{\n\phi}{y}\right|^2,\phi\bigg)=0 \q \text{in } \D',\\
&\phi=Q \q \text{on } \{y=\eta_{\tau_0}(x)\},\q \phi=0 \q \text{on  }\R\times \{0\}.
\end{split}\end{cases}$$
It follows from the comparison principle that $\phi\geq \psi_{\Ld}$ in $\D'$. Hence
$$\frac1{\bar y}\frac{\pt\phi}{\pt\nu}(\bar X)\leq \frac1{\bar y}\frac{\pt\psi_\Ld}{\pt\nu}(\bar X)=\Ld.$$
On the other hand, using the linear asymptotics of $\phi$ at $\bar X$, which can be obtained by solving the problem for the blow up limit of $\phi/Q$ at $\bar X$, one has
$$\frac1{\bar y}\frac{\pt\phi}{\pt\nu}(\bar X)\geq CQ$$
for some $C>0$ independent of $\Ld$. Then ${\Ld}/{Q}\geq C$. This finishes the proof.
\end{proof}

Lemma \ref{Ld dependence} and Lemma \ref{continuous fit} imply the following continuous fit property of the solution for the truncated problem.

\begin{corollary}\label{Ld determine}
There exists $\Ld=\Ld(\mu,R)>0$ such that $\ur_{\Ld}(1)=0$. Furthermore,
$$C^{-1}Q\leq \Ld(\mu,R)\leq CQ$$
for a positive constant $C$ depending on $B_*$, $S_*$, $\bar H$, $\g$, and $\v$, but independent of $\mu$ and $R$.
\end{corollary}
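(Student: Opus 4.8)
The plan is to fix $\mu$, $R$, and $\v$ and run an intermediate value argument in the single scalar parameter $\Ld$. Recall from Proposition \ref{y graph} that, whenever the truncated free boundary $\G_\Ld=\G_{\mu,R,\Ld}$ is nonempty, it is the graph $x=\ur_\Ld(y)$ over $y\in(\ul H_\Ld,1)$ and the one-sided limit $\ur_\Ld(1):=\lim_{y\to1^-}\ur_\Ld(y)\in(-\mu,R]$ exists; geometrically $\ur_\Ld(1)$ is the abscissa at which the free boundary meets the horizontal line $\{y=1\}$ carrying the top of the nozzle, and since $N(1)=0$ the condition $\ur_\Ld(1)=0$ says exactly that the free boundary is attached to the nozzle outlet $A=(0,1)$. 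So the goal is reduced to producing a value $\Ld=\Ld(\mu,R)$ with $\ur_\Ld(1)=0$ together with the quantitative bound.

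First I would record the two inputs. The free boundary $\G_\Ld$ is nonempty for every relevant $\Ld>0$: the boundary data $\psi^\sharp_{\mu,R}$ force $\psi<Q$ on a portion of $\{x=R\}\cap\{0<y<1\}$ while $\psi=Q$ on $\N\cup([0,R]\times\{1\})$, so $\pt\{\psi<Q\}$ must meet $(-\mu,R]\times(0,1)$; this is built into the hypotheses of Lemma \ref{continuous fit}. Next, Lemma \ref{Ld dependence} gives that $\Ld_n\to\Ld$ implies $\psi_{\Ld_n}\to\psi_\Ld$ uniformly and $\ur_{\Ld_n}(y)\to\ur_\Ld(y)$ for every $y\in(\ul H_\Ld,1]$, in particular at $y=1$, so $\Ld\mapsto\ur_\Ld(1)$ is a continuous function of $\Ld$ on $(0,\infty)$. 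Finally, Lemma \ref{continuous fit} supplies a constant $C=C(B_*,S_*,\bar H,\g,\v)$, independent of $\mu$ and $R$, with $\ur_\Ld(1)>0$ for $\Ld\le C^{-1}Q$ and $\ur_\Ld(1)<0$ for $\Ld\ge CQ$. Applying the intermediate value theorem to the continuous map $\Ld\mapsto\ur_\Ld(1)$ on $[C^{-1}Q,CQ]$ then produces $\Ld(\mu,R)\in(C^{-1}Q,CQ)$ with $\ur_{\Ld(\mu,R)}(1)=0$, which yields both the existence statement and the two-sided estimate $C^{-1}Q\le\Ld(\mu,R)\le CQ$ with the same $C$, hence independent of $\mu$ and $R$. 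If a canonical choice is preferred one may instead set $\Ld(\mu,R):=\sup\{\Ld>0:\ur_\Ld(1)>0\}$ and rule out $\ur_{\Ld(\mu,R)}(1)>0$ by continuity and maximality of the supremum.

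The substantive content is already carried by the two preceding lemmas, so the only genuine work here is the bookkeeping that legitimizes the intermediate value step: that $\ur_\Ld(1)$ is well-defined (i.e., $\G_\Ld\neq\emptyset$ and $\lim_{y\to1^-}\ur_\Ld(y)$ exists) throughout $[C^{-1}Q,CQ]$, and that it depends continuously on $\Ld$ with no jump arising from the free boundary detaching from the strip $\{0<y<1\}$ or from the graph function losing its endpoint limit at $y=1$. This I expect to be the main (if modest) obstacle, and it is settled by combining the graph property and the existence of the endpoint limit from Proposition \ref{y graph}, the uniform and pointwise convergence up to $y=1$ from Lemma \ref{Ld dependence}, and the nondegeneracy and non-oscillation estimates underpinning Lemma \ref{continuous fit}.
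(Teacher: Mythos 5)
Your proposal is correct and follows essentially the same route as the paper, which states Corollary \ref{Ld determine} as a direct consequence of Lemma \ref{Ld dependence} (continuity of $\Ld\mapsto\ur_\Ld(1)$) and Lemma \ref{continuous fit} (opposite signs of $\ur_\Ld(1)$ for $\Ld\le C^{-1}Q$ and $\Ld\ge CQ$), i.e.\ precisely the intermediate-value argument you spell out. Your added bookkeeping on nonemptiness of $\G_\Ld$ and the existence of the endpoint limit at $y=1$ is consistent with Proposition \ref{y graph} and with how the paper uses these lemmas.
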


\subsection{Smooth fit}
With the continuous fit property at hand, one can argue along the same lines as \cite[Theorem 6.1]{ACF85} to conclude that the free boundary $\G_\psi$ fits the outlet of the nozzle in a $C^1$ fashion.

\begin{proposition}\label{smooth fit}
Let $\psi:=\psi_{\mu,R,\Ld}$ be the minimizer of the truncated problem \eqref{variation problem} with $\Ld$ as in  Corollary \ref{Ld determine}. Then $\N\cup\G_{\psi}$ is $C^1$ in a neighborhood of the point $A=(0,1)$, and $\n\psi$ is continuous in a $\{\psi<Q\}$-neighborhood of $A$.
\end{proposition}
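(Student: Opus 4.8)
The plan is to follow the strategy of \cite[Theorem 6.1]{ACF85}, which establishes smooth fit once continuous fit (Corollary \ref{Ld determine}) is known. The idea is to transfer the problem to a fixed half-disk via a suitable change of variables that flattens the nozzle boundary $\N$ near $A=(0,1)$, and then exploit the two regularity regimes that meet at $A$: on the nozzle side $\psi=Q$ is a prescribed boundary value along the fixed $C^{1,\bar\alpha}$ curve $\N$, so standard elliptic boundary regularity for the equation \eqref{EL equ} gives $\psi\in C^{1,\alpha}$ up to $\N$; on the free boundary side $\G_\psi$ we have the Lipschitz bound (Proposition \ref{Lipschitz}), the measure-theoretic nondegeneracy and density estimates (Proposition \ref{measure}), the refined gradient estimate $|\nabla\psi/y|^2\le\Lambda^2+Cr^\alpha$ near $\G_\psi$ (Lemma \ref{gradientpsi estimate}, Remark \ref{rmk_gradu}), and the fact that every blow-up limit is a half-plane solution with slope $\Lambda\bar y$ (Corollary \ref{halfplane sol}). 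The goal is to show these two pieces of $\partial\{\psi<Q\}$ meet tangentially at $A$.

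First I would set up local coordinates: pick a $C^{1,\bar\alpha}$ diffeomorphism straightening $\N$ near $A$ to a line segment, and work in a small half-ball $B_\rho^+$ centered at $A$ with the straightened $\N$ on the diameter. The transformed equation is still uniformly elliptic of the same divergence structure (with $C^{1,1}$-in-$z$, smooth-in-$\mathbf p$ coefficients, plus the $1/y$ factors which are smooth and bounded away from $A$ since $y$ is close to $1$), so all the interior and free-boundary estimates above survive the change of variables with controlled constants. Next I would perform a blow-up at $A$: rescale $\psi_r(X):=(Q-\psi(A+rX))/r$ and extract a limit $\psi_0$ as $r\to 0$. By the continuous fit $A$ is a free boundary point, so $\psi_0\not\equiv 0$; by the half-plane classification (Corollary \ref{halfplane sol}) together with the flatness/monotonicity $\partial_x\psi\ge 0$ (Proposition \ref{psi monotonic}) and the fact that $\psi_0=0$ on the (straightened) nozzle side, the limit $\psi_0$ must be the one-dimensional half-plane solution $\psi_0(X)=\Lambda\,(X\cdot e)^+$ for a single direction $e$; one then checks $e$ is forced (by the boundary condition $\psi=Q$ on the nozzle, i.e. $\psi_0=0$ there) to be the inward normal to $\N$ at $A$. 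This identifies the tangent of $\G_\psi$ at $A$ with the tangent of $\N$ at $A$, which is the continuous-fit-implies-tangential-fit step.

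Then I would upgrade from "all blow-ups are the same linear profile" to genuine $C^1$ regularity: since $A$ is a free boundary point where the blow-up is flat with the correct slope, I would invoke the flatness-implies-$C^{1,\alpha}$ regularity theory for the free boundary (as already used in the interior, following \cite{ACF84}), now applied in the half-ball with the Dirichlet wall, to conclude that $\G_\psi$ is a $C^{1,\alpha}$ graph up to and including $A$ with the same one-sided tangent as $\N$. Combined with $\N\in C^{1,\bar\alpha}$ and the matching of tangents, $\N\cup\G_\psi$ is $C^1$ across $A$. Finally, with $\G_\psi$ now a $C^{1,\alpha}$ curve near $A$ on which $\psi=Q$ and $|\nabla\psi/y|=\Lambda$, and $\N$ a $C^{1,\bar\alpha}$ curve on which $\psi=Q$, elliptic boundary regularity for \eqref{EL equ} (e.g. Schauder estimates up to a $C^{1,\alpha}$ boundary, applied on each side and matched along the common $C^1$ curve $\N\cup\G_\psi$) yields that $\nabla\psi$ extends continuously to a $\{\psi<Q\}$-neighborhood of $A$.

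The main obstacle I expect is the transition step from the linear blow-up to the $C^1$ fit at the \emph{corner} point where the fixed boundary $\N$ and the free boundary $\G_\psi$ meet: one must rule out that $\G_\psi$ oscillates or forms a cusp against $\N$, and show the one-sided tangents coincide rather than forming a wedge. This is exactly where the non-oscillation estimate (Lemma \ref{nonoscillation}), the monotonicity $\partial_x\psi\ge 0$ (which forces $\G_\psi$ to be a graph $x=\Upsilon(y)$ near $A$ by Proposition \ref{y graph}), and the uniqueness of the Cauchy problem (Lemma \ref{Cauchy uniqueness}, to exclude the degenerate one-dimensional competitor) must be combined carefully; handling the $1/y$ terms and the non-constant right-hand side $y\,\partial_z G_\epsilon$ in the blow-up — showing they are lower-order and vanish in the limit because $\partial_z G_\epsilon = O(\delta)$ is small (Proposition \ref{Gproperties pro})— is the technical heart. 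Since all the required estimates are already in place and the geometry is identical to \cite[Theorem 6.1]{ACF85} after straightening $\N$, I would present this as a "same lines as" argument and only spell out the modifications forced by the axisymmetric weight $y$ and the inhomogeneity.
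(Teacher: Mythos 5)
Your proposal is correct and follows essentially the same route as the paper: blow up near $A$, observe that the inhomogeneous term $y\,\partial_z G_\epsilon$ is bounded and hence disappears under the scaling so the limit satisfies the homogeneous (irrotational) equation $\nabla\cdot(g_\epsilon(|\nabla\psi|^2,Q)\nabla\psi)=0$, and then invoke the compactness/smooth-fit machinery of \cite[Theorem 6.1]{ACF85} (the paper additionally phrases the final step via unique continuation for the limiting equation, which plays the role of your classification of the blow-up profile). No substantive gap.
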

\begin{proof}
Suppose that $\{X_n\}$ is a sequence of points in $\{\psi<Q\}$ which converges to $A$. Define the scaled functions
$$\psi_{n}(X):=Q-\frac{Q-\psi(X_n+rX)}{r},\q r\in(0,1).$$
Then $\psi_{n}$ satisfies the quasilinear equation
\begin{align*}
&\n\c \bigg(g_\v\bigg(\left|\frac{\n\psi_{n}}{y_n+ry}\right|^2,Q-r(Q-\psi_{n})\bigg)\frac{\n\psi_{n}}{y_n+ry}\bigg)\\
&\q\q-r(y_n+ry)\pt_zG_\v\bigg(\left|\frac{\n\psi_{n}}{y_n+ry}\right|^2,Q-r(Q-\psi_{n})\bigg)=0
\end{align*}
in $\{\psi_{n}<Q\}$. Since $\pt_zG_\v\in L^{\infty}$, then any blow up limit along a subsequence $\{\psi_{n}\}$ satisfies
\begin{equation}\label{smoothfit equ1}
\n\c(g_\v(|\n\psi|^2,Q)\n\psi)=0.
\end{equation}
Note that \eqref{smoothfit equ1} is the same as the governing equation for the irrotational flows. Thus with the scaling property at hand,
one can use the compactness arguments as in \cite[Theorem 6.1]{ACF85} together with the unique continuation property for the limiting equation \eqref{smoothfit equ1} to conclude Proposition \ref{smooth fit}.
\end{proof}

\section{Removal of the truncations and the far fields behavior}\label{sec remove truncation}
In this section, we first remove the domain and subsonic truncations, and then study the far fields behavior of the jet flows at the upstream and downstream. Consequently, we obtain the existence of subsonic jet flows which satisfy all properties in Problem \ref{problem 2}.

\subsection{Remove the domain truncations}
In this subsection, we remove the truncations of the domain by letting $\mu, R\to\infty$ to get a limiting solution in $\Omega$, which is bounded by $\N_0$ and $\N\cup([0,\infty)\times\{1\})$. The limiting solution inherits the properties of solutions in the truncated domains, which are summarized in the following proposition.

\begin{proposition}\label{psi determine}
Let the nozzle boundary $\N$ defined in \eqref{nozzle} satisfy \eqref{nozzle condition}. Given the Bernoulli function and the entropy function $(\bar{B}, \bar{S})\in (C^{1,1}([0,\bar{H}]))^2$ at the upstream which satisfy \eqref{BS min}-\eqref{BS condition2}, where the constant $\k=(B_*,S_*,\gamma,\bar H)$ in \eqref{BS condition2} is sufficiently small. Given the mass flux $Q\in(Q_*,Q^*)$, where $Q_*$ and $Q^*$ are defined in Proposition \ref{incoming data}.
	Let $\psi_{\mu,R,\Ld}$ be the minimizer of the problem \eqref{variation problem}
	with $\psi^\sharp_{\mu,R}$ defined in \eqref{psi0}. Then for any $\mu_j, R_j\to\infty$, there is a subsequence (still labeled by $\mu_j$ and $R_j$) such that $\Ld_j:=\Ld(\mu_j,R_j)\to \Ld_\infty$ for some $\Ld_\infty\in (0,\infty)$ and $\psi_{\mu_j,R_j,\Ld_j}\to \psi_\infty$ in $C^{0,\a}_{\rm loc}(\O)$ for any $\a\in (0,1)$. Furthermore, the following properties hold.
	\begin{itemize}
		\item [\rm (i)] The function $\psi:=\psi_\infty$ is a local minimizer for the energy functional, i.e., for any $D\Subset \Omega$, one has $J^\v(\psi)\leq J^\v({\phi})$ for any $\phi$ satisfying ${\phi}=\psi$ on $\partial D$, where
		$$J^\v(\phi):= \int_{D}y\bigg[G_\v\bigg(\left|\frac{\n\phi}{y}\right|^2,\phi\bigg)+\ld^2_{\v,\infty}\chi_{\{\phi<Q\}}\bigg]dX,\q \ld_{\v,\infty}:=\sqrt{\Phi_{\v}(\Ld_\infty^2,Q)}.$$
		In particular, $\psi$ solves
		\begin{equation}\label{eq_limiting_sol}
			\left\{
			\begin{aligned}
				&\n\c\bigg(g_\v\bigg(\left|\frac{\n\psi}{y}\right|^2,\psi\bigg)\frac{\n\psi}{y}\bigg)-y\pt_zG_\v\bigg(\left|\frac{\n\psi}{y}\right|^2,\psi\bigg)=0  &&\text{ in } \mathcal{O},\\
				&\psi =0 &&\text{ on } \N_0,\\
				&\psi =Q &&\text{ on } \N \cup \Gamma_{\psi},\\
				&\left|\frac{\nabla \psi}y\right| =\Lambda_\infty &&\text{ on } \Gamma_{\psi},
			\end{aligned}
			\right.
		\end{equation}
		where $\mathcal{O}:=\Omega\cap \{\psi<Q\}$ is the flow region, $\Gamma_{\psi}:=\pt\{\psi<Q\}\backslash \N$ is the free boundary, and  $\pt_zG_\v(|\n\psi/y|^2,\psi)$ satisfies \eqref{Gdz expression} in the subsonic region $|\nabla\psi/y|^2\leq \t_c(\psi)-\epsilon$.
		
		\item [\rm (ii)] The function $\psi$ is in $C^{2,\alpha}(\mathcal{O})\cap C^1(\overline{\mathcal{O}})$, and it satisfies $\pt_{x}\psi\geq 0$ in $\Omega$.
		
		\item [\rm (iii)] The free boundary $\G_{\psi}$ is given by the graph $x=\ur(y)$, where $\Upsilon$ is a $C^{2,\alpha}$ function as long as it is finite.
		
		\item[(iv)] At the orifice $A=(0,1)$ one has $\lim_{y\to1-}\ur(y)=0$. Furthermore, $\N\cup\Gamma_{\psi}$ is $C^1$ around $A$, in particular, $N'(1)=\lim_{y\rightarrow 1-}\Upsilon'(y)$.
		
		\item [\rm (v)] There is a constant $\ul H\in(0,1)$ such that $\ur(y)$ is finite if and only if $y\in (\ul H,1]$, and $\lim_{y\to \ul H+}\ur(y)=\infty$. Furthermore, there exists an $R_0$ sufficiently large, such that $\G_{\psi}\cap\{x>R_0\}=\{(x,f(x)): R_0<x<\infty\}$ for some $C^{2,\a}$ function $f$ and $\lim_{x\to+\infty} f'(x)=0$.
	\end{itemize}
\end{proposition}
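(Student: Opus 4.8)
\noindent\emph{Proof proposal.} The plan is to produce $\psi_\infty$ as a subsequential limit of the truncated minimizers and to transfer to it all the structure already available in the truncated setting. First, by Corollary \ref{Ld determine} the numbers $\Lambda_j:=\Lambda(\mu_j,R_j)$ lie in a fixed interval $[C^{-1}Q,CQ]$, so after passing to a subsequence $\Lambda_j\to\Lambda_\infty\in(0,\infty)$; the associated $\ld_{\v,j}$ are then uniformly bounded, so Proposition \ref{Lipschitz}, applied on any fixed $D\Subset\Omega$ (contained in $\Omega_{\mu_j,R_j}$ for $j$ large), gives a uniform Lipschitz bound for $\psi_j:=\psi_{\mu_j,R_j,\Lambda_j}$ on $D$, besides $0\le\psi_j\le Q$. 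Arzel\`a--Ascoli and a diagonal argument then produce a further subsequence converging in $C^{0,\alpha}_{\mathrm{loc}}(\Omega)$ for every $\alpha\in(0,1)$ to a continuous $\psi:=\psi_\infty$ with $0\le\psi\le Q$, so that $\{\psi<Q\}$ is open.

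Next I would show that $\psi$ is a local minimizer of $J^\v$, following the standard scheme of \cite[Section~4]{ACF85} (see also \cite{LSTX2023}): given $D\Subset\Omega$ and $\phi$ with $\phi=\psi$ on $\partial D$, one splices $\phi$ with $\psi_j$ on a thin collar inside $\partial D$ to obtain an admissible competitor for $\psi_j$ on $\Omega_{\mu_j,R_j}$, compares energies, and lets $j\to\infty$; the Dirichlet term is controlled by convexity and weak lower semicontinuity, and the only delicate point is the $L^1_{\mathrm{loc}}$-convergence $\ld_{\v,j}^2\chi_{\{\psi_j<Q\}}\to\ld_{\v,\infty}^2\chi_{\{\psi<Q\}}$, which I would obtain from the continuity of $\Lambda\mapsto\ld_\v$ together with the uniform density estimates of Proposition \ref{measure}(iii), so that the free boundaries do not fatten in the limit. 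Granted local minimality, Lemma \ref{EL} and Lemma \ref{free BC} (with Remark \ref{ld Ld} and $\ld_{\v,\infty}^2=\Phi_\v(\Lambda_\infty^2,Q)$) yield the system \eqref{eq_limiting_sol} and the subsonic identity \eqref{Gdz expression}, while the Dirichlet data $\psi=0$ on $\N_0$ and $\psi=Q$ on $\N\cup\G_\psi$ pass to the limit from $\psi^\sharp_{\mu_j,R_j}$. Interior regularity for the uniformly elliptic (subsonically truncated) equation gives $\psi\in C^{2,\alpha}(\mathcal{O})$; combined with the free-boundary regularity of Section \ref{sec existence and regularity}, the boundary elliptic estimates, the smooth fit of Proposition \ref{smooth fit}, and the uniform bound $\psi_j(x,y)\le\psi^\dag(y)=O(y^2)$ near $\N_0$ inherited from Lemma \ref{psi bound lem} (which in the limit feeds the regularity argument of the Appendix), this gives $\psi\in C^1(\overline{\mathcal{O}})$. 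Finally, since interior estimates upgrade the convergence to $C^1_{\mathrm{loc}}$ on $\{\psi<Q\}$, the sign $\partial_x\psi\ge0$ in $\Omega$ is inherited from Proposition \ref{psi monotonic}. This proves (i) and (ii).

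For (iii)--(v): since $\partial_x\psi\ge0$ and $\psi$ is continuous, for each $y$ the set $\{x:\psi(x,y)<Q\}$ is an interval $(-\infty,\ur(y))$ with $\ur(y)\in(-\infty,+\infty]$, so $\G_\psi=\{x=\ur(y)\}$ wherever $\ur$ is finite, and there $\ur\in C^{2,\alpha}$ by the free-boundary regularity, which is (iii). The continuous fit $\lim_{y\to1-}\ur(y)=0$ I would obtain by passing to the limit in $\ur_{\mu_j,R_j,\Lambda_j}(1)=0$, using the uniform non-oscillation estimate of Lemma \ref{nonoscillation} and the nondegeneracy near $A$ of Lemma \ref{nondegeneracy}; Proposition \ref{smooth fit} then applies to the limiting solution and gives that $\N\cup\G_\psi$ is $C^1$ near $A$ with $N'(1)=\lim_{y\to1-}\ur'(y)$, which is (iv). For (v), set $\ul H:=\inf\{y\in(0,1]:\ur(y)<\infty\}$; the uniform lower bound $\ul H_{\mu_j,R_j,\Lambda_j}\ge H_*(\Lambda_j)$ of Proposition \ref{y graph} and the uniform bound on $\Lambda_j$ force $\ul H>0$, while $\ul H<1$ since $\G_\psi$ is nonempty near $A$. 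To see that $\G_\psi$ is a flattening $x$-graph far downstream, I would first show $\partial_x\psi(x,\cdot)\to0$ as $x\to+\infty$ uniformly up to the free boundary, using that $x\mapsto\psi(x,y)$ is nondecreasing and bounded so $\int_{\mathbb{R}}\partial_x\psi(x,y)\,dx\le Q$, together with interior and boundary gradient estimates; the free-boundary relation $(\partial_x\psi)^2+(\partial_y\psi)^2=\Lambda_\infty^2 y^2$ on $\G_\psi$ then forces $\G_\psi\cap\{x>R_0\}$ to be a $C^{2,\alpha}$ graph $y=f(x)$ with $f'(x)=-\partial_x\psi/\partial_y\psi\to0$, and consistency of the two representations gives $\ur(y)\to\infty$ as $y\to\ul H+$ and that $\ur$ is finite precisely on $(\ul H,1]$.

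I expect the main obstacle to be the passage to the limit in the second step --- establishing that $\psi$ is a genuine local minimizer, and in particular the $L^1$-convergence of the jet characteristic functions, which is exactly what pins down the free-boundary condition with the correct limiting slope $\Lambda_\infty$ --- together with, in the third step, ensuring that the continuous fit at $A$ survives the limit; for both points the uniformity in $(\mu,R)$ of the non-oscillation and nondegeneracy estimates is essential.
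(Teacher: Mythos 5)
Your proposal is correct and follows essentially the same route as the paper, which does not spell out the argument but refers to \cite[Proposition 6.5]{LSTX2023} (in the spirit of \cite{ACF85}): compactness of $\psi_{\mu_j,R_j,\Lambda_j}$ from the uniform bounds $C^{-1}Q\leq\Lambda_j\leq CQ$ and the uniform Lipschitz estimates, local minimality of the limit via the splicing comparison with non-fattening of $\{\psi_j<Q\}$ from the nondegeneracy and density estimates, and then inheritance of the fine properties (monotonicity, $y$-graph, continuous and smooth fit at $A$, downstream flattening) from Sections \ref{sec fine properties}--\ref{sec fit}. No essential gap beyond the level of detail one would in any case import from the cited two-dimensional proof.
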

The proof is similar to \cite[Proposition 6.5]{LSTX2023}, so we shall not repeat it here.

The next proposition shows the positivity of $\partial_{x}\psi_\infty$ in the region $\O\cap\{\psi<Q\}$, which plays an important role in the equivalence between the stream function formulation and the original Euler system.

\begin{proposition}\label{psix strictly pro}
	The solution $\psi:=\psi_\infty$ obtained in Proposition \ref{psi determine} satisfies $\pt_{x}\psi>0$ in $\MO:=\O\cap\{\psi<Q\}$.
\end{proposition}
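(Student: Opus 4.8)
The plan is to upgrade the non-strict monotonicity $\pt_x\psi\geq 0$ from Proposition \ref{psi determine}(ii) to strict positivity in $\MO=\O\cap\{\psi<Q\}$ by a strong maximum principle argument applied to $w:=\pt_x\psi$. First I would note that differentiating the equation \eqref{eq_limiting_sol} in $x$ shows that $w$ satisfies a second order linear elliptic equation in divergence form with bounded measurable coefficients in any subsonic subdomain: schematically $\pt_i(\mathfrak a^{ij}\pt_j w + \text{(lower order in }w))=0$, where $\mathfrak a^{ij}=\pt_{p_ip_j}\MG(\n\psi/y,\psi)$ is uniformly elliptic by \eqref{G dpp}, and the lower order terms come from the $z$-dependence of $\MG$ and the explicit $y$-weight; all coefficients are bounded since $\psi\in C^{2,\alpha}_{\rm loc}(\MO)$ and the derivative bounds \eqref{G dzdzpdzz} hold. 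Since $w\geq 0$ in $\O$ (hence in $\MO$), the strong maximum principle (e.g. \cite{GT_book}) applies: on each connected component of $\MO$, either $w\equiv 0$ or $w>0$.

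Next I would rule out $w\equiv 0$ on any component. If $\pt_x\psi\equiv 0$ on a component $\MO'$ of $\MO$, then $\psi=\psi(y)$ there, so $\psi$ is a one-dimensional function of $y$ on $\MO'$. I would then trace $\MO'$ back toward the inlet: because of the boundary data construction \eqref{psi0} and the strict inequality $\psi^\sharp_{\mu,R}(-\mu,y)<\psi$ from Lemma \ref{psi bound lem} (which passes to the limit), together with the fact that $\MO$ is connected and contains points arbitrarily far upstream, a component of $\MO$ that is purely $y$-dependent would force $\psi$ to be globally one-dimensional on a large region abutting the inlet; this contradicts either the strict monotonicity in $x$ near $\{x=-\mu\}$ established in the truncated problems (Step 1 of Proposition \ref{psi monotonic}), or — more cleanly — the free boundary condition: a one-dimensional subsonic solution $\psi=\psi(y)$ with $|\psi'/y|=\Lambda_\infty$ on its top boundary is the explicit profile $\psi_{1d}$, and by the Cauchy uniqueness Lemma \ref{Cauchy uniqueness} applied across the free boundary this profile would have to coincide with $\psi$ on a full neighborhood, which is incompatible with the attachment of $\Gamma_\psi$ to the nozzle at $A=(0,1)$ and the asymptotic behavior. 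I would package this as: the set $\{w>0\}$ is open, nonempty (it is nonempty near the inlet by the truncated estimates), and relatively closed in each component by the maximum principle dichotomy, hence equals the component.

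There is one subtlety I must handle with care: the equation for $\psi$ is only the uniformly elliptic truncated equation \eqref{eq_limiting_sol}, but $\MO$ might a priori contain points where $|\n\psi/y|^2$ is close to $\t_c(\psi)$, i.e. the truncation is active. This is harmless for this argument because \eqref{eq_limiting_sol} is uniformly elliptic with $C^{1,1}$ coefficients everywhere regardless of the subsonic status, so $w=\pt_x\psi$ genuinely solves a nice linear elliptic equation on all of $\MO$ and the strong maximum principle applies verbatim; the subsonic truncation only becomes relevant later when one removes it. The main obstacle, then, is not the PDE regularity but the rigidity step — cleanly excluding a one-dimensional component of $\MO$. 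I expect to carry this out by combining the strict inlet inequality of Lemma \ref{psi bound lem} (so that near $\{x\to-\infty\}$ the level set $\{\psi<Q\}$ genuinely varies in $x$, forcing $w\not\equiv 0$ on the component containing the inlet) with the connectedness of $\MO$ and a continuation argument: any component of $\MO$ touches $\pt\O$ along a connected arc by the construction of the boundary data and the strong maximum principle (as in Proposition \ref{psi monotonic}), so $\MO$ has a single component, on which $w\not\equiv 0$, hence $w>0$ throughout.

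\begin{proof}
Write $w:=\pt_{x}\psi$, where $\psi=\psi_\infty$ is the solution from Proposition \ref{psi determine}. By Proposition \ref{psi determine}(ii), $\psi\in C^{2,\alpha}_{\rm loc}(\MO)$ and $w\geq 0$ in $\O$, in particular $w\geq0$ in $\MO$.

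\emph{Step 1. $w$ solves a linear uniformly elliptic equation in $\MO$.}
Differentiating the equation in \eqref{eq_limiting_sol} with respect to $x$, we find that $w$ satisfies
$$\pt_i\big(\mathfrak a^{ij}\pt_j w\big)+\pt_i\big(\mathfrak e^i w\big)+\mathfrak d\, w=0 \q\text{in }\MO,$$
where $\mathfrak a^{ij}:=\pt_{p_ip_j}\MG(\n\psi/y,\psi)$, and $\mathfrak e^i$, $\mathfrak d$ are bounded measurable functions on every $D'\Subset\MO$ built from $\pt_{p_iz}\MG$, $\pt_{zz}\MG$, the weight $y$ and $\n\psi$. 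Indeed $\psi\in C^{2,\alpha}_{\rm loc}(\MO)$ so $\n\psi\in L^\infty_{\rm loc}$, and the derivative bounds \eqref{G dzdzpdzz} give the boundedness of $\mathfrak e^i$ and $\mathfrak d$. Moreover $\mathfrak a^{ij}$ is uniformly elliptic on $D'$ by the convexity estimate \eqref{G dpp}, with ellipticity constants $\mathfrak b_*$ and $\mathfrak b^*$ that are uniform since the equation in \eqref{eq_limiting_sol} is the (uniformly elliptic) subsonic-truncated equation regardless of whether $|\n\psi/y|^2$ is close to $\t_c(\psi)$.

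\emph{Step 2. Maximum principle dichotomy.}
Since $w\geq0$ in $\MO$ and $w$ solves the above uniformly elliptic equation with bounded lower order coefficients, the strong maximum principle (cf. \cite{GT_book}) implies that on each connected component of $\MO$, either $w\equiv 0$ or $w>0$.

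\emph{Step 3. $\MO$ is connected.}
By the strong maximum principle applied to the equation in \eqref{eq_limiting_sol}, every connected component of $\MO=\O\cap\{\psi<Q\}$ must touch $\pt\O$. By the construction of the boundary value $\psi^\sharp_{\mu,R}$ in \eqref{psi0} and the uniform convergence $\psi_{\mu_j,R_j,\Ld_j}\to\psi$, the set $\{\psi<Q\}\cap\pt\O$ is a single connected arc (consisting of the symmetry axis $\N_0$ together with the portion of the inlet below the graph data and, for $x$ large, the free boundary region). Hence $\MO$ consists of a single connected component.

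\emph{Step 4. Excluding $w\equiv 0$.}
Suppose $w\equiv 0$ in $\MO$, i.e. $\psi=\psi(y)$ is one-dimensional in $\MO$. Since $\MO$ contains the free boundary $\Gamma_\psi$, which by Proposition \ref{psi determine}(iii)(v) is a graph $x=\ur(y)$ meeting the nozzle at $A=(0,1)$, a one-dimensional $\psi$ on $\MO$ would force $\ur$ to be constant; this is incompatible with $\lim_{y\to1-}\ur(y)=0$ and $\lim_{y\to\ul H+}\ur(y)=\infty$ from Proposition \ref{psi determine}(iv)(v). Alternatively, on any upper or lower rectangular neighborhood $U$ of a free boundary segment, $\psi$ would solve the Cauchy problem
$$\begin{cases}\begin{split}
&\n\c\Big(g_\v\big(|\n\psi/y|^2,\psi\big)\tfrac{\n\psi}{y}\Big)-y\pt_zG_\v\big(|\n\psi/y|^2,\psi\big)=0 && \text{in }U,\\
&\psi=Q,\q \tfrac1y\pt_\nu\psi=\Ld_\infty && \text{on }\pt U\cap\Gamma_\psi,
\end{split}\end{cases}$$
so by Lemma \ref{Cauchy uniqueness} it would equal the one-dimensional solution $\psi_{1d}$ in a full neighborhood, contradicting the attachment of $\Gamma_\psi$ to the nozzle. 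Hence $w\not\equiv0$ in $\MO$.

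Combining Steps 2--4, $w=\pt_{x}\psi>0$ in $\MO$. This completes the proof.
\end{proof}
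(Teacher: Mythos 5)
Your proposal is correct and follows essentially the same route as the paper: differentiate the (uniformly elliptic, truncated) equation in $x$, apply the strong maximum principle to the nonnegative derivative $\pt_x\psi$, and rule out the degenerate alternative $\pt_x\psi\equiv 0$ by the incompatibility of horizontal streamlines with the geometry of the free boundary. Your Steps 3--4 merely spell out the connectedness and rigidity details that the paper compresses into ``all streamlines are horizontal, a contradiction,'' so no substantive difference remains.
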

\begin{proof}
	It follows from Proposition \ref{psi determine} (ii) that $\pt_{x}\psi\geq0$ in $\MO$. The strict inequality can be proved by the strong maximum principle. Indeed, assume that $\pt_{x}\psi(X_0)=0$ for some $X_0\in\MO$. Let $V:=\pt_{x}\psi$. Then $V\geq0$ in $\MO$ and it satisfies
	$$\pt_i\left(\frac1y\mathfrak{a_{ij}}\pt_j V+\mathfrak{b_i} V\right)-\mathfrak{b_i}\pt_iV-\mathfrak{c}V=0 \q\text{ in }\MO,$$
	where
	$$\mathfrak{a_{ij}}:=\pt_{p_ip_j}G_\v\left(\left|\frac{\n\psi}{y}\right|^2,\psi\right),\q \mathfrak{b_i}:=\pt_{p_iz}G_\v\left(\left|\frac{\n\psi}{y}\right|^2,\psi\right),\q \mathfrak{c}:=y\pt_{zz}G_\v\left(\left|\frac{\n\psi}{y}\right|^2,\psi\right).$$
	The strong maximum principle 
	gives that $V=0$ in $\MO$, and hence all streamlines are horizontal. This leads to a contradiction. Hence $V>0$ in $\MO$ and thus the proof of the proposition is complete.
\end{proof}

\subsection{Remove the subsonic truncation}
In this subsection, we remove the subsonic truncation when the mass flux is
sufficiently small. This means that for given suitable mass flux, the solution obtained in Proposition \ref{psi determine} is actually a solution of the problem \eqref{jet_equ}.

\begin{proposition}\label{subsonic pro}
	Let $\psi:=\psi_\infty$ be a solution obtained in Proposition \ref{psi determine} with $\Ld:=\Ld_\infty$. Assume that the constant $\k=\k(B_*,S_*,\gamma,\N)$ in \eqref{BS condition2} is sufficiently small.
	Then
	\begin{equation}\label{subsonic}
		\left|\frac{\n\psi}{y}\right|^2\leq\t_c(\psi)-\v
	\end{equation}
	as long as $Q\in(Q_*,Q_m)$ for some $Q_m=Q_m(B_*,S_*,\gamma,\N)\in(Q_*,Q^*)$ sufficiently small, where $Q_*$ and $Q^*$ are defined in Proposition \ref{incoming data}.
\end{proposition}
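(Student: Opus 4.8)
The plan is to establish the uniform subsonicity estimate \eqref{subsonic} via a maximum-principle argument combined with the a priori control on the size of the solution coming from the smallness of the mass flux $Q$ (recall $Q_* = \k^{1/(4\g)}$, so $Q$ being small forces $\k$ small, hence $\k_0$ small by \eqref{k0}). First I would use the Lipschitz estimate from Proposition \ref{Lipschitz} (together with Corollary \ref{Ld determine} and Proposition \ref{psi determine}, which give $\Ld_\infty \leq CQ$) to conclude that $|\n\psi/y| \leq C\Ld_\infty \leq CQ$ uniformly near the free boundary, so that on $\G_\psi$ itself one has $|\n\psi/y|^2 = \Ld_\infty^2 \leq CQ^2$, which is strictly below $\t_c(\psi) - \v$ once $Q$ (equivalently $\v$, $\k$) is chosen small, using the lower bound $\t_c(\psi) \geq t_* > 0$ from \eqref{tc_bound}. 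This handles the boundary portion along $\G_\psi$.

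Next, the interior: the key is that $w := |\n\psi/y|^2$ is a subsolution of a uniformly elliptic linear equation in the region $\{0 < \psi < Q\}$, exactly as derived in the proof of Lemma \ref{gradientpsi estimate}, where the zeroth- and first-order coefficients are controlled by $\d = C\v^{-1}\k_0(\k_0+1)$ and hence are small relative to the ellipticity constant $\mathfrak b_*$ when $\k_0$ is small. I would then invoke the maximum principle for $w$ on the flow region $\MO = \O \cap \{\psi < Q\}$, whose boundary consists of: the free boundary $\G_\psi$ (where $w = \Ld_\infty^2$ is small, as above), the nozzle wall $\N \cup ([0,\infty)\times\{1\})$ and symmetry axis $\N_0$, and the far-field portions. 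On the symmetry axis and nozzle wall one needs a boundary gradient bound; here the uniform decay estimate near the axis (Lemma \ref{psi bound lem}, giving $\psi \leq \psi^\sharp_{\mu,R}(R,\cdot) = \psi^\dag$ with $\psi^\dag(y) = \min(\Ld y^2 e^{1-y}, Q)$ of size $O(\Ld)$ near $y=0$) controls $|\n\psi|$ there, and on the wall the boundary estimate (Remark \ref{Lipschitz boundary}) gives $|\n\psi| \leq C\Ld$. For the upstream and downstream, the comparison functions of the type $\psi^\dag$ and $\psi^\sharp_{\mu,R}(-\mu,\cdot)$ trap $\psi$ between barriers whose gradients are $O(\Ld) = O(Q)$, so $w$ stays small there too.

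Putting these together: $w$ is a subsolution of $\mathcal L w \geq -\pt_i\mathfrak g^i - \mathfrak f$ with $\|\mathfrak g^i\|_\infty, \|\mathfrak f\|_\infty = O(\Ld^2 + \d) = O(Q^2)$ (using $\k_0 = O(\k^{1-1/(2\g)})$, $\d = O(\v^{-1}\k_0)$), and $w$ is bounded by $O(Q^2)$ on $\pt\MO$; therefore by the weak maximum principle (or the De Giorgi–Nash–Moser $L^\infty$ bound applied to $w$) one gets $\sup_{\MO} w \leq C Q^2 \leq CQ_m^2$, which is $\leq t_*/2 - \v < \t_c(\psi) - \v$ for $Q_m$ chosen sufficiently small depending on $B_*, S_*, \g, \N$ (and with $\v$ itself chosen comparably small). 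The dependence on the nozzle $\N$ enters through the constants in Proposition \ref{Lipschitz} and the barrier construction, both of which depend on the geometry of $\O$.

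The main obstacle I expect is the far-field control: unlike the truncated problem where $\pt\O_{\mu,R}$ is compact and the boundary data $\psi^\sharp_{\mu,R}$ is explicit, the limiting domain $\O$ is unbounded, so one cannot directly apply the maximum principle on $\MO$ without first knowing $w$ does not concentrate at $x \to \pm\infty$. I would circumvent this by running the estimate on each truncated minimizer $\psi_{\mu_j, R_j, \Ld_j}$ first — there the boundary values of $w$ on $\{x = -\mu_j\}$ and $\{x = R_j\}$ are computed directly from $\psi^\sharp_{\mu_j,R_j}$ and are $O(\Ld_j^2) = O(Q^2)$ by the same barrier bounds used in Lemma \ref{psi0 sup-subsol} and Lemma \ref{psi bound lem} — obtain the uniform bound $\sup w \leq CQ^2$ independent of $\mu_j, R_j$, and then pass to the limit using the $C^{0,\a}_{\rm loc}$ (indeed $C^{1}_{\rm loc}$) convergence $\psi_{\mu_j,R_j,\Ld_j} \to \psi_\infty$ from Proposition \ref{psi determine}. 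A secondary technical point is that the subsolution equation for $w$ is only valid where $\psi$ is $C^2$, i.e. away from $\{\psi = Q\}$; one treats $w$ on the open set $\{0 < \psi < Q\}$ and uses that $w \to \Ld_\infty^2$ at free boundary points (the limsup statement from \cite[Lemma 3.4]{ACF84} quoted in the proof of Lemma \ref{gradientpsi estimate}) to include $\G_\psi$ in the maximum-principle boundary.
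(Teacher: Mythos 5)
There is a genuine gap, and it sits exactly at the hardest point of the proposition: the control of $|\nabla\psi/y|$ near the symmetry axis. Your maximum-principle strategy rests on the subsolution inequality for $w=|\nabla\psi/y|^2$ from Lemma \ref{gradientpsi estimate}, but that inequality is uniformly elliptic with bounded data only on sets $\D'\Subset\D$ away from $\{y=0\}$: the coefficients $\mathfrak b^{ij}$ contain the factor $y^{-2}\nabla\psi$, so the drift and source terms blow up as $y\to0$ unless one already knows $|\nabla\psi|\lesssim y$, which is essentially the estimate being proved. For the same reason your assertion $\|\mathfrak g^i\|_\infty,\|\mathfrak f\|_\infty=O(\Lambda^2+\delta)=O(Q^2)$ is circular: in Lemma \ref{gradientpsi estimate} these quantities are controlled by $\|\nabla\psi\|_{L^\infty}$, and the bound $|\nabla\psi|\le C\lambda$ of Proposition \ref{Lipschitz} and Remark \ref{Lipschitz boundary} is local, requires a nearby free-boundary point, and carries constants depending on $\bar y$ (hence degenerating at the axis); since the free boundary stays above $H_*>0$, no such bound is available near $\N_0$. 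Finally, the step in which you say that the decay estimate of Lemma \ref{psi bound lem} ``controls $|\nabla\psi|$'' on the axis, and thereby supplies the boundary values of $w$ there, is precisely the nontrivial content of the proposition: passing from $0<\psi\le C\Lambda y^2$ to $|\nabla\psi/y|\le CQ$ up to $\{y=0\}$ requires an argument, and your proposal leaves it unproved.

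The paper does not use any equation for $w$ at all; it proves the pointwise bound $|\nabla\psi/y|\le CQ$ directly by elliptic estimates on $\psi$. Away from the axis ($y_0>1/4$) the standard interior estimate together with $0\le\psi\le Q$ and $|\pt_zG_\epsilon|\le\delta\epsilon\le CQ$ (via \eqref{G dzdzpdzz} and \eqref{label_2}) already gives $|\nabla\psi(X_0)/y_0|\le CQ$. Near the axis ($0<y_0\le1/4$) one rescales $\psi_0(X)=\psi(X_0+rX)/r^2$ with $r=y_0/2$, uses the decay $\psi\le\min\{\Lambda y^2e^{1-y},Q\}$ from Lemma \ref{psi bound lem} together with $\Lambda\le CQ$ (Corollary \ref{Ld determine}) to get $\|\psi_0\|_{L^\infty(B_1)}\le CQ$, and applies the interior estimate to the rescaled equation, yielding $|\nabla\psi(X_0)/y_0|=\tfrac12|\nabla\psi_0(0)|\le CQ$; the H\"older continuity of $|\nabla\psi/y|$ up to $\{y=0\}$ (Appendix \ref{appendix}) and $|\nabla\psi/y|=\Lambda\le CQ$ on the free boundary then give \eqref{subsonic} once $Q_m$ is chosen small, using $\t_c\ge t_*$ from \eqref{tc_bound}. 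This also shows why your global maximum principle (and the associated far-field and truncation bookkeeping) is unnecessary: once you have the rescaled interior estimate near the axis --- the one ingredient your proposal is missing --- the desired bound follows pointwise everywhere. If you insist on the maximum-principle route, you would first have to prove that rescaled estimate, or set up a genuine bootstrap breaking the circular dependence of $\mathfrak f,\mathfrak g^i$ on $\|\nabla\psi\|_{L^\infty}$; neither is supplied in your proposal.
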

\begin{remark}\label{rmk_subsonic}
		In view of \eqref{tc_bound}, $\t_c(\psi)$ is uniformly bounded from below by a positive constant depending only on $B_*$, $S_*$, $\bar H$, and $\g$. Thus one can fix a truncation parameter $\v>0$ small depending only on $B_*$, $S_*$, $\bar H$, and $\g$. 
		For fixed $\v>0$, let $\psi$ be a solution of the elliptic equation in \eqref{eq_limiting_sol} which satisfies the estimate \eqref{subsonic}. By the definition of $g_\v$ in \eqref{gm} and the expression of $\partial_zG_\epsilon$ in the subsonic region, cf. \eqref{Gdz expression}, the equation in \eqref{eq_limiting_sol} is exactly the equation in \eqref{jet_equ}.
\end{remark}
\noindent\textit{Proof of Proposition \ref{subsonic pro}.}
For fixed $\v>0$ sufficiently small, we claim that
\begin{equation}\label{psi_gradient_Q}
	\left|\frac{\n\psi}{y}\right|\leq CQ
	\q\text{in }\MO:=\O\cap\{\psi<Q\}
\end{equation}
for some constant $C=C(B_*,S_*,\gamma,\N)>0$. There are two cases:

\textit{Case 1. Estimate of $|\n\psi/y|$ away from the symmetry axis $\N_0$.}
Let $X_0=(x_0,y_0)\in\MO\cap\{y_0>1/4\}$. By elliptic estimates there exists a constant $C=C(B_*,S_*,\gamma,\N)>0$ such that
$$\left|\frac{\n\psi(X_0)}{y_0}\right|\leq C(\|\psi\|_{L^\infty(\MO)}+\|\pt_zG_\v\|_{L^\infty(\MO)}).$$
Then $|\n\psi(X_0)/y_0|\leq CQ$ follows from $0\leq\psi\leq Q$, the bound of $|\pt_zG_\epsilon|$ in \eqref{G dzdzpdzz}, and \eqref{label_2}.

\textit{Case 2. Estimate of $|\n\psi/y|$ near the symmetry axis $\N_0$.}
Let $X_0=(x_0,y_0)\in \MO\cap\{0<y_0\leq1/4\}$. For any $X=(x,y)\in B_1$, define
\begin{equation*}\label{def:psi_0} \psi_0(X)=\frac{\psi(X_0+rX)}{r^2}, \q  r=\frac{y_0}2.
\end{equation*}
The function $\psi_0$ satisfies
$$\n\c\left(g_\v\left(\left|\frac{\n\psi_0}{y+2}\right|^2,\frac{y_0^2}{4}\psi_0\right)\frac{\n\psi_0}{y+2}\right)-\frac{y_0^2}4(y+2)\pt_z G_\v\left(\left|\frac{\n\psi_0}{y+2}\right|^2,\frac{y_0^2}{4}\psi_0\right)=0 \q\text{in }B_1.$$
Besides, since by Lemma \ref{psi bound lem}
\begin{align}\label{psi_y2}
	0<\psi(x,y)\leq\min\{\Ld y^2e^{1-y},Q\}
\end{align}
and by Corollary \ref{Ld determine} it holds $\Ld\leq CQ$ , one has
$$0<\psi_0(X)\leq 4\Ld e\left(1+\frac{y}{2}\right)^2\leq CQ \q\text{in }  B_1$$
where $C=C(B_*,S_*,\bar H,\g)>0$. Thus using elliptic estimates, \eqref{G dzdzpdzz}, and \eqref{label_2} again yields
$$\left|\frac{\n\psi(X_0)}{y_0}\right|=\frac12|\n\psi_0(0)|\leq C(\|\psi_0\|_{L^\infty(B_1)}+\|\pt_zG_\v\|_{L^\infty(\MO)})\leq CQ$$
for some constant $C=C(B_*,S_*,\gamma,\N)>0$. Thus \eqref{psi_gradient_Q} holds true. 

Note that $|\n\psi/y|$ is uniformly H\"{o}lder continuous up to the symmetry axis $\N_0$ (cf. Appendix \ref{appendix}). Besides, by Corollary \ref{Ld determine} one has $|\n\psi/y|=\Ld\leq CQ$ on the free boundary $\Omega\cap\partial\{\psi<Q\}$. Thus in view of Remark \ref{rmk_subsonic}, one can find a small constant $Q_m=Q_m(B_*,S_*,\gamma,\N)\in(Q_*, Q^*)$, where $Q_*$ and $Q^*$ are defined in Proposition \ref{incoming data}, such that the estimate \eqref{subsonic} holds true as long as the mass flux $Q\in(Q_*,Q_m)$. 
This finishes the proof of Proposition \ref{subsonic pro}.
\bx

\subsection{Far fields behavior of the jet flow}

The next proposition gives the asymptotic behavior of the subsonic jet flows as $x\to\pm\infty$.

\begin{proposition}\label{pro_asymptotic_upstream}
	Let $\psi:=\psi_\infty$ be the solution obtained in Proposition \ref{psi determine} with $\Ld:=\Ld_\infty$. Then for any $\alpha\in(0,1)$, as $x\to-\infty$,
	\begin{equation}\label{psi upstream}
		\psi(x,y)\to\bar{\psi}(y):=\int_0^{y}s(\bar{\r}\bar{u})(s)ds
		\q \text{in } C_{{\rm loc}}^{2,\alpha}(\R\times(0,\bar H)),
	\end{equation}
	where $\bar\rho$ and $\bar u$ are determined in Proposition \ref{incoming data}; as $x\to+\infty$,
	\begin{equation}\label{psi downstream}
	\psi(x,y)\to\ul\psi(y):=\int_0^ys(\ul\r\ul u)(s)ds 	\q \text{in } C_{{\rm loc}}^{2,\alpha}(\R\times(0,\ubar H)),
	\end{equation}
	where $\ul{\r},\,\ul{u}\in C^{1,\alpha}((0,\ul{H}])$ are positive functions. Moreover, the density $\ul\r$, axial velocity $\ul u$, and  height $\ul H$ of the flow at the downstream are uniquely determined by $\bar B$, $\bar S$, $\bar H$, $\g$, $Q$, and $\Ld$.
\end{proposition}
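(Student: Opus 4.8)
The plan is to obtain both far-field profiles by a compactness argument on the $x$-translates of $\psi$, together with an energy/monotonicity argument in the spirit of \cite{DD2011_axially_small_vorticity} that forces the limiting profiles to be $x$-independent, and then to identify these profiles as the unique solutions of an associated one-dimensional (free boundary) ODE. First I record the relevant one-dimensional problem: since the incoming state $(\bar\r,(\bar u,0),\bar p)$ of Proposition \ref{incoming data} is an $x$-independent solution of the Euler system \eqref{Euler system} with vanishing radial velocity, Lemma \ref{quasilinearequ lem} shows that its stream function $\bar\psi(y):=\int_0^y s(\bar\r\bar u)(s)\,ds$ solves
\begin{equation*}
\frac{d}{dy}\!\left(g\!\left(\frac{(\bar\psi')^2}{y^2},\bar\psi\right)\frac{\bar\psi'}{y}\right)=\frac{y\,\B'(\bar\psi)}{g((\bar\psi')^2/y^2,\bar\psi)}-\frac{y\,\S'(\bar\psi)}{\g\,g^\g((\bar\psi')^2/y^2,\bar\psi)},\qquad \bar\psi(0)=0,\quad \bar\psi(\bar H)=Q,\quad \bar\psi'>0,
\end{equation*}
where $\bar\psi(\bar H)=Q$ is exactly the normalization in Proposition \ref{incoming data} (cf.\ \eqref{Q bar}). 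Conversely, any monotone one-dimensional solution of this boundary value problem produces, through \eqref{eq:rup_psi}, a purely horizontal Euler flow; writing the linear equation satisfied by the difference of two such solutions, whose lower-order coefficients are $O(\k_0)$ hence small by \eqref{k0}, the comparison principle of Lemma \ref{comparison principle} (applicable on the strip of width $\bar H$ by Remark \ref{rmk_CP}) shows that $\bar\psi$ is the unique such solution. The same reasoning applies on an interval $(0,\ul H)$ with the condition at $y=\ul H$ replaced by the Cauchy pair $\psi(\ul H)=Q$, $|\psi'(\ul H)|/\ul H=\Ld$; requiring additionally that the induced mass flux equal $Q$ pins down $\ul H$, and the resulting profiles $\ul\r,\ul u$ depend only on $\bar B,\bar S,\bar H,\g,Q,\Ld$ (through $\B,\S$ and the values $\bar B(\bar H),\bar S(\bar H)$ on the bounding streamline $\{\psi=Q\}$).

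For the upstream asymptotics, set $\psi_n(x,y):=\psi(x-n,y)$. For $n$ large the nozzle wall $\N$ has left every fixed compact subset of $\R\times(0,\bar H)$, so on such sets $\psi_n$ solves the elliptic equation in \eqref{eq_limiting_sol} with $\psi_n=0$ on $\{y=0\}$. The uniform subsonicity \eqref{subsonic}, the interior and boundary Lipschitz bounds (Proposition \ref{Lipschitz}, Remark \ref{Lipschitz boundary}), and the uniform H\"older continuity of $\n\psi/y$ up to $\N_0$ (Appendix \ref{appendix}) give uniform $C^{1,\a}$ bounds, whence Schauder estimates yield uniform $C^{2,\a}_{\rm loc}$ bounds; passing to a subsequence, $\psi_n\to\psi^-$ in $C^{2,\a}_{\rm loc}(\R\times(0,\bar H))$, with $\psi^-$ solving the elliptic equation in the whole strip, $\psi^-=0$ on $\{y=0\}$, and $\psi^-(\cdot,\bar H^-)=Q$ (since $0\le\psi\le Q$ and $\psi=Q$ on $\N$). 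Because $\pt_x\psi\ge0$ (Proposition \ref{psi determine}(ii)) and $0\le\psi\le Q$, one has $\int_{-\infty}^{x_0}\!\int_0^{\bar H}\pt_x\psi\,dy\,dx\le Q\bar H<\infty$ for every $x_0$, so $\pt_x\psi(x,\cdot)\to0$ locally in $L^1$ as $x\to-\infty$, and hence $\pt_x\psi^-\equiv0$ (this $x$-independence being the energy estimate of \cite{DD2011_axially_small_vorticity} in the present setting). Thus $\psi^-=\psi^-(y)$ solves the one-dimensional boundary value problem above, so $\psi^-=\bar\psi$ by uniqueness; since the limit does not depend on the subsequence, \eqref{psi upstream} follows.

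The downstream argument is analogous with $\psi_n(x,y):=\psi(x+n,y)$, the new feature being the free boundary. By Proposition \ref{psi determine}(v) there is $R_0$ and a $C^{2,\a}$ function $f$ with $\G_\psi\cap\{x>R_0\}=\{(x,f(x)):x>R_0\}$ and $f'(x)\to0$; combining this with $\pt_x\psi\ge0$ and the non-oscillation estimate (Lemma \ref{nonoscillation}) gives $f(x)\to\ul H$ for a constant $\ul H\in(0,1)$, so on fixed compact sets the free boundaries of $\psi_n$ converge to $\{y=\ul H\}$ and the regions $\{\psi_n<Q\}$ exhaust $\R\times(0,\ul H)$. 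Using \eqref{subsonic}, the gradient estimates up to the free boundary (Remark \ref{Lipschitz boundary}, Lemma \ref{gradientpsi estimate}), and the $C^{2,\a}$ regularity of $\G_\psi$, one gets uniform $C^{2,\a}$ bounds up to $\{y=\ul H\}$ on compact sets; passing to a subsequence, $\psi_n\to\psi^+$ in $C^{2,\a}_{\rm loc}(\R\times(0,\ul H])$, where $\psi^+$ solves the elliptic equation in $\R\times(0,\ul H)$, $\psi^+=0$ on $\{y=0\}$, and $\psi^+=Q$, $|\n\psi^+/y|=\Ld$ on $\{y=\ul H\}$. As before, $\pt_x\psi\ge0$ and $0\le\psi\le Q$ force $\pt_x\psi^+\equiv0$, so $\psi^+=\psi^+(y)$ solves the one-dimensional Cauchy problem on $(0,\ul H)$; the uniqueness from the first paragraph then identifies $\psi^+=\ul\psi$ with $\ul\psi(y)=\int_0^y s(\ul\r\ul u)(s)\,ds$, $\ul\r,\ul u\in C^{1,\a}((0,\ul H])$ positive, and shows that $\ul H,\ul\r,\ul u$ are uniquely determined by $\bar B,\bar S,\bar H,\g,Q,\Ld$. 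Independence of the subsequence gives \eqref{psi downstream}.

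The main difficulty is the rigorous implementation of the $x$-independence of $\psi^\pm$ together with the control of the free boundary at infinity: although the short $L^1$ bound above exploits $\pt_x\psi\ge0$, carrying out the slice-energy estimate of \cite{DD2011_axially_small_vorticity} in the present variational free-boundary setting — in particular handling the boundary contributions produced by the free boundary and by the $1/y$ weight — requires care, and verifying that $f(x)\to\ul H$, $f'(x)\to0$ and that the over-determined limiting ODE (two-point Dirichlet data, the Neumann condition at the free end, and the mass-flux normalization) is uniquely solvable is exactly where the smallness hypotheses \eqref{BS condition1}–\eqref{BS condition2} and the comparison principle (Lemma \ref{comparison principle}, Remark \ref{rmk_CP}) enter decisively. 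Once these are in place, upgrading the $C^{2,\a}_{\rm loc}$ convergence of $\psi$ to the asymptotics of $(\r,\u,p)$ via \eqref{eq:rup_psi} is routine.
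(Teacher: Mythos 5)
Your compactness framework is essentially the paper's, and the mechanism you use to make the limits $x$-independent (the bound $\int_{-\infty}^{x_0}\int_0^{\bar H}\pt_x\psi\,dy\,dx\le Q\bar H$ coming from $\pt_x\psi\ge 0$ and $0\le\psi\le Q$) is a sound and even simpler substitute for the energy estimates of \cite{DD2011_axially_small_vorticity} that the paper invokes; the identification of the upstream limit with $\bar\psi$ via uniqueness of the one-dimensional Dirichlet problem (small $\k_0$, comparison principle) is also acceptable. The genuine gap is in the final assertion of the proposition: that $\ul\r$, $\ul u$ and the downstream height $\ul H$ are uniquely determined by $\bar B,\bar S,\bar H,\g,Q,\Ld$. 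You reduce this to the claim that the over-determined one-dimensional problem on $(0,\ul H)$ — Dirichlet data at both ends, the extra condition $|\psi'(\ul H)|/\ul H=\Ld$, and the unknown parameter $\ul H$ — is uniquely solvable, but you never prove it; your own closing paragraph concedes this is where the work lies. The comparison principle (Lemma \ref{comparison principle}, Remark \ref{rmk_CP}) gives uniqueness of the Dirichlet problem on a fixed interval, and Lemma \ref{Cauchy uniqueness} gives uniqueness of the Cauchy problem for a fixed $\ul H$; neither of these, nor ``the same reasoning,'' selects $\ul H$. Some additional ingredient — monotonicity of the shooting map in $\ul H$, or a sliding/Hopf argument as in Proposition \ref{uniqueness pro} — would be required and is absent.

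The paper closes exactly this step by an idea your proposal does not contain: since $B$ and $S$ are transported along streamlines and the downstream pressure is the constant $\ul p=(\g-1)\S(Q)/\big(\g g^{\g}(\Ld^2,Q)\big)$ determined by $\Ld$ and $Q$, the streamline correspondence $\ul\psi(\th(y))=\bar\psi(y)$ yields the explicit formulas \eqref{ul r}--\eqref{ul u} expressing $(\ul\r\,\ul u)(\th(y))$ as a function of $y$ alone, so \eqref{theta} integrates to an explicit expression for $\th$, giving $\ul H=\th(\bar H)$ and the profiles $\ul\r,\ul u$ uniquely in terms of the data. The same formulas, together with a lower bound as in \eqref{ubar ineq1} (this is where $Q>Q_*=\k^{\frac1{4\g}}$ enters), are what give the positivity of $\ul u$ on $(0,\ul H]$, which you also assert without proof. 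Without this (or an equivalent shooting/monotonicity argument), the ``moreover'' part of the proposition is not established.
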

\begin{proof}
	(i) \emph{Upstream asymptotic behavior.} Let $\psi^{(-n)}(x,y): =\psi(x-n,y)$, $n\in\mathbb Z$. Since the nozzle is asymptotically horizontal with the height $\bar H$, there exists a subsequence $\psi^{(-n)}$ (relabeled) converges to a function $\hat\psi$ in $C_{\rm loc}^{2,\alpha}(\R\times(0,\bar H))$ for any $\alpha\in(0,1)$, where $\hat\psi$ satisfies \eqref{subsonic} and solves the Dirichlet problem in the infinite strip
	\begin{equation}\label{system upstream}\begin{cases}
			\begin{split}
				&\n\c\bigg(g_\v\bigg(\bigg|\frac{\n\hat\psi}{y}\bigg|^2,\hat\psi\bigg)\frac{\n\hat\psi}{y}\bigg)-y\pt_z G_\v\bigg(\bigg|\frac{\n\hat\psi}{y}\bigg|^2,\hat\psi\bigg)=0 \q \text{ in } \R\times(0,\bar H),\\
				&\hat\psi=Q \ \text{ on } \R\times\{\bar H\},\q \hat\psi=0 \ \text{ on } \R\times\{0\}
	\end{split}\end{cases}\end{equation}
	and it satisfies $0\leq\hat\psi\leq Q$ in $\R\times[0,\bar H]$.
	On the other hand, it follows from the energy estimates (cf. \cite[Proposition 4.1]{DD2011_axially_small_vorticity}) that the problem \eqref{system upstream} has a unique solution.
	Since $\hat\psi$ satisfies \eqref{subsonic} and the function $\bar\psi$ defined in \eqref{psi upstream} satisfies the equation in \eqref{jet_equ}, by Remark \ref{rmk_subsonic} one has $\hat\psi(x,y)=\bar \psi(y)$ in $\R\times[0,\bar H]$.
	This proves the asymptotic behavior of the flows in the upstream.

	(ii) \emph{Downstream asymptotic behavior.} Let $\psi^{(n)}(x,y):=\psi(x+n,y)$, $n\in\mathbb Z$. By the $C^{2,\a}$ regularity of the free boundary and the boundary regularity for elliptic equations, there exists a subsequence $\psi^{(n)}$ (relabeled) converging to a function $\ubar\psi$ in $C_{\rm loc}^{2,\alpha}(\R\times(0,\ubar H))$ for any $\alpha\in(0,1)$, where $\ubar\psi$ satisfies  \eqref{subsonic} and solves
	\begin{equation}\label{system downstream}\begin{cases}\begin{split}
				&\n\c\bigg(g_\v\bigg(\bigg|\frac{\n\ubar\psi}{y}\bigg|^2,\ubar\psi\bigg)\frac{\n\ubar\psi}{y}\bigg)-y\pt_z G_\v\bigg(\bigg|\frac{\n\ubar\psi}{y}\bigg|^2,\ubar\psi\bigg)=0 &&\text { in } \R\times(0,\ul H),\\
				&\ubar\psi=Q,\q \frac1y\pt_{y}\ubar\psi=\Ld &&\text{ on } \R\times\{\ul H\},\\
				&\ubar\psi=0  && \text{ on }\R\times\{0\}
	\end{split}\end{cases}\end{equation}
	with $0\leq\ubar\psi\leq Q$ in $\R\times[0,\ul H]$.
	By Lemma \ref{Cauchy uniqueness}, the solution to the above Cauchy problem is unique for given positive constants $Q$, $\Ld$, and $\ul H$.
	Moreover, using the energy estimates (\cite[Proposition 4.1]{DD2011_axially_small_vorticity}) one can conclude that $\pt_{x}\ubar\psi=0$, that is, $\ubar\psi$ is a one-dimensional function. 
	Therefore, by \eqref{psi gradient} the flow radial velocity $v\to0$ as $x\to+\infty$. In view of \eqref{eq:rup_psi}, the downstream density $\ubar\rho$, axial velocity $\ubar u$, and pressure $\ubar p$ (which is a constant, cf. Remark \ref{rmk_1}) are determined by
	$$(\ubar\r,\ubar u,\ubar p)=\left(\frac1{g(|\ubar\psi'/y|^2,\ubar\psi)}, \frac{g(|\ubar\psi'/y|^2,\ubar\psi)\ubar\psi'}{y}, \frac{(\gamma-1)\S(Q)}{\gamma g^\gamma(\Lambda^2,Q)}\right).$$
	Consequently, $\ubar\psi$ can be expressed as in \eqref{psi downstream}. The regularity of $\ubar\rho$ and $\ubar u$ follows from the regularity of $\ubar \psi$.
	
	It remains to show that the downstream axial velocity $\ubar u$ is positive and the downstream height $\ubar H$ is uniquely determined by $\bar B$, $\bar S$, $\bar H$, $\g$, $Q$, and $\Ld$.
	For this we let $\th(y)$ be the position at downstream if one follows along the streamline with asymptotic height $y$ at the upstream, i.e., $\th:[0,\bar H]\to[0,\ul H]$ satisfies
	\begin{equation}\label{theta def}
		\ul\psi(\th(y))=\bar\psi(y),\q y\in[0,\bar H],
	\end{equation}
	where $\bar\psi$ and $\ul\psi$ are defined in \eqref{psi upstream} and \eqref{psi downstream}, respectively.
	Then the map $\th$ satisfies
	\begin{align}\label{theta}
		\begin{cases}
			\th'(y)=\frac{y(\bar{\r}\bar{u})(y)}{\th(y)(\ul{\r}\ul{u})(\th(y))},\\
			\th(0)=0,
	\end{cases}\end{align}
	where by \eqref{S def}
	\begin{equation}\label{ul r}
		\ul\r(\th(y))=\left(\frac{\g\ul p}{(\g-1)\bar S(y)}\right)^{\frac1\g},
	\end{equation}
	and by \eqref{BS_relation1} and \eqref{BS_psi_bar}
	\begin{equation}\label{ul u}
		\ul u(\th(y))
		=\sqrt{2\bigg(\bar B(y)-\left(\frac{\g\ul p}{\g-1}\right)^{1-\frac1\g}\bar S^{\frac1\g}(y)\bigg)}.
	\end{equation}
	Using same arguments as the proof for \eqref{ubar ineq1} in Proposition \ref{incoming data}, one gets
	$$C(B_*,S_*,\bar H,\g)\k^{\frac1{4\g}}\leq\ul u(\th(y))\leq \sqrt{2B^*}.$$
	Thus substituting \eqref{ul r} and \eqref{ul u} into \eqref{theta} gives that $\th(y)$, in particular $\ul H=\theta(\bar H)$, is uniquely determined. 
	This finishes the proof of the proposition.
\end{proof}

In view of Propositions \ref{psi determine}-\ref{pro_asymptotic_upstream}, we have proved the existence of solutions to {Problem} \ref{problem 2} when the mass flux $Q$ is in a suitable range. Then the existence of solutions to Problem \ref{probelm 1} follows from Proposition \ref{prop:equiv_sol}.


\section{Uniqueness of the outer pressure}\label{sec uniqueness}
In this section, we show that for given Bernoulli function $\bar B$, entropy function $\bar S$, and mass flux $Q$ at the upstream, there is a unique momentum $\Ld$ on the free boundary such that Problem \ref{problem 2} has a solution. In view of \eqref{Ld} and the proof of Lemma \ref{g}(i), the uniqueness of $\Ld$ implies the uniqueness of the outer pressure $\ul p$. More precisely, the constant $\r_0$ in \eqref{Ld} is determined by $\r_0=1/g(\Ld^2,Q)$, where $g$ is the function defined in Lemma \ref{g}(i), hence $\ul p=(\g-1)\bar S(\bar H)\r_0^\g/\g$ is uniquely determined.

\begin{proposition}\label{uniqueness pro}
Let the nozzle boundary $\N$ defined in \eqref{nozzle} satisfy \eqref{nozzle condition}. Given the Bernoulli function and the entropy function $(\bar{B}, \bar{S})\in (C^{1,1}([0,\bar{H}]))^2$ at the upstream which satisfy \eqref{BS min}-\eqref{BS condition2}, where the constant $\k=(B_*,S_*,\gamma,\N)$ in \eqref{BS condition2} is sufficiently small. Given the mass flux $Q\in(Q_*,Q^*)$, where $Q_*$ and $Q^*$ are defined in Proposition \ref{incoming data}. Suppose that $(\psi_i,\G_i,\Ld_i)$ $(i=1,2)$ are two uniformly subsonic solutions to Problem \ref{problem 2}, then $\Ld_1=\Ld_2$.
\end{proposition}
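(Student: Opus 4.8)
The plan is to argue by contradiction along the classical lines of Alt--Caffarelli--Friedman for the uniqueness of the outer pressure in jet problems (cf. \cite{ACF85,LSTX2023}), adapted to the axially symmetric compressible setting. Suppose $\Lambda_1\neq\Lambda_2$, say $\Lambda_1<\Lambda_2$. Let $\psi_1,\psi_2$ be the corresponding solutions of Problem \ref{problem 2}, with free boundaries $\Gamma_1=\{x=\Upsilon_1(y)\}$ and $\Gamma_2=\{x=\Upsilon_2(y)\}$. The first step is a \emph{comparison/ordering} step: using the monotonicity $\partial_x\psi_i>0$ (Proposition \ref{psix strictly pro}), the common boundary data on $\N_0\cup\N$, the common mass flux $Q$, and the continuous fit at $A=(0,1)$ (Proposition \ref{psi determine}(iv)), one shows that the solution with the larger momentum on the free boundary must ``open up'' faster, so that $\Upsilon_1(y)\leq \Upsilon_2(y)$ for all $y$, i.e. $\{\psi_1<Q\}\subset\{\psi_2<Q\}$, and hence $\psi_2\leq\psi_1$ in $\{\psi_1<Q\}$. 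To obtain this ordering rigorously I would compare $\psi_2$ with horizontal translates $\psi_1(\cdot-\tau,\cdot)$ of $\psi_1$: for $\tau$ large the translate lies above $\psi_2$, and one decreases $\tau$ until first contact, using the quasilinear comparison principle (Lemma \ref{comparison principle}, and its large-width refinement Remark \ref{rmk_CP}, which applies here since $\kappa$ is small so $|\partial_{\mathbf p z}\mathcal G|,|\partial_{zz}\mathcal G|$ are small), the strong maximum principle, the interior $C^{1,\alpha}$ regularity of minimizers near free boundary points where contact occurs, and the Hopf lemma to rule out interior contact at a free boundary point. The conclusion is $\tau=0$, giving $\psi_2\leq\psi_1$.

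The second step extracts a contradiction from the free boundary conditions once the domains are ordered. Since $\psi_2\le\psi_1$ in $\{\psi_1<Q\}$ and both equal $Q$ on $\N\cup\{x\ge 0,y=1\}$, at a point where the two free boundaries meet (in particular near $A$, where both pass through $A$ by the continuous fit) one compares the normal derivatives. If $\Gamma_1$ and $\Gamma_2$ coincide on an open arc, then $\psi_1-\psi_2\geq 0$ vanishes to first order there, and a Hopf-type argument at an interior touching point forces $\psi_1\equiv\psi_2$ and then $\Lambda_1=\Lambda_2$, a contradiction; if they do not coincide, one uses the asymptotic (downstream) behavior from Proposition \ref{pro_asymptotic_upstream}: as $x\to+\infty$, $\psi_i\to\ubar\psi_i(y)$, a one-dimensional profile on $(0,\ubar H_i)$ with $\ubar H_i$ uniquely determined by $\bar B,\bar S,\bar H,\gamma,Q,\Lambda_i$, and $|\nabla\psi_i/y|=\Lambda_i$ on $\{y=\ubar H_i\}$. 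Tracking the map $\theta_i$ (cf.\ \eqref{theta def}--\eqref{ul u}) along streamlines, one checks that $\Lambda_1<\Lambda_2$ forces $\ubar H_1>\ubar H_2$ (a larger exit momentum contracts the jet more), but the ordering $\{\psi_1<Q\}\subset\{\psi_2<Q\}$ combined with the downstream convergence forces $\ubar H_1\leq\ubar H_2$; these are incompatible unless $\Lambda_1=\Lambda_2$.

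Alternatively, and perhaps more cleanly, the second step can be carried out \emph{locally near the orifice} $A$: both free boundaries are $C^1$ up to $A$ and $\N\cup\Gamma_i$ is $C^1$ there with the same tangent $N'(1)$, so by the smooth fit (Proposition \ref{smooth fit}) one has $|\nabla\psi_i/y|=\Lambda_i$ as one approaches $A$ along $\Gamma_i$ \emph{and} $\psi_i$ is $C^1$ in a $\{\psi_i<Q\}$-neighborhood of $A$; blowing up at $A$ one obtains, for each $i$, a half-plane solution of slope $\Lambda_i$ (cf.\ Corollary \ref{halfplane sol} together with the continuous fit), all sharing the same half-plane geometry determined by $N'(1)$. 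Since $\psi_2\le\psi_1$ near $A$ with equality on $\N$, comparing the two blow-up limits — two half-plane solutions on the same half-plane with slopes $\Lambda_1$ and $\Lambda_2$, ordered — yields $\Lambda_1=\Lambda_2$ directly. I expect the main obstacle to be the rigorous justification of the ordering step: because the equation is quasilinear with a $1/y$ singularity and the comparison principle of Lemma \ref{comparison principle} requires either a thin strip or smallness of the $z$-derivatives of $\mathcal G$, one must be careful that the translation-comparison argument is legitimate on the full (unbounded, full-height) domain; this is where the smallness of $\kappa$ (hence of $\kappa_0$, hence of $|\partial_{\mathbf p z}\mathcal G|$ and $|\partial_{zz}\mathcal G|$) is essential, exactly as in Remark \ref{rmk_CP}, and where the monotonicity $\partial_x\psi_i>0$ and the precise structure of the boundary data \eqref{psi0} are used to propagate the inequality from the upstream into the whole flow region.
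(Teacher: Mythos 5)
There is a genuine gap, and it sits exactly at the step you yourself flag as the main obstacle: the ordering claim $\ur_1(y)\le\ur_2(y)$, i.e.\ $\{\psi_1<Q\}\subset\{\psi_2<Q\}$, for the \emph{untranslated} solutions. First, both free boundaries emanate from the same point $A=(0,1)$ by continuous fit, so no strict inclusion can be produced by sliding a translate back to $\tau=0$; your plan to ``decrease $\tau$ until first contact and conclude $\tau=0$'' cannot work, because first contact will in general occur at some $\tau_*>0$. Second, and more decisively, the inclusion you assert is in the wrong direction: as you correctly compute in your own second step (and as the paper does, via the subsonic Bernoulli relation at the free boundary and the streamline map $\th$), $\Ld_1<\Ld_2$ forces $\ul{H}_1>\ul{H}_2$. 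But then $\ur_1(y)\to+\infty$ as $y\downarrow\ul{H}_1$ while $\ur_2(y)$ remains finite for $y$ near $\ul{H}_1>\ul{H}_2$, so $\ur_1>\ur_2$ there and $\{\psi_1<Q\}\not\subset\{\psi_2<Q\}$. Thus the ``incompatibility'' you extract in Step 2 is a contradiction with a hypothesis that was never (and cannot be) established, not with $\Ld_1<\Ld_2$. The alternative blow-up argument at $A$ inherits the same unproven ordering $\psi_2\le\psi_1$ near $A$, and in addition $A$ is a point where the free boundary meets the fixed boundary, so Corollary \ref{halfplane sol} (an interior free-boundary statement) does not apply there without further work.

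The paper's proof uses your correct ingredient $\ul{H}_1>\ul{H}_2$ in a different way: since $\N\cup\G_i$ are $y$-graphs and $\ul{H}_1>\ul{H}_2$, one can translate $\MO_1$ horizontally and take the smallest $\tau_*\ge0$ with $\MO_1^{\tau_*}\supseteq\MO_2$, touching at some point $(x_*,y_*)\in\G_2$. One then shows $\psi_1^{\tau_*}\le\psi_2$ in $\MO_2$ (matching far-field behavior upstream, the comparison principle downstream, and the strong maximum principle in between), and the contradiction comes from the Hopf lemma at the touching free-boundary point: $\Ld_1=\frac1{y_*}\pt_\nu\psi_1^{\tau_*}(x_*,y_*)>\frac1{y_*}\pt_\nu\psi_2(x_*,y_*)=\Ld_2$, contradicting $\Ld_1<\Ld_2$. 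In other words, the conclusion is drawn from the normal-derivative comparison at a genuine touching point of the translated domains, not from an ordering of the original domains nor from a second comparison of downstream heights. If you replace your Step 1 by this translation-to-first-contact containment (in the direction $\MO_1^{\tau_*}\supseteq\MO_2$) and your Step 2 by the Hopf-lemma comparison at the contact point, your outline becomes the paper's argument.
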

\begin{proof}
Let $(\ul\r_i,\ul u_i,\ul p_i, \ul H_i)$ ($i=1,2$) be the associated downstream asymptotic states.
Without loss of generality, assume that $\Ld_1<\Ld_2$. 
By \eqref{BSpsi relation}, along the free boundary $\G_1$ and $\G_2$ one has
$$\frac{\Ld_1^2}{2\ul\r_1^2(\ul H_1)}+\ul\r_1^{\g-1}(\ul H_1)\bar S(\bar H)=\bar B(\bar H)
=\frac{\Ld_2^2}{2\ul\r_2^2(\ul H_2)}+\ul\r_2^{\g-1}(\ul H_2)\bar S(\bar H).$$
Since the solutions are subsonic and $\Ld_1<\Ld_2$, it follows from the proof of Lemma \ref{g}(i) that $\ul\r_1(\ul H_1)>\ul\r_2(\ul H_2)$.
Hence by \eqref{S def} one has
$$\ul p_1=\frac{(\g-1)}{\g}\bar S(\bar H)\ul \r_1^\g(\ul H_1)>\frac{(\g-1)}{\g}\bar S(\bar H)\ul \r_2^\g(\ul H_2)=\ul p_2.$$
Let $\th_i(y)$ $(i=1,2)$ be the function defined in \eqref{theta} associated with $(\ul\r_i,\ul u_i)$ ($i=1,2$).
Then
$$\ul\r_1(\th_1(y))=\left(\frac{\g\ul p_1}{(\g-1)\bar S(y)}\right)^{\frac1\g}>\left(\frac{\g\ul p_2}{(\g-1)\bar S(y)}\right)^{\frac1\g}=\ul\r_2(\th_2(y)).$$
Combining this inequality with
$$\frac{(\ul\r_1\ul u_1)^2(\th_1(y))}{2\ul\r_1^2(\th_1(y))}+\ul\r_1^{\g-1}(\th_1(y))\bar S(y)=\bar B(y)
=\frac{(\ul\r_2\ul u_2)^2(\th_2(y))}{2\ul\r_2^2(\th_2(y))}+\ul\r_2^{\g-1}(\th_2(y))\bar S(y)$$
(which is obtained from \eqref{psi gradient} and \eqref{BSpsi relation}), and the proof of Lemma \ref{g}(i) together yields
$$(\ul\r_1\ul u_1)(\th_1(y))<(\ul\r_2\ul u_2)(\th_2(y)).$$
Thus in view of \eqref{theta} it follows
\begin{equation}\label{theta ineq1}
(\th_1^2(y))'>(\th_2^2(y))'.
\end{equation}
Note that $\th_1(0)=\th_2(0)$. After an integration of  \eqref{theta ineq1} on $[0,\bar H]$ one gets
$$\ul H_1=\th_1(\bar H)>\th_2(\bar H)=\ul H_2.$$

Let $\MO_i$ be the domain bounded by $\N_0$, $\N$, and $\G_i$. Since $\ul H_1>\ul H_2$ and that $\N\cup \G_i$ is a $y$-graph, one can find a $\tau\geq0$ such that the domain $\MO_1^{\tau}=\{(x,y):(x-\tau,y)\in\MO_1\}$ contains $\MO_2$. Let $\tau_*$ be the smallest number such that $\MO^{\tau_*}_1$ contains $\MO_2$ and they touch at some point $(x_*,y_*)\in\Gamma_2$. Define  $\psi_1^{\tau_*}(x,y):=\psi_1(x-\tau_*,y)$. 
Now we prove $\psi_1^{\tau_*}\leq \psi_2$  in $\mathcal O_2$. Suppose that there exists a point $(\bar x,\bar y)\in\mathcal O_2$ such that $\psi_1^{\tau_*}(\bar x,\bar y)>\psi_2(\bar x,\bar y)$. Since $\psi_1^{\tau_*}$ and $\psi_2$ have the same asymptotic behavior as $x\to-\infty$, and $\psi_1^{\tau_*}\leq\psi_2$ as $x\to+\infty$ by the comparison principle (cf. Lemma \ref{comparison principle}), one can find a domain $\mathcal O_2'\subset\mathcal O_2$ containing $(\bar x,\bar y)$ such that $\psi_1^{\tau_*}-\psi_2$ obtains its maximum in the interior of $\mathcal O_2'$. This contradicts the strong maximum principle. Hence $\psi_1^{\tau_*}\leq\psi_2$ in $\MO_2$.
Since at the touching point one has $\psi_1^{\tau_*}(x_*,y_*)=\psi_2(x_*,y_*)$, by the Hopf lemma it holds
$$\Ld_1=\frac1{y_*}\frac{\pt\psi_1^{\tau_*}}{\pt\nu}(x_*,y_*)>\frac1{y_*}\frac{\pt\psi_2}{\pt\nu}(x_*,y_*)=\Ld_2.$$
This leads to a contradiction. Hence one has $\Ld_1=\Ld_2$.
\end{proof}
Combining all the results in previous sections, Theorem \ref{result} is proved.

\appendix
\section{ }\label{appendix}

In this appendix, we exclude the singularity of the solution $\psi$ obtained in Proposition \ref{psi determine} near the symmetry axis. We refer to \cite{LW1998} for the study on elliptic equations with singularity at the boundary under different conditions.

The function $\psi$ obtained in Proposition \ref{psi determine} is a solution of the elliptic equation \eqref{elliptic equG} in $\MO:=\Omega\cap\{\psi<Q\}$,
where $\MG(\bp,z):=G_\v(|\bp|^2,z)$ satisfies all properties in Proposition \ref{Gproperties pro}. Besides, $\psi$ satisfies the estimates \eqref{psi_gradient_Q} and $0<\psi(x,y)\leq Cy^2$ (cf. \eqref{psi_y2}),  provided $\k$ in \eqref{BS condition2} is sufficiently small. 
We aim to show that $|\n\psi/y|$ is uniformly H\"{o}lder continuous up to $\{y=0\}$. 

For simplicity, we denote 
\begin{align}\label{def_L}
\mathcal L:=\mathfrak a_{ij}\pt_{ij}-\frac{\mathfrak a_{i2}}{y}\pt_i \q (i,j=1,2)
\end{align}
as a linear elliptic operator, where the coefficients $\mathfrak a_{ij}\in C^1(\R\times(0,1])$  $(i,j=1,2)$ satisfy 
\begin{align*}
&\mathfrak b_*|\mathbf{\xi}|^2\leq \mathfrak a_{ij}\mathbf{\xi}_i\mathbf{\xi}_j\leq \mathfrak b^*|\mathbf{\xi}|^2, \quad \mathbf{\xi}\in \R^2
\end{align*}
with $\mathfrak b_*,\mathfrak b^*>0$; moreover, we denote
\begin{align*}
\mathcal Q_{k}:=\left[-\frac{M}{2^k},\frac{M}{2^k}\right]\times\left[0,\frac 1{2^k}\right], \q k\geq0,
\end{align*}
where $M>0$ is a constant to be chosen later. 
 
First, we give the following key lemma.

\begin{lemma}\label{lem_axi}
Let $\phi\in C^{2,\alpha}_{{\rm loc}}(\R\times(0,1))\cap C^0(\R\times[0,1])$ be a solution of
\begin{align}\label{pde}
	\mathcal L\phi=y^2{\mathfrak f}(x,y) \q\text{in }\R\times (0,1),
\end{align}
where $\mathcal L$ is defined in \eqref{def_L} and ${\mathfrak f}\in L^{\infty}(\R\times(0,1))$. Suppose that $\|{\mathfrak f}\|_{L^{\infty}(\R\times(0,1))}\leq F$ for some constant $F>0$ sufficiently small depending on $\mathfrak b_*$ and $\mathfrak b^*$. If $0\leq \phi(x,y)\leq y^2$ in $\mathcal Q_0$, then there exists a constant $\sigma=\sigma(\mathfrak b_*,\mathfrak b^*)\in(0,1/8)$, such that
$$0\leq \phi(x,y)\leq (1-\sigma) y^2 \q\text{or}\q \sigma y^2\leq \phi(x,y)\leq y^2 \q \text{ in } \mathcal Q_1.$$
\end{lemma}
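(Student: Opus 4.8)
The plan is to prove this dichotomy by a normalization-and-compactness argument combined with a quantitative maximum principle, in the spirit of Krylov--Safonov-type oscillation decay but adapted to the degenerate coefficient $\mathfrak{a}_{i2}/y$. First I would observe that the function $w(x,y):=\phi(x,y)/y^2$ is not itself the natural object; instead, since $0\le\phi\le y^2$, one should compare $\phi$ directly against the two explicit barriers $y^2$ and $0$ using the structure of $\mathcal{L}$. The key computation is to apply $\mathcal{L}$ to the quadratic $y^2$: one has $\pt_{ij}(y^2)=2\delta_{i2}\delta_{j2}$ and $\pt_i(y^2)=2y\delta_{i2}$, so $\mathcal{L}(y^2)=2\mathfrak{a}_{22}-\frac{\mathfrak{a}_{22}}{y}\cdot 2y=0$. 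Thus $y^2$ is a solution of the homogeneous equation $\mathcal{L}v=0$, and likewise $0$ is a solution. This is the crucial algebraic fact: the two ``extremal'' obstacles in the hypothesis are both $\mathcal{L}$-harmonic, so $\phi$ and the barriers differ only by the forcing term $y^2\mathfrak{f}$.

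Next I would set $v_+:=y^2-\phi\ge0$ and $v_-:=\phi\ge0$, both nonnegative in $\mathcal{Q}_0$ and satisfying $\mathcal{L}v_\pm = \mp y^2\mathfrak{f}$, hence $|\mathcal{L}v_\pm|\le y^2 F$ in $\mathcal{Q}_0\cap\{y>0\}$. The plan is to prove a weak Harnack / measure-theoretic alternative: either $v_+\ge \frac12 y^2$ on a definite portion of $\mathcal{Q}_1$ (in an averaged sense), or $v_-\ge\frac12 y^2$ there. To make ``a definite portion'' precise and to pass to the pointwise bound in $\mathcal{Q}_1$, I would test with the subsolution barrier $\beta(x,y):=y^2\bigl(c_0 - c_1|x|^2 - c_2(\text{something in }y)\bigr)$ carefully chosen so that $\mathcal{L}\bigl(\sigma y^2 - v_-\bigr)$ and $\mathcal{L}\bigl(\sigma y^2 - v_+\bigr)$ have a sign after absorbing the $y^2 F$ term; here the smallness of $F$ relative to $\mathfrak{b}_*,\mathfrak{b}^*$ is used exactly to dominate the forcing by the strictly-signed part of $\mathcal{L}\beta$. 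Because $\mathcal{L}(y^2)=0$ identically, dividing through by $y^2$ effectively reduces the degenerate operator on $\mathcal{Q}_0$ to a genuinely uniformly elliptic (nondivergence) operator in the variable $w=\phi/y^2$ with bounded lower-order coefficients and right-hand side $\mathfrak{f}$; one then invokes the interior Harnack inequality for $w$ (which is $\ge 0$ and $\le 1$) applied at the scale $\mathcal{Q}_0\to\mathcal{Q}_1$, yielding: either $\inf_{\mathcal{Q}_1} w \ge \sigma$ or $\sup_{\mathcal{Q}_1}(1-w)$ is bounded below, i.e. $\inf_{\mathcal{Q}_1}(1-w)\ge\sigma$, for a constant $\sigma=\sigma(\mathfrak{b}_*,\mathfrak{b}^*)$. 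The two outcomes are precisely $\phi\le(1-\sigma)y^2$ and $\phi\ge\sigma y^2$ in $\mathcal{Q}_1$.

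Concretely the steps are: (1) verify $\mathcal{L}(y^2)=0$ and reduce to the equation for $w:=\phi/y^2$, computing that $w$ solves a uniformly elliptic nondivergence equation $\mathfrak{a}_{ij}\pt_{ij}w + \tilde{\mathfrak b}_i\pt_i w = \mathfrak{f}$ with $\tilde{\mathfrak b}_i$ bounded by $C(\mathfrak b^*/\mathfrak b_*)/y$ but — crucially — arising in a way that the Harnack constant only depends on the ellipticity ratio after the rescaling (this uses that near $\{y=0\}$ the geometry of $\mathcal{Q}_k$ shrinks isotropically with the singularity, so that the rescaled operator on a fixed reference cube has bounded coefficients); (2) apply the interior Harnack inequality (e.g. \cite[Theorem 8.17--8.20]{GT_book} after a De Giorgi--Nash--Moser / Krylov--Safonov argument, or directly the nondivergence Harnack) to $w\in[0,1]$ to get $\inf_{\mathcal Q_1} w\ge \sigma(\sup_{\mathcal Q_{1/2}}w - C F)$ and the symmetric statement for $1-w$; (3) choose $F$ small enough that the $CF$ error is harmless, and conclude the dichotomy with a fixed $\sigma<1/8$; (4) fix $M$ so that the reference cube $\mathcal{Q}_0$ sits inside the domain of validity and the Harnack cubes $\mathcal{Q}_1\subset\mathcal{Q}_{1/2}\subset\mathcal Q_0$ have the right inclusions. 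The main obstacle I expect is making the Harnack inequality genuinely uniform up to $\{y=0\}$: the lower-order coefficient of the reduced equation for $w$ blows up like $1/y$, so one cannot cite interior estimates naively. The way around it is to note that $w\equiv 1$ and $w\equiv 0$ are exact solutions of the homogeneous reduced equation, so the singular drift term does not destroy the comparison principle or the Harnack chain; alternatively one works on the dyadic cubes $\mathcal Q_k$ and exploits scale invariance — under $(x,y)\mapsto(2^{-k}x,2^{-k}y)$ the operator $\mathcal L$ is invariant in form and $y^2\mathfrak f$ scales correctly, so a single Harnack estimate on $\mathcal Q_0$ transfers to all scales, which is exactly the mechanism that will later (outside this lemma, by iteration) give the $C^{2,\alpha}$ regularity of $\psi$ up to the symmetry axis.
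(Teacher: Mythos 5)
Your reduction is set up correctly — indeed $\mathcal L(y^2)=0$, so $v_\pm$ are nonnegative with $|\mathcal L v_\pm|\le y^2F$, and the conclusion is exactly one step of oscillation decay for $w=\phi/y^2$. But the engine of your argument has a genuine gap. Writing the equation for $w$ out, one gets
\begin{equation*}
\mathfrak a_{ij}\pt_{ij}w+\frac{3\,\mathfrak a_{i2}}{y}\,\pt_i w=\mathfrak f ,
\end{equation*}
so the lower-order coefficients are $3\mathfrak a_{i2}/y$, not bounded: your claim that dividing by $y^2$ produces a uniformly elliptic operator ``with bounded lower-order coefficients'' is false, and the fix you propose does not repair it. The dyadic rescaling $(x,y)\mapsto(2^{-k}x,2^{-k}y)$ leaves the drift $\mathfrak a_{i2}/y$ invariant in form, so the rescaled operator on the fixed reference cube still has a $1/y$-singular drift at $\{y=0\}$; scale invariance cannot convert an interior Harnack inequality into one that holds uniformly up to the degenerate boundary, and an estimate of the type $\inf_{\mathcal Q_1}w\ge\sigma(\sup w-CF)$ over a region touching $\{y=0\}$ is precisely what is missing. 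Likewise, the observation that $w\equiv0$ and $w\equiv1$ solve the homogeneous reduced equation gives you the comparison principle (which holds anyway, since the coefficients are locally bounded in $\{y>0\}$), but it does not give a Harnack chain reaching the axis: for the mixed term $\frac{3\mathfrak a_{12}}{y}\pt_x$ there is no higher-dimensional (Bessel-type) reinterpretation unless $\mathfrak a_{12}=O(y)$, which is not assumed in the lemma and, in the application, is essentially what one is trying to prove.

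The paper avoids this issue entirely, and the contrast shows what your sketch still needs. There, the dichotomy is taken at the single point $(0,1/2)$; the Harnack inequality is applied only in the strip $[-M+1,M-1]\times[1/8,7/8]$, strictly away from the axis, where $\mathcal L$ has bounded coefficients, yielding $\phi\ge2\sigma$ there. The bound is then pushed down to $\{y=0\}$ not by any boundary Harnack but by the explicit comparison function $P(x,y)=\sigma y^2+\sigma y^3-\frac{8\sigma}{M^2}|x|^2y^2$: since $\mathcal L(\sigma y^2)=0$ and $\mathcal L(\sigma y^3)=3\sigma\mathfrak a_{22}y$, one computes $\mathcal L(\phi-P)=y^2\mathfrak f+\frac{16\sigma}{M^2}\mathfrak a_{11}y^2+\frac{48\sigma}{M^2}\mathfrak a_{12}xy-3\sigma\mathfrak a_{22}y\le0$ in $\mathcal Q_1$ once $M$ is large (depending on $\mathfrak b^*/\mathfrak b_*$) and $F$ is small, while $\phi\ge P$ on $\pt\mathcal Q_1$; the comparison principle then gives $\phi\ge P\ge\sigma y^2$ after translation in $x$. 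Your barrier $y^2(c_0-c_1|x|^2-c_2(\cdot))$ is close in spirit, but in your proposal it plays only an auxiliary role and you do not identify the crucial mechanism: the cubic correction $+\sigma y^3$ (note the sign, opposite to what you wrote) produces the strictly negative term of order $y$ that dominates both the forcing $y^2\mathfrak f$ and the $O(\sigma/M)$ errors for $0\le y\le1/2$. To make your route work you would have to either prove a boundary Harnack/weak Harnack for the degenerate operator (e.g.\ via a weighted divergence-form reformulation à la Fabes--Kenig--Serapioni, which needs structure beyond the hypotheses of the lemma), or fall back on exactly this interior-Harnack-plus-barrier scheme.
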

\begin{proof}
	Since $0\leq \phi(0,1/2)\leq 1/4$, one has $\phi(0,1/2)\geq 1/8$ or $\phi(0,1/2)\leq 1/8$. Assume that $\phi(0,1/2)\geq 1/8$ (otherwise we consider the function $y^2-\phi$). Using the Harnack inequality yields $\phi(x,y)\geq 2\sigma$ in $[-M+1,M-1]\times [1/{8},{7}/{8}]$, where $\sigma=\sigma(\mathfrak b_*,\mathfrak b^*)>0$ can be sufficiently small. Define a function
	$$P(x,y):=\sigma y^2+\sigma y^3-\frac{8\sigma}{M^2}|x|^2y^2.$$
	Direct computations give that
	\begin{align*}
		\mathcal L(\phi-P)=y^2 {\mathfrak f}(x,y)+\frac{16\sigma}{M^2}\mathfrak a_{11}y^2+\frac{48\sigma}{M^2}\mathfrak a_{12}xy-3\sigma\mathfrak a_{22}y.	
	\end{align*}
 Choose a sufficiently large $M=M(\mathfrak a_{ij})>0$. Since $\|{\mathfrak f}\|_{L^{\infty}(\R\times(0,1))}$ is sufficiently small, one can check that $\mathcal L(\phi-P)\leq0$ in $\mathcal Q_1$ and $\phi\geq P$ on $\partial \mathcal Q_{1}$. By the comparison principle we conclude $\phi\geq P$ in $\mathcal Q_1$. Thus $\phi(0,y)\geq P(0,y)\geq\sigma y^2$. Using a translation one gets $\phi(x,y)\geq \sigma y^2$ in $\mathcal Q_1$. This finishes the proof of the lemma.
\end{proof}

Without loss of generality we assume that $0\leq \psi\leq y^2$. The elliptic equation in \eqref{elliptic equG} can be rewritten as
\begin{align}\label{psi_eq_appendix}
	\tilde{\mathfrak a}_{ij}\pt_{ij}\psi-\frac{\tilde{\mathfrak a}_{i2}}y\pt_i\psi=y^2\tilde{\mathfrak f},
\end{align}
where
\begin{equation}\label{def:cof}\begin{split}
&\tilde{\mathfrak a}_{ij}:=\pt_{p_ip_j}\MG\bigg(\frac{\n\psi}{y},\psi\bigg),\q
\tilde{\mathfrak f}:=\pt_{p_iz}\MG\bigg(\frac{\n\psi}{y},\psi\bigg)\frac{\pt_i\psi}{y}+\pt_{z}\MG\bigg(\frac{\n\psi}{y},\psi\bigg).	
\end{split}\end{equation}
Note that $\tilde{\mathfrak f}\in {L^\infty(\R\times (0,1))}$, cf. \eqref{G dzdzpdzz} and \eqref{psi_gradient_Q}. Let
\begin{equation}\label{defpsi1}
\psi_1(x,y):=\frac{\psi(r_0x,r_0y)}{r_0^2}, \q r_0>0.
\end{equation} 
Then the function $\psi_1$ satisfies an elliptic equation of the form 
\begin{equation*}	
\mathcal L\psi_1=y^2\mathfrak f_1(x,y) \q\text{in }\mathcal Q_0, \q\text{where } \mathfrak f_1(x,y):=r_0^2\tilde{\mathfrak f}(r_0x,r_0y),
\end{equation*} 
and $0\leq \psi_1\leq y^2$ in $\mathcal Q_0$. In view of Lemma \ref{lem_axi}, if we choose a  sufficiently small $r_0=r_0(\mathfrak b^*,\mathfrak b_*,\|\tilde{\mathfrak f}\|_{L^\infty(\R\times (0,1))})$ such that $\|\mathfrak  f_1\|_{L^\infty(\R\times (0,1))}\leq F$, then
\begin{equation}\label{eq2}
m_1y^2\leq \psi_1(x,y)\leq M_1y^2 \q \text{ in } \mathcal Q_{1},
\end{equation}
where $m_1\in\{0,\sigma\}$, $M_1\in\{1-\sigma,1\}$, and $M_1-m_1=1-\sigma$. 
Define 
$$\psi_2(x,y):=\frac{\psi_1-m_1y^2}{M_1-m_1}\left(\frac12x,\frac12y\right).$$
Since $\psi_2$ also satisfies an elliptic equation of the form
\begin{align}\label{1}
	\mathcal L\psi_2=y^2\mathfrak f_2(x,y) \q\text{in } \mathcal Q_0, \q\text{where }
	\mathfrak f_2(x,y):=\frac{\mathfrak f_1\left(\frac x2,\frac y2\right)}{2^4(M_1-m_1)}
\end{align}
and $0\leq \psi_2(x,y)\leq y^2/4$ in $\mathcal Q_{0}$. 
Applying Lemma \ref{lem_axi} again to $4\psi_2$ (note that $2^2(M_1-m_1)\geq 1$ since $\sigma\leq 1/8$) one has 
$$m_2y^2\leq \psi_2(x,y)\leq M_2y^2 \q\text{in } \mathcal Q_1,$$
where $m_2\in\{0,\sigma/4\}$, $M_2\in\{(1-\sigma)/4,1/4\}$, and $M_2- m_2=(1-\sigma)/4$. Repeating the above arguments, for each integer $k>0$, we have 
$$\tilde m_ky^2\leq \psi_1(x,y)\leq  \tilde M_ky^2 \q \text{in } \mathcal Q_{k},$$
where $\tilde m_k,\tilde M_k\in[0,1]$ and $\tilde M_k-\tilde m_k=(1-\sigma)^{k}$. In view of the definition of $\psi_1$ in \eqref{defpsi1}, we get
$$|\psi(x,y)-\mathfrak c(0)y^2|\leq Cy^{2+\alpha} \q\text{in } B_\sigma^+:=B_\sigma\cap\{y>0\}$$
for some $C^{\alpha}$ ($\alpha\in(0,1)$) function $\mathfrak c(x)$ and constants $C>0$, $0<\sigma\ll 1$. 

Now for any $(x_0,y_0)\in B_{\sigma/2}^+$, let $r=y_0/2$. Since \eqref{psi_eq_appendix} holds in $B_{r}(x_0,y_0)$, one has 
\begin{equation*}
	|\psi(x_0+x,y_0+y)-\mathfrak c(0)(y_0+y)^2|\leq C(y_0+y)^{2+\alpha}\leq Cr^{2+\alpha} \q\text{in }  B_r.
\end{equation*}
Let 
\begin{equation*}
\varphi(x,y):=\frac{\psi(x_0+rx,y_0+ry)-\mathfrak c(0)(y_0+ry)^2}{r^2}.
\end{equation*}
Then $\varphi$ satisfies 
\begin{equation*}
\hat{\mathfrak {a}}_{ij}\pt_{ij}\varphi-\frac{\hat{\mathfrak a}_{i2}}{2+y}\pt_{i}\varphi=\hat{\mathfrak f} \q\text{in } B_1,
\end{equation*}
where 
\begin{equation*}\begin{split}
\hat{\mathfrak {a}}_{ij}(x,y):=	\tilde{\mathfrak {a}}_{ij}(x_0+rx,y_0+ry),\q 
\hat{\mathfrak f}(x,y):=(y_0+ry)^2\tilde {\mathfrak f}(x_0+rx,y_0+ry),
\end{split}
\end{equation*}
and $\tilde{\mathfrak {a}}_{ij}$, $\tilde {\mathfrak f}$ are defined in \eqref{def:cof}. 
Thus by the interior regularity of elliptic equations one has 
$$\left|\frac{\n\psi(x_0,y_0)}{y_0}-(0,2\mathfrak c(0))\right|=\frac12|\n\varphi(0)|\leq C(\|\varphi\|_{L^\infty(B_1)}+\|\hat{\mathfrak f}\|_{L^{\infty}(B_1)})\leq Cy_0^{\alpha},$$
where $C=C(\hat {\mathfrak a}_{ij})>0$. Using a coordinate translation we  conclude that $|\n\psi/y|$ is uniformly H\"{o}lder continuous up to $\{y=0\}$. Actually we can further get $\psi\in C^{2,\alpha}(\MO\cup\{y=0\})$ since now we know $\tilde {\mathfrak f}$ is $C^{\alpha}$. 

\medskip {\bf Acknowledgement.} 
The author would like to thank Professors Lihe Wang and Chunjing Xie for many helpful discussions.

\bibliographystyle{abbrv}
\bibliography{reference}

\end{document}